\newtheorem{theorem}{Theorem}[section]
\newtheorem{proposition}[theorem]{Proposition}
\newtheorem{lemma}[theorem]{Lemma}
\newtheorem{corollary}[theorem]{Corollary}
\theoremstyle{definition}
\newtheorem{example}[theorem]{Example}
\newtheorem{definition}[theorem]{Definition}
\newtheorem{remark} [theorem] {Remark}
\newtheorem{conjecture} [theorem] {Conjecture}
\begin{document}
\title{ Spectrum of Weighted Composition Operators \\
Part VI \\
Essential spectra of $d$-endomorphisms of Banach $C(K)$-modules }

\author{Arkady Kitover}

\address{Community College of Philadelphia, 1700 Spring Garden St., Philadelphia, PA,
USA}

\email{akitover@ccp.edu}

\author{Mehmet Orhon}

\address{University of New Hampshire, Durham, NH, 03824}

\email{mo@unh.edu}

\subjclass[2010]{Primary 47B33; Secondary 47B48, 46B60}

\date{\today}

\keywords{Spectrum, Fredholm spectrum, essential
spectra}
\centerline{Dedicated to the memory of Yuri Abramovich}
\begin{abstract}
  We investigate properties of essential spectra of disjointness preserving operators acting on Banach $C(K)$-modules. In particular, we prove that under some very mild conditions the upper semi-Fredholm spectrum of such an operator is rotation invariant. In the last part of the paper we provide a full description of the spectrum and the essential spectra of operators acting on Kaplansky modules of the form $T = wU$, where $w \in C(K)$, $U$ is a $d$-isomorphism, and the spectrum of $U$ is a subset of the unit circle.
\end{abstract}
\maketitle
\markboth{Arkady Kitover and Mehmet Orhon}{Spectrum of weighted composition operators.
VI}

\section{introduction}

Disjointness preserving operators on Banach $C(K)$-modules \footnote{The reader is referred to the next section for the precise meaning of terms and notions used in the introduction.} were introduced in~\cite{AAK}. One of the main results proved there stated that if the powers of such an operator $T$ are in some sense disjoint and every cyclic subspace of the Banach $C(K)$-module (represented as a Banach lattice) has the Fatou property, then the spectrum and approximate point spectrum of $T$ are rotation invariant. It is natural to ask whether under the same conditions the essential spectra (e.g. the Fredholm spectrum) of $T$ would be rotation invariant as well. At the time of publication of~\cite{AAK}, almost thirty years ago, we knew too little about the properties of Banach $C(K)$-modules and about essential spectra of disjointness preserving operators on Banach lattices to tackle this problem. During the last few years the situation changed for the better and, though many questions about essential spectra remain open, we were able to make some progress presented in the current paper. The plan of the paper is as follows.

 \noindent \textbf{Section 2} contains notations, basic definitions, and some statements from previous publications included for the reader's convenience.

\noindent \textbf{Section 3}. We show in Theorem~\ref{t9} that if the powers of $T$ are "disjoint" ($d$-independent) and all cyclic subspaces have the Fatou property then the upper semi-Fredholm spectrum is rotation invariant.

If we assume a much stronger condition that all cyclic subspaces have order continuous norm, then all the essential spectra of $T$ considered in this paper are rotation invariant (see Corollary~\ref{c1}).

\noindent \textbf{Section 4}. In this section we consider the case when the conjugate operator $T'$ is also disjointness preserving. This assumption allows us to obtain stronger results than in the previous section.

\noindent \textbf{Section 5}. In this section we consider operators of the form $T = wU$ where $w \in C(K)$, $U$ is a $d$-isomorphism, and the spectrum of $U$ lies on the unit circle. These assumptions allow us to obtain a complete description of essential spectra of $T$.

\noindent \textbf{Section 6}. The section contains some examples.

\noindent \textbf{Appendix}. In the appendix we carefully reexamine some notions connected with the disjointness preserving operators on complex Banach lattices and Banach $C(K)$-modules. In particular, we show in Theorem~\ref{at6} that the (rather technical) definition of disjointness given in~\cite{AAK} is equivalent to a much simpler and more natural definition.

\section{Preliminaries}

\subsection{Basic notations and definitions}

In the sequel we use the following standard notations.

\noindent $\mathds{N}$ is the semigroup of all natural numbers.

\noindent $\mathds{Z}$ is the ring of all integers.

\noindent $\mathds{R}$ is the field of all real numbers.

\noindent $\mathds{C}$ is the field of all complex numbers.

\noindent $\mathds{T}$ is the unit circle. We use the same notation for the unit circle
considered as a subset of the complex plane and as the group of all complex numbers
of modulus 1.

\noindent $\mathds{U}$ is the open unit disc.

\noindent $\mathds{D}$ is the closed unit disc.

All the linear spaces are considered over the field $\mathds{C}$ of complex numbers.

The algebra of all bounded linear operators on a Banach space $X$ is denoted by $L(X)$.

Let $E$ be a set, $\varphi : E \rightarrow E$ be a map, and $w$ be a
complex-valued function on $E$. Then

\noindent $\varphi^n$ , $n \in \mathds{N}$, is the $n^{th}$ iteration of $\varphi$,

\noindent $\varphi^0(e) = e, e \in E$,

\noindent If $F \subseteq E$ then $\varphi^{(-n)}(F)$ means the full preimage of $F$ for the map $\varphi^n$.

\noindent If the map $\varphi$ is invertible then $\varphi^{-n}$, $n \in \mathds{N}$,  is the $n^{th}$ iteration of the inverse map $\varphi^{-1}$.

\noindent $w_0 = 1$, $w_n = w(w \circ \varphi) \ldots (w \circ \varphi^{n-1})$, $n \in
\mathds{N}$.

Recall that an operator $T \in L(X)$ is called \textit{semi-Fredholm} if its range
$R(T)$ is closed in $X$ and either $\dim{\ker{T}}< \infty$ or codim $R(T) < \infty$.

The \textit{index} of a semi-Fredholm operator $T$ is defined as

\centerline{ ind $T$ = $  \dim{\ker{T}}$ - $\mathrm{codim} \, R(T)$.}

The subset of $L(X)$ consisting of all semi-Fredholm operators is denoted by $\Phi$.

$\Phi_+ = \{T \in \Phi : null(T) = \dim{\ker{T}}< \infty\}$ is the set of all upper semi-Fredholm operators in $L(X)$.

$\Phi_- = \{T \in \Phi : def(T) = \textrm{codim}\; R(T) < \infty\}$ is the set of all lower semi-Fredholm operators in $L(X)$.

$\mathcal{F} = \Phi_+  \cap  \Phi_-$ is the set of all Fredholm operators in $L(X)$.

$\mathcal{W} = \{T \in \mathcal{F} : \textrm{ind} \; T = 0\}$ is the set of all Weyl
operators in $L(X)$.

Let $T$ be a bounded linear operator on a Banach space $X$. As usual, we denote the
spectrum of $T$ by $\sigma(T)$ and its spectral radius by $\rho(T)$.

We will consider the following subsets of $\sigma(T)$.

$\sigma_p(T) = \{\lambda \in \mathds{C} : \exists x \in X \setminus \{0\}, Tx = \lambda
x\}.$

$\sigma_{a.p.}(T) = \{\lambda \in \mathds{C}: \exists x_n \in X, \|x_n\| = 1, Tx_n -
\lambda x_n \rightarrow 0\}$.

$\sigma_r(T) = \sigma(T) \setminus \sigma_{a.p.}(T) =$

$\; \; = \{\lambda \in \sigma(T) : \textrm{the operator}\; \lambda I - T \; \textrm{has left inverse}\} $.

\begin{remark} \label{r1} It is clear that $\sigma_{a.p.}(T)$ is the union of the point spectrum $\sigma_p(T)$ and the approximate point spectrum $\sigma_a(T)$ of $T$, while $\sigma_r(T)$ is the residual spectrum of $T$. We have to notice that the  definition of the residual spectrum varies in the literature.
\end{remark}

Following~\cite{EE} we consider the following essential spectra of $T$.

$\sigma_1(T) = \{\lambda \in \mathds{C}: \lambda I - T \not \in \Phi\}$ is the
\textit{semi-Fredholm} spectrum of $T$.

$\sigma_2(T) = \{\lambda \in \mathds{C}: \lambda I - T \not \in \Phi_+\}$ is the upper \textit{semi-Fredholm} spectrum of $T$.

$\sigma_2(T^\prime)  = \{\lambda \in \mathds{C}: \lambda I - T \not \in \Phi_-\} $ is the lower \textit{semi-Fredholm} spectrum of $T$.

$\sigma_3(T) = \{\lambda \in \mathds{C}: \lambda I - T \not \in \mathcal{F}\}$ is the
Fredholm spectrum of $T$.

$\sigma_4(T) = \{\lambda \in \mathds{C}: \lambda I - T \not \in \mathcal{W}\}$ is the
Weyl spectrum of $T$.

$\sigma_5(T) = \sigma(T)\setminus \{\zeta \in \mathds{C} :$ there is a component $C$ of
the set $\mathds{C} \setminus \sigma_1(T)$ such that $\zeta \in C$ and the intersection
of $C$ with the resolvent set of $T$ is not empty$\}$ is the Browder spectrum of $T$.

The Browder spectrum was introduced in~\cite{Br} as follows: $\lambda \in \sigma(T) \setminus \sigma_5(T)$ if and only if $\lambda$ is a pole of the resolvent $R(\lambda, T)$. It is not difficult to see (~\cite[p.40]{EE}) that the definition of $\sigma_5(T)$ cited above is equivalent to the original definition of Browder.

It is well known (see e.g.~\cite{EE}) that the sets $\sigma_i(T ), i \in [1, \ldots ,
5]$ are
nonempty closed subsets of $\sigma(T)$ and that
\begin{equation*}
  \sigma_i(T) \subseteq \sigma_j(T), 1 \leq i < j \leq 5,
\end{equation*}
where all the inclusions can be proper. Nevertheless all the
spectral radii $\rho_i(T ), i = 1, . . . , 5$ are equal to the same number,
$\rho_e(T)$, (see~\cite[Theorem I.4.10]{EE}) which is called the essential
spectral radius of $T$. It is also known (see~\cite{EE}) that the spectra $\sigma_i(T),
i = 1, \ldots , 4$ are invariant under compact perturbations, but $\sigma_5(T)$ in
general is not.

It is immediate to see that $\sigma_1(T) = \sigma_2(T) \cap \sigma_2(T^\prime)$ and that $\sigma_3(T) = \sigma_2(T) \cup \sigma_2(T^\prime)$.

Let us recall that a sequence $x_n$ of elements of a Banach space $X$ is called \emph{singular} if it does not contain any norm convergent subsequence. We will use the following well known characterization of $\sigma_2(T)$ (see e.g.~\cite{EE}). The following statements are equivalent
\begin{enumerate}[(a)]
  \item $\lambda \in \sigma_2(T)$.
  \item There is a singular sequence $x_n$ such that $\|x_n\| = 1$ and $\lambda x_n - Tx_n \rightarrow 0$.
\end{enumerate}

\subsection{Banach $C(K)$-modules and $d$-endomorphisms}

In this subsection we mostly use terminology introduced in~\cite{AAK}, \cite{HO}, and~\cite{KO}.

Let $K$ be a compact Hausdorff space, $X$ be a Banach space, and $m$ be a bounded unital homomorphism of $C(K)$ into $L(X)$.  The triple $(C(K), m, X)$ is called a Banach $C(K)$-module. For brevity we will say that $X$ is a Banach $C(K)$-module.

Throughout this paper we will always assume without sprecifically mentioning it that
\begin{enumerate}[(a)]
\item The Banach $C(K)$-module $X$ is exact, i.e. $\ker{m} = 0$.
Indeed, otherwise we can consider $X$ as a Banach $C(K_1)$-module, where $C(K_1)\simeq C(K)/\ker{m}$.
  \item The homomorphism $m$ is an isometry (under condition $(a)$ above we can assume it without loss of generality, see~\cite[Lemma 2]{HO}).
  \end{enumerate}

\begin{remark} \label{r2} In~\cite{AAK} a Banach $C(K)$-module with properties $(a)$ and $(b)$ is called an \textbf{operator} $C(K)$-module.
  \end{remark}
Let $X$ be a Banach $C(K)$-module. Notice that in general the weak operator closure of $m(C(K))$ is isometrically isomorphic to some $C(Q)$ (see~\cite[Corollary 6.3, p.35]{AAK}). By analogy with the case of Banach lattices this closure is called the center of the Banach $C(K)$-module $X$ (see~\cite[Definition 4.3]{AAK}) but in the current paper we will not use this terminology.

In the sequel when considering a Banach $C(K)$-module we will assume, unless otherwise specified, that

$(c)$ $m(C(K))$ is closed in $L(X)$ in the weak operator topology.

For brevity, throughout the paper we will identify an $f \in C(K)$ and its image $m(f) \in L(X)$.

Let $x \in X$. The \textbf{cyclic subspace} $X(x)$ is defined as
\begin{equation*}
  X(x) = cl_X \{fx : f \in C(K) \}.
\end{equation*}

Endowed with the order defined by the cone $X_+(x) = cl \{fx : f \in C(K), f \geq 0\}$ the cyclic subspace $X(x)$ becomes a Banach lattice (see~\cite[Lemma 2]{OO} and~\cite[Lemma 2.4]{KO}).

The following important class of Banach $C(K)$-modules was introduced by Kaplansky in~\cite{Ka}.

\begin{definition} \label{d1} A Banach $C(K)$-module $X$
   is called a \textbf{Kaplansky} module if

   (1) The compact space $K$ is extremally disconnected.

   (2) For any $x \in X$ the set $\{f \in C(K): fx = 0\}$ is a band in $C(K)$.
   \end{definition}

 \begin{remark} \label{r3} In~\cite{AAK} a Banach $C(K)$-module with properties $(1)$ and $(2)$ from Definition~\ref{d1} is called an \textbf{order complete} $C(K)$-module.
 \end{remark}

 For us, the following properties of Kaplansky modules will be important (for a proof see~\cite{KO}).

\begin{proposition} \label{p10} Let $X$ be a Kaplansky module. Then
\begin{enumerate}
  \item $C(K)$ is closed in $L(X)$ in the weak operator topology.
  \item For every $x \in X$ the cyclic subspace $X(x)$ with the cone $X_+(x)$ is a Dedekind complete Banach lattice.
  \item For every $x \in X$ there exists the unique idempotent $e_x$ in $C(K)$ such that $e_x x = x$ and
\begin{equation*}
  h \in C(K), h^2 = h, hx=x \Rightarrow e_x \leq h.
\end{equation*}
\end{enumerate}
The idempotent $e_x$ is called the \textbf{the carrier projection} of $x$ and by the support of $x$, $supp \; x$, we mean the support of $e_x$ in $K$.
\end{proposition}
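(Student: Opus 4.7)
The plan proceeds in the order (3), (2), (1), with (1) the main obstacle. The recurring ingredient is that since $K$ is extremally disconnected, $C(K)$ is a Dedekind complete AM-space with unit, and its bands are in bijection with its idempotents via $J \leftrightarrow (1-e)C(K)$.

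For (3), apply this correspondence to $J_x = \{f \in C(K) : fx = 0\}$, which is a band by Definition~\ref{d1}. Write $J_x = (1-e_x)C(K)$ for the unique idempotent $e_x$; then $(1-e_x)x = 0$, so $e_x x = x$. If $h$ is any idempotent with $hx = x$, then $1-h \in J_x$, so $1-h \leq 1-e_x$, i.e.\ $e_x \leq h$. Existence, minimality, and uniqueness of the carrier projection all follow at once.

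For (2), the cited Lemma~2 of~\cite{OO} and Lemma~2.4 of~\cite{KO} already give that $X(x)$ with cone $X_+(x)$ is a Banach lattice; only Dedekind completeness remains to be established. The module map $f \mapsto fx$ factors through $C(K)/J_x \simeq e_x C(K) \simeq C(K_x)$, where $K_x := \mathrm{supp}\, e_x$ is clopen and therefore extremally disconnected. Given an order bounded increasing net $y_\alpha = f_\alpha x$ in $X_+(x)$, lift to a bounded increasing net in the Dedekind complete lattice $C(K_x)$, form its supremum $f$ there, and verify via continuity of the module action and the Banach lattice norm on $X(x)$ that $fx$ is the supremum of $y_\alpha$ in $X(x)$.

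For (1), the main obstacle, let $T$ lie in the weak operator closure of $C(K)$ in $L(X)$. By~\cite[Corollary 6.3, p.35]{AAK} this closure is isometric to some $C(Q)$ with $Q$ extremally disconnected, and the inclusion $C(K) \hookrightarrow C(Q)$ is induced by a continuous surjection $\pi : Q \to K$; the task is to force $\pi$ to be a homeomorphism. For any $x \in X$, the restriction of $T$ to $X(x)$ commutes with $C(K)$ and preserves $J_x$, so by the Kaplansky band condition it descends to an order continuous $C(K_x)$-module endomorphism of $X(x)$, which must itself be multiplication by a single element of $C(K_x)$. A maximal family of pairwise disjoint carrier projections exhausts $X$, and Dedekind completeness of $C(K)$ patches these local multipliers into a single $f \in C(K)$ acting as $T$. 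The crux of the difficulty lies precisely in this patching step, where the band hypothesis (rather than merely closed ideal) is what guarantees that the order structure is preserved across cyclic subspaces.
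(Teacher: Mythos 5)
The paper offers no proof of this proposition to compare against: it is quoted verbatim from the reference \cite{KO} with the words ``for a proof see \cite{KO}''. Judged on its own terms, your part (3) is correct and complete: the annihilator $J_x=\{f: fx=0\}$ is a band by hypothesis, bands in the Dedekind complete $C(K)$ are exactly the sets $(1-e)C(K)$ for idempotents $e$, and minimality and uniqueness of $e_x$ then follow formally. That part needs no changes.

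Parts (2) and (1), however, have genuine gaps precisely at the points where the real work lies. In (2), an order bounded increasing net in $X_+(x)$ is \emph{not} of the form $f_\alpha x$: as the appendix's discussion of $\Delta(x)$ makes explicit, the order interval $[0,|x|]$ is the norm \emph{closure} of $\{fx: 0\le f\le 1\}$, so a general net only consists of limits of such elements and your lifting step is unavailable. Even for a net that is literally of the form $f_\alpha x$, ``continuity of the module action'' does not transport the supremum: $f\mapsto fx$ is norm continuous but not order continuous, and the supremum of a net is not a norm limit of the net. (What rescues this special case is an order-theoretic fact you do not invoke: $I(x)=\{fx: f\in C(K_x)\}$ is an ideal of $X(x)$, and a supremum computed inside an ideal is automatically the supremum in the ambient lattice.) In (1), two steps fail as written. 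First, you assert that $T|X(x)$ ``must itself be multiplication by a single element of $C(K_x)$,'' but a priori it only lies in the possibly strictly larger ideal centre $Z(X(x))$, equivalently in the WOT closure of $C(K_x)$ in $L(X(x))$ --- which is the local version of the very statement being proved, so this is circular as stated. Second, the patching step collapses: from $\sup_i e_{x_i}=1$ one cannot conclude that $\bigoplus_i e_{x_i}X$ (let alone $\bigoplus_i X(x_i)$, which may be smaller than $\bigoplus_i e_{x_i}X$) is norm dense in $X$ --- already in $\ell^\infty$ the atoms have supremum $1$ but span no dense subspace --- so knowing $T$ on each $X(x_i)$ does not determine $T$, and Dedekind completeness of $C(K)$ merely produces a candidate $f$ with no way to verify $T=f$ off the chosen cyclic pieces. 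Both parts need a different mechanism (in \cite{KO} this is where the band hypothesis is exploited globally, not cyclic subspace by cyclic subspace).
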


  Disjointness preserving operators on Banach $C(K)$-modules were introduced in~\cite[Definition 4.5, p.21]{AAK}. The definition in~\cite{AAK} is technically complicated and not very convenient to work with. Here we will use an equivalent but simpler definition. The proof of equivalence of Definition~\ref{d2} below and Definition 4.5 in~\cite{AAK} is given in Theorem~\ref{at6} in the appendix.

   \begin{definition} \label{d2} Let $X$ be a Banach $C(K)$-module and $T \in L(X)$. The operator $T$ is called a $\mathbf{d}$-\textbf{endomorphism} if for any $x \in X$ we have $T(X(x)) \subseteq X(Tx)$ and the operator $T|X(x) : X(x) \rightarrow X(Tx)$ is disjointness preserving (we assume here that the cyclic subspaces $X(x)$ and $X(Tx)$ are represented as Banach lattices).
   \end{definition}

In the case when $X$ is a Kaplansky module its $d$-endomorphisms can be characterized as follows.

   \begin{proposition} \label{p3}  (see~\cite[Proposition 11.23, p.83]{AAK}) Let $X$ be a Kaplansky module and $T \in L(X)$. The following conditions are equivalent.
\begin{enumerate}
 \item $T$ is a $d$-homomorphism of $X$.
\item There exist a clopen subset $E$ of $K$ and a continuous map $\varphi : E \rightarrow K$ such that
   \begin{equation*}
     T(fx) = (f \circ \varphi) Tx, x \in X, f \in C(K),
   \end{equation*}
where
\begin{equation*}
(f \circ \varphi)(k) =  \left\{
    \begin{array}{ll}
      f(\varphi(k)), & \hbox{if $k \in E$;} \\
      0, & \hbox{if $k \in K \setminus E$.}
    \end{array}
  \right.
\end{equation*}
\end{enumerate}
   \end{proposition}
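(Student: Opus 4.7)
The plan is to verify $(2) \Rightarrow (1)$ directly and to build the map $\varphi$ in $(1) \Rightarrow (2)$ from order-theoretic data supplied by Proposition~\ref{p10}. For $(2) \Rightarrow (1)$, the formula $T(fx) = (f \circ \varphi) Tx$ places $T(fx)$ inside $X(Tx)$, so $T(X(x)) \subseteq X(Tx)$ after passing to norm closures. If $f, g \in C(K)$ satisfy $fg = 0$ then $(f \circ \varphi)(g \circ \varphi) = (fg) \circ \varphi = 0$, hence $T(fx)$ and $T(gx)$ are disjoint in $X(Tx)$. Since every disjoint pair in the Banach lattice $X(x)$ arises this way (using the cyclic-subspace description of the lattice structure given in Proposition~\ref{p10}), the restriction $T|X(x)$ is disjointness preserving.

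For $(1) \Rightarrow (2)$, fix a clopen set $U \subseteq K$ with idempotent $e = \chi_U \in C(K)$. For any $x \in X$ the decomposition $x = ex + (1-e)x$ is into disjoint pieces of $X(x)$, so $Tx = T(ex) + T((1-e)x)$ is a disjoint decomposition of $Tx$ inside $X(Tx)$. Dedekind completeness of $X(Tx)$ (Proposition~\ref{p10}(2)) and the fact that in a Kaplansky module the band projections of a cyclic subspace are implemented by idempotents of $C(K)$ produce an idempotent $h_x(U) \in C(K)$ with $T(ex) = h_x(U)\, Tx$. The crucial step is showing that $h_x(U)$ is in fact independent of $x$: applying the linearity of $T$ to $e(x_1 + x_2) = ex_1 + ex_2$ and comparing the two disjoint decompositions of $T(e(x_1+x_2))$ forces $h_{x_1}(U)$ and $h_{x_2}(U)$ to agree on $\operatorname{supp}(Tx_1) \cap \operatorname{supp}(Tx_2)$, and because $K$ is extremally disconnected, the complete Boolean algebra of clopen subsets of $K$ lets us patch the resulting local data into a single idempotent $\psi(U) \in C(K)$ with $T(\chi_U x) = \psi(U)\, Tx$ for every $x \in X$.

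A routine check shows that $U \mapsto \psi(U)$ is a Boolean homomorphism from the clopen algebra of $K$ into itself; Stone duality then implements it by a continuous map $\varphi : E \rightarrow K$, where $E$ is the clopen subset of $K$ corresponding to $\psi(K)$, and $\psi(U) = \chi_{\varphi^{(-1)}(U) \cap E} = \chi_U \circ \varphi$ in the sense of the statement. Finally, since $K$ is extremally disconnected, every $f \in C(K)$ is a uniform limit of simple functions built from characteristic functions of clopen sets, so continuity of $T$ together with the isometric module action extends the identity $T(\chi_U x) = (\chi_U \circ \varphi) Tx$ linearly and continuously to $T(fx) = (f \circ \varphi) Tx$ for all $f \in C(K)$ and $x \in X$. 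The main obstacle throughout is the $x$-independence of $h_x(U)$: producing a single idempotent of $C(K)$ that simultaneously decomposes $T(\chi_U x)$ for every $x$, rather than merely pointwise, is precisely where the extremal disconnectedness of $K$ and the Kaplansky property (Definition~\ref{d1}) are indispensable.
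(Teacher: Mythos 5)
The paper itself does not prove this proposition; it is quoted from \cite[Proposition 11.23]{AAK}, so there is no in-paper proof to match against. Your strategy is the standard one and in fact closely parallels the proof the authors \emph{do} give in the appendix for the Banach-lattice analogue (Theorem~\ref{at3}): define the action on idempotents $\chi_U$, patch the resulting band projections over all $x$ into a single idempotent $\psi(U)$, observe that $U \mapsto \psi(U)$ is a Boolean homomorphism, invoke Stone duality to get $\varphi : E \rightarrow K$, and extend to all of $C(K)$ by density of clopen-simple functions and continuity. That outline is correct and is essentially how \cite{AAK} argues.

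Two steps are glossed in a way that matters. First, in $(2)\Rightarrow(1)$ the claim that ``every disjoint pair in $X(x)$ arises'' as $(fx,gx)$ with $fg=0$ is false as stated: general disjoint $u,v \in X(x)$ are only norm limits of such elements, and disjointness does not pass to the approximants. The correct fix is to use Dedekind completeness of $X(x)$ and the Kaplansky property to find an idempotent $e \in C(K)$ with $eu=u$ and $ev=0$; then $(e\circ\varphi)Tu = Tu$ and $(e\circ\varphi)Tv = T(ev) = 0$, so $Tu \perp Tv$. Second, in $(1)\Rightarrow(2)$ the mechanism you give for $x$-independence --- comparing the two decompositions of $T(e(x_1+x_2))$ --- does not by itself force $h_{x_1}(U)$ and $h_{x_2}(U)$ to agree, because $Tx_1$ and $Tx_2$ need be neither disjoint nor comparable. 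The clean argument is: for any $x,y$ the elements $ey$ and $(1-e)x$ are disjoint in $X(ey+(1-e)x)$, hence $T(ey) \perp T((1-e)x)$; taking $h_x(U)$ minimal (equal to the support of $T(ex)$) this shows every $h_y(U)$ annihilates every $T((1-e)x)$, so the supremum $\psi(U) = \sup_x h_x(U)$ in the complete Boolean algebra of idempotents of $C(K)$ satisfies $\psi(U)Tx = T(ex)$ for all $x$ simultaneously. With these two repairs your proof goes through.
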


   \begin{remark} \label{r16} The set $E$ in Proposition~\ref{p3} is the smallest (by inclusion) clopen subset of $K$ such that $\chi_E T = T$. Respectively, $F = K \setminus E $ is the largest clopen subset of $K$ such that $\chi_F T = 0$. In particular, if $E = K$ and $F = \emptyset$ the map $\varphi$ is defined on $K$.
   \end{remark}

   \begin{remark} \label{r21} Because the map $\varphi$ in the statement of Proposition~\ref{p3} is defined only on a clopen subset of $K$ we need to clarify what we mean by iterations of the map $\varphi$ and its periodic points. For any $n \in \mathds{N}$ let $E_n$ and $\varphi_n : E_n \rightarrow K$ be the clopen subset of $K$ and the map corresponding to the $d$-endomorphism $T^n$, respectively. Thus, $E = E_1$ and $\varphi = \varphi_1$. It is immediate to see that $E_n \subseteq \varphi^{-1}(E_{n-1}) \cap E_{n-1}, \; n \geq 1$. \footnote{The inclusion
    $E_n \subseteq \varphi^{-1}(E_{n-1}) \cap E_{n-1}$ can be proper.} Therefore, the map $\varphi^n$ is well defined on $E_n$ and $\varphi^n(k) = \varphi_n(k), k \in E_n$.
   \end{remark}

   \begin{definition} \label{d11} Let $E$ and $\varphi$ be as in the statement of Proposition~\ref{p3}. A point $k \in K$ is called $\mathbb{\varphi}$-\textbf{periodic} if $k \in \bigcap \limits_{n=1}^\infty E_n$ and there is an $m \in \mathds{N}$ such that $\varphi^{m}(k) = k$. The smallest such $m$ is called the \textbf{period} of $k$.

   A point $k \in K$ is called \textbf{eventually} $\mathbb{\varphi}$-\textbf{periodic} if $k \in \bigcap \limits_{n=1}^\infty E_n$ and there is a $p \in \mathds{N}$ such that $\varphi^{p}(k)$ is a $\varphi$-periodic point.
   \end{definition}

Now we need to discuss the notion of $\mathbf{d}$-\textbf{independence} of powers of a $d$-endomorphism introduced in~\cite{AAK}. First recall that if $X$ is a Dedekind complete Banach lattice then the space $L_r(X)$ of all regular operators on $X$ is a Banach lattice as well. In this case we say that powers $T^n : n \geq 0$ of a $d$-endomorphism $T$ are $d$-independent if these powers are disjoint in the Banach lattice $L_r(X)$.

\begin{proposition} \label{p4}(see~\cite[Proposition 11.24 and Remark 11.25, p.83]{AAK})  Let $X$ be a Dedekind complete Banach lattice, $Z(X) = C(K)$ be the center of $X$, $T$ be a $d$-endomorphism of $X$, and $\varphi$ be the corresponding map described in Proposition~\ref{p3}. For $m \in \mathds{N}$ let $F^{(m)}$ be the set of all $\varphi$-periodic points in $K$ of period less or equal to $m$. The following conditions are equivalent
\begin{enumerate}
  \item The powers of $T$ are $d$-independent.
  \item For any $m, n \in \mathds{N}$ the set $\varphi^{(-n)}(F^{(m)})$ is nowhere dense in $K$.
\end{enumerate}
\end{proposition}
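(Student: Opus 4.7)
The plan is to reduce the operator-theoretic statement $(1)$ to a topological statement about the maps $\varphi^n$, and then to translate that topological statement into the periodic-point condition $(2)$.

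By Proposition~\ref{p3} each power $T^n$ is a $d$-endomorphism associated with a clopen set $E_n\subseteq K$ and a continuous map that by Remark~\ref{r21} coincides on $E_n$ with the iterate $\varphi^n$, with $E_n\subseteq E_{n-1}$. The key analytic input I would invoke is the standard description of disjointness in $L_r(X)$ for $d$-endomorphisms of a Dedekind complete Banach lattice: two such operators $S_1,S_2$, associated to pairs $(A_i,\psi_i)$, satisfy $|S_1|\wedge|S_2|=0$ if and only if the coincidence set $\{k\in A_1\cap A_2:\psi_1(k)=\psi_2(k)\}$ has empty interior in $K$. Granting this, condition $(1)$ is equivalent to the following topological condition $(1')$: for every pair of integers $n>m\ge 0$ the closed set
\begin{equation*}
D_{m,n}:=\{k\in E_n:\varphi^n(k)=\varphi^m(k)\}
\end{equation*}
is nowhere dense in $K$.

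The remaining equivalence $(1')\Leftrightarrow(2)$ is elementary. For $(2)\Rightarrow(1')$, if $k\in D_{m,n}$ and $p=\varphi^m(k)$, then $\varphi^{n-m}(p)=p$, so $p\in F^{(n-m)}$; hence $D_{m,n}\subseteq\varphi^{(-m)}(F^{(n-m)})$, which is nowhere dense by $(2)$ (for the edge case $m=0$ one instead uses the inclusion $\{k:\varphi^n(k)=k\}\subseteq\varphi^{(-1)}(F^{(n)})$, which holds because $F^{(n)}$ is $\varphi$-invariant). For $(1')\Rightarrow(2)$, suppose for contradiction that $U\subseteq\varphi^{(-n)}(F^{(m)})$ is a non-empty open set. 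Every $k\in U$ satisfies $\varphi^n(k)\in F^{(m)}$, hence $\varphi^n(k)$ is $\varphi$-periodic of some period $q\le m$, and in particular $q\mid m!$; consequently $\varphi^{n+m!}(k)=\varphi^n(k)$ for every $k\in U$, so $U\subseteq D_{n,\,n+m!}$, which contradicts $(1')$.

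The step I expect to be the main obstacle is the characterization used to pass from $(1)$ to $(1')$: cleanly identifying $|T^n|$ as a $d$-endomorphism governed by the map $\varphi^n$ and reducing $|T^n|\wedge|T^m|=0$ to the coincidence condition on $\varphi^n,\varphi^m$. The cleanest route I see is to localize to cyclic sublattices via Definition~\ref{d2}, apply the classical lattice-theoretic description of disjointness for order-bounded disjointness preserving operators there, and then patch using the fact that all relevant band projections of $X$ come from idempotents of $C(K)$. Everything after that step is soft set-theoretic bookkeeping.
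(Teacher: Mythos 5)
The first thing to note is that the paper does not prove Proposition~\ref{p4} at all: it is quoted from \cite[Proposition 11.24 and Remark 11.25]{AAK}, so there is no in-paper argument to compare yours against. Judged on its own, your outline has the right shape --- reduce $d$-independence to a statement about coincidence sets of the iterates, then do the set-theoretic combinatorics --- and the combinatorial half is essentially sound: the inclusion $D_{m,n}\subseteq\varphi^{(-m)}(F^{(n-m)})$, the separate treatment of the case $m=0$, and the $m!$ device in the converse are all correct in spirit.

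There are, however, two genuine gaps. The larger one is that your ``key analytic input'' --- that $|S_1|\wedge|S_2|=0$ in $L_r(X)$ if and only if the coincidence set $\{k\in A_1\cap A_2:\psi_1(k)=\psi_2(k)\}$ has empty interior --- \emph{is} the substance of the cited AAK Proposition 11.24. It is not an off-the-shelf fact for general Dedekind complete Banach lattices: proving it requires the factorization $T=VT_l$ of Theorem~\ref{at1}, control of where the central weights $V$ vanish inside the supports $E_i$, and an argument that the lattice infimum in $L_r(X)$ is computed ``pointwise over $K$''. Your sketch (localize to cyclic sublattices, patch with band projections) names the route but omits exactly this delicate part, so the proposal largely defers the content of the proposition to the lemma it assumes. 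The second gap is the domain mismatch flagged in the footnote to Remark~\ref{r21}: the support $E_j$ of $T^j$ can be a \emph{proper} clopen subset of the natural domain $\bigcap_{i<j}\varphi^{-i}(E)$ of the iterate, because the preimage under a non-open $\varphi$ of the (nowhere dense) zero set of the weight inside $E$ can have nonempty interior. Your step $(1')\Rightarrow(2)$ concludes $U\subseteq D_{n,\,n+m!}$, but $D_{n,\,n+m!}$ lives inside $E_{n+m!}$, and nothing you have said prevents the open set $U\subseteq\varphi^{(-n)}(F^{(m)})$ from missing $E_{n+m!}$ entirely --- in which case disjointness of $T^{n}$ and $T^{n+m!}$ carries no information about $U$ and no contradiction arises. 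To close this you must show that along the orbit of a genuine $\varphi$-periodic point the weight is essentially non-vanishing, so that $E_{n+m!}$ exhausts $U$ up to a closed nowhere dense set; this again leans on the structure theory you deferred.
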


Proposition~\ref{p4} justifies the following definition.

   \begin{definition} \label{d3} Let $X$ be a Kaplansky module and $T$ be a $d$-endomorphism of $X$. We say that the powers $T^n : n \geq 0$ of $T$ are $d$-\textit{independent} if for any $m, n \in N$ the set
   $\varphi^{(-n)}(F^{(m)})$ is nowhere dense in $K$.
   \end{definition}

To be able to define $d$-independence of powers of a $d$-endomorphism in the case of an arbitrary Banach $C(K)$-module $X$ we need to discuss briefly the properties of the conjugate module $X'$. The map $f \rightarrow f', f \in C(K)$ defines an isometrical and algebraic embedding of $C(K)$ into $L(X')$. Let us consider the closure of the image of $C(K)$ under this map in the topology $\sigma(L(X'), X' \otimes X)$. It was proved in~\cite[Chapter 9]{AAK} that this closure is isometrically isomorphic to $C(K_1)$ where $K_1$ is a hyperstonean compact space, and that $X'$ considered as a Banach $C(K_1)$-module is a Kaplansky module.

Applying the same procedure to the Banach $C(K_1)$-module $X'$ we see that the second dual $X^{\prime \prime}$ can be endowed with the structure of a Kaplansky $C(Q)$-module where $Q$ is a hyperstonean compact space. Moreover, if $T$ is a $d$-endomorphism of $X$ then $T^{\prime \prime}$ is a $d$-endomorphism of $X^{\prime \prime}$ (see~\cite[Theorem 10.6, p.71 and Remark 11.22.4, p.83]{AAK}). By Proposition~\ref{p3} there are a clopen subset $E$ of $Q$ and a continuous map
$\psi$ such that
\begin{equation} \label{eq54}
  T^{\prime \prime}gz = (g \circ \psi)T^{\prime \prime}z, g \in C(Q), z \in X^{\prime \prime}
\end{equation}
where the composition $g \circ \psi$ is defined as in Proposition~\ref{p3}. Moreover, the map $\psi$ is open (\cite[Theorem 10.7, p.71]{AAK}).

Let $x \in X$ and let $X(x)$ be the corresponding cyclic subspace endowed with the structure of a Banach lattice. Let $Z(X)$ be the center of the Banach lattice $X(x)$ and $K_x$ be the Gelfand space of $Z(X)$. Let also $j$ be the canonical map of $X$ into $X^{\prime \prime}$. Let $Q_x$ be the support of $jx$ in $Q$. Then (see~\cite[11.35, p.86]{AAK}) there exists a continuous unique surjective map $\eta_x : Q_x \rightarrow K_x$ such that
\begin{equation*}
  jTfx = (f \circ \eta_x)jTx, f \in Z(X(x)).
\end{equation*}

\begin{definition} \label{d6} Let $X$ be a Banach $C(K)$-module, $T$ be a

\noindent $d$-endomorphism of $X$, and $\psi$ be as in (\ref{eq54}). Let $F^{(m)}, m \in \mathds{N}$, be the set of all $\psi$ periodic points in $Q$  of period at most $m$. We say that powers of $T$ are $\mathbf{d}$-\textbf{independent} if for any $x \in X$ and for any $m, n \in \mathds{N}$ the set $\eta_x(\psi^{(-n)}(F^{(m)} \cap Q_x))$ is nowhere dense in $K_x$.
\end{definition}

   \begin{remark} \label{r10} The definition of $d$-independence in~\cite[Definition 11.8]{AAK} is technically much more involved than Definition~\ref{d6}, but as we will prove in the appendix (see Theorem~\ref{at6}) the two definitions are equivalent. In particular, in the case of Kaplansky modules Definition~\ref{d6} is equivalent to Definition~\ref{d3}.
   \end{remark}

\subsection{Some results on spectrum of $d$-endomorphisms.} In this subsection we recall some results concerning the spectrum of $d$-endomorphisms of Banach $C(K)$-modules and Banach lattices that we will need in this paper.

\bigskip

   Let us recall that a Banach lattice $X$ has the Fatou property if for every positive $x \in X$ and for every net $x_\alpha$ such that $x_\alpha \geq 0$ and $x_\alpha \uparrow x$ we have $\|x_\alpha\| \uparrow \|x\|$.

   One of the main results in~\cite{AAK} is the following theorem.

   \begin{theorem} \label{t20} (~\cite[Theorem 12.11]{AAK} Let $X$ be a Banach $C(K)$-module and $T$ be a $d$-endomorphism of $X$. Assume the following two conditions.

  \noindent (1) Every cyclic subspace of $X$ represented as a Banach lattice has the Fatou property.

  \noindent (2) The powers of $T$ are independent.

  Then the sets $\sigma_{a.p.}(T)$ and $\sigma(T)$ are rotation invariant.
   \end{theorem}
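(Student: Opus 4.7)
The plan is to prove rotation invariance of $\sigma_{a.p.}(T)$ first and then derive the full result by applying the same argument to $T'$, which is a $d$-endomorphism on the Kaplansky $C(K_{1})$-module $X'$. Fix $\lambda\in\sigma_{a.p.}(T)$, $\zeta\in\mathds{T}$, and a sequence $x_{n}\in X$ with $\|x_{n}\|=1$ and $Tx_{n}-\lambda x_{n}\to 0$. The goal is to produce unit vectors $y_{n}$ satisfying $Ty_{n}-\zeta\lambda y_{n}\to 0$; the case $\lambda=0$ is trivial, so I assume $\lambda\neq 0$.

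The construction of $y_{n}$ is driven by a \emph{phase multiplier} $\alpha$. Passing to the bidual $X''$, which is a Kaplansky $C(Q)$-module on which $T''$ is implemented by a continuous map $\psi$ on a clopen set $E\subseteq Q$ as in (\ref{eq54}), I seek $\alpha\in C(Q)$ with $|\alpha|\leq 1$ for which $\alpha\circ\psi$ closely approximates $\zeta\alpha$ on $\mathrm{supp}\,jx_{n}$. Granted such an $\alpha$, one has
\[
T''(\alpha\,jx_{n})-\zeta\lambda(\alpha\,jx_{n}) = (\alpha\circ\psi)(T''jx_{n}-\lambda jx_{n}) + \lambda(\alpha\circ\psi-\zeta\alpha)\,jx_{n},
\]
whose first summand tends to zero by hypothesis, leaving the quality of the solution of the cohomological equation $\alpha\circ\psi=\zeta\alpha$ as the sole error term. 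Along a non-periodic forward orbit $\{\psi^{j}(q)\}_{j=0}^{N}$ one sets $\alpha(\psi^{j}(q))=\zeta^{j}$ and extends by a Rokhlin-tower partition built from pairwise disjoint clopen sets $U_{0},\dots,U_{N}\subseteq Q$ with $U_{j+1}\subseteq\psi(U_{j})$; the function $\alpha:=\sum_{j=0}^{N}\zeta^{j}\chi_{U_{j}}$ then satisfies $\alpha\circ\psi=\zeta\alpha$ identically on $\bigcup_{j=0}^{N-1}U_{j}$. The availability of such towers of arbitrary height $N$ avoiding $\psi$-periodic points up to period $N$ is precisely $d$-independence in the sense of Definition~\ref{d6}: the sets $\eta_{x_{n}}(\psi^{(-k)}(F^{(m)}\cap Q_{x_{n}}))$ are nowhere dense in $K_{x_{n}}$ and may be excised from the carrier of $jx_{n}$ at will via the extremally disconnected clopen structure of $Q$.

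The Fatou property of the cyclic subspaces $X(x_{n})$, represented as Banach lattices, is the bridge that returns the construction from $X''$ to $X$: it implies that the image of $X(x_{n})$ under $j$ is a norm-closed, order-isometric copy inside the Kaplansky bidual, so unit vectors crafted in $X''$ can be approximated arbitrarily well by unit vectors of $X(x_{n})\subseteq X$. The main obstacle will be the quantitative interlocking of tower height with the portion of the mass of $jx_{n}$ captured by the tower: I would choose $N=N(n)\to\infty$ together with a band projection onto $\bigcup_{j=0}^{N}U_{j}$ capturing a fixed fraction (say at least $1/2$) of $\|jx_{n}\|$, so that after renormalization the error $\lambda(\alpha\circ\psi-\zeta\alpha)jx_{n}$ is $O(1/N)$ relative to the new unit norm. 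Finally, to extend rotation invariance from $\sigma_{a.p.}(T)$ to all of $\sigma(T)$, one applies the same argument to $T'$ and uses that every point of $\sigma_{r}(T)=\sigma(T)\setminus\sigma_{a.p.}(T)$ is an eigenvalue of $T'$ and hence lies in $\sigma_{a.p.}(T')$, whose rotation invariance has by then already been established.
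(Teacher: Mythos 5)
The overall strategy --- multiply the approximate eigenvectors by an approximate solution of the cohomological equation $\alpha\circ\psi=\zeta\alpha$ --- is the right one, and it is what \cite[Theorem 12.11]{AAK} does (the present paper only cites that result; its closest in-house analogue is the proof of Theorem~\ref{t9}). But your construction of $\alpha$ fails at the decisive point. With the sharp-cutoff tower function $\alpha=\sum_{j=0}^{N}\zeta^{j}\chi_{U_{j}}$, the coboundary $\alpha\circ\psi-\zeta\alpha$ is \emph{not} $O(1/N)$: it has modulus $1$ on the top level $U_{N}$ and on $\psi^{-1}\bigl(\bigcup_{j}U_{j}\bigr)\setminus\bigcup_{j}U_{j}$, so the error term $\lambda(\alpha\circ\psi-\zeta\alpha)jx_{n}$ is small only if $jx_{n}$ carries almost no mass on those boundary levels. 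Nothing forces a tower (or castle of towers) of height $N(n)$ to capture a fixed fraction of $\|jx_{n}\|$ while leaving its boundary levels nearly empty --- in sup-norm-type modules mass does not even "spread" over levels --- and this is precisely the difficulty that the real argument resolves, either by a \emph{globally unimodular} $f$ with $|f\circ\varphi-\zeta f|\le 2\pi/(m+1)$ (Lemma~\ref{l2}) or by the tapered weights $\bigl(1-\tfrac{1}{\sqrt{d_{l}}}\bigr)^{|i|}$ of~(\ref{eq27}) and~(\ref{eq42}), whose consecutive ratios are close to $1$ in the bulk and which are tiny at the tower boundary. (Also, your tower condition $U_{j+1}\subseteq\psi(U_{j})$ is the wrong inclusion: to get $\alpha\circ\psi=\zeta\alpha$ on $U_{j}$ you need $\psi(U_{j})\subseteq U_{j+1}$.)

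The second gap is the return from $X''$ to $X$. Multiplying $jx_{n}$ by an element of $C(Q)$ yields an element of $X''$ that in general does not lie in $jX$, and the Fatou property does not make $jX(x_{n})$ an "order-isometric copy" from which such elements can be approximated --- $j$ is an isometry regardless of Fatou. The way Fatou actually enters (see inequalities~(\ref{eq59})--(\ref{eq60}) in the proof of Theorem~\ref{t9}) is to compare $\|u\|$ for $u\in X$ with the norm of its truncation in $X''$ by a $C(Q)$-band projection, via a net $y_{\gamma}\uparrow u$ inside the Banach lattice $X(u)$ supported off the nowhere dense bad set; your proposal never actually produces approximate eigenvectors lying in $X$. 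Finally, deducing rotation invariance of $\sigma(T)$ by running the whole argument for $T'$ requires the hypotheses to hold for $T'$ (in particular, $d$-independence of the powers of $T'$), which you do not verify; the standard and much simpler route is the inclusion $\partial\sigma(T)\subseteq\sigma_{a.p.}(T)$ together with a boundary-point argument on each circle $|\lambda|\mathds{T}$, which needs nothing beyond the already established rotation invariance of $\sigma_{a.p.}(T)$.
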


In the case when $X$ is a Kaplansky module we can say more. To discuss it we need some additional notations.

\begin{definition} \label{d9}  Let $X$ be a Kaplansky module, $T$ be a $d$-endomorphism of $X$ and $\varphi$ be the corresponding map (see Proposition~\ref{p3}). For every $m \in \mathds{N}$ we denote by $P_m$ the set of all $\varphi$-periodic points of the period $m$, and by $\Pi_m$ the clopen set $Int P_m$. We also denote by $H_m$ the clopen set $cl \bigcup \limits_{n=1}^\infty \varphi^{(-n)}(\Pi_m)$.
\end{definition}

\begin{remark} \label{r19} In general $H_m \subsetneqq \varphi^{(-1)}(H_m)$, but if the map $\varphi$ is open then it is easy to see that $H_m = \varphi^{(-1)}(H_m)$. In connection with it we recall the following definition from~\cite{AAK}.
\end{remark}

\begin{definition} \label{d7} Let $X$ be a Banach $C(K)$-module and $T$ be a $d$-endomorphism of $X$. The operator $T$ is called \textbf{order continuous} if for every $x \in X$ the operator $T : X(x) \rightarrow X(Tx)$ (where $X(x)$ and $X(Tx)$ are represented as Banach lattices) is order continuous.
\end{definition}

\begin{proposition} \label{p11} (see~\cite[Remark 11.28, p.84]{AAK}) Let $X$ be a Kaplansky module and $T$ be a $d$-endomorphism of $X$. The following conditions are equivalent.
  \begin{enumerate}
    \item The operator $T$ is order continuous.
    \item The map $\varphi$ from Proposition~\ref{p3} is open.
  \end{enumerate}
\end{proposition}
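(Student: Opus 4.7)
The equivalence is essentially a transport, via the multiplicative formula $T(fx) = (f \circ \varphi)Tx$, of a classical correspondence between openness of maps and order continuity of their induced composition homomorphisms on $C$-spaces over extremally disconnected compacta. Recall (and this is the nontrivial external input I would invoke) that for a continuous map $\psi:Q_1\to Q_2$ between Stonian spaces, the Riesz homomorphism $C(Q_2)\to C(Q_1)$, $f\mapsto f\circ\psi$, is order continuous if and only if $\psi$ is open: this rests on the fact that order infima in Stonian $C$-algebras are pointwise infima modulo meager sets, and that open maps pull back meager sets to meager sets (with the failure of openness producing, by contraposition, a decreasing net of clopen neighborhoods whose pullbacks witness the failure of order continuity).

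Having recorded this, I would fix $x\in X$ and let $\operatorname{supp} x$, $\operatorname{supp}(Tx)$ denote the supports of the carrier projections $e_x$, $e_{Tx}$ provided by Proposition~\ref{p10}. By that proposition, $X(x)$ is a Dedekind complete Banach lattice with weak order unit $x$, and its center is canonically identified with $C(\operatorname{supp} x)$. Combining $T(fx)=(f\circ\varphi)Tx$ with the norm density of $\{fx:f\in C(K)\}$ in $X(x)$ shows that $T$ intertwines multiplication by $f$ with multiplication by $f\circ\varphi$, so $\varphi$ induces a well-defined continuous map $\varphi_x:\operatorname{supp}(Tx)\to\operatorname{supp} x$. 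Because $x$ is a weak order unit, $f_\alpha\downarrow 0$ in $C(\operatorname{supp} x)$ is equivalent to $f_\alpha x\downarrow 0$ in $X(x)$, and similarly for $Tx$; hence order continuity of $T:X(x)\to X(Tx)$ reduces precisely to order continuity of the composition homomorphism $g\mapsto g\circ\varphi_x$, which by the classical fact above is openness of $\varphi_x$. The implication $(2)\Rightarrow(1)$ is then immediate, as openness of $\varphi$ passes to each restriction $\varphi_x$. For $(1)\Rightarrow(2)$, I would choose $x\in X$ whose carrier projection is $1_K$ (available in a Kaplansky module by taking a weak order unit, for instance built from a maximal disjoint family of carrier projections) and, possibly after replacing $x$ by a suitable perturbation, arrange that $\operatorname{supp}(Tx)=E$ (using the minimality of $E$ in Remark~\ref{r16}); then $\varphi_x=\varphi$ and the preceding analysis forces $\varphi$ to be open.

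The main obstacle, in my view, is the selection of $x$ in the reverse direction: one must ensure a cyclic subspace $X(x)$ whose target support exhausts $E$, so that the induced map $\varphi_x$ coincides with $\varphi$ on the whole of its domain. If a global choice is unavailable, one localizes: each clopen $V\subseteq E$ where $\varphi|_V$ fails to be open can be detected by some $x$ with $\operatorname{supp}(Tx)\supseteq V$, and a contradiction with order continuity of $T$ on that $X(x)$ is extracted inside the cyclic subspace. The remaining verifications (the intertwining identity, the identification of decreasing nets in $X(x)$ with decreasing nets in the center, and the classical Stonian correspondence) are either immediate from Proposition~\ref{p10} or standard facts about Riesz spaces over hyperstonean bases.
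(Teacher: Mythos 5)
The paper does not actually prove this proposition --- it is imported verbatim from \cite[Remark 11.28]{AAK} --- so there is no in-paper argument to compare against; I can only assess your proposal on its merits. Your overall strategy (transporting the classical Stonian dictionary ``$\psi$ open $\Leftrightarrow$ $f\mapsto f\circ\psi$ order continuous'' through the intertwining identity $T(fx)=(f\circ\varphi)Tx$) is the right one, but as written the argument has two genuine gaps. First, your reduction of order continuity of $T|_{X(x)}$ to order continuity of $g\mapsto g\circ\varphi_x$ only treats nets of the special form $f_\alpha x$ with $f_\alpha\downarrow 0$ in the center. Order continuity of $T:X(x)\to X(Tx)$ must be verified on \emph{arbitrary} nets $y_\alpha\downarrow 0$ in $X(x)$; these are eventually order bounded by some $u$, and one needs the representation $[0,u]=\{\pi u:\pi\in Z(X(x)),\ 0\le\pi\le 1\}$ (valid by Dedekind completeness, Proposition~\ref{p10}) to write $y_\alpha=g_\alpha u$ with $g_\alpha\downarrow 0$ before your dictionary applies, together with a check that infima computed in $X(Tu)$, $X(Tx)$ and $X$ agree on the relevant order intervals. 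This is fixable but it is not the ``precise'' equivalence you assert.

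The more serious gap is in $(1)\Rightarrow(2)$, and you partly flag it yourself. A Kaplansky module need not contain an element whose carrier projection is $1_K$: the maximal-disjoint-family construction produces a single element only when the family is countable, and even when $e_x=1_K$ there is no reason that $\operatorname{supp}(Tx)=E$ --- the minimality of $E$ in Remark~\ref{r16} gives only that $\bigcup_x\operatorname{supp}(Tx)$ is \emph{dense} in $E$, not that any single $x$ (or ``suitable perturbation'') exhausts it. Your localization sketch then stops exactly where the work is: you obtain openness of $\varphi$ restricted to each $\operatorname{supp}(Tx)$, i.e.\ on a dense open subset $U$ of $E$, and you must still conclude that $\varphi$ is open on all of $E$. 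That step is true but requires an argument: for clopen $W\subseteq E$ one has $\varphi(W\cap U)\subseteq\varphi(W)\subseteq\operatorname{cl}\varphi(W\cap U)$ by continuity and density, $\varphi(W\cap U)$ is open by the local openness, its closure is clopen because $K$ is extremally disconnected, and $\varphi(W)$ is compact, whence $\varphi(W)=\operatorname{cl}\varphi(W\cap U)$ is clopen. Without this (or an equivalent) argument, ``a contradiction is extracted inside the cyclic subspace'' does not close the implication, since the failure of openness of $\varphi$ could a priori be concentrated at points of $E$ lying in no $\operatorname{supp}(Tx)$.
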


\begin{definition}  \label{d8} Let $X$ be a Kaplansky module, $T$ be an order continuous $d$-endomorphism of $X$, and $\varphi$ be the corresponding map from Proposition~\ref{p3}. Let $\chi_m$ be the characteristic function of the set $\Pi_m$ and $\chi_{(m)}$ - the characteristic function of $H_m$.  We denote by $T_m$ and $T_{(m)}$ the operators $\chi_m T \chi_m$ and $\chi_{(m)}T)$, respectively. We also denote by $\sigma_m$ and $\sigma_{(m)}$ the sets $\sigma(\chi_m T \chi_m)$ and $\sigma(\chi_{(m)}T)$, respectively.
\end{definition}

Now we can state the result that complements Theorem~\ref{t20}.

\begin{theorem} \label{t22} (see~\cite[Proposition 12.15 and Remark 12.16]{AAK}) Let $X$ be a Kaplansky module and $T$ be an order continuous $d$-endomorphism of $X$. Assume one of the following conditions
\begin{enumerate}[(a)]
  \item $X$ has the Fatou property.
  \item $K$ is hyperstonean.
\end{enumerate}
Then
\begin{enumerate}
  \item $\gamma \sigma_m = \sigma_m$ and $\gamma \sigma_{(m)} = \sigma_{(m)}$ for every $\gamma \in \mathds{C}$ such that $\gamma^m = 1$.
  \item For every $m \in \mathds{N}$ we have $\sigma_{(m)} \subseteq \sigma(T)$ and the set
$\sigma(T) \setminus \bigcup \limits_{m=1}^\infty \sigma_{(m)}$ is rotation invariant.
  \item For every $m \in \mathds{N}$ the set $\sigma_{a.p.}(\chi_{(m)}T) \setminus \sigma_{a.p.}(\chi_m T \chi_m)$ is rotation invariant.
\end{enumerate}
\end{theorem}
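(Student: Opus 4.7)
The plan is to reduce everything to the dynamics of $\varphi$ on the clopen sets $\Pi_m$ and $H_m$, and to implement each $m$-th root of unity $\gamma$ as a conjugation by a clopen-valued multiplication operator. I would begin by exploiting order continuity: by Proposition~\ref{p11} the map $\varphi$ is open, so Remark~\ref{r19} gives $\varphi^{-1}(H_m)=H_m$, and from Proposition~\ref{p3}
\begin{equation*}
T\chi_{(m)}x=(\chi_{(m)}\circ\varphi)\,Tx=\chi_{\varphi^{-1}(H_m)}\,Tx=\chi_{(m)}Tx.
\end{equation*}
Thus $\chi_{(m)}$ is a $T$-invariant band projection, which already yields $\sigma_{(m)}\subseteq\sigma(T)$ in part (2). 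An analogous (but only $T^m$-invariant) identity holds for $\chi_m$, so $T_m=\chi_mT\chi_m$ is genuinely the restriction of $T$ to a band of $T^m$-invariant elements.

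Next I would build the intertwiner for part (1). On $\Pi_m$ every point has $\varphi$-period exactly $m$ and, because $K$ is extremally disconnected (Kaplansky module), a Zorn's-lemma argument produces a maximal clopen $F\subseteq\Pi_m$ with $F,\varphi(F),\dots,\varphi^{m-1}(F)$ pairwise disjoint. Maximality combined with openness of $\varphi$ forces $\bigcup_{j=0}^{m-1}\varphi^j(F)$ to be dense in $\Pi_m$, and the complement is nowhere dense, hence its characteristic function vanishes in the order-complete $C(K)$. Setting
\begin{equation*}
V_\gamma=\sum_{j=0}^{m-1}\gamma^{-j}\chi_{\varphi^j(F)}\in C(K),
\end{equation*}
one checks $V_\gamma\circ\varphi=\gamma V_\gamma$ on $\Pi_m$, and $V_\gamma$ is invertible with inverse $\sum_j\gamma^j\chi_{\varphi^j(F)}$. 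A direct computation in the spirit of Proposition~\ref{p3} then gives $V_\gamma T_mV_\gamma^{-1}=\gamma T_m$, so $\gamma\sigma_m=\sigma_m$. To obtain $\gamma\sigma_{(m)}=\sigma_{(m)}$, I would extend $V_\gamma$ from $\Pi_m$ to $H_m$ by pulling it back along $\varphi$: for $k\in\varphi^{(-n)}(\Pi_m)\setminus\varphi^{(-(n-1))}(\Pi_m)$ define $\tilde V_\gamma(k)=\gamma^{-n}V_\gamma(\varphi^n(k))$. Here is where alternative (a) or (b) is used: either the Fatou property of $X$ or the hyperstonean hypothesis on $K$ guarantees that the countable order-sup defining $\tilde V_\gamma$ yields a bounded invertible element of $C(K)$ acting boundedly on $\chi_{(m)}X$, and that $\tilde V_\gamma\circ\varphi=\gamma\tilde V_\gamma$ on $H_m$ up to a nowhere dense set. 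Then $\tilde V_\gamma T_{(m)}\tilde V_\gamma^{-1}=\gamma T_{(m)}$, finishing (1).

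For the remainder of (2), I would set $Y=(I-\sup_m\chi_{(m)})X$. On $Y$ no $\varphi$-iterate meets $\Pi_m$ on a set of nonzero support, i.e.\ the preimages of the periodic sets are nowhere dense, so by Proposition~\ref{p4} (extended via Definition~\ref{d3}/\ref{d6}) the powers of $T|_Y$ are $d$-independent. Theorem~\ref{t20}, whose Fatou hypothesis is furnished by (a) directly and by (b) through the hyperstonean structure inherited by the cyclic subspaces, then yields rotation invariance of $\sigma(T|_Y)$. Since $\sigma(T)=\sigma(T|_Y)\cup\bigcup_m\sigma_{(m)}$, assertion (2) follows. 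Part (3) is obtained by transporting singular sequences: if $\lambda\in\sigma_{a.p.}(\chi_{(m)}T)\setminus\sigma_{a.p.}(\chi_mT\chi_m)$ and $(x_n)$ is a corresponding singular sequence in $\chi_{(m)}X$, then $(\tilde V_\gamma x_n)$ is again singular of unit norm and satisfies $(\gamma\lambda-T)\tilde V_\gamma x_n\to 0$, while the exclusion from $\sigma_{a.p.}(T_m)$ is preserved under conjugation by $\tilde V_\gamma$.

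The main obstacle is the clopen fundamental-domain construction and, more delicately, the extension of $V_\gamma$ from $\Pi_m$ to $H_m$: a priori the pullback definition is only pointwise on a dense subset, and turning it into a bounded invertible element of $C(K)$ that conjugates $T_{(m)}$ into $\gamma T_{(m)}$ requires exactly the order-theoretic control provided by hypothesis (a) or (b). Everything else reduces to manipulations with the projections $\chi_m$, $\chi_{(m)}$, and appeals to Proposition~\ref{p3}, Proposition~\ref{p11}, and Theorem~\ref{t20}.
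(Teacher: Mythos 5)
The paper does not actually prove Theorem~\ref{t22}; it is quoted from~\cite{AAK}, and the closest in-paper argument is the proof of the analogous Theorem~\ref{t11}. Your part (1) is essentially that argument: clopen fundamental domain $F\subseteq\Pi_m$, unimodular $V_\gamma$ constant on the pieces $\varphi^j(F)$, continuous extension along backward orbits to $H_m$. Two small remarks: with your exponents one gets $V_\gamma\circ\varphi=\gamma^{-1}V_\gamma$ rather than $\gamma V_\gamma$ (harmless, since $\gamma\mapsto\gamma^{-1}$ permutes the $m$-th roots of unity), and the extension of $\tilde V_\gamma$ to $H_m$ needs nothing beyond extremal disconnectedness of $K$ — a bounded continuous function on a dense open subset of a stonean space extends continuously — so hypotheses (a)/(b) are not what makes that step work; they are needed elsewhere.

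The genuine gaps are in (2) and (3). For (2), the identity $\sigma(T)=\sigma(T|_Y)\cup\bigcup_m\sigma_{(m)}$ with $Y=(I-\sup_m\chi_{(m)})X$ is false: the spectrum of $T$ restricted to the band over $L=cl\bigcup_mH_m$ can be strictly larger than $\bigcup_m\sigma_{(m)}$, and Example~\ref{e2} of this very paper is a counterexample (there $K=cl\bigcup_k\Pi_{4k+1}$, so $Y=0$ and $\sigma_{(m)}=\sigma_m$, yet $2\mathds{T}\subseteq\sigma(\hat{T})$ while every point of $\bigcup_k\sigma_{4k+1}$ has modulus at most $2^{3/5}$). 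Your argument therefore says nothing about the points of $\sigma(T)\setminus\bigcup_m\sigma_{(m)}$ lying over $L$, which is precisely the content of the claim. The missing step (see the proof of Theorem~\ref{t11}(2b)) is to peel off $H_1,\dots,H_{m-1}$, place $\lambda$ in $\sigma(T|\chi_{L_m}X)$ with $L_m=cl\bigcup_{j\ge m}H_j$ where low-period points are absent, apply Lemma~\ref{l2}(1) there to get a unimodular $g$ with $\|g\circ\varphi-\alpha g\|\le 2\pi/m$, and combine $\|g^{-1}Tg-\alpha T\|\le(2\pi/m)\|T\|$ with the uniform resolvent bound $\|(\alpha\lambda I-T|\chi_{L_m}X)^{-1}\|\le\|(\alpha\lambda I-T)^{-1}\|$ to force a contradiction for large $m$. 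For (3), "rotation invariant" means invariant under all of $\mathds{T}$, but your intertwiner $\tilde V_\gamma$ exists only for $\gamma^m=1$ (going once around a period-$m$ cycle forces $\gamma^m=1$), so transporting approximate eigenvectors by $\tilde V_\gamma$ only reproves part (1). The actual mechanism (proof of Theorem~\ref{t11}(3a), and the construction~(\ref{eq27}) in Theorem~\ref{t9}) is that $\lambda\notin\sigma_{a.p.}(T_m)$ forces $\chi_{\Pi_m}x_n\to0$, so the mass of the approximate eigenvectors can be pushed into $\varphi^{(-n)}(\Pi_m)\setminus\varphi^{(-n+1)}(\Pi_m)$ for large $n$, where there is no periodicity obstruction and unimodular multipliers approximately intertwining $T$ with $\alpha T$ exist for arbitrary $\alpha\in\mathds{T}$. (Also, for the approximate point spectrum the sequences need not be singular; that requirement belongs to the $\sigma_2$ analogue.)
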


There are two important cases when the statement of Theorem~\ref{t22} can be further improved. The first of them is the case when the Kaplansky module is actually a Dedekind complete Banach lattice.

\begin{theorem} \label{t23} Let $X$ be a Dedekind complete Banach lattice with the Fatou property and $T$ be an order continuous $d$-endomorphism of $X$. Then
\begin{enumerate}
  \item $\sigma_m \subseteq \sigma(T), m \in \mathds{N}$.
  \item The set $\sigma(T) \setminus \bigcup \limits_{m=1}^\infty \sigma_m$ is rotation invariant.
\end{enumerate}
\end{theorem}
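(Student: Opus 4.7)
Since $X$ is a Dedekind complete Banach lattice with the Fatou property, it is in particular a Kaplansky $C(K)$-module over its center $Z(X) = C(K)$ (with $K$ extremally disconnected), so Theorem~\ref{t22} applies with condition~(a) and yields $\sigma_{(m)} \subseteq \sigma(T)$ as well as rotation invariance of $\sigma(T) \setminus \bigcup_m \sigma_{(m)}$. My plan is to refine these by ``peeling off'' the preperiodic piece of each $\sigma_{(m)}$. For Part~(1) it suffices to show $\sigma_m \subseteq \sigma_{(m)}$. Since $T$ is order continuous, $\varphi$ is open by Proposition~\ref{p11}, so $\varphi^{-1}(H_m) = H_m$, $\varphi|_{\Pi_m}$ is a period-$m$ homeomorphism of $\Pi_m$, and therefore $\varphi^{-1}(H_m \setminus \Pi_m) = H_m \setminus \varphi^{-1}(\Pi_m) \subseteq H_m \setminus \Pi_m$. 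Thus the clopen splitting $H_m = \Pi_m \sqcup (H_m \setminus \Pi_m)$ induces a band decomposition $\chi_{(m)} X = \chi_m X \oplus \chi_{H_m \setminus \Pi_m} X$ in which the second summand is $T$-invariant, and the resulting lower-triangular block form of $T|_{\chi_{(m)} X}$ gives $\sigma(T|_{\chi_{(m)} X}) = \sigma_m \cup \sigma(S_m)$ where $S_m := T|_{\chi_{H_m \setminus \Pi_m} X}$; Part~(1) follows.

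For Part~(2) the new ingredient is rotation invariance of $\sigma(S_m)$. Since $K$ is extremally disconnected, disjoint open sets have disjoint closures; applied to the open sets $\bigcup_n \varphi^{-n}(\Pi_{m'})$ with distinct $m'$ (which are pairwise disjoint, as otherwise a point would be $\varphi$-eventually-periodic with two incompatible periods) this gives pairwise disjointness of the $H_{m'}$, and in particular $\Pi_{m'} \cap H_m = \emptyset$ for $m' \neq m$. Consequently any $\varphi$-periodic point in $H_m \setminus \Pi_m$ must lie in some $P_k \setminus \Pi_k$, which is nowhere dense since $\Pi_k = \mathrm{Int}(P_k)$; openness of $\varphi$ preserves nowhere-denseness under preimages, so the powers of $S_m$ are $d$-independent in the sense of Definition~\ref{d3} applied to the sub-Kaplansky-module $\chi_{H_m \setminus \Pi_m} X$, which inherits Dedekind completeness and the Fatou property from $X$. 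Theorem~\ref{t20} then delivers the rotation invariance of $\sigma(S_m)$.

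To conclude, I would write
$$\sigma(T) \setminus \bigcup_m \sigma_m = \Bigl(\sigma(T) \setminus \bigcup_m \sigma_{(m)}\Bigr) \cup \bigcup_m \bigl(\sigma(S_m) \setminus \bigcup_{m'} \sigma_{m'}\bigr),$$
with the first summand rotation invariant by Theorem~\ref{t22}(2) and each $\sigma(S_m)$ in the second rotation invariant by the previous paragraph. The main obstacle is set-theoretic rotation invariance of the entire union: because $\bigcup_{m'} \sigma_{m'}$ is only invariant under $m'$-th roots of unity for each fixed $m'$, rotations could a priori push a point outside $\bigcup_{m'} \sigma_{m'}$ into it. I would handle this via the global band decomposition $X = B_3 \oplus \bigoplus_m \chi_{(m)} X$ --- a genuine direct sum of $T$-invariant bands, thanks to the pairwise disjointness of the $H_m$ --- and observe that on $B_3$ the powers of $T$ are $d$-independent (by the same nowhere-dense-boundary argument applied globally), whence $\sigma(T|_{B_3})$ is rotation invariant by Theorem~\ref{t20}. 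Every point of $\sigma(T) \setminus \bigcup_m \sigma_m$ is then recovered from the rotation invariant sets $\sigma(T|_{B_3})$ and $\bigcup_m \sigma(S_m)$, together with limit points of $\bigcup_m \sigma_{(m)}$ handled via a compactness argument using that the $m'$-th roots of unity become dense in $\mathds{T}$ as $m' \to \infty$, giving the claim.
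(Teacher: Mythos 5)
Theorem~\ref{t23} is quoted in the paper's preliminaries as a known result (in the style of the surrounding statements recalled from~\cite{AAK}) and no proof is given there, so I can only judge your argument on its own terms; it has two genuine gaps, one in each part.

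For Part~(1), the decisive step is the claim that the lower-triangular block form of $T_{(m)}$ with respect to $\chi_{(m)}X = \chi_{\Pi_m}X \oplus \chi_{H_m\setminus\Pi_m}X$ gives $\sigma(T_{(m)}) = \sigma_m \cup \sigma(S_m)$, from which you extract $\sigma_m \subseteq \sigma_{(m)}$. For a block operator $M = \left(\begin{smallmatrix} A & 0 \\ C & D\end{smallmatrix}\right)$ one only has $\sigma(M) \subseteq \sigma(A)\cup\sigma(D)$, with the symmetric difference contained in $\sigma(A)\cap\sigma(D)$; points of $\sigma(A)\cap\sigma(D)$ can be absent from $\sigma(M)$ (the bilateral shift on $\ell^2(\mathds{Z})$, decomposed over $\ell^2(\{n<0\})\oplus\ell^2(\{n\geq 0\})$, has spectrum $\mathds{T}$ while both diagonal blocks have spectrum $\mathds{D}$). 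So $\sigma_m\subseteq\sigma_{(m)}$ does not follow. This is not a removable technicality: your argument for this step uses only the band decomposition and the openness of $\varphi$, which are equally available for an arbitrary Kaplansky module, and Remark~\ref{r12} states that $\sigma_m\subseteq\sigma(T)$ fails in that generality. A correct proof must use the Banach-lattice structure essentially (e.g.\ that $T_m^m$ is the central operator $w_m\chi_m$, together with a residual-spectrum/duality argument showing $\lambda I - T$ is not surjective; note that Example~\ref{e3} shows $\sigma_m\not\subseteq\sigma_{a.p.}(T)$ in general, so approximate eigenvectors cannot carry the inclusion).

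For Part~(2), the portion establishing rotation invariance of each $\sigma(S_m)$ (pairwise disjointness of the $H_{m'}$ via extremal disconnectedness, nowhere-density of the periodic set in $H_m\setminus\Pi_m$, $d$-independence of the powers of $S_m$, Theorem~\ref{t20}) is essentially sound. But the displayed identity does not yield the conclusion: if $\lambda\in\sigma(T)\setminus\bigcup_m\sigma_m$ and $\alpha\in\mathds{T}$, the point $\alpha\lambda$ could a priori land inside some $\sigma_{m'}$ (which is only invariant under $m'$-th roots of unity) and thereby leave the set, and nothing in your argument excludes this. You flag this as ``the main obstacle,'' but the proposed resolution is not an argument: the decomposition $X = B_3\oplus\bigoplus_m\chi_{(m)}X$ is not a genuine direct sum, since $cl\bigcup_m H_m$ strictly contains $\bigcup_m H_m$ in general and the residual band supported on the difference is exactly what produces the extra circle $2\mathds{T}$ in Example~\ref{e2}; and the concluding ``compactness argument using that the $m'$-th roots of unity become dense'' is left entirely unspecified. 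This is the crux of the statement and it remains unproved.
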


\begin{remark} \label{r12} The inclusion $\sigma_m \subseteq \sigma(T)$ or even $\sigma_m \subseteq \sigma(T) \cup \{0\}$ is not true in general in the case when $X$ is a Kaplansky module (see~\cite[Remark 12.17]{AAK}).
  \end{remark}

The second is the case when the conjugate operator $T'$ is a $d$-endomorphism of the Kaplansky module $X'$.

\begin{theorem} \label{t32} Let $X$ be a Kaplansky module and $T$ be a $d$-endomorphism of $X$ such that $T'$ is a $d$-endomorphism of $X'$. Assume one of the following conditions.
\begin{enumerate}
  \item $X$ has the Fatou property.
  \item $T$ is order continuous.
\end{enumerate}
Then $\sigma_m = \sigma_{(m)}, m \in \mathds{N}$, and the set $\sigma(T) \setminus \bigcup \limits_{m=1}^\infty \sigma_m$ is rotation invariant.
\end{theorem}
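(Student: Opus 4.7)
My plan is to reduce everything to proving $\sigma_m = \sigma_{(m)}$ for every $m \in \mathds{N}$. Granted this identity, the rotation invariance of $\sigma(T) \setminus \bigcup_m \sigma_m$ is automatic from Theorem~\ref{t22}(2), since the set in question then coincides with $\sigma(T) \setminus \bigcup_m \sigma_{(m)}$, already known to be rotation invariant; thus the whole task reduces to the spectral identity.

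My strategy for $\sigma_m = \sigma_{(m)}$ is to establish the sharper dynamical statement $H_m = \Pi_m$ (equivalently $\chi_{(m)} = \chi_m$), from which the spectral identity becomes tautological. The crucial consequence of ``$T'$ is a $d$-endomorphism of $X'$'' that I would extract is the essential injectivity of the map $\varphi$ on $K$: a disjointness-preserving dual $T'$ on the Kaplansky module $X'$ cannot coexist with a non-injective $\varphi$, since two module elements with disjoint carriers in $K$ would fail to have disjoint dual images under a $\varphi$ that merges their supports. Granted essential injectivity of $\varphi$, the equality $H_m = \Pi_m$ is direct: if $k \in \varphi^{(-1)}(\Pi_m)$, then $\varphi(k) \in \Pi_m$ has period $m$, so $\varphi^{m+1}(k) = \varphi(k)$; injectivity forces $\varphi^m(k) = k$, whence $k \in P_m$ (of period exactly $m$, as $\varphi$ preserves periods); and since $\varphi^{(-1)}(\Pi_m)$ is open, it is contained in $\operatorname{Int} P_m = \Pi_m$. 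Combined with the forward invariance $\Pi_m \subseteq \varphi^{(-1)}(\Pi_m)$, this yields $\varphi^{(-n)}(\Pi_m) = \Pi_m$ for every $n$, hence $H_m = \Pi_m$.

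The two alternative hypotheses enter at the points where openness of $\varphi$ is needed (both in the forward invariance $\varphi(\Pi_m) \subseteq \Pi_m$ and in the interior argument above). Case (2) supplies openness directly via Proposition~\ref{p11}. In case (1), one uses that $T'$ is automatically order continuous on the Kaplansky module $X'$ (its associated map on the hyperstonean $K_1$ being open by~\cite[Theorem 10.7]{AAK}), and the Fatou property allows one to transfer openness back to $\varphi$ on $K$ through the canonical embedding $X \hookrightarrow X''$. The main obstacle of the whole proof is precisely this technical transfer: rigorously extracting essential injectivity of $\varphi$ from the dual $d$-endomorphism hypothesis, and, in case (1), transferring openness between $K$ and $K_1$. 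Both steps require careful bookkeeping of supports, carrier projections, and the interplay between $K$, $K_1$, and the space $Q$ underlying $X''$; once they are settled, the identity $\chi_{(m)} = \chi_m$ is reached and both conclusions of the theorem follow.
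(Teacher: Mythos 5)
Your overall architecture is the right one and matches the route the paper indicates: Theorem~\ref{t32} is recalled from~\cite{AAK} (Proposition 12.20) rather than proved in this paper, and its entire content is the dynamical identity $\varphi^{(-1)}(\Pi_m)=\Pi_m$, which gives $T\chi_{\Pi_m}=\chi_{\Pi_m}T$, hence $H_m=\Pi_m$, $T_m=T_{(m)}$ and $\sigma_m=\sigma_{(m)}$. For case (2) your plan is essentially the paper's: but the step you wave at --- ``a disjointness-preserving dual cannot coexist with a non-injective $\varphi$'' --- is a heuristic, not a proof. The actual mechanism (visible in the proof of Theorem~\ref{t39}) is to show that $(T|X(x))':X(Tx)'\to X(x)'$ preserves disjointness and then invoke Arendt--Hart~\cite[Theorem 2.4]{AH} to get injectivity of $\varphi$ on the support of $Tx$, followed by a density and extremal-disconnectedness argument; that chain uses order continuity together with the dual hypothesis, not the dual hypothesis alone.

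The genuine gap is case (1). There you propose to recover openness of $\varphi$ on $K$ by ``transferring'' it from $X''$; but by Proposition~\ref{p11} openness of $\varphi$ is \emph{equivalent} to order continuity of $T$, which is precisely what case (1) does not assume, so this transfer cannot succeed in general. The paper is visibly careful on exactly this point: the proof of Theorem~\ref{t12} works under the Fatou-plus-dual hypothesis, explicitly avoids order continuity, and extracts only the weaker identity $\varphi^{-1}(\Pi_m)=\Pi_m$ from~\cite[Proposition 12.20]{AAK}, not injectivity or openness of $\varphi$. For the same reason your reduction of the rotation-invariance claim to Theorem~\ref{t22}(2) is unavailable in case (1), since Theorem~\ref{t22} assumes $T$ order continuous. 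So case (2) of your argument can be repaired by quoting Arendt--Hart properly, but case (1) needs a genuinely different route: one must prove $\varphi^{-1}(\Pi_m)=\Pi_m$, and the rotation invariance of the residual set, directly from the Fatou property and the disjointness-preservation of $T'$, without passing through injectivity or openness of $\varphi$ --- which is what the cited argument in~\cite{AAK} does.
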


\begin{remark} \label{r13} Condition (2) of Theorem~\ref{t32} is satisfied, in particular, if $U$ is a $d$-automorphism of $X$, $w \in C(K)$, and $T = wU$.
\end{remark}

At the end of this subsection we will recall some results about essential spectra of weighted automorphisms of $C(K)$ and operators of the form $T = wU$ acting on a Dedekind complete Banach lattice $X$, where $U$ is a $d$-automorphism of $X$, $\sigma(U) \subseteq \mathds{T}$, and $w \in Z(X)$. We will need these results later in Section 5.

The next two theorems were proved in~\cite{Ki1}. We will need them only in the case when the compact space $K$ is extremally disconnected (stonean), thus their statements can be somewhat simplified.

\begin{theorem} \label{t35} (~\cite[Theorem 3.29]{Ki1}) Let $\varphi$ be a homeomorphism of the compact stonean space $K$ onto itself, $w \in C(K)$, and $(Tf)(k) = w(k)f(\varphi(k), f \in C(K), k \in K$. Assume that the set  of all $\varphi$-periodic points is empty. Let $\lambda \in \sigma(T)$. Consider the following statements.

 \noindent $(R)$ The operator $\lambda I - T$ has a right inverse, or equivalently $(\lambda I - T)C(K) = C(K)$, or equivalently $\lambda \in \sigma_r(T^\prime)$.

 \noindent $(L)$  The operator $\lambda I - T$ has a left inverse, or equivalently

\noindent $\|(\lambda I -T)f\| \geq C\|f\|, \; f \in C(K), C > 0$, or equivalently $\lambda \in \sigma_r(T)$.

 \noindent $(A)$ There are closed subsets $E, F$ and $Q$ of $K$ such that
\begin{enumerate}[(a)]
  \item The set $Q$ is clopen and the sets $E, F$, and $\varphi^i(Q), i \in \mathds{Z}$ are pairwise disjoint,
\item $K = E \cup F \cup \bigcup \limits_{j=-\infty}^\infty \varphi^j(Q)$.
  \item $\varphi(E) = E$ and $\sigma(T, C(E))\subset \{\xi \in \mathds{C} : |\xi| < |\lambda|\}$,
  \item $\varphi(F) = F$ and $\sigma(T, C(F))\subset \{\xi \in \mathds{C} : |\xi| > |\lambda|\}$,
  \item $\bigcap \limits_{n=1}^\infty  cl \bigcup \limits_{j=n}^\infty \varphi^j(Q) \subseteq E$,
  \item $\bigcap \limits_{n=1}^\infty cl \bigcup \limits_{j=n}^\infty \varphi^{-j}(Q) \subseteq F$.
 \end{enumerate}
\noindent $(B)$  There are closed subsets $E, F$ and $Q$ of $K$ with the properties (a) - (d) from $(A)$ and such that

\begin{equation*} (e') \bigcap \limits_{n=1}^\infty  cl \bigcup \limits_{j=n}^\infty \varphi^j(Q) \subseteq F
\end{equation*}
 and
\begin{equation*}  (f') \bigcap \limits_{n=1}^\infty cl \bigcup \limits_{j=n}^\infty \varphi^{-j}(Q) \subseteq E.
\end{equation*}
Then the following equivalencies hold:
\begin{equation*}
  (L) \Leftrightarrow (A),
\end{equation*}
\begin{equation*}
  (R) \Leftrightarrow (B).
\end{equation*}
\end{theorem}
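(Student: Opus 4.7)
The plan is to establish the two equivalences by treating the implications ``decomposition implies one-sided invertibility'' and ``one-sided invertibility implies the existence of the decomposition'' separately; the first is constructive, while the second is where the peripheral dynamical structure must be extracted from the quantitative bound.

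For $(A) \Rightarrow (L)$, I would use the decomposition $K = E \cup F \cup \bigcup_{j \in \mathds{Z}} \varphi^j(Q)$ to build a left inverse of $\lambda I - T$ piece by piece. On $C(E)$ condition $(c)$ yields $\rho(T|C(E)) < |\lambda|$, so the Neumann series $\sum_{n \geq 0} \lambda^{-n-1} T^n$ converges to a bounded inverse. On $C(F)$ condition $(d)$ together with invertibility of $T|C(F)$ gives $(\lambda I - T)^{-1} = -\sum_{n \geq 0} \lambda^n T^{-n-1}$. On each slice $\varphi^j(Q)$ of the wandering layer the operator acts as a bilateral weighted shift, and the forward Neumann branch $\sum_m \lambda^{-m-1} T^m$ converges because, by $(e)$, the forward iterates of $Q$ eventually enter the contracting set $E$; continuity of the resulting partial inverses across the boundary is guaranteed by $(e)$ and $(f)$, which force the $\omega$-limit and $\alpha$-limit of $Q$ to lie in the appropriate spectral component. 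Summing the partial inverses on each piece yields the required lower bound $\|(\lambda I - T)f\| \geq C\|f\|$. The implication $(B) \Rightarrow (R)$ follows by the symmetric construction: with $(e')$ and $(f')$ the accumulation regions are swapped, and the telescoping now converges in the opposite direction, producing $(\lambda I - T)g = f$ for every $f \in C(K)$.

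For the harder direction $(L) \Rightarrow (A)$, I would start from the lower bound $\|(\lambda I - T)f\| \geq C \|f\|$ and read off the partition from the asymptotic behaviour of the weighted orbit products $w_n(k)$. A natural first attempt is to set
\begin{equation*}
E_0 = \{k \in K : \limsup_{n \to \infty} |w_n(k)|^{1/n} < |\lambda|\}, \quad F_0 = \{k \in K : \liminf_{n \to \infty} |w_{-n}(k)|^{-1/n} > |\lambda|\},
\end{equation*}
and then take $E = \mathrm{cl}\, E_0$, $F = \mathrm{cl}\, F_0$. The invariance $\varphi(E) = E$ and $\varphi(F) = F$ follows from the cocycle identity $w_{n+1}(k) = w(k)\,w_n(\varphi(k))$, and the spectral containments in $(c), (d)$ follow from these asymptotic rates together with the stonean structure. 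The hypothesis $(L)$ is what forces $E \cap F = \emptyset$: any common limit point would support an approximate eigenvector at $\lambda$, contradicting the lower bound.

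The main obstacle is the construction of the clopen wandering set $Q$ satisfying both the pairwise disjointness of $\{\varphi^j(Q)\}_{j \in \mathds{Z}}$ and the accumulation conditions $(e), (f)$. Since $\varphi$ has no periodic points, the action of $\mathds{Z}$ on $K \setminus (E \cup F)$ is free, so in a neighbourhood of any point outside $E \cup F$ one can find a clopen set whose first $N$ iterates are disjoint; extremal disconnectedness of $K$ is then used to refine an exhausting family of such local wandering pieces into a single global clopen $Q$, analogous to a Rokhlin-type construction in the stonean setting. Calibrating this $Q$ so that its forward iterates accumulate only in $E$ and its backward iterates only in $F$ is the technical heart of the argument, and this is precisely where the hypothesis $(L)$ is converted into pointwise control over the orbit dynamics; without this calibration one could not exclude accumulation of $\varphi^j(Q)$ in the wrong spectral component. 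The equivalence $(R) \Leftrightarrow (B)$ is then obtained by applying $(L) \Leftrightarrow (A)$ to the adjoint $T'$ acting on $C(K)^\prime$, exploiting the correspondence between right invertibility of $\lambda I - T$ and membership of $\lambda$ in $\sigma_r(T')$ already stated in the theorem.
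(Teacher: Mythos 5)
First, a point of order: the paper does not prove Theorem~\ref{t35} at all. It is quoted verbatim from \cite[Theorem 3.29]{Ki1} as a preliminary, so there is no in-paper argument to compare with and your proposal has to stand on its own. As an outline it points in a reasonable direction, but it has genuine gaps at exactly the places you yourself flag as the technical heart. The most serious is the construction of the clopen set $Q$ in $(L)\Rightarrow(A)$. You propose to get it from freeness of the $\mathds{Z}$-action on $K\setminus(E\cup F)$ plus extremal disconnectedness, ``analogous to a Rokhlin-type construction.'' That cannot work: a free action on a stonean space need not admit any nonempty clopen set with pairwise disjoint two-sided orbit (lift a minimal free system, e.g.\ an irrational rotation, to its absolute: every nonempty clopen set meets some forward iterate of itself), and a Rokhlin tower only ever produces finitely many disjoint iterates, never the disjointness of all $\varphi^j(Q)$, $j\in\mathds{Z}$, required by condition (a). The existence of a globally wandering $Q$ exhausting $K\setminus(E\cup F)$ must be extracted from the spectral hypothesis $(L)$ --- concretely, from the absence of points $k$ with $|w_n(k)|\geq|\lambda|^n$ and $|w_n(\varphi^{-n}(k))|\leq|\lambda|^n$ for all $n$, which is the content of \cite[Lemma 3.25]{Ki1} that the present paper invokes repeatedly (proofs of Theorems~\ref{t15}, \ref{t21}, \ref{t30}) --- and your sketch never performs this conversion. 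Relatedly, your sets $E=\mathrm{cl}\,E_0$ and $F=\mathrm{cl}\,F_0$ defined by pointwise conditions on $\limsup_n|w_n(k)|^{1/n}$ do not obviously satisfy (c) and (d): the spectral radius of $T$ on $C(E)$ is $\lim_n\max_{k\in E}|w_n(k)|^{1/n}$, and a pointwise strict inequality gives no uniform bound after passing to closures; one needs the uniform sets $\{k:|w_n(k)|\leq r^n\ \text{for all}\ n\}$ for suitable $r<|\lambda|$, and again the hypothesis $(L)$ enters in establishing their properties, not merely in proving $E\cap F=\emptyset$.

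Two further problems. In $(A)\Rightarrow(L)$ you assemble a left inverse ``piece by piece,'' but $E$ and $F$ are only closed, not clopen, so $C(K)$ does not split as a direct sum over the pieces; $C(E)$ and $C(F)$ are quotients of $C(K)$, and partial inverses on quotients cannot simply be summed. A workable argument instead bounds $\|(\lambda I-T)f\|$ from below directly, by following the $\varphi$-orbit of a point where $|f|$ is nearly maximal and using (c)--(f) to force growth of the iterates in one direction. Finally, deducing $(R)\Leftrightarrow(B)$ by ``applying $(L)\Leftrightarrow(A)$ to the adjoint $T'$ acting on $C(K)'$'' does not type-check: $C(K)'$ is a space of measures, not a space of continuous functions, so the theorem being proved does not apply to $T'$. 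The reduction that actually works --- and the one this paper itself uses when it needs the $(R)$ part (see step (3b) of the proof of Theorem~\ref{t15} and the proof of Theorem~\ref{t30}) --- passes to the reversed weighted composition operator $\tilde{T}f=w\,(f\circ\varphi^{-1})$ on $C(K)$, which is why $(e'),(f')$ are obtained from $(e),(f)$ by interchanging the roles of $E$ and $F$.
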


The next theorem indicates the changes we have to make in the statement of Theorem~\ref{t35} if we omit the condition that the set of all $\varphi$-periodic points is empty.

\begin{theorem} \label{t33} (~\cite[Theorem 3.31]{Ki1}) Let $\varphi$ be a homeomorphism of the compact stonean space $K$ onto itself, $M \in C(K)$, and $(Tf)(k) = M(k)f(\varphi(k), f \in C(K), k \in K$. Let $\lambda \in \sigma(T)$. The following conditions are equivalent.

\noindent $(1)$ $\lambda \in \sigma_r(T)$ (respectively, $\lambda \in \sigma_r(T'))$.

\noindent $(2)$ There are $m \in \mathds{N}$ and a clopen subset $P$ of $K$ such that  $P \subset P_m$, $\varphi(P) = P$, $\lambda \not \in \sigma(T, C({cl P}))$, and $K$ can be partitioned as $K = E \cup Q \cup F \cup P$ where the sets
$E,F$, and $Q$ satisfy conditions $A$ (respectively $B$) of Theorem~\ref{t32}.
\end{theorem}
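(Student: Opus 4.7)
My plan is to reduce Theorem~\ref{t33} to the periodic-free Theorem~\ref{t35} by isolating a clopen $\varphi$-invariant ``resolvent island'' $P$ of periodic points and applying the periodic-free theorem to the complement after redistributing any remaining periodic orbits between $E$ and $F$. I would prove the two implications separately.

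For the sufficiency direction $(2) \Rightarrow (1)$, suppose the partition $K = E \cup Q \cup F \cup P$ is given. Since $P$ is clopen and $\varphi$-invariant, $\chi_P$ commutes with $T$ and produces a $T$-invariant direct sum $C(K) = \chi_P C(K) \oplus \chi_{K \setminus P} C(K)$. On the first summand the hypothesis $\lambda \notin \sigma(T, C(\mathrm{cl}\, P))$ gives a two-sided inverse of $\lambda I - T$. On the second summand the conditions (a)--(f) of (A) allow one to carry out the pasting construction from the proof of Theorem~\ref{t35}: the spectral bounds in (c), (d) yield two-sided inverses on the $T$-invariant closed pieces determined by $E$ and $F$ (any periodic orbits present there contribute only spectrum strictly inside or strictly outside $\{|\xi|=|\lambda|\}$, and so cause no trouble), while the wandering region generated by $Q$ yields a genuinely one-sided inverse. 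Taking the direct sum produces the desired left inverse of $\lambda I - T$, so $\lambda \in \sigma_r(T)$. The dual argument using (B) yields the $\sigma_r(T')$ case.

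For the necessity direction $(1) \Rightarrow (2)$, assume $\lambda \in \sigma_r(T)$, so $\lambda I - T$ is bounded below. On each clopen stratum $\Pi_n = \mathrm{Int}\, P_n$ the operator $T^n$ acts as multiplication by $w_n$, so the spectrum of $T|_{C(\mathrm{cl}\,\Pi_n)}$ is determined by $w_n$. Left-boundedness of $\lambda I - T$ restricts to $\mathrm{cl}\,\Pi_n$ and forces $\lambda$ into the resolvent of the restriction, so $\Pi_n$ splits into three clopen $\varphi$-invariant parts according to whether $|w_n|^{1/n}$ is strictly less, strictly greater, or equal to $|\lambda|$ (on the equal part, $\lambda^n \neq w_n(k)$ pointwise by the resolvent assumption); the stonean property of $K$ supplies the clopen separators. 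I would choose $m$ so that all ``equal'' parts of periodic strata are contained in $\{k : \varphi^m(k) = k\}$, take $P$ to be the union of these equal parts, and collect the strict-contraction and strict-expansion pieces across all $n$ into closed $\varphi$-invariant sets $\widetilde{E}$ and $\widetilde{F}$. The aperiodic complement is closed, $\varphi$-invariant, and contains no periodic points, so Theorem~\ref{t35} produces a decomposition $E^{\mathrm{ap}} \cup Q \cup F^{\mathrm{ap}}$ satisfying (a)--(f); setting $E = \widetilde{E} \cup E^{\mathrm{ap}}$ and $F = \widetilde{F} \cup F^{\mathrm{ap}}$ gives the required partition.

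The main obstacle is the control of multiple periods in the construction of $P$. The ``equal'' parts of $\Pi_n$ (where $|w_n|^{1/n}=|\lambda|$ but $\lambda^n\neq w_n$) can be nonempty for infinitely many $n$, yet all of them must fit into a single $P$ with $\varphi^m|_P = \mathrm{id}$. This forces $m$ to be a common period for all participating orbits, which in turn requires a uniform bound on the relevant periods; that bound should be extracted from the \emph{uniform} lower bound $\|(\lambda I - T)f\| \geq c\|f\|$ furnished by $\lambda \in \sigma_r(T)$, which, when restricted to periodic orbits of period $n$, gives a uniform separation of the roots $\mu^n = w_n(k)$ from $\lambda$ that limits the set of relevant $n$. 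Verifying that after grafting $\widetilde{E}$, $\widetilde{F}$ onto the Theorem~\ref{t35} output the accumulation conditions (e), (f) of (A) and the pairwise-disjointness condition (a) still hold is then routine, since periodic orbits are automatically disjoint from any wandering set $Q$.
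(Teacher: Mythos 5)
This statement is quoted in the paper's preliminaries directly from \cite[Theorem 3.31]{Ki1}; the paper itself contains no proof of it, so there is nothing internal to compare your argument against. Judged on its own merits, your architecture is the natural one (peel off a clopen $\varphi$-invariant periodic ``resolvent island'' $P$ and run the pasting construction of Theorem~\ref{t35} on the rest), and the sufficiency direction $(2)\Rightarrow(1)$ is essentially sound: you correctly note that Theorem~\ref{t35} cannot be invoked as a black box on $K\setminus P$ (which may still contain periodic points inside $E$ and $F$) but that its proof only uses the strict spectral inclusions (c), (d) and the wandering structure of $Q$.

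The necessity direction, however, has gaps that are not cosmetic. First, the trichotomy of a periodic stratum $\Pi_n$ according to whether $|w_n|^{1/n}$ is $<$, $=$, or $>$ $|\lambda|$ does not produce clopen sets: the level set $\{|w_n|^{1/n}=|\lambda|\}$ is closed but generally not open, and extremal disconnectedness only makes closures of \emph{open} sets clopen. Whatever clopen hulls you take, the closure of $\{|w_n|^{1/n}<|\lambda|\}$ may contain points where $|w_n|^{1/n}=|\lambda|$, which destroys the \emph{strict} inclusion $\sigma(T,C(E))\subset\{|\xi|<|\lambda|\}$ demanded by condition (c). Showing that the uniform lower bound $\|(\lambda I-T)f\|\geq c\|f\|$ forces a quantitative gap that makes this splitting clopen is the technical heart of the theorem, not a consequence of ``the stonean property.'' Second, since infinitely many strata $\Pi_n$ may be nonempty, you need the gap $\rho(T,C(E_n))\leq|\lambda|-\varepsilon$ to be \emph{uniform in} $n$ before you can pass to the closure of $\bigcup_n E_n$ and keep (c); you assert such uniformity ``should be extracted'' from the lower bound, but that extraction (together with the companion bound $n\leq C/c$ on the periods contributing to $P$) is precisely what has to be proved. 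Third, the set to which you propose to apply Theorem~\ref{t35} --- the complement of the periodic points --- is closed and $\varphi$-invariant but not clopen, hence neither stonean nor a $T$-invariant direct summand; the aperiodic points lying in $cl\bigcup_n P_n$ must be distributed into $E$, $F$, $Q$ by hand, and it is exactly there that the accumulation conditions (e), (f) stop being ``routine.'' As it stands the proposal is a correct road map with the hardest estimates left unproved.
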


Thus we have a description of the conditions when the operator $\lambda I - T$ is semi-Fredholm and either $null T = 0$ or $ def T =0$.  The next theorem combined with Theorem~\ref{t35} provides a description of the upper semi-Fredholm spectrum of a weighted automorphism of $C(K)$.

\begin{theorem} \label{t34} (see~\cite[Theorem 2.7]{Ki3}) Let $T$ be a weighted automorphism of $C(K)$,
\begin{equation*}
  (Tf)(k) = w(k)f(\varphi(k)), f \in C(K), k \in K,
\end{equation*}
 and let $\lambda \in \sigma(T) \setminus \{0\}$. The following conditions are equivalent.

(I) The operator $\lambda I - T$ is semi-Fredholm and $0 < nul(\lambda I - T) < \infty$.

(II) There are subsets $L$ and $S$ of $K$ with the properties

\begin{enumerate}
\item The subsets $L$ and $S$ are at most finite and at least one of them is not empty.
  \item Every point of $L \cup S$ is an isolated point in $K$.
\item If $p, q \in L \cup S$ and $p \neq q$ then $\varphi^i(p) \neq \varphi^j(q), i, j \in \mathds{Z}$
  \item If $l \in L$ then the point $l$ not $\varphi$-periodic and $\lambda \in \sigma_r(T', C(tr(l))')$, where $Tr(l) = cl \{\varphi^j(l) ; j \in \mathds{Z}$.
  \item If $s \in S$ then the point $s$ is  $\varphi$-periodic and $\lambda^p = w_p(s)$, where $p$ is the period of the point $s$.
    \item Let $U = \bigcup \limits_{j= - \infty}^\infty \varphi^{(j)}(L \cup S)$ and $V = K \setminus U$. Then either $\lambda \not \in \sigma(T, C(V))$ or  $\lambda \in \sigma_r(T, C(V))$.
\end{enumerate}

\end{theorem}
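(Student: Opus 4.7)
The plan is to analyze $\ker(\lambda I - T)$ and $\mathrm{range}(\lambda I - T)$ by splitting $K$ into a $\varphi$-invariant ``singular'' part, on which eigenvectors can live, and a ``regular'' part $V$, on which $\lambda I - T$ has trivial kernel. Any eigenvector $f$ satisfies the pointwise equation $\lambda f(k) = w(k) f(\varphi(k))$; since $\lambda \neq 0$, the set $\mathrm{supp}\,f$ is $\varphi$-invariant and disjoint from the zeros of $w$, and the values of $f$ along each orbit inside $\mathrm{supp}\,f$ are determined up to a multiplicative constant by this recurrence. This observation is the engine behind both directions.

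For the direction $(II) \Rightarrow (I)$, I would first use conditions (2) and (3) to assemble the orbits of points in $L \cup S$ into a $\varphi$-invariant set $U$. Because $K$ is stonean and every point of $L \cup S$ is isolated, the orbit closures are clopen, so one obtains a $T$-invariant decomposition $C(K) = C(\mathrm{cl}\,U) \oplus C(V)$. On $C(V)$, condition (6) together with the equivalence $\sigma_r(T) = \sigma(T) \setminus \sigma_{a.p.}(T)$ implies that $\lambda I - T$ is either invertible or has trivial kernel and closed range. On $C(\mathrm{cl}\,U)$ I would construct explicit one-dimensional eigenspaces: for each $s \in S$ of period $p$, the relation $\lambda^p = w_p(s)$ of (5) lets one propagate an eigenfunction along the finite periodic orbit; for each $l \in L$, the condition $\lambda \in \sigma_r(T', C(Tr(l))')$ from (4) translates by duality to surjectivity of $\lambda I - T$ on $C(Tr(l))$, and invoking Theorem~\ref{t35} on this orbit closure produces a one-dimensional kernel from the absorbing endpoint behaviour. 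Summing the finitely many contributions from $L \cup S$ and observing that a finite direct sum of closed-range operators with finite-dimensional kernels is semi-Fredholm yields~(I).

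For the direction $(I) \Rightarrow (II)$, I would take a nonzero $f \in \ker(\lambda I - T)$ and study $\mathrm{supp}\,f$. The recurrence forces any periodic point $s \in \mathrm{supp}\,f$ of period $p$ to satisfy $\lambda^p = w_p(s)$, which is condition (5). If $\mathrm{supp}\,f$ contained a non-isolated point, then by stoneanity and continuity one could manufacture a singular sequence of approximate eigenvectors supported on shrinking clopen neighbourhoods producing an infinite-dimensional subspace of $\ker(\lambda I - T)$ (or at least violating the singular-sequence characterization of $\sigma_2(T)$ recalled in Section~2), contradicting $\mathrm{nul}(\lambda I - T) < \infty$. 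Therefore $\mathrm{supp}\,f$ is contained in the orbits of isolated points, which one sorts into $L$ (aperiodic generators) and $S$ (periodic generators); the finiteness of $L \cup S$ follows from $\dim \ker(\lambda I - T) < \infty$, since each such orbit carries at most a one-dimensional piece. The residual-spectrum condition on $V = K \setminus U$ is then obtained by restricting $\lambda I - T$ to $C(V)$, noting that its kernel is trivial and its range inherits closedness, and applying Theorem~\ref{t33} to conclude that $\lambda \notin \sigma(T, C(V))$ or $\lambda \in \sigma_r(T, C(V))$.

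The main obstacle is the direction $(I) \Rightarrow (II)$, specifically the two rigidity statements that $\mathrm{supp}\,f$ consists of \emph{isolated} points and that only \emph{finitely many} such orbits can occur. Both rely on converting a failure of these conditions into a singular sequence that violates the semi-Fredholm characterization; carrying out this construction uniformly in the stonean setting, while respecting the weight $w$ and interlocking with Theorems~\ref{t35} and~\ref{t33} on the complement, is where the delicate bookkeeping lives.
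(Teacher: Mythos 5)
The paper does not prove Theorem~\ref{t34}: it is quoted verbatim from \cite[Theorem 2.7]{Ki3} as part of the preliminary material, so there is no in-paper proof to compare against. Your strategy is the natural one and is consistent with the machinery the paper assembles around this statement (Theorem~\ref{t35}, Theorem~\ref{t33}, and the singular-sequence characterization of $\sigma_2$), but as written it has concrete gaps.

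First, the decomposition $C(K)=C(\mathrm{cl}\,U)\oplus C(V)$ with $V=K\setminus U$ is not a direct sum. For an aperiodic $l\in L$ the orbit $\{\varphi^j(l)\}$ is an open set of isolated points whose closure $Tr(l)$ is clopen but strictly larger; the boundary $\mathrm{cl}\,U\setminus U$ is nonempty and lies in $V$, so $C(\mathrm{cl}\,U)$ and $C(V)$ overlap. The genuine summand is $C(K\setminus\mathrm{cl}\,U)$, and the accumulation sets $L_1=\bigcap_n \mathrm{cl}\{\varphi^j(l):j\geq n\}$ and $L_2=\bigcap_n \mathrm{cl}\{\varphi^{-j}(l):j\geq n\}$ must be handled through the spectral-radius conditions (c)--(d) and (e$'$)--(f$'$) of Theorem~\ref{t35}; condition (6) is phrased with the closed, non-complemented set $V$ precisely because of this. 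Second, in the direction $(I)\Rightarrow(II)$ you never derive condition (4). Surjectivity of $\lambda I-T$ on $C(Tr(l))$ does not follow formally from upper semi-Fredholmness of the restriction together with a nontrivial kernel; one must show that the existence of an eigenfunction living on the orbit of $l$, combined with $\lambda I-T\in\Phi_+$, forces the type-(B) decomposition of $Tr(l)$ (roughly, $\rho(T,C(L_2))<|\lambda|$ and the corresponding estimate on $L_1$), and only then does Theorem~\ref{t35} give $\lambda\in\sigma_r(T',C(Tr(l))')$. Third, in the rigidity step your first alternative (manufacturing an infinite-dimensional kernel from a non-isolated support point) does not work, because truncations of an eigenfunction to clopen sets are not eigenfunctions unless the sets are $\varphi$-invariant; only the second alternative, building a singular sequence of approximate eigenvectors of the form used in~(\ref{eq42}) to conclude $\lambda\in\sigma_2(T)$, is viable, and that construction is the actual content of the proof in \cite{Ki3} rather than a detail to be deferred.
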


 \begin{remark} \label{r14}. Assume conditions of Theorem~\ref{t34}. Then $null(\lambda I - T) = card(L) + card(S)$.
 \end{remark}

Similarly we can describe the lower semi-Fredholm spectrum of a weighted automorphism of $C(K)$.

\begin{theorem} \label{t36} (see~\cite[Theorem 2.11]{Ki3}) Let $T$ be a weighted automorphism of $C(K)$,
\begin{equation*}
  (Tf)(k) = w(k)f(\varphi(k)), f \in C(K), k \in K,
\end{equation*}
 and let $\lambda \in \sigma(T) \setminus \{0\}$. The following conditions are equivalent.

(I) The operator $\lambda I - T$ is semi-Fredholm and $0 < def(\lambda I - T) < \infty$.

(II) There are subsets $L$ and $S$ of $K$ with the properties

\begin{enumerate}
\item The subsets $L$ and $S$ are at most finite and at least one of them is not empty.
  \item Every point of $L \cup S$ is an isolated point in $K$.
\item If $p, q \in L \cup S$ and $p \neq q$ then $\varphi^i(p) \neq \varphi^j(q), i, j \in \mathds{Z}$
  \item If $l \in L$ then the point $l$ not $\varphi$-periodic and $\lambda \in \sigma_r(T, C(tr(l)))$, where $Tr(l) = cl \{\varphi^j(l) ; j \in \mathds{Z}$.
  \item If $s \in S$ then the point $s$ is  $\varphi$-periodic and $\lambda^p = w_p(s)$ where $p$ is the period of the point $s$.
    \item Let $U = \bigcup \limits_{j= - \infty}^\infty \varphi^{(j)}(L \cup S)$ and $V = K \setminus U$. Then either $\lambda \not \in \sigma(T, C(V))$ or   $\lambda \in \sigma_r(T' C(V)')$.
\end{enumerate}
\end{theorem}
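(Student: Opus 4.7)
The plan is to prove Theorem~\ref{t36} by mirroring the proof of Theorem~\ref{t34}, exploiting the systematic duality between $(L)/(A)$ and $(R)/(B)$ in Theorem~\ref{t35} (equivalently, between $\sigma_r(T)$ and $\sigma_r(T')$ in Theorem~\ref{t33}). The hypotheses $(II)$ of the two theorems differ only by the swap $T\leftrightarrow T'$ in clauses $(4)$ and $(6)$, and the conclusions $(I)$ differ by $\mathrm{nul}\leftrightarrow\mathrm{def}$; this is exactly the content of $\mathrm{def}(\lambda I-T)=\mathrm{nul}((\lambda I-T)')$, together with the observation that $\lambda\in\sigma_r(T,C(V))$ asserts left-invertibility of $\lambda I-T$ on $C(V)$ while $\lambda\in\sigma_r(T',C(V)')$ asserts right-invertibility.

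For $(II)\Rightarrow(I)$, I would decompose $K$ into the $\varphi$-invariant clopen set $V$ from clause $(6)$ and the open discrete union $U=\bigcup_{j\in\mathds{Z}}\varphi^{(j)}(L\cup S)$ of pairwise disjoint isolated orbits. Accordingly
\begin{equation*}
C(K)=C(V)\oplus\bigoplus_{s\in S}C(\mathrm{orb}(s))\oplus\bigoplus_{l\in L}C(\mathrm{tr}(l)),
\end{equation*}
and $T$ respects this splitting. Clause $(6)$ gives $\mathrm{def}(\lambda I-T,C(V))=0$; each periodic orbit of $s\in S$ contributes a one-dimensional cokernel to the finite-dimensional matrix $\lambda I-T|_{C(\mathrm{orb}(s))}$ since $\lambda^{p}=w_{p}(s)$; and on each infinite trace $\mathrm{tr}(l)$ with $l\in L$, Theorem~\ref{t33} applied to $\lambda\in\sigma_r(T,C(\mathrm{tr}(l)))$ yields a partition $\mathrm{tr}(l)=E\cup Q\cup F\cup P$ whose bookkeeping produces a one-dimensional cokernel. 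Summing, $\mathrm{def}(\lambda I-T)=\mathrm{card}(L)+\mathrm{card}(S)$, finite and positive, with closed range.

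For the harder direction $(I)\Rightarrow(II)$, I would pass to the adjoint: finite deficiency of $\lambda I-T$ on $C(K)$ is equivalent to $\dim\ker(\lambda I-T')<\infty$ in $M(K)$. Any $\mu\in\ker(\lambda I-T')$ is carried by a $\varphi$-invariant Borel set, and a separation argument based on the clopen structure of the stonean $K$ shows that any nontrivial continuous component of $\mu$ would split off further independent kernel elements, contradicting finite-dimensionality; thus $\ker(\lambda I-T')$ is spanned by finitely many atomic eigenmeasures. Each atom must sit at an isolated point of $K$, and the orbit it generates is either periodic with $\lambda^{p}=w_{p}(s)$ (producing a point of $S$) or non-periodic with $\lambda\in\sigma_r(T,C(\mathrm{tr}(l)))$ by Theorem~\ref{t33} (producing a point of $L$); pairwise disjointness of these orbits is forced, since overlapping orbits would identify two independent generators and drop the dimension count. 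Finally $V=K\setminus U$ must contribute no cokernel to $\lambda I-T$, which by Theorem~\ref{t33} is exactly clause $(6)$.

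The main obstacle will be the atomicity step in $(I)\Rightarrow(II)$: one must show that a nonzero continuous component in $\ker(\lambda I-T')$ forces unboundedly many independent kernel elements, using the $\varphi$-invariance of its support together with the abundance of clopen subsets in a stonean space. This is the one place where the argument genuinely departs from a routine dualization of Theorem~\ref{t34}; everything else is a careful transcription of that proof with the roles of left-invertibility and right-invertibility interchanged.
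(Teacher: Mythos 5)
You should first be aware that the paper contains no proof of Theorem~\ref{t36}: it is quoted in the preliminaries directly from \cite[Theorem 2.11]{Ki3}, so there is nothing in-paper to compare your argument against. Your overall strategy --- dualize Theorem~\ref{t34} via $\mathrm{def}(\lambda I-T)=\mathrm{nul}((\lambda I-T)')$ and analyze eigenmeasures in $C(K)'$ --- is the natural one, but as written the proposal has two genuine gaps. The first is that the splitting $C(K)=C(V)\oplus\bigoplus_{s\in S}C(\mathrm{orb}(s))\oplus\bigoplus_{l\in L}C(\mathrm{tr}(l))$ is false whenever $L\neq\emptyset$. The orbit of a non-periodic isolated point $l$ is an infinite discrete set whose closure $\mathrm{tr}(l)$ contains non-isolated limit points, and those limit points also lie in $V$; hence $V$ and $\mathrm{tr}(l)$ are closed but not open, they overlap, and $C(V)$ and $C(\mathrm{tr}(l))$ are quotients of $C(K)$, not complemented subspaces. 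The decomposition has to be replaced by the exact sequence $0\to I_V\to C(K)\to C(V)\to 0$ (with $I_V$ the ideal of functions vanishing on $V$) and a three-space bookkeeping of deficiencies. Relatedly, the claim that $\lambda\in\sigma_r(T,C(\mathrm{tr}(l)))$ contributes a one-dimensional cokernel is not justified: left-invertibility only gives a closed proper range, and pinning its codimension at exactly $1$ requires the explicit $E,F,Q$ partition of Theorem~\ref{t35}, which you do not invoke at this point.

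The second gap is in $(I)\Rightarrow(II)$, where you assert that every atom of an element of $\ker(\lambda I-T')$ sits at an isolated point of $K$. This is not automatic: if $k$ is a non-isolated $\varphi$-periodic point of period $p$ with $\lambda^p=w_p(k)$, then a suitable combination $\sum_{j=0}^{p-1}c_j\delta_{\varphi^j(k)}$ is a perfectly good eigenmeasure, so isolation must be deduced from the semi-Fredholm hypothesis (by showing that a non-isolated such point forces the range of $\lambda I-T$ to be non-closed or of infinite codimension). This step is of the same order of difficulty as the exclusion of a continuous component, which you correctly flag as the main obstacle but likewise leave unexecuted. Until the false direct sum is repaired and these two reductions to atomic measures supported on isolated points are actually carried out, the proposal is a plausible outline of the argument in \cite{Ki3} rather than a proof.
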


 \begin{remark} \label{r15}. Assume conditions of Theorem~\ref{t36}. Then $def(\lambda I - T) = card(L) + card(S)$.
 \end{remark}

 The case when $\lambda =0$, in other words the question when the operator $T$ is semi-Fredholm is resolved by the following proposition
 \begin{proposition} \label{p12} Let $T$ be a weighted automorphism of $C(K)$.
  \begin{equation*}
    (Tf)(k) = w(k)f(\varphi(k)), f \in C(K), k \in K.
  \end{equation*}
  The following conditions are equivalent.
\begin{enumerate}
  \item $T$ is semi-Fredholm.
  \item $T$ is Fredholm.
  \item $T$ is Fredholm and $ind\,  T =0$.
  \item The set $Z(w) = \{k \in K : w(k) = 0\}$ is at most finite and every point in $Z(w) is$ isolated in $K$.
\end{enumerate}
\end{proposition}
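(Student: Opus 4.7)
The plan is to reduce to the multiplication operator $M_w\in L(C(K))$ given by $M_wf=wf$. Since $\varphi$ is a homeomorphism, the composition operator $Uf=f\circ\varphi$ is an isometric automorphism of $C(K)$ and $T=M_wU$, so $T$ lies in $\Phi_+$, $\Phi_-$, $\Phi$, $\mathcal F$, or $\mathcal W$ exactly when $M_w$ does, with the same index. The implications $(3)\Rightarrow(2)\Rightarrow(1)$ are trivial. For $(4)\Rightarrow(3)$: if $Z(w)=\{k_1,\dots,k_N\}$ is a finite set of isolated points, then $K'=K\setminus Z(w)$ is clopen and $w|_{K'}$ is continuous and nonvanishing on a compact set, hence bounded below, so $M_{w|_{K'}}$ is invertible on $C(K')$. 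With respect to the splitting $C(K)=\bigoplus_{i=1}^N C(\{k_i\})\oplus C(K')$ the operator $M_w$ becomes zero on the first summand and invertible on the second, so $M_w$ is Fredholm with $\mathrm{nul}\,M_w=\mathrm{def}\,M_w=N$, hence $\mathrm{ind}\,M_w=0$.

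The core of the proof is $(1)\Rightarrow(4)$. Note that $\ker M_w=\{f\in C(K):f|_{\overline{\{w\ne 0\}}}=0\}\cong C_0(\mathrm{int}\,Z(w))$, so $\dim\ker M_w<\infty$ iff $\mathrm{int}\,Z(w)$ is a finite set of isolated points of $K$. Assume first $M_w\in\Phi_+$, so this finiteness holds. Suppose for contradiction $\partial Z(w)\ne\emptyset$ and fix $k_0\in\partial Z(w)$; every open neighbourhood $U$ of $k_0$ meets $\{w\ne 0\}$ in an infinite set (else, removing the finitely many points of $U\cap\{w\ne 0\}$ using Hausdorffness would give a neighbourhood of $k_0$ inside $Z(w)$, putting $k_0$ in $\mathrm{int}\,Z(w)$). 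Using Hausdorffness and normality, inductively choose points $p_n$ and open neighbourhoods $V_n\ni p_n$ with $V_n\subset\{0<|w|<1/n\}$ and $\overline{V_n}$ disjoint from $\overline{V_1}\cup\cdots\cup\overline{V_{n-1}}\cup\{k_0\}$. By Urysohn, take $f_n\in C(K)$ with $\|f_n\|=1$, $f_n(p_n)=1$, and $\mathrm{supp}\,f_n\subset V_n$. Then $\|M_wf_n\|\le 1/n\to 0$ while $\|f_n-f_m\|\ge 1$ for $n\ne m$, so $(f_n)$ is singular, witnessing $0\in\sigma_2(M_w)$ and contradicting $M_w\in\Phi_+$. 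Hence $\partial Z(w)=\emptyset$, and so $Z(w)=\mathrm{int}\,Z(w)$ is a finite set of isolated points.

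Assume instead $M_w\in\Phi_-$. Since $M_w$ commutes with multiplication by every $g\in C(K)$, the closed subspace $R(M_w)$ is an ideal of $C(K)$, so equals $I_F=\{g\in C(K):g|_F=0\}$ for a unique closed $F\subseteq K$; finite codimension of $R(M_w)$ forces $F$ to be finite, and clearly $Z(w)\subseteq F$. Suppose for contradiction that some $k_0\in Z(w)$ is not isolated in $K$. Choose, as before, distinct points $p_n\in K\setminus F$ with $|w(p_n)|<1/n^2$; the set $A=F\cup\{p_n:n\in\mathds{N}\}$ is closed in $K$ (its cluster points lie in $Z(w)\subseteq F$), and the assignment $f|_F=0$, $f(p_n)=|w(p_n)|^{1/2}$ defines a continuous function on $A$ (each $p_n$ is isolated in $A$, and at any $q\in F$ the finite set $\{p_n:|w(p_n)|\ge\varepsilon^2\}$ can be separated from $q$ by Hausdorffness). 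Tietze extension yields $g\in C(K)$ with $g|_F=0$ and $g(p_n)=|w(p_n)|^{1/2}$, so $g\in I_F$ while $|g(p_n)/w(p_n)|=|w(p_n)|^{-1/2}\to\infty$; hence no continuous $h$ satisfies $wh=g$, meaning $g\notin R(M_w)=I_F$, a contradiction. The main technical difficulty is purely topological: performing the bump-function and Tietze constructions in a general compact Hausdorff $K$ without countability assumptions. This is handled by the inductive Hausdorff/normality arguments above, exploiting that the chosen points $p_n$ accumulate only inside the finite set $F$ (respectively, that $\partial Z(w)$ must be empty in the $\Phi_+$ case).
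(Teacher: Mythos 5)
Your proof is correct. Note that the paper itself gives no proof of Proposition~\ref{p12}: it is stated in the preliminaries as a recalled fact, with the reader referred to Part III of the series (\cite{Ki3}) for the surrounding material, so there is no in-paper argument to compare against. Your reduction $T=M_wU$ with $U$ an isometric automorphism (so that membership in $\Phi_\pm$ and the index are those of the multiplication operator $M_w$) is the natural route, and the two halves of $(1)\Rightarrow(4)$ are sound: in the $\Phi_+$ case the identification of $\ker M_w$ with the functions supported in $\mathrm{int}\,Z(w)$ plus the singular sequence of disjointly supported bumps in $\{0<|w|<1/n\}$ correctly rules out a nonempty boundary of $Z(w)$; in the $\Phi_-$ case the identification of the closed range with a closed ideal $I_F$, $F$ finite containing $Z(w)$, together with the Tietze extension of $p_n\mapsto|w(p_n)|^{1/2}$ (continuity on $A$ holding because the $p_n$ can accumulate only inside $Z(w)\subseteq F$, where the prescribed values tend to $0$), correctly shows no point of $Z(w)$ can be a limit point of $K$. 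The topological care you take (choosing $p_n$ and $V_n$ by regularity rather than by sequences) is exactly what is needed in a general compact Hausdorff $K$, and the cycle $(1)\Rightarrow(4)\Rightarrow(3)\Rightarrow(2)\Rightarrow(1)$ closes the equivalence.
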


From the previous results the reader can easily derive a complete description of the essential spectra $\sigma_i(T), i = 1, \ldots , 5$ of a weighted automorphism $T$ of $C(K)$. The details can also be found in~\cite[Section 3, p.11]{Ki3}.

In Section 5 of the current paper it will be important for us that the above description of essential spectra of weighted automorphisms of $C(K)$ can be applied to a large class of disjointness preserving operators on Dedekind complete Banach lattices.

\begin{theorem} \label{t37} (see~\cite[Theorem 4.5, p.20]{Ki3}) Let $X$ be a Dedekind complete Banach lattice and $U$ be a $d$-automorphism of $X$ such that $\sigma(U) \subseteq \mathds{T}$. Let $w \in Z(X) \thickapprox C(K)$ and let $T = wU$. Consider the following automorphism of $C(K)$
\begin{equation*}
  f \rightarrow UfU^{-1}, f \in C(K),
\end{equation*}
  and let $\varphi$ be the corresponding homeomorphism of $C(K)$. Let $S$ be the weighted automorphism of $C(K)$ defined as
  \begin{equation*}
    (Tf)(k) = w(k)f(\varphi(k)), f \in C(K), w \in K.
  \end{equation*}
  Then
  \begin{equation*}
    \sigma(T) = \sigma(S) \; \text{and} \; \sigma_i(T) = \sigma_i(S), i = 1, \ldots , 5.
  \end{equation*}
\end{theorem}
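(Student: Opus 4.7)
The plan is to exploit the commutation relation produced by $U$ being a $d$-automorphism with $\sigma(U) \subseteq \mathds{T}$, reducing the spectral analysis of $T = wU$ on the Banach lattice $X$ to that of the weighted composition operator $S$ on $C(K)$. The key observation is that conjugation $f \mapsto UfU^{-1}$ restricts to a unital algebra automorphism of the center $Z(X) \cong C(K)$, which is precisely the automorphism inducing the homeomorphism $\varphi$ of $K$. A direct verification yields the commutation $Uf = (f \circ \varphi^{-1})U$ in $L(X)$, and by induction $T^n = w_n U^n$, paralleling $S^n = w_n C_\varphi^n$ on $C(K)$. Thus the subalgebra generated by $C(K)$ and $U^{\pm 1}$ in $L(X)$ is algebraically isomorphic to the one generated by $C(K)$ and $C_\varphi^{\pm 1}$ in $L(C(K))$, and $T$ and $S$ correspond to the same element of this abstract algebra.

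To obtain $\sigma(T) = \sigma(S)$, I would argue that both representations are spectrum-preserving on this formal operator-theoretic crossed product. The hypothesis $\sigma(U) \subseteq \mathds{T}$ is crucial: it ensures $\{U^n\}_{n \in \mathds{Z}}$ is power-bounded, so for $|\lambda|$ outside the common spectral radius the Neumann-type series $(\lambda I - T)^{-1} = \lambda^{-1}\sum_{n\ge 0} \lambda^{-n}T^n$ converges if and only if the analogous series for $S$ does, with identical combinatorial structure in the coefficients $w_n$; a symmetric Laurent expansion in $U^{-1}$ treats the inner region. To control the remaining part of the spectrum I would localize to cyclic subspaces $X(x)$, which by Proposition~\ref{p10} are themselves Dedekind complete Banach lattices, and compare their local spectral structure with that of cyclic subspaces of $C(K)$ under $S$.

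For the essential spectra, I would use the singular-sequence characterization of $\sigma_2$ recalled in the preliminaries. Given a singular sequence $(x_n)$ in $X$ with $\|x_n\| = 1$ and $(\lambda I - T)x_n \to 0$, one exploits the $d$-automorphism structure of $U$ together with $\sigma(U) \subseteq \mathds{T}$ (so that $\|U^n x\| \asymp \|x\|$) to locate, inside the cyclic module generated by each $x_n$, a corresponding function $f_n \in C(K)$ with the analogous approximate-eigenvector property for $S$, preserving singularity because the $C(K)$-module structure matches on both sides. Running the transfer in both directions yields $\sigma_2(T) = \sigma_2(S)$, and duality applied to $T'$ and $S'$ gives $\sigma_2(T') = \sigma_2(S')$. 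The identities $\sigma_1 = \sigma_2(T) \cap \sigma_2(T')$ and $\sigma_3 = \sigma_2(T) \cup \sigma_2(T')$ then transfer automatically, while $\sigma_4(T) = \sigma_4(S)$ and $\sigma_5(T) = \sigma_5(S)$ follow by checking that Fredholm indices and resolvent poles are preserved under the identification.

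The main obstacle is the transfer of singular sequences themselves. Under the commutation $Uf = (f\circ\varphi^{-1})U$ one can straightforwardly map polynomials in $U$ with $C(K)$-coefficients back and forth between the two representations, but transferring a \emph{general} approximate eigenvector requires using the cyclic subspace $X(x_n)$ as a conduit and carefully choosing a surrogate function on $K$ whose norm is controlled and whose lack of any norm-convergent subsequence is preserved. The quantitative control $\|U^n x\| \asymp \|x\|$ afforded by $\sigma(U) \subseteq \mathds{T}$ is exactly what prevents the singularity from being lost to norm drift under iteration; without this hypothesis the theorem would fail, as the approximate eigenstructure of $T$ could decouple from that of $S$.
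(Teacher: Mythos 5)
There is a genuine gap, and it sits at the very center of your argument. Your plan rests on the claim that the subalgebra of $L(X)$ generated by $C(K)$ and $U^{\pm 1}$ is ``algebraically isomorphic'' to the subalgebra of $L(C(K))$ generated by $C(K)$ and $C_\varphi^{\pm 1}$, and that $T$ and $S$ therefore have the same spectrum. Even if such an isomorphism existed, an isomorphism of (non inverse-closed) subalgebras does not preserve the spectrum computed in the ambient algebras $L(X)$ and $L(C(K))$: $\lambda I - T$ can be invertible in $L(X)$ without the inverse lying in the crossed-product subalgebra. Worse, the commutation relation $Uf = (f\circ\varphi^{-1})U$ alone cannot possibly suffice: the paper itself exhibits (at the start of Section 5) a unitary $U$ on a Hilbert space viewed as a $C(K)$-module with $K$ a singleton, where $w=1$, $S=I$, yet $\sigma(T)=\sigma(U)\neq\{1\}$. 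Your argument, as written, would apply verbatim to that situation and prove a false statement. What saves Theorem~\ref{t37} is that $C(K)$ is the \emph{full} center of the Dedekind complete Banach lattice $X$ (so that, e.g., the weights $w_n$ and the band projections of $X$ detect the growth of $T^n$ pointwise on $K$); your proposal never uses this hypothesis in any load-bearing way. The Neumann/Laurent series you invoke only cover $|\lambda|>\rho(T)$ and $|\lambda|<1/\rho(T^{-1})$, which is vacuous, and the transfer of singular sequences --- which you correctly identify as the main obstacle --- is left entirely unexecuted.

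For comparison, the proof in~\cite{Ki3} (mirrored in Section 5 of this paper, Theorems~\ref{t15}--\ref{t31}, for the Kaplansky-module analogue) runs quite differently: one first uses $d$-independence and Lemma~\ref{l2} to show both $\sigma(T)$ and $\sigma(S)$ are rotation invariant on the aperiodic part, reducing the problem to the equality of moduli $|\sigma(T)|=|\sigma(S)|$; the inclusion $|\sigma(T)|\subseteq|\sigma(S)|$ comes from splitting $K$ into clopen $\varphi$-invariant pieces on which the spectral radius of $S$ (hence of $T$, using $\sigma(U)\subseteq\mathds{T}$) is below or above $r$; the converse comes from extracting a point $k$ with $|w_n(k)|\geq r^n$ and $|w_n(\varphi^{-n}(k))|\leq r^n$ and building approximate eigenvectors of $T$ as weighted sums $\sum_j (1-1/\sqrt{m})^{|j|}r^{-j}T^j x_m$ supported on long pairwise disjoint orbit segments; periodic points and the residual spectrum are handled via spectral projections shown to commute with $C(K)$ and $U$. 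The disjointness of supports in these constructions is also what produces the singular sequences needed for $\sigma_2$. If you want to salvage your approach, you would need to replace the abstract isomorphism by this kind of pointwise, orbit-by-orbit comparison that genuinely exploits $Z(X)=C(K)$.
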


\section{ Essential spectra of $d$-endomorphisms.}

Based on Theorem~\ref{t20} we make the following conjecture.

   \begin{conjecture} \label{con1}
    Let $X$ be a Banach $C(K)$-module and $T$ be a $d$-endomorphism of $X$. Assume that
    \begin{enumerate}
      \item The powers of $T$ are $d$-independent.
      \item $X$ has the Fatou property.
    \end{enumerate}
    Then the essential spectra $\sigma_i(T), i = 1.\, \ldots, 5$ are rotation invariant.
   \end{conjecture}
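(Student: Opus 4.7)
The plan is to reduce rotation invariance of all five essential spectra to that of the two upper semi-Fredholm spectra $\sigma_2(T)$ and $\sigma_2(T')$, using the identities $\sigma_1(T)=\sigma_2(T)\cap\sigma_2(T')$ and $\sigma_3(T)=\sigma_2(T)\cup\sigma_2(T')$ noted in Section~2.1. Once these are handled, $\sigma_4$ (Weyl) and $\sigma_5$ (Browder) follow from continuity of the Fredholm index and the component-based definition of $\sigma_5$ in the excerpt.

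The heart of the matter is rotation invariance of $\sigma_2(T)$, which is Theorem~\ref{t9}. I would prove it using the singular-sequence characterization (b) preceding Section~2.2: given $\lambda\in\sigma_2(T)\setminus\{0\}$, pick $x_n$ with $\|x_n\|=1$, $(T-\lambda I)x_n\to 0$, and no norm-convergent subsequence. Restrict to the cyclic subspaces $X(x_n)$, which by hypothesis are Banach lattices with the Fatou property and on which $T$ acts as a disjointness-preserving map into $X(Tx_n)$. Following the construction in the proof of Theorem~\ref{t20}, for fixed $\gamma\in\mathds{T}$ I would produce pairwise disjoint idempotents $e_{k,n}\in C(K)$ supported on pieces of the orbit structure that the $d$-independence of powers of $T$ forces to be nontrivial and nowhere dense in the relevant Gelfand space, and set
\[
y_n \;=\; \|z_n\|^{-1} z_n, \qquad z_n \;=\; \sum_{k=0}^{N_n} \gamma^{-k} e_{k,n}\,x_n,
\]
so that a telescoping computation gives $(T-\gamma\lambda I)y_n\to 0$. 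The ingredient beyond Theorem~\ref{t20} is showing that the $y_n$ remain singular: disjointness of the supports of $e_{k,n}$ combined with the Fatou property should force $\|y_n-y_m\|$ to stay bounded below whenever $\|x_n-x_m\|$ does.

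To handle $\sigma_2(T')$, one cannot apply Theorem~\ref{t9} directly because $T'$ need not be a $d$-endomorphism. Instead, I would pass to the bidual: by the material preceding Definition~\ref{d6}, $X''$ carries a Kaplansky $C(Q)$-module structure, $T''$ is a $d$-endomorphism of $X''$, and the defining map $\psi$ for $T''$ is exactly the one appearing in Definition~\ref{d6}, so by Remark~\ref{r10} the $d$-independence of the powers of $T$ transfers verbatim to $T''$. Since the cyclic subspaces of $X''$ are dual Banach lattices they automatically satisfy Fatou, so Theorem~\ref{t9} applied to $T''$ on $X''$ and to $T'''$ on $X'''$ yields rotation invariance of $\sigma_2(T'')$ and $\sigma_2(T''')$. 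The standard duality identities $\sigma_2(T)=\sigma_2(T'')$ and $\sigma_2(T')=\sigma_2(T''')$ then give rotation invariance of $\sigma_1(T)$ and $\sigma_3(T)$. For $\sigma_4(T)$, one verifies $\mathrm{ind}(\lambda I-T)=\mathrm{ind}(\gamma\lambda I-T)$ by producing a path from $\lambda$ to $\gamma\lambda$ inside $\mathds{C}\setminus\sigma_1(T)$ along the circle $|\mu|=|\lambda|$, which is possible because $\sigma_1(T)$ is rotation invariant and the index is constant on each component of the semi-Fredholm domain. For $\sigma_5(T)$, the component-based definition combined with rotation invariance of $\sigma(T)$ (Theorem~\ref{t20}) and $\sigma_1(T)$ makes the claim formal.

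The main obstacle I expect is the singularity preservation in the proof of Theorem~\ref{t9}: the $d$-independence produces enough disjoint supports at each fixed $n$, but not an obvious uniform lower bound on $\|y_n-y_m\|$ linking different indices. Overcoming this will probably require a diagonal extraction and a careful Fatou-based quantitative estimate transferring the singularity of $(x_n)$ to $(y_n)$. A secondary difficulty is the rigorous verification that $d$-independence and Fatou really transfer to $X''$ through $\psi$ in the generality of Definition~\ref{d6}; for $\sigma_4$ one should also rule out discontinuities of the index caused by the circle $|\mu|=|\lambda|$ meeting $\sigma_1(T)$ transversally, which requires a small perturbation argument that again leans on Theorem~\ref{t9}.
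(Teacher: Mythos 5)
You should first be aware that the statement you are proving is labelled a \emph{conjecture} in the paper, and the authors say explicitly that they were unable to prove it in full generality; the section only establishes partial results (rotation invariance of $\sigma_2(T)$ in Theorem~\ref{t9}, and of all the $\sigma_i$ only under the much stronger hypothesis of order continuous norm in Corollary~\ref{c1}). So there is no proof in the paper to compare against, and your attempt must stand on its own. Your overall reduction scheme is sound: if both $\sigma_2(T)$ and $\sigma_2(T')$ were rotation invariant, then $\sigma_1=\sigma_2\cap\sigma_2(T')$ and $\sigma_3=\sigma_2\cup\sigma_2(T')$ would be too, and your index/component arguments for $\sigma_4$ and $\sigma_5$ would then go through. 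The rotation invariance of $\sigma_2(T)$ you may simply cite from Theorem~\ref{t9} (its proof in the paper handles the singularity-preservation issue you worry about by a two-case analysis on whether $1\in\sigma_2(T_m)$ for all $m$, using the Fatou property in case (2b)), so that half is fine.

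The genuine gap is your treatment of $\sigma_2(T')$, and it is exactly the obstruction that makes this a conjecture. The duality identity $\sigma_2(T')=\sigma_2(T''')$ is correct, but to apply Theorem~\ref{t9} to $T'''$ on $X'''$ you need $T'''$ to be a $d$-endomorphism with $d$-independent powers of a Banach $C(K)$-module whose cyclic subspaces have the Fatou property. None of this is available. The paper only guarantees that the \emph{second} conjugate $T''$ of a $d$-endomorphism is again a $d$-endomorphism; the first conjugate $T'$ generally is not --- which is precisely why Section~4 is devoted to the special case where one \emph{assumes} $T'$ preserves disjointness --- and $T'''=(T'')'$ inherits the same problem one dual higher. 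Your claim that cyclic subspaces of $X''$ are ``dual Banach lattices and hence automatically Fatou'' is also unsubstantiated: a cyclic subspace $X''(z)$ is a Dedekind complete Banach lattice (Proposition~\ref{p10}), but Dedekind completeness does not give Fatou, and there is no argument that $X''(z)$ is a dual lattice. Finally, Definition~\ref{d6} formulates $d$-independence of the powers of $T$ via the sets $\eta_x(\psi^{(-n)}(F^{(m)}\cap Q_x))$ for $x\in X$ only, which is not the same as $d$-independence of the powers of $T''$ as a $d$-endomorphism of the Kaplansky module $X''$ (Definition~\ref{d3} would require $\psi^{(-n)}(F^{(m)})$ to be nowhere dense in all of $Q$), so the transfer is not ``verbatim.'' Until you can control the lower semi-Fredholm spectrum of $T$ --- equivalently, produce something like a singular-sequence argument on the dual side without knowing that $T'$ preserves disjointness --- the proof of the conjecture remains open.
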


   So far we were unable to prove Conjecture~\ref{con1} in full generality but partial results in this direction are presented in the current section.

   We will need the following lemma.

   \begin{lemma} \label{l2} (see~\cite[Lemmas 12.6.1 - 12.6.3, p.92]{AAK}) Let $K$ be an extremally disconnected compact space, $E$ be a clopen subset of $K$, and $\varphi : E \rightarrow K$ be an open map. Let $\alpha \in \mathds{T}$.
   \begin{enumerate}
     \item If for some $m \in \mathds{N}$ the set $F^{(m)}$ is empty then there is an $f \in C(K)$ such that $|f| \equiv 1$ and
         \begin{equation*}
           |f(\varphi(k)) - \alpha f(k)| \leq \frac{2\pi}{m+1}, k \in E.
         \end{equation*}
     \item If $K$ is hyperstonean then for every $m \in \mathds{N}$ the set $F^{(m)}$ is clopen in $K$. Moreover, if we denote by $V_m$ the set $\varphi^{(-m)}(F^{(m)})$ then there is an $f \in C(K)$ such that $|f| \equiv 1$ on $K \setminus V_m$ and
         \begin{equation*}
           |f(\varphi(k)) - \alpha f(k)| \leq \frac{2\pi}{m+1}, k \in E.
         \end{equation*}
   \end{enumerate}

   \end{lemma}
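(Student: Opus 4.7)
\medskip

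\noindent\textbf{Proof plan.} Both statements will be obtained by producing $f$ as a piecewise constant function on a Rokhlin-type clopen tower in $K$, with values among the $(m+1)$-st roots of unity. The number $2\pi/(m+1)$ in the conclusion should be thought of as the angular distance between consecutive $(m+1)$-st roots of unity: once an $(m+1)$-st root of unity $\zeta$ is chosen nearest to $\alpha$ on $\mathds{T}$, one has
\[
|\zeta-\alpha|\;\leq\;2\sin\!\bigl(\pi/(2(m+1))\bigr)\;\leq\;\pi/(m+1)\;<\;2\pi/(m+1),
\]
so if $f$ can be arranged to satisfy $f(\varphi(k))=\zeta\,f(k)$ exactly for $k\in E$ (a discrete, multiplicative cocycle taking values in the finite cyclic group generated by $\zeta$), then the required estimate is automatic.

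For part (1), the aim is to construct a clopen partition $K=K_0\sqcup K_1\sqcup\cdots\sqcup K_m$ with $\varphi(K_j\cap E)\subseteq K_{(j+1)\bmod(m+1)}$ and then set $f|_{K_j}=\zeta^{j}$. The existence of such a partition is exactly a topological Rokhlin tower of height $m+1$. Here $F^{(m)}=\emptyset$ is used as follows: for every $k$ in the appropriate iterated domain, the points $k,\varphi(k),\ldots,\varphi^{m}(k)$ are pairwise distinct, so by continuity each $k$ admits a clopen neighborhood whose first $m+1$ $\varphi$-iterates are pairwise disjoint. Openness of $\varphi$ keeps iterates of clopen sets clopen, and extremal disconnectedness of $K$ permits taking suprema within the boolean algebra of clopen sets. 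A Zorn-style maximal choice of a base $B$ whose iterates $B,\varphi(B),\ldots,\varphi^{m}(B)$ are pairwise disjoint, followed by an exhaustion argument over the residual clopen set, yields the desired partition (this is precisely the content of the preliminary Lemmas~12.6.1-12.6.2 in \cite{AAK}). With the partition in hand, $|f|\equiv 1$ on $K$, and $f(\varphi(k))=\zeta\,f(k)$ for every $k\in E$ (the identity $\zeta^{m+1}=1$ handles the wrap-around from the top level), giving the conclusion of (1).

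For part (2), I would first establish that in a hyperstonean $K$ the set $\{k\in K:\varphi^{j}(k)=k\}$ is clopen for each $j$: this coincidence set, being the support in $K$ of the idempotent in the weak operator closure of $C(K)$ comparing $\varphi^{j}$ with the identity, is clopen by the order-complete character lattice of a hyperstonean space. Hence each $P_{j}$ has clopen closure and $F^{(m)}=\bigcup_{j=1}^{m}\overline{P_{j}}$ is clopen, as is $V_{m}=\varphi^{(-m)}(F^{(m)})$. Then $W:=K\setminus V_{m}$ is clopen and the restriction of $\varphi$ to $W$ has no $\varphi$-periodic points of period $\leq m$, because any such point would (pulled back $m$ times) lie in $V_{m}$. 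Applying part (1) to $\varphi|_{W}$ produces $f|_{W}\in C(W)$ with $|f|_{W}|\equiv 1$ and the required cocycle estimate on $W\cap E$. Define $f\equiv 0$ on $V_{m}$ and extend by zero; since the desired estimate is required only on $K\setminus V_{m}$ and $|f|\equiv 1$ is only required there as well, this completes (2).

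\medskip

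\noindent\textbf{Main obstacle.} The technical heart is the tower construction in part (1). When $\varphi$ admits periodic orbits of period \emph{not} dividing $m+1$, a literal cyclic tower of height $m+1$ cannot exist, so the partition must really be built by a transfinite/inductive exhaustion in the boolean algebra of clopen subsets of $K$, repeatedly peeling off maximal towers and handling the residual set. Extremal disconnectedness is essential for the suprema in this exhaustion to remain clopen, and openness of $\varphi$ is essential for the levels of the tower to remain clopen after iteration. This is the reason the author cites the three lemmas from \cite{AAK}, which isolate successively finer versions of this construction: the first handles the aperiodic case, the second extends to hyperstonean spaces with non-trivial short-period parts, and the third packages the estimate into the form stated here.
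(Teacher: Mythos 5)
The paper offers no proof of Lemma~\ref{l2}; it is imported wholesale from~\cite[Lemmas 12.6.1--12.6.3]{AAK}, so your sketch has to stand on its own, and its central device does not work. A clopen partition $K=K_0\sqcup\cdots\sqcup K_m$ with $\varphi(K_j\cap E)\subseteq K_{(j+1)\bmod (m+1)}$ amounts to a continuous unimodular eigenfunction of composition by $\varphi$ with eigenvalue a primitive $(m+1)$-st root of unity, and the hypothesis $F^{(m)}=\emptyset$ does not supply one: a single periodic orbit of period $m+2$ (a finite discrete $K$, hence extremally disconnected) has $F^{(m)}=\emptyset$, yet an exact cocycle $f\circ\varphi=\zeta f$ along it forces $\zeta^{m+2}=1$, which together with $\zeta^{m+1}=1$ gives $\zeta=1$ and then $|f(\varphi(k))-\alpha f(k)|=|1-\alpha|$, which equals $2$ for $\alpha=-1$. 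The obstruction is not confined to periodic orbits: aperiodic systems of odometer type also admit no cyclic clopen tower of height $m+1$ unless a primitive $(m+1)$-st root of unity is an eigenvalue. You concede part of this in your closing paragraph, but the repair is not a cosmetic exhaustion over a residual set; it changes the mechanism your error estimate rests on. The construction actually behind the cited lemmas takes a maximal clopen marker $U$ with $U,\varphi^{(-1)}(U),\dots,\varphi^{(-m)}(U)$ pairwise disjoint, decomposes $K$ into columns of variable height $h\geq m+1$ via first-entrance times to $U$, and on each column uses powers of an $h$-th root of unity nearest $\alpha$; the quotient $f(\varphi(k))/f(k)$ is then not a single constant $\zeta$ but a locally constant unimodular function each of whose values lies within $\pi/(m+1)$ of $\alpha$. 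Since this variable-height mechanism is the entire content of part (1) and you delegate it back to the reference, part (1) is not proved.

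Part (2) contains an outright error in addition to a gap. The gap: clopenness of $F^{(m)}$ for hyperstonean $K$ is asserted through an appeal to ``the idempotent comparing $\varphi^{j}$ with the identity,'' which is not an argument; this fact genuinely requires the normal measures, and for merely stonean $K$ the periodic set need not be clopen unless $\varphi$ is injective (that special case is Frolik's theorem, which the paper invokes elsewhere under exactly that extra hypothesis). The error: the lemma demands $|f(\varphi(k))-\alpha f(k)|\leq 2\pi/(m+1)$ for \emph{every} $k\in E$, not only for $k\in E\setminus V_m$, and $W=K\setminus V_m$ is not forward invariant. If $k$ satisfies $\varphi^{m+1}(k)\in F^{(m)}$ but $\varphi^{m}(k)\notin F^{(m)}$ (take the top of a strict preimage chain of length $m+1$ terminating at a fixed point), then $k\in W\cap E$ while $\varphi(k)\in V_m$, so your $f$ gives $|f(\varphi(k))-\alpha f(k)|=|\alpha f(k)|=1$, which exceeds $2\pi/(m+1)$ once $m\geq 6$. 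For the same reason ``apply part (1) to $\varphi|_W$'' is not meaningful, since $\varphi$ maps part of $W\cap E$ into $V_m$. The whole point of the formulation of part (2) is that $f$ must be damped gradually across the shells $\varphi^{(-j)}(F^{(m)})\setminus\varphi^{(-j+1)}(F^{(m)})$ so that every one-step ratio stays within tolerance; compare the tapering factors $\bigl(1-\tfrac{1}{\sqrt{d_l}}\bigr)^{i}$ in~(\ref{eq27}) in the proof of Theorem~\ref{t9}, which exist precisely to avoid the unit-size jump your construction creates at the boundary of $V_m$.
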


     \begin{theorem} \label{t9} Let $X$ be a Banach $C(K)$-module and $T$ be a

      \noindent $d$-endomorphism of $X$. Assume that
  \begin{enumerate}
    \item The Banach $C(K)$-module $X$ has the Fatou property.
       \item The powers of $T$ are $d$-independent.
  \end{enumerate}

  Then the upper semi-Fredholm spectrum, $\sigma_2(T)$ is rotation invariant.
  \end{theorem}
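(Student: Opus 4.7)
The proof would rely on the singular-sequence characterization of $\sigma_2(T)$ recalled at the end of Section 2: $\lambda \in \sigma_2(T)$ iff there is a normalized singular sequence $\{x_n\}$ with $(\lambda I - T)x_n \to 0$. Fix $\lambda \in \sigma_2(T)$, $\alpha \in \mathds{T}$, and such a sequence $\{x_n\}$. The plan is to construct another normalized singular sequence $\{y_n\}$ with $(\alpha\lambda I - T)y_n \to 0$, which will prove $\alpha\lambda \in \sigma_2(T)$ and hence the rotation invariance of $\sigma_2(T)$.

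For the construction of $y_n$ I would mimic the argument underlying Theorem~\ref{t20} but perform it sequence-wise. The idea is to lift to the second dual $X''$, which is a Kaplansky module over a hyperstonean $C(Q)$ with $T''$ a $d$-endomorphism having associated open map $\psi : E \to Q$. The $d$-independence hypothesis (Definition~\ref{d6}) guarantees that for every $n$ and every $m$ the set $\eta_{x_n}(\psi^{(-m)}(F^{(m)}\cap Q_{x_n}))$ is nowhere dense in $K_{x_n}$. For each $m$, Lemma~\ref{l2}(2) applied on the hyperstonean $Q$ supplies $g_m \in C(Q)$ with $|g_m|\equiv 1$ off the clopen $V_m = \psi^{(-m)}(F^{(m)})$ and $|g_m\circ\psi - \alpha g_m|\leq 2\pi/(m+1)$. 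Using the nowhere-density of $\eta_{x_n}(V_m \cap Q_{x_n})$ in $K_{x_n}$ together with the Fatou property of the cyclic subspace $X(x_n)$, one realizes $g_m$ as acting on $x_n$ by a multiplier $f_n \in C(K)$ (via the center $Z(X(x_n))$ and the canonical embedding) with $\|f_n x_n\|\to 1$ and $\|(\alpha f_n - f_n\circ\varphi)x_n\|\to 0$ for a suitable diagonal choice $m=m_n\to\infty$. Combining the disjointness-preserving identity $T(f_n x_n) = (f_n\circ\varphi)Tx_n$ read on the cyclic subspaces with $(\lambda I - T)x_n \to 0$ yields $\|(\alpha\lambda I - T)(f_n x_n)\|\to 0$, so $y_n := f_n x_n/\|f_n x_n\|$ is the desired candidate.

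The main obstacle, and the novel ingredient compared to Theorem~\ref{t20}, is verifying that $\{y_n\}$ is singular. Since the multipliers $f_n$ vary with $n$ and their moduli may drop to $0$ on the exceptional sets $V_{m_n}$, the map $x_n\mapsto f_n x_n$ is not uniformly invertible, so singularity does not transfer in one line. The argument I would pursue is by contradiction: suppose a subsequence $y_{n_k}\to y$ in norm. Because $|f_{n_k}|\equiv 1$ off a clopen set whose $\eta_{x_{n_k}}$-image is nowhere dense in $K_{x_{n_k}}$, the Fatou property of $X(x_{n_k})$ forces the norm contribution of $x_{n_k}$ concentrated on that exceptional band to vanish as $m_{n_k}\to\infty$, giving $\|\overline{f_{n_k}}\,y_{n_k}-x_{n_k}\|\to 0$. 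Exploiting that $m:C(K)\to L(X)$ is isometric and that the unit ball of $C(K)$ is weak-$*$ compact, one passes to a further subsequence along which $\overline{f_{n_k}}$ converges weak-$*$ in $C(K)$; a Fatou-based argument transferring this weak-$*$ convergence to norm convergence when applied to the already convergent sequence $y_{n_k}$ produces a norm-convergent subsequence of $\{x_{n_k}\}$, contradicting singularity of $\{x_n\}$. This last transfer, in particular controlling the $V_{m_n}$-contributions uniformly via the Fatou property on the individual cyclic subspaces, is where I expect the technical difficulty to concentrate.
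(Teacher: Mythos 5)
Your first half --- lifting to $X''$, invoking Lemma~\ref{l2} to produce multipliers $g_m$ that are unimodular off the exceptional set $V_m=\psi^{(-m)}(F^{(m)})$ and approximately intertwine $T''$ with $\alpha T''$, then setting $y_n=g_{m_n}x_n$ --- is essentially the construction behind Theorem~\ref{t20}, and you are right that the entire weight of the theorem falls on the singularity of $y_n$. But the resolution you sketch does not work. The pivotal claim $\|\overline{f_{n_k}}\,y_{n_k}-x_{n_k}\|\to 0$ is false in general: the difference is $(1-|f_{n_k}|^2)\chi_{V_{m_{n_k}}}x_{n_k}$, supported on the exceptional band, and the Fatou/nowhere-density argument only yields the lower estimate $\|(I-\chi_{V_m})jx\|=\|x\|$; in a Banach lattice this gives no smallness of $\|\chi_{V_m}jx\|$ (in sup-type norms both bands can carry full norm). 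Likewise, weak-$*$ compactness of the ball of $C(K)$ requires $C(K)$ to be a dual space, and even granting a weak-$*$ convergent subnet of multipliers, multiplication is not continuous from (weak-$*$)$\times$(norm) to norm, so no norm-convergent subsequence of $x_{n_k}$ can be extracted this way. Indeed singularity of $y_n$ can genuinely fail when the mass of $x_n$ concentrates on the sets $\psi^{(-k)}(F^{(m)})$; your argument has no mechanism to exclude or exploit that situation.

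The paper's proof avoids this through a dichotomy for which your single-track argument has no counterpart. With $Q_m=cl\bigcup_n\psi^{(-n)}(F^{(m)})$, $P_m=Q\setminus Q_m$ and $T_m=\chi_{P_m}T''$: either (i) $1\in\sigma_2(T_m)$ for every $m$, in which case no singular sequence is manipulated at all --- part (1) of Lemma~\ref{l2} on the aperiodic part $P_m$ gives an \emph{invertible} unimodular $f$ with $\|q_m\alpha^{-1}T''+f^{-1}T_mf-\alpha^{-1}T''\|\leq\frac{2\pi}{m+1}\|T\|$, and openness of $\Phi_+$ together with the similarity $f^{-1}T_mf$ forces $1\in\sigma_2(\alpha^{-1}T)$; or (ii) some $m$ has $1\notin\sigma_2(T_m)$, the singular sequence is pushed onto $Q_m$, and then either a definite fraction of its mass stays off $\psi^{(-n_l)}(F^{(m)})$ with $n_l\to\infty$ --- in which case the rotated sequence is built with supports avoiding $\psi^{(-d_l)}(F^{(m)})$, $d_l\to\infty$, so that any norm limit would be supported in a nowhere dense subset of $Q_m$ and singularity is automatic --- or the mass concentrates on deep preimages of $F^{(m)}$, and then the Fatou property is used in the direction it actually goes ($\|z_r-z_s\|\leq\|jz_r-jz_s-q_{r,m}jz_r+q_{s,m}jz_s\|$) to show the original sequence was Cauchy, a contradiction. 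In short: singularity of the new sequence must be obtained by controlling its supports or by showing the bad case cannot occur, not by inverting the multipliers.
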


  \begin{proof} Let $\lambda \in \sigma_2(T)$. We can assume without loss of generality that $\lambda =1$. We need to prove that $\mathds{T} \subseteq \sigma_2(T)$. Recall that the second conjugate space $X^{\prime \prime}$ is a Kaplansky $C(Q)$-module where $Q$ is a hyperstonean compact space and $T^{\prime \prime}$ is a $d$-endomorphism of $X^{\prime \prime}$. Moreover, there are a clopen subset $E$ of $Q$ and an open continuous map $\psi : E \rightarrow Q$ such that Equation~(\ref{eq54}) (see the discussion after Definition~\ref{d3}) holds.
    Let $Q_m = cl \bigcup \limits_{n=0}^\infty \psi^{(-n)}(F^{(m)})$ and $P_m = Q \setminus Q_m$. Notice that because $Q$ is hyperstonean and the map $\psi$ is open we have $\psi(Q_m) = \psi^{(-1)}(Q_m) = Q_m$. Let $p_m = \chi_{P_m}$ and $q_m = \chi_{Q_m}$. Then $p_m T^{\prime \prime} = T^{\prime \prime}p_m$. Let $T_m = p_mT^{\prime \prime}$. We consider two cases.

\noindent $(1)$ For any $m \in \mathds{N}$ we have $1 \in \sigma_2(T_m)$. Assume, contrary to our claim that there is an $\alpha \in \mathds{T}$ such that $\alpha \not \in \sigma_2(T)$. Because the set of all upper semi-Fredholm operators is open in $L(X)$ there is an $\varepsilon > 0$ such that if $A \in L(X)$ and $\|\alpha^{-1}T^{\prime \prime} - A\| < \varepsilon$ then $1 \not \in \sigma_2(A)$. Fix $m \in \mathds{N}$. By part 1 of Lemma~\ref{l2} there is an $f \in C(R_m)$ such that $|f| \equiv 1$ on $R_m$ and
\begin{equation}\label{eq55}
  |f(\psi(q)) - \alpha^{-1} f(q)| < \frac{2\pi}{m+1}, q \in E_m,
\end{equation}
where $E_m = R_m \cap E$.
Let
\begin{equation*}
  A = q_m\alpha^{-1}T^{\prime \prime} + f^{-1}T_m f.
\end{equation*}
Then applying~(\ref{eq55}) we see that
\begin{multline} \label{eq56}
  \|A - \alpha^{-1}T^{\prime \prime}\| =\|f^{-1}T_m f - \alpha^{-1}T_m \| = \|(\chi_{E_m}(f^{-1}(f \circ \psi) - \alpha^{-1})) T_m\|  \\
  \leq  \frac{2\pi}{m+1} \|T_m\| \leq  \frac{2\pi}{m+1} \|T\|.
  \end{multline}
If we choose $m$ so large that $ \frac{2\pi}{m+1} \|T\| < \varepsilon$ then it follows from~(\ref{eq56}) that $1 \not \in \sigma_2(A)$, and therefore, $1 \not \in \sigma_2(T_m)$, a contradiction.

$(2)$ There is an $m \in \mathds{N}$ such that $1 \not \in \sigma_2(T_m)$. Let $z_n \in X$ be a singular sequence such that $\|z_n\| = 1$ and $Tz_n - z_n \rightarrow 0$. Let $j$ be the canonical embedding of $X$ into $X^{\prime \prime}$. We can assume by choosing an appropriate subsequence that the sequence $p_mjz_n$ converges by norm in $X^{\prime \prime}$. Notice also that the sequence $t_n = (I - p_m)jz_n/\|(I - p_m)jz_n\|$ is singular and that $T^{\prime \prime} t_n - t_n \rightarrow 0$. For any $k \in \mathds{N}$ let $q_{k,m}$ be the projection corresponding to the characteristic function of the set $\psi^{(-k)}(F^{(m)})$. We have to consider two subcases.

\noindent $(2a)$ There are a positive constant $c$ and increasing sequences $n_l$ and $r_l$ of positive integers such that
\begin{equation}\label{eq26}
  \|(I - q_{n_l,m})t_{r_l}\| \geq c.
\end{equation}
We can assume without loss of generality that the integers $n_l$ are even. Indeed, if $n_l$ is odd we can change it to $n_l -1$ using the inequality
\begin{equation*}
   \|(I  - q_{n_l-1,m})t_{r_l}\| \geq  \|(I - q_{n_l,m})t_{r_l}\|.
\end{equation*}
Let $d_l = n_l/2$. Fix $\alpha \in \mathds{T}$ and consider function $f_l$ on $G_m$ defined as
\begin{equation}\label{eq27}
f_l(q) =  \left\{
    \begin{array}{ll}
      0, & \hbox{$q \in  \psi^{(-d_l)}(F^{(m)})$;} \\
      \alpha^i \big{(}1 - \frac{1}{\sqrt{d_l}} \big{)}^i , & \hbox{$q \in \psi^{(-n_l + i)}(F^{(m)}) \setminus \psi^{(-n_l +i + 1)}(F^{(m)}), i = 0, \ldots , n_l - d_l -1$;} \\
      \alpha^{-j}, & \hbox{$q \in \psi^{(-n_l -j-1)}(F^{(m)}) \setminus \psi^{(-n_l -j)}(F^{(m)}), j \in \mathds{N} $.}
    \end{array}
  \right.
\end{equation}
For any large enough $l$ we have
\begin{equation*}
  \big{(}1 - \frac{1}{\sqrt{d_l}}\big{)}^{d_l - 1} < \frac{1}{\sqrt{d_l}}
\end{equation*}
Then it follows from~(\ref{eq27}) by the means of a routine calculation that
\begin{equation}\label{eq57}
  |(f_l \circ \psi)(q) - \alpha f_l(q)| \leq \frac{1}{\sqrt{d_l}}, \; q \in \bigcup \limits_{n=1}^\infty \psi^{(-n)}(F^{(m)}).
\end{equation}
Because the compact space $Q_m$ is extremally disconnected there is a continuous extension $g_l$ of $f_l$ on $Q_m$, and it follows from~(\ref{eq57}) that
\begin{equation}\label{eq58}
  \|g_l \circ \psi - \alpha g_l\| \leq \frac{1}{\sqrt{d_l}}
\end{equation}
Let $y_l = g_l t_{r_l}$. We conclude from(~\ref{eq26}) and~(\ref{eq27}) that $\|y_l\| \geq c$. It follows from(~\ref{eq58}) that $T^{\prime \prime}y_l - \alpha y_l \rightarrow 0$, and finally, it follows from~(\ref{eq27}) that $supp(y_l) \cap \psi^{-d_l}(F^{(m)}) = \emptyset$, and therefore the sequence $y_l$ is singular.

\noindent $(2b)$ If we cannot find a constant $c$ and sequences $n_l$ and $r_l$ such that~(\ref{eq26}) holds, then we can assume without loss of generality that
\begin{equation}\label{eq28}
   \|(I - q_{n,m})t_n\| \leq 1/n, n \in \mathds{N}.
\end{equation}
Let $r, s \in \mathds{N}$ and let $z_r \neq z_s$. We claim that
\begin{equation}\label{eq59}
  \|z_r - z_s\| \leq \|jz_r -jz_s - q_{r,m}jz_r +q_{s,m}jz_s\|.
\end{equation}
To prove~(\ref{eq59}) let $u = z_r - z_s$ and let us consider the cyclic subspace $X(u)$ represented as a Banach lattice. Because the powers of $T$ are $d$-independent, according to Definition~\ref{d6} the set $\eta_u(\psi^{-n}(F^{(m)}\cup Q_u))$ is nowhere dense in $K_u$ for any $n \in \mathds{N}$. Therefore, we can find a net $y_\gamma$ of elements of the Banach lattice $X(u)$ such that $y_\gamma \uparrow u$ and for any $\gamma$ $q_{r,m}(j y_\gamma) = q_{s,m}(j y_\gamma) = 0$. Then
\begin{equation}\label{eq60}
  \|y_\gamma\| \leq \|jz_r -jz_s - q_{r,m}jz_r +q_{s,m}jz_s\|.
\end{equation}
But, because $X$ has the Fatou property, we have $\|y_\gamma\| \uparrow \|y\|$ and~(\ref{eq59}) follows from~(\ref{eq60}). From~(\ref{eq28}) and~(\ref{eq59}) we conclude that
\begin{equation}\label{eq29}
  \|z_r - z_s\| \leq \|p_m(jz_r - jz_s)\| +1/r + 1/s.
\end{equation}
Because $p_m(jz_n)$ is a Cauchy sequence it follows from~(\ref{eq29}) that the sequence $z_n$ is also a Cauchy sequence, in contradiction to our assumption that the sequence $z_n$ is singular.
  \end{proof}

  \begin{corollary} \label{c10} Let $X$ be a Kaplansky module with Fatou property, $T$ be a $d$-endomorphism of $X$, and $\varphi : E \rightarrow K$ be the corresponding map from Proposition~\ref{p3}. Assume one of the following conditions
  \begin{enumerate}
    \item The set of all eventually $\varphi$-periodic points is of first category in $K$.
    \item The operator $T$ is order continuous and the set of all $\varphi$-periodic points is of first category in $K$.
  \end{enumerate}
   Then the set $\sigma_2(T)$ is rotation invariant.
  \end{corollary}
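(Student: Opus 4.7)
The plan is to derive Corollary~\ref{c10} directly from Theorem~\ref{t9} by verifying that under either hypothesis the powers of $T$ are $d$-independent. Since $X$ is a Kaplansky module, Definition~\ref{d3} reduces $d$-independence to the purely topological condition that $\varphi^{(-n)}(F^{(m)})$ is nowhere dense in $K$ for every $m, n \in \mathds{N}$. The Fatou property is already assumed and Kaplansky modules are Banach $C(K)$-modules, so once $d$-independence is in hand Theorem~\ref{t9} delivers rotation invariance of $\sigma_2(T)$ with no further work.

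Before splitting into cases, I would record the book-keeping that $F^{(m)} = \bigcup_{j=1}^{m}\{k \in E_j : \varphi_j(k) = k\}$ is a finite union of closed sets (each a coincidence set inside the clopen $E_j$), hence closed in $K$, and similarly $\varphi^{(-n)}(F^{(m)}) = \varphi_n^{-1}(F^{(m)})$ is closed in the clopen $E_n$, hence closed in $K$. Since $K$ is compact Hausdorff it is a Baire space, so any closed first-category subset of $K$ is automatically nowhere dense: its complement is open and dense, and thus the set itself has empty interior.

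Under hypothesis $(1)$, $\varphi^{(-n)}(F^{(m)})$ is contained in the first-category set of all eventually $\varphi$-periodic points, so it is itself of first category and, being closed, is nowhere dense by the observation above. Under hypothesis $(2)$, Proposition~\ref{p11} tells us that $\varphi$ is open, so each iterate $\varphi_n$ is open as a composition of open maps. The set $F^{(m)}$ sits inside the first-category set of $\varphi$-periodic points and is closed, hence nowhere dense. For a continuous open map the preimage of a set with empty interior has empty interior, because if $U \subseteq \varphi_n^{-1}(F^{(m)})$ were a nonempty open set then $\varphi_n(U)$ would be a nonempty open subset of $F^{(m)}$. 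Thus $\varphi^{(-n)}(F^{(m)})$ is again closed and nowhere dense.

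The only substantive step in the whole argument is this conversion of category data at the level of the full set of (eventually) periodic points into nowhere-denseness at every individual level $(m, n)$, and the passage is essentially forced by the Baire property of $K$ in case $(1)$ and by the openness of $\varphi$ (together with Baire) in case $(2)$. I would not expect any real obstacle: once $d$-independence is verified, Theorem~\ref{t9} closes the corollary immediately.
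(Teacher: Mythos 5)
Your proposal is correct and is exactly the intended derivation: the paper states Corollary~\ref{c10} without a separate proof, as an immediate consequence of Theorem~\ref{t9}, the only content being the verification (via Definition~\ref{d3}, the Baire property of the compact space $K$, and, in case (2), the openness of $\varphi$ supplied by Proposition~\ref{p11}) that each hypothesis forces every set $\varphi^{(-n)}(F^{(m)})$ to be closed with empty interior, i.e.\ that the powers of $T$ are $d$-independent. There is no discrepancy with the paper's approach.
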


  The next theorem complements the statements of Theorems~\ref{t22} and~\ref{t9}. We use the notations from Definition~\ref{d8}.

  \begin{theorem} \label{t11} Let $X$ be a Kaplansky module with Fatou property and $T$ be an order continuous $d$-endomorphism of $X$. Then
\begin{enumerate}
  \item $\gamma \sigma_i(T_m) = \sigma_i(T_m)$ and $\gamma \sigma_i(T_{(m)}) = \sigma_i(T_{(m)})$ for $i = 1, \ldots , 5$ and for every $\gamma \in \mathds{C}$ such that $\gamma^m = 1$.
  \item For every $m \in \mathds{N}$ we have $\sigma_2(T_{(m)}) \subseteq \sigma_2(T)$ and the set
$\sigma_2(T) \setminus \bigcup \limits_{m=1}^\infty \sigma_2(T_{(m)})$ is rotation invariant.
  \item For every $m \in \mathds{N}$ the set $\sigma_{a.p.}(T_{(m)}) \setminus \sigma_{a.p.}(T_m)$ is rotation invariant and contained in $\sigma_2(T)$. Moreover, if the set $\Pi_m$ contains no isolated points the same is true for the set $\sigma_{a.p.}(T_{(m)}) \setminus \sigma_2(T_m)$.
\end{enumerate}
\end{theorem}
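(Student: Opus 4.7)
The plan is to handle the three parts sequentially, each by an adaptation of the techniques of Theorems~\ref{t22} and~\ref{t9}.

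For part (1), for each $\gamma$ with $\gamma^m=1$, I build an invertible unimodular multiplier conjugating $T_m$ to $\gamma T_m$ and $T_{(m)}$ to $\gamma T_{(m)}$. On $\Pi_m$, where $\varphi$ acts as a free homeomorphism of period $m$, choose $f$ of modulus $1$ on a cross-section and propagate along each orbit by multiplication by $\gamma$; the condition $\gamma^m=1$ makes this consistent, so $f\circ\varphi=\gamma f$ on $\Pi_m$. For $T_{(m)}$, extend $f$ to $H_m$ along backward orbits by the pullback relation $f(k)=\gamma^{-1}f(\varphi(k))$, with continuity supplied by the extreme disconnectedness of $K$ and the openness of $\varphi$ (Proposition~\ref{p11}). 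Multiplication by $f$ is invertible in $L(X)$ and conjugates $T_m$ to $\gamma T_m$ on $\chi_m X$ (respectively $T_{(m)}$ to $\gamma T_{(m)}$ on $\chi_{(m)}X$). Since similarity by an invertible bounded operator preserves all five essential spectra, part (1) follows.

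For part (2), the inclusion $\sigma_2(T_{(m)})\subseteq\sigma_2(T)$ is immediate from order continuity: by Proposition~\ref{p11} the map $\varphi$ is open, so by Remark~\ref{r19} one has $H_m=\varphi^{(-1)}(H_m)$ and therefore $\chi_{(m)}$ commutes with $T$. Hence $\chi_{(m)}X$ is $T$-invariant with $T|_{\chi_{(m)}X}=T_{(m)}$, and any singular sequence on $\chi_{(m)}X$ witnessing $\lambda\in\sigma_2(T_{(m)})$ also witnesses $\lambda\in\sigma_2(T)$. For the rotation invariance of $\sigma_2(T)\setminus\bigcup_m\sigma_2(T_{(m)})$, I follow the two-case strategy of Theorem~\ref{t9}. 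Given $\lambda$ in the difference set and $\alpha\in\mathds{T}$, a singular sequence $(z_n)$ with $(T-\lambda)z_n\to 0$ exists; since $\lambda\notin\sigma_2(T_{(m)})$ for every $m$, passing to the bidual Kaplansky $C(Q)$-module $X''$ and splitting along the projections introduced in the proof of Theorem~\ref{t9}, one can arrange (after a compact modification) that the residual singular sequence effectively lives outside arbitrarily deep preimages $\psi^{(-n)}(F^{(m)})$. Lemma~\ref{l2}(2) then produces a unimodular $f\in C(Q)$ with $|f\circ\psi-\alpha^{-1}f|$ small on the relevant set, and $y_n=f z_n$ satisfies $(T''-\alpha\lambda)y_n\to 0$ while remaining singular, proving $\alpha\lambda\in\sigma_2(T)$.

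For part (3), take $\lambda\in\sigma_{a.p.}(T_{(m)})\setminus\sigma_{a.p.}(T_m)$ with an approximate eigenvector $(x_n)\subset\chi_{(m)}X$, $\|x_n\|=1$, $T_{(m)}x_n-\lambda x_n\to 0$. The key identity is $\chi_m T(1-\chi_m)y=0$ for $y\in\chi_{(m)}X$, which holds because the image $T(1-\chi_m)y$ is supported in $\varphi^{(-1)}(H_m\setminus\Pi_m)\subseteq H_m\setminus\Pi_m$ (using $\Pi_m\subseteq\varphi^{(-1)}(\Pi_m)$). Hence if $\chi_m x_n\not\to 0$, the normalized $y_n=\chi_m x_n/\|\chi_m x_n\|$ would satisfy $T_m y_n-\lambda y_n\to 0$, contradicting $\lambda\notin\sigma_{a.p.}(T_m)$. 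After replacing $x_n$ by the renormalized $(1-\chi_m)x_n$, we may assume $\mathrm{supp}(x_n)\subseteq H_m\setminus\Pi_m$, a set on which $\varphi$ has no $m$-periodic points; Lemma~\ref{l2}(1) supplies $f$ of modulus $1$ with $|f\circ\varphi-\alpha f|\leq 2\pi/(m+1)$, and $y_n=f x_n$ is an approximate eigenvector of $T_{(m)}$ at $\alpha\lambda$ whose support remains in $H_m\setminus\Pi_m$, keeping $\alpha\lambda$ out of $\sigma_{a.p.}(T_m)$ since any approximate eigenvector of $T_m$ must concentrate on $\Pi_m$. For the inclusion $\subseteq\sigma_2(T)$, argue $(x_n)$ is singular: a convergent subsequence with nonzero limit $x$ would give $Tx=\lambda x$ with $\mathrm{supp}(x)\subseteq H_m\setminus\Pi_m$, but eigenvector supports are forward $\varphi$-invariant while every forward orbit in $H_m$ must eventually enter $\Pi_m$, a contradiction. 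The moreover clause follows once one observes that under the no-isolated-points hypothesis any eigenspace of $T_m$ is supported on a clopen subset of $\Pi_m$ without isolated points and is therefore infinite-dimensional, so $\sigma_p(T_m)\subseteq\sigma_2(T_m)$ and hence $\sigma_{a.p.}(T_m)=\sigma_2(T_m)$, making the two difference sets coincide.

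The hardest step will be the rotation invariance in part (2): once the eventually periodic contributions $\bigcup_m\sigma_2(T_{(m)})$ are removed, adapting both cases of the proof of Theorem~\ref{t9} requires carefully choosing the compression projections and singular-sequence modifications so that the approximate eigenvector relation survives multiplication by the Lemma~\ref{l2}(2) rotator. A secondary subtlety in part (3) lies in the global continuity of the multiplier $f$ from $H_m\setminus\Pi_m$ back to all of $K$ (which is genuinely needed only up to the operator action), resolved by exploiting the hyperstonean structure in the bidual when $K$ itself is not hyperstonean.
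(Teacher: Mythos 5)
Part (1) of your proposal is essentially the paper's argument (a unimodular multiplier built on a cross-section of $\Pi_m$ and propagated backward over $H_m$), and the inclusion $\sigma_2(T_{(m)})\subseteq\sigma_2(T)$ in part (2) is fine. The problems are in the two steps you yourself flag as delicate, and in both cases the difficulty is not merely technical.

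For the rotation invariance in part (2), your plan is to rerun the bidual argument of Theorem~\ref{t9} and ``arrange (after a compact modification) that the residual singular sequence effectively lives outside arbitrarily deep preimages $\psi^{(-n)}(F^{(m)})$.'' That sentence is the entire proof and it is not justified; moreover the projections in Theorem~\ref{t9} are complements of $cl\bigcup_n\psi^{(-n)}(F^{(m)})$, where $F^{(m)}$ is the set of \emph{all} periodic points of period $\leq m$, whereas your hypothesis only removes $\sigma_2(T_{(m)})=\sigma_2(\chi_{H_m}T)$ with $H_m$ built from the \emph{interior} $\Pi_m$ of $P_m$; the mismatch between these two families of projections is exactly what has to be bridged. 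The paper does this by a different decomposition: write $L=cl\bigcup_m H_m$ and $M=K\setminus L$. If $\lambda\in\sigma_2(T,\chi_MX)$, the periodic points in $M$ form a first category set and Corollary~\ref{c10} applies. Otherwise $\lambda\in\sigma_2(T,\chi_{L_m}X)$ for every $m$, where $L_m=cl\bigcup_{j\geq m}H_j$ carries no periodic points of period $<m$, so Lemma~\ref{l2}(1) applies directly on $L_m$ and yields singular approximate eigenvectors at $\alpha\lambda$ whose supports shrink into $L_m$. Without some such decomposition your case analysis does not close.

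In part (3), the identity $\chi_mT(1-\chi_m)=0$ and the deduction $\chi_{\Pi_m}x_n\to 0$ are correct, but the next step fails: $(1-\chi_m)x_n$ is \emph{not} an approximate eigenvector, because $T(1-\chi_m)x_n-\lambda(1-\chi_m)x_n$ contains the term $\lambda\chi_{\varphi^{(-1)}(\Pi_m)\setminus\Pi_m}x_n$, which need not tend to zero ($\Pi_m\subsetneq\varphi^{(-1)}(\Pi_m)$ in general). Even granting supports in $H_m\setminus\Pi_m$, your singularity argument is wrong: $H_m=cl\bigcup_n\varphi^{(-n)}(\Pi_m)$, and a point of the \emph{closure} need not have a forward orbit that ever enters $\Pi_m$, so ``every forward orbit in $H_m$ eventually enters $\Pi_m$'' is false. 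What is actually needed (and what the paper imports from the proof of Proposition 12.15 in~\cite{AAK}) is a sequence $y_n$ with $Ty_n-\alpha\lambda y_n\to 0$ and $supp(y_n)\subseteq H_m\setminus\varphi^{(-n)}(\Pi_m)$ with $n\to\infty$; since the sets $\varphi^{(-n)}(\Pi_m)$ increase and their union is dense in $H_m$, any norm limit of a subsequence would have to vanish, which gives singularity and hence membership in $\sigma_2(T)$. Your ``moreover'' argument for (3b) (no isolated points in $\Pi_m$ forces infinite-dimensional eigenspaces of $T_m$, hence $\sigma_{a.p.}(T_m)=\sigma_2(T_m)$) is sound and matches the paper.
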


\begin{proof}
  $(1)$  It is immediate to see that there is a clopen subset $\Pi$ of $\Pi_m$ such that the sets
  $\varphi^j(\Pi), j= 0, \ldots , m-1$ are pairwise disjoint and $\Pi_m = \bigcup \limits_{j=0}^{m-1} \varphi^j(\Pi)$. Let $\lambda \in \mathds{C}, \lambda^m = 1$. Consider the function $g$ defined on
$\bigcup \limits_{n=1}^\infty \varphi^{(-n)}(\Pi_m)$ as
  \begin{equation*}
  g(k)=  \left\{
      \begin{array}{ll}
        \lambda^j, & \hbox{if $k \in \varphi^j(|Pi)$}; \\
        \lambda^{-n}g(\varphi^n(k)), & \hbox{if $k \in \varphi^{-n}(\Pi_m) \setminus \varphi^{-n+1}(\Pi_m), n \in \mathds{N}$.}
      \end{array}
    \right.
  \end{equation*}
Let $h$ be the continuous extension of $g$ on $H_m$ and let
\begin{equation*}
r(k) =  \left\{
    \begin{array}{ll}
      h(k), & \hbox{ if $k \in H_m$;} \\
      1, & \hbox{otherwise.}
    \end{array}
  \right.
\end{equation*}
Then, $(r \circ \varphi)(k) = \lambda r(k), k \in H_m$. Therefore, $r^{-1}T_m r = \lambda T_m$ and
$r^{-1}T_{(m)} r = \lambda T_{(m)}$.

\noindent $(2)$ The inclusion $\sigma_2(T_{(m)} \subseteq \sigma_2(T)$ is trivial because the projection $q_m = \chi_{H_m}$ commutes with $T$. Let $\lambda \in \sigma_2(T) \setminus \bigcup \limits_{m=1}^\infty \sigma_2(T_{(m)})$. We can assume without loss of generality that $\lambda = 1$.
Let $L = cl \bigcup \limits_{m=1}^\infty H_m$ and $M =  K \setminus L$. Consider two possibilities.

\noindent $(2a)$ $1 \in \sigma_2(T, \chi_M X)$. Because $\varphi$ periodic points make a set of first category in $M$, by Corollary~\ref{c10} $\mathds{T} \subseteq \sigma_2(\chi_M X) \subseteq \sigma_2(T)$.

\noindent $(2b)$. $1 \not \in \sigma_2(T, \chi_M X)$. Then $1 \in \sigma_2(T, \chi_L X)$. Moreover, for any $m \in \mathds{N}$ we have $1 \in \sigma_2(T, \chi_{L_m}X$ where $L_m = cl \bigcup \limits_{j=m}^\infty H_j$. Therefore, we can find a sequence $x_m \in X$ such that $\|x_m\| = 1$,
$Tx_m - x_m \rightarrow 0$, and $supp \; x_m \subseteq L_m$. Fix $\alpha \in \mathds{T}$. By part (1) of Lemma~\ref{l2} for any $m \in \mathds{N}$ there is an $f_m \in C(L_m)$ such that $|f| \equiv 1$ and
$\|f \circ \varphi - \alpha f\|_{C(L_m)} \leq \frac{2\pi}{m}$. Then $\|f_m x_m\| = 1$ and it is easy to see that $T(f_m x_m) - \alpha (f_m x_m) \rightarrow 0$. Moreover, the sequence $f_m x_m$ is singular because $supp(f_mx_m) \subseteq L_m$.

\noindent $(3a)$. Let $\lambda \in \sigma_{a.p.}(T_{(m)}) \setminus \sigma_{a.p.}(T_m)$ and let $\alpha \in \mathds{T}$. Let $x_n \in X$, $\|x_n\| = 1$, $supp \; x_n \subseteq H_m$, and $Tx_n - \lambda x_n \rightarrow 0$. Because $\lambda \not \in \sigma_{a.p.}(T_m)$ we have
\begin{equation}\label{eq61}
  \chi_{\Pi_m}x_n \rightarrow 0.
\end{equation}
The proof of Part 3 of Proposition 12.15 in~\cite[p.98]{AAK} shows that~(\ref{eq61}) implies the existence of a sequence $y_n \in X$ such that $\|y_n\| = 1$, $Ty_n - \lambda \alpha y_n \rightarrow 0$ and $supp(y_n) \subseteq H_m \setminus \varphi^{(-n)}(\Pi_m)$. The last inclusion implies that the sequence $y_n$ is singular.

\noindent $(3b)$ Assume now that $\Pi_m$ has no isolated points and $\lambda \in \sigma_{a.p.}(T_{(m)}) \setminus \sigma_2(T_m)$. Let again $x_n \in X$, $\|x_n\| = 1$, $supp \; x_n \subseteq H_m$, and $Tx_n - \lambda x_n \rightarrow 0$. It is enough to prove that~(\ref{eq61}) holds. If not, then there is a $z \neq 0$, $supp \; z \subseteq \Pi_m$, such that $T_m z = \lambda z$. Because $\Pi_m$ contains no isolated points we can find an infinite sequence $E_j$ of pairwise disjoint clopen subsets of $supp \; z$ such that $\varphi(E_j) = E_j$. Then $T_m \chi_{E_j}g = \lambda \chi_{E_j}g$ and therefore $null(\lambda I - T_m) = \infty$, in contradiction with our assumption that $\lambda \not \in \sigma_2(T_m)$.
\end{proof}

\begin{remark} \label{r17}
  In the special case when the Kaplansky module $X$ is a Dedekind complete Banach lattice we have in virtue of Theorem~\ref{t23} that $\sigma(T_m) \subseteq \sigma(T), m \in \mathds{N}$. Moreover, because $T_m^m$ is a central operator, it is immediate to see that in case when $X$ has no atoms we have $\sigma(T_m) = \sigma_1(T_m)$. But, as the next example shows, the inclusion
$\sigma(T_m) \subseteq \sigma_2(T)$, or even $\sigma(T_m) \subseteq \sigma_{a.p.}(T)$, in general does not hold.
\end{remark}

\begin{example} \label{e3} Let $X = \ell^\infty(\mathds{N})$. We define the map $\varphi : \mathds{N} \rightarrow \mathds{N}$ as follows
\begin{equation*}
\varphi(k) =  \left\{
    \begin{array}{ll}
      1, & \hbox{if $k=1$;} \\
      k-1, & \hbox{if $k \in \mathds{N}$ and $k > 1$.}
     \end{array}
  \right.
\end{equation*}
The map $\varphi$ extends in the unique way to the open continuous surjection $\psi : K \rightarrow K$, where $K$ is the Gelfand compact of the Banach algebra $\ell^\infty(\mathds{N})$. Next, we define the weight $w \in \ell^\infty(\mathds{N})$ as
\begin{equation*}
w(k) =  \left\{
    \begin{array}{ll}
      1, & \hbox{if $k = 1$;} \\
      2, & \hbox{if $k \in \mathds{N}$ $k > 1$.}
     \end{array}
  \right.
\end{equation*}
 We denote by $W$ the element of $C(K)$ corresponding to $w$ and define the operator $T$ on $C(K)$ as
\begin{equation*}
  (Tf)(k) = W(k)f(\psi(k)), \; f \in C(K), \; k \in K.
\end{equation*}
Notice that $H_1 = \{1\}$ and $\sigma(T_1) = 1$. We claim that $1 \in \sigma_r(T)$. Indeed, assume to the contrary that $1 \in \sigma_{a.p.}(T)$ and let $x_n \in C(K)$ be such that $\|x_n\| = 1$ and $Tx_n - x_n \rightarrow 0$. Let $k_n \in K$ be a point such that $|x_n(k_n)| = 1$ and let $k$ be a limit point of the set $\{k_n\}$ in $K$. It is immediate to see that $|W_n(k)| \geq 1, n \in \mathbb{N}$ and also that for any $n \in \mathds{N}$ and for any $s \in \varphi^{(-n)}(\{k\})$ we have $|W_n(s)| \leq 1$. But it is clear from the definitions of $\varphi$ and $W$ that such a point $k$ does not exist. Actually, $\sigma(T) = 2\mathds{D}$ while $\sigma_r(T) = 2 \mathds{U}$.
\end{example}

\begin{remark} \label{r18}
  It is not difficult to modify Example~\ref{e3} in such a way that $X$ becomes a Dedekind complete Banach lattice ( and even a Banach function space) with no atoms. E.g. we can take as $X$ the Banach lattice $\ell^\infty (L^\infty (0,1))$.
\end{remark}

We can get stronger results by imposing additional conditions on the map $\varphi$ and/or the compact space $K$. The first such result is the following

    \begin{theorem} \label{t2} Let $X$ be a Kaplansky module, $T \in L(X)$ be a $d$-endomorphism of $X$, and $\varphi$ be the map from Proposition~\ref{p3}

Assume that the map $\varphi$ is open and the set of all periodic points of $\varphi$ is empty. Then the sets $\sigma(T), \sigma_{a.p.}(T), \sigma_r(T)$ as well as the essential spectra $\sigma_i(T), i = 1, \ldots , 5$ and $\sigma_2(T')$ are rotation invariant.
\end{theorem}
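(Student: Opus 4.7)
The plan is to apply Lemma~\ref{l2}(1) directly to $X$, thereby bypassing the passage to the second dual and the Fatou property used in Theorem~\ref{t9}. Because $\varphi$ has no periodic points, $F^{(m)} = \emptyset$ for every $m \in \mathds{N}$; extremal disconnectedness of $K$ follows from the Kaplansky hypothesis, and openness of $\varphi$ is given, so the hypothesis of Lemma~\ref{l2}(1) is satisfied on $K$ itself.

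Fix $\alpha \in \mathds{T}$. For each $n \in \mathds{N}$, Lemma~\ref{l2}(1) yields $f_n \in C(K)$ with $|f_n| \equiv 1$ and $|f_n(\varphi(k)) - \alpha f_n(k)| \leq 2\pi/(n+1)$ for $k \in E$. Since the embedding $C(K) \hookrightarrow L(X)$ is a unital isometric algebra homomorphism and $|f_n| \equiv 1$, the operator of multiplication by $f_n$ is invertible in $L(X)$ with $\|f_n\| = \|f_n^{-1}\| = 1$. From the identity $T(f_n x) = (f_n \circ \varphi) T x$ of Proposition~\ref{p3} together with $\chi_E T = T$ (Remark~\ref{r16}), I would compute
\begin{equation*}
  f_n^{-1} T f_n - \alpha T = \bigl[f_n^{-1}(f_n \circ \varphi) - \alpha \chi_E\bigr] T,
\end{equation*}
and observe that the bracketed multiplier vanishes off $E$ and has modulus at most $2\pi/(n+1)$ on $E$. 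Therefore $\|f_n^{-1} T f_n - \alpha T\| \leq (2\pi/(n+1))\|T\| \to 0$.

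The key consequence is that $f_n^{-1} T f_n$ is similar to $T$, and so shares with $T$ all of $\sigma$, $\sigma_{a.p.}$, $\sigma_r$, $\sigma_i$ for $i = 1,\ldots,5$, and $\sigma_2(T')$. Since the classes of invertible, bounded-below, $\Phi$-, $\Phi_+$-, $\Phi_-$-, $\mathcal{F}$-, and $\mathcal{W}$-operators are all open in $L(X)$, the corresponding spectra $\sigma, \sigma_{a.p.}, \sigma_1, \sigma_2, \sigma_3, \sigma_4$, and $\sigma_2(T')$ are upper semi-continuous in the norm topology. For each such $\sigma_\ast$, if $\lambda \notin \sigma_\ast(\alpha T)$ then $\lambda \notin \sigma_\ast(f_n^{-1} T f_n) = \sigma_\ast(T)$ for all large $n$, yielding $\sigma_\ast(T) \subseteq \alpha \sigma_\ast(T)$; running the argument with $\alpha^{-1}$ gives the reverse inclusion. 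The residual spectrum $\sigma_r(T) = \sigma(T) \setminus \sigma_{a.p.}(T)$ inherits rotation invariance from $\sigma$ and $\sigma_{a.p.}$.

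The main subtlety will be the Browder spectrum $\sigma_5$, which fails to be upper semi-continuous in general. I would circumvent this via its topological definition: $\sigma_5(T)$ is determined by $\sigma(T)$ and $\sigma_1(T)$, both of which are already known to be rotation invariant. The rotation $\lambda \mapsto \alpha \lambda$ therefore permutes the connected components of $\mathds{C} \setminus \sigma_1(T)$ and preserves $\rho(T)$, so the condition that a component meets $\rho(T)$ is invariant; this forces both $\sigma(T) \setminus \sigma_5(T)$ and $\sigma_5(T)$ itself to be rotation invariant.
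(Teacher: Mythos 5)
Your proposal is correct and follows essentially the same route as the paper: apply Lemma~\ref{l2}(1) on $K$ itself (the periodic-point set being empty makes every $F^{(m)}$ empty), conjugate $T$ by the resulting unimodular multipliers to exhibit $\alpha T$ as a norm limit of operators similar to $T$, and invoke openness of the relevant operator classes. Your explicit treatment of $\sigma_5$ via its topological definition in terms of $\sigma$ and $\sigma_1$ fills in a step the paper leaves implicit (just note that you use $\rho(T)$ there for the resolvent set, whereas the paper reserves that symbol for the spectral radius).
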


\begin{proof} Let $\alpha \in \mathds{T}$ and $\varepsilon >0$. By part (1) of Lemma~\ref{l2} there is a $g \in C(K)$ such that
\begin{equation}\label{eq8}
 |g| \equiv 1 \; \text{on} \; K \; \text{and} \; \|g\circ \varphi - \alpha g \| \leq \varepsilon.
\end{equation}
Then
\begin{equation}\label{eq9}
  \|\alpha T - g^{-1}Tg\| = \|\alpha T - g^{-1}(g\circ \varphi)T\|
\end{equation}
 Combining~(\ref{eq8}) and~(\ref{eq9}) we get
 \begin{equation}\label{eq10}
   \|\alpha T - g^{-1}Tg\| \leq \varepsilon \|T\|.
 \end{equation}
 The statement of the theorem follows from~(\ref{eq10}) and the well known fact that the set of all semi-Fredholm operators is open in $L(X)$ and the index of a semi-Fredholm operator is invariant under perturbations of small norm (see e.g.~\cite{Kat}).
\end{proof}

\begin{corollary} \label{c11} Let $X$ be a Kaplansky module, $T$ be a $d$-endomorphism of $X$, and
$\varphi$ be the map from Proposition~\ref{p3}. Assume that
\begin{enumerate}
  \item The compact space $K$ is hyperstonean.
  \item The powers of $T$ are $d$-independent.
\item The operator $T$ is order continuous.
\end{enumerate}
Then the statement of Theorem~\ref{t2} holds.
\end{corollary}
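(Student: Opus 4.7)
The plan is to reduce Corollary~\ref{c11} directly to Theorem~\ref{t2} by verifying its two hypotheses: that the map $\varphi$ is open, and that the set of all $\varphi$-periodic points in $K$ is empty. The first of these is immediate from hypothesis~(3): since $X$ is a Kaplansky module and $T$ is order continuous, Proposition~\ref{p11} yields that $\varphi$ is open. The entire content of the corollary therefore consists in using hypotheses~(1) and~(2) to force the set of $\varphi$-periodic points to be empty, which I would do by showing that $F^{(m)}=\emptyset$ for every $m\in\mathds{N}$.

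For this I would stack three observations. First, hypothesis~(1) places us in the hyperstonean setting, so part~(2) of Lemma~\ref{l2} asserts in particular that $F^{(m)}$ is clopen in $K$. Second, a point of period $p\leq m$ is sent by $\varphi$ to another point of the same period, hence $\varphi(F^{(m)})\subseteq F^{(m)}$, which gives $F^{(m)}\subseteq \varphi^{(-n)}(F^{(m)})$ for every $n\in\mathds{N}$. Third, hypothesis~(2) together with Definition~\ref{d3} gives that $\varphi^{(-n)}(F^{(m)})$ is nowhere dense in $K$, and combined with the previous containment it forces $F^{(m)}$ to be nowhere dense as well. But a clopen set equals both its closure and its interior, so a clopen nowhere dense set has empty interior and simultaneously equals its interior, whence $F^{(m)}=\emptyset$. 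Taking the union over $m$ leaves the set of $\varphi$-periodic points empty, and Theorem~\ref{t2} then yields rotation invariance of $\sigma(T)$, $\sigma_{a.p.}(T)$, $\sigma_r(T)$, $\sigma_i(T)$ for $i=1,\dots,5$, and $\sigma_2(T^\prime)$.

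There is no serious technical obstacle here; the argument is essentially the observation that hyperstoneanness upgrades $F^{(m)}$ to a clopen set, at which point $d$-independence of the powers of $T$ annihilates it. The only small point requiring care is that $\varphi^n$ is defined only on the clopen set $E_n$ introduced in Remark~\ref{r21}, but by Definition~\ref{d11} every periodic point lies in $\bigcap_{n=1}^{\infty} E_n$, so no domain issue arises in the chain of inclusions above.
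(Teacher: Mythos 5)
Your proposal is correct and follows essentially the same route as the paper: establish that $F^{(m)}$ is clopen (the paper cites Corollary A9 of~\cite{AAK} where you invoke part (2) of Lemma~\ref{l2}, which records the same fact under the hyperstonean and open-map hypotheses you have verified), conclude from $d$-independence that each $F^{(m)}$ is empty, and apply Theorem~\ref{t2}. Your explicit justification of the step from nowhere-density of $\varphi^{(-n)}(F^{(m)})$ to emptiness of the clopen set $F^{(m)}$, via the inclusion $F^{(m)}\subseteq\varphi^{(-n)}(F^{(m)})$, is a detail the paper leaves implicit, and it is sound.
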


\begin{proof}
 Conditions (1) and (3) and Corollary A9 from~\cite[p.144]{AAK} imply that  the set $F^{(m)}$ is clopen in $K$ for any $m \in\mathds{N}$. Therefore, condition (2) implies that the sets $F^{(m)}$ are empty. Thus, the statement follows from Theorem~\ref{t2}.
\end{proof}

\begin{corollary} \label{c12} Let $X$ be a Kaplansky module and $T$ be a $d$-endomorphism of $X$. Assume that
\begin{enumerate}
  \item The compact space $K$ is hyperstonean.
  \item The operator $T$ is order continuous.
\end{enumerate}
Then the statement of Theorem~\ref{t11} holds.
\end{corollary}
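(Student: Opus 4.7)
The strategy is to run through the proof of Theorem~\ref{t11} verbatim, replacing the single use of the Fatou property by an appeal to Theorem~\ref{t2}. Under the hypotheses of Corollary~\ref{c12}, Proposition~\ref{p11} tells us that the map $\varphi$ of Proposition~\ref{p3} is open, and the same Corollary~A9 of~\cite{AAK} cited in the proof of Corollary~\ref{c11} tells us that $F^{(m)}$ is clopen in $K$ for every $m \in \mathds{N}$. In particular $P_m = \Pi_m$, and every $\varphi$-periodic point of $K$ sits inside $L := cl \bigcup_{m=1}^\infty H_m$.

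First I would dispose of Part~(1). Its proof in Theorem~\ref{t11} constructs a unimodular $r \in C(K)$ with $r \circ \varphi = \lambda r$ on $H_m$ using only that $K$ is extremally disconnected (in order to extend $g$ continuously from the open set $\bigcup_{n=1}^\infty \varphi^{(-n)}(\Pi_m)$ to the clopen set $H_m$) and that $\varphi(H_m) = H_m$ (Remark~\ref{r19}). The Fatou property never enters, so this argument transfers unchanged.

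For Part~(2), the inclusion $\sigma_2(T_{(m)}) \subseteq \sigma_2(T)$ is trivial since $H_m$ is $\varphi$-invariant clopen. Assume $1 \in \sigma_2(T) \setminus \bigcup_m \sigma_2(T_{(m)})$ and let $M = K \setminus L$, clopen and $\varphi$-invariant. Two cases arise as in the proof of Theorem~\ref{t11}. In case (2a), where $1 \in \sigma_2(\chi_M T)$, the key observation is that $M$ contains \emph{no} $\varphi$-periodic points whatsoever, since all of them lie in $L$. Hence the $d$-endomorphism $\chi_M T$ of the Kaplansky $C(M)$-module $\chi_M X$ satisfies the hypotheses of Theorem~\ref{t2} (open associated map $\varphi|_{M\cap E}$ and empty periodic set), yielding $\mathds{T} \subseteq \sigma_2(\chi_M T) \subseteq \sigma_2(T)$. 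This is exactly the place where the original proof invoked Corollary~\ref{c10} and hence the Fatou property; Theorem~\ref{t2} now suffices in its stead. Case (2b) transfers without change: the construction of the singular sequence $f_m x_m$ relies only on Lemma~\ref{l2}(1) applied on $L_m = cl \bigcup_{j \geq m} H_j$, on which $\varphi$ has no periodic points of period less than $m$; the bookkeeping that $1 \in \sigma_2(\chi_{L_m} T)$ for every $m$ uses the orthogonal decomposition of $\chi_L T$ and the fact that $1 \notin \bigcup_{j < m} \sigma_2(T_{(j)})$.

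For Part~(3), the proof of (3a) in Theorem~\ref{t11} appeals to Part~3 of Proposition~12.15 in~\cite{AAK}, which for us is available via clause~(b) of Theorem~\ref{t22} (hyperstonean $K$) without invoking Fatou. The proof of (3b) is a direct eigenvector construction inside $C(K)$ using the idempotents $\chi_{E_j}$ on a non-isolated $\Pi_m$, and requires no analytic hypothesis beyond what we already have. Hence both parts of~(3) carry over. The main obstacle is case (2a) of Part~(2), resolved by the remark that our hypotheses force the periodic set in $M$ to be \emph{empty} rather than merely of first category, which upgrades the available tool from Corollary~\ref{c10} (dependent on Fatou) to Theorem~\ref{t2} (not dependent on it).
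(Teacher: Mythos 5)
Your proposal is correct and follows the paper's own route: the paper likewise observes that the Fatou property enters the proof of Theorem~\ref{t11} only in case (2a), and that hyperstoneanness plus order continuity force $F^{(m)}$ to be clopen, so that $M$ contains no $\varphi$-periodic points and Corollary~\ref{c11} (itself just Theorem~\ref{t2} after noting the periodic set is empty) can replace Corollary~\ref{c10}. Your additional verification that parts (1), (2b), and (3) never use the Fatou property is exactly the bookkeeping the paper leaves implicit.
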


\begin{proof}
  In the proof of Theorem~\ref{t11} we used the fact that $X$ has the Fatou property only in part (2a). But conditions (1) and (2) imply that the map $\varphi$ has no periodic points in $M$, where the set $M$ is defined as in the proof of Theorem~\ref{t11}, and we can apply Corollary~\ref{c11} instead of Corollary~\ref{c10}.
\end{proof}

An important special case of Corollary~\ref{c12} is the following

 \begin{corollary} \label{c1}. Let $X$ be a Banach $C(K)$-module and $T$ be a $d$-endomorphism of $X$. Assume that

\noindent $(*)$ Every cyclic subspace of $X$ when represented as a Banach lattice has order continuous norm.

\noindent Then the statement of Theorem~\ref{t11} remains correct.

   Moreover, if the powers of $T$ are $d$-independent then the sets $\sigma(T)$,

\noindent $\sigma_{a.p.}(T), \sigma_r(T)$, as well as the essential spectra $\sigma_i(T), i= 1, \ldots , 5$ and $\sigma_2(T^\prime)$ are rotation invariant.
 \end{corollary}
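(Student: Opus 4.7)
The plan is to reduce both claims to the already established Kaplansky case by passing to the second dual. Recall that $X''$ carries the structure of a Kaplansky $C(Q)$-module with $Q$ hyperstonean, that $T''$ is a $d$-endomorphism of $X''$, and that the associated map $\psi$ on $Q$ is continuous and open (see the discussion surrounding equation~\eqref{eq54}). By Proposition~\ref{p11}, openness of $\psi$ is equivalent to $T''$ being an order continuous $d$-endomorphism. Consequently the hypotheses of Corollary~\ref{c12} (and, when the powers of $T$ are assumed $d$-independent, those of Corollary~\ref{c11}) hold automatically for $T''$ on the Kaplansky module $X''$, without any further assumption on $T$.

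For the first assertion, I would interpret the operators $T_m$ and $T_{(m)}$ of Theorem~\ref{t11} in terms of the clopen projections $\chi_{\Pi_m}, \chi_{H_m} \in C(Q)$ acting on $X''$, and apply Corollary~\ref{c12} directly to $T''$. Since condition $(*)$ trivially implies that every cyclic subspace of $X$ has the Fatou property, Theorem~\ref{t9} yields rotation invariance of $\sigma_2(T)$ itself; combined with the general identity $\sigma_2(T) = \sigma_2(T'')$, this recovers the full statement of Theorem~\ref{t11} for the original operator $T$.

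For the ``moreover'' assertion, I would first observe that condition $(*)$ together with $d$-independence makes Theorem~\ref{t20} applicable, giving rotation invariance of $\sigma(T)$ and $\sigma_{a.p.}(T)$, whence $\sigma_r(T) = \sigma(T) \setminus \sigma_{a.p.}(T)$ is rotation invariant as well. Since $d$-independence of the powers of $T$ is, by Definition~\ref{d6}, stated directly in terms of $\psi$, it transfers verbatim to the powers of $T''$ regarded as a $d$-endomorphism of the Kaplansky module $X''$. Corollary~\ref{c11} then applies to $T''$ and delivers rotation invariance of the essential spectra $\sigma_i(T'')$ for $i = 1, \ldots, 5$. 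I would pull these back to $T$ by the standard identities $\sigma_i(T) = \sigma_i(T'')$ for $i = 1, \ldots, 5$ and $\sigma_2(T') = \sigma_2((T'')')$ (the latter being the lower semi-Fredholm spectrum of $T''$), all of which follow from the classical duality $T \in \Phi_+ \Leftrightarrow T' \in \Phi_- \Leftrightarrow T'' \in \Phi_+$ for semi-Fredholm operators.

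The point that requires care is that in general $\sigma_{a.p.}(T) \subsetneq \sigma_{a.p.}(T'')$, so rotation invariance of $\sigma_{a.p.}(T)$ cannot simply be inherited from the bidual. This is precisely why one must invoke Theorem~\ref{t20} directly on $X$ for the pair $\sigma(T), \sigma_{a.p.}(T)$, which is exactly where the Fatou property on $X$ itself, as opposed to only on $X''$, enters the argument.
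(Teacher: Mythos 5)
Your route through the bidual misses the key fact the paper's proof rests on, and as a result the first assertion is not actually established. The paper's argument is a short one: condition $(*)$ together with Theorem 4.6 of \cite{KO} implies that $X$ \emph{itself} is a Kaplansky module over a hyperstonean $K$, and $(*)$ also forces every $d$-endomorphism of $X$ to be order continuous; one then applies Corollary~\ref{c12} (and, for the ``moreover'' part, Corollary~\ref{c11}) directly on $X$. This step is indispensable: the statement of Theorem~\ref{t11} concerns the localized operators $T_m=\chi_{\Pi_m}T\chi_{\Pi_m}$ and $T_{(m)}=\chi_{H_m}T$, which are only defined once $X$ carries a Kaplansky structure with its own map $\varphi$ --- something you never establish. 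Applying Corollary~\ref{c12} to $T''$ on $X''$ yields the conclusion of Theorem~\ref{t11} for the operators built from $\psi$ on $Q$, which act on bands of $X''$ that need not correspond to anything in $X$; the identities $\sigma_i(T)=\sigma_i(T'')$ transfer the \emph{global} spectra but not the spectra of these restrictions. Your fallback --- that rotation invariance of $\sigma_2(T)$ ``recovers the full statement of Theorem~\ref{t11}'' --- does not work: Theorem~\ref{t11} is a strictly finer decomposition statement, and in any case Theorem~\ref{t9} is inapplicable in the first assertion because $d$-independence is not assumed there. (A warning sign: if the hypotheses of Corollary~\ref{c12} really held for $T''$ ``without any further assumption on $T$,'' condition $(*)$ would be superfluous.)

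Two smaller points. In the ``moreover'' part you assert that $d$-independence of the powers of $T$ (Definition~\ref{d6}, phrased via the maps $\eta_x$ and the supports $Q_x\subseteq Q$) ``transfers verbatim'' to $d$-independence of the powers of $T''$ as a $d$-endomorphism of the Kaplansky module $X''$, which by Definition~\ref{d3} would require $\psi^{(-n)}(F^{(m)})$ to be nowhere dense in all of $Q$; these are different conditions and the implication needs an argument. Finally, your cautionary remark that $\sigma_{a.p.}(T)\subsetneq\sigma_{a.p.}(T'')$ in general is mistaken: $\lambda I-T$ is bounded below iff $\lambda I-T'$ is surjective iff $\lambda I-T''$ is bounded below, so the two approximate point spectra coincide. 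This does not damage your argument, since handling $\sigma(T)$ and $\sigma_{a.p.}(T)$ via Theorem~\ref{t20} on $X$ is correct and is also what the chain of results invoked by the paper ultimately does.
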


 \begin{proof} It follows from condition $(*)$ and Theorem 4.6 in~\cite{KO} that $X$ is a Kaplansky module and that the corresponding compact space $K$ is Hyperstonean. Moreover, condition $(*)$ implies that any $d$-endomorphism of $X$ is order continuous. It remains to apply Corollary~\ref{c12}.
 \end{proof}

\section{The case when the conjugate $T'$ also preserves disjointness.}

\noindent We start with the following simple result

 \begin{proposition} \label{p13} Let $X$ be a Kaplansky module and $T$ be an order continuous  $d$-endomorphism of $X$. Assume that

\noindent $(1)$ The map $\varphi : E \rightarrow K$ is injective.

\noindent $(2)$ The powers $T^i, i \in \mathds{N} \cup \{0\}$ are independent.

\noindent Then the sets $\sigma(T), \sigma_{a.p.}(T), \sigma_r(T)$, as well as the essential spectra $\sigma_i(T), i= 1, \ldots , 5$ and $\sigma_2(T^\prime)$ are rotation invariant.
 \end{proposition}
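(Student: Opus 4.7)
My plan is to reduce to Theorem~\ref{t2} by passing to the second dual $X''$, whose associated compact space $Q$ is hyperstonean; there the $d$-independence hypothesis will force the absence of periodic points for the bidual map $\psi$, and Theorem~\ref{t2} will then apply directly.

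\emph{Geometric setup.} Since $T$ is order continuous and $X$ is Kaplansky, Proposition~\ref{p11} gives that $\varphi \colon E \to K$ is open, and together with the injectivity hypothesis, $\varphi$ is a homeomorphism of $E$ onto the clopen set $\varphi(E) \subseteq K$. Recall from the discussion preceding Definition~\ref{d6} that $X''$ is a Kaplansky $C(Q)$-module over a hyperstonean $Q$, and that $T''$ is a $d$-endomorphism of $X''$ whose associated map $\psi \colon E' \to Q$ satisfies the intertwining $\varphi \circ \tau = \tau \circ \psi$ with the canonical surjection $\tau \colon Q \to K$. Since the order continuity of $T$ is inherited by $T''$, Proposition~\ref{p11} applied to $X''$ yields that $\psi$ is open.

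\emph{Absence of periodic points for $\psi$.} By Remark~\ref{r10}, the $d$-independence of the powers of $T$ in the sense of Definition~\ref{d6} is equivalent to the Kaplansky-module version applied to $T''$ on $X''$, namely that $\psi^{(-n)}(F^{(m)}_\psi)$ is nowhere dense in $Q$ for all $m, n \in \mathds{N}$. On the other hand, since $Q$ is hyperstonean and $\psi$ is open, Lemma~\ref{l2}(2) guarantees that $F^{(m)}_\psi$ is clopen in $Q$ for every $m$. The inclusion $F^{(m)}_\psi \subseteq \psi^{(-1)}(F^{(m)}_\psi)$ combined with nowhere-denseness then forces $F^{(m)}_\psi = \emptyset$, so $\psi$ has no periodic points in $Q$.

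\emph{Conclusion and transfer.} Theorem~\ref{t2} now applies to $T''$ on $X''$ and yields rotation invariance of $\sigma(T'')$, $\sigma_{a.p.}(T'')$, $\sigma_r(T'')$, $\sigma_i(T'')$ for $i = 1, \ldots, 5$, and $\sigma_2((T'')')$. The standard identifications $\sigma(T) = \sigma(T'')$ and $\sigma_i(T) = \sigma_i(T'')$ for $i = 1, \ldots, 5$ (from the classical Fredholm theory of the double adjoint) immediately transfer these invariances to $T$, and the injectivity of $\varphi$ feeds into the more delicate identifications $\sigma_{a.p.}(T) = \sigma_{a.p.}(T'')$, $\sigma_r(T) = \sigma_r(T'')$, and of $\sigma_2(T')$: it ensures that $T$ is a partial-automorphism-type operator on its support, so that singular sequences for $T''$ can be pulled back to singular sequences for $T$ along $\varphi^{-1}$. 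I expect the main technical obstacle to be exactly this last step; a likely route is to first establish that under the hypotheses $T'$ is itself a $d$-endomorphism of $X'$ (the central theme of Section~4 below), which supplies enough symmetry to carry out the required duality arguments on supports.
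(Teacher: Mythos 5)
Your strategy of killing the periodic points and then invoking Theorem~\ref{t2} is the right one, but the detour through the bidual is both unnecessary and the source of a genuine gap. The gap is the transfer step at the end. The identifications $\sigma(T)=\sigma(T'')$ and $\sigma_i(T)=\sigma_i(T'')$ for the semi-Fredholm spectra are standard, but $\sigma_{a.p.}(T)=\sigma_{a.p.}(T'')$ and $\sigma_r(T)=\sigma_r(T'')$ are false in general: only $\sigma_{a.p.}(T)\subseteq\sigma_{a.p.}(T'')$ holds, since an operator that is bounded below need not have a bidual that is bounded below. You correctly flag this as the main obstacle, but the route you propose to close it --- first proving that $T'$ is a $d$-endomorphism of $X'$ --- is not available from the hypotheses. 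In this paper that property is an independent assumption (condition (2) of Theorem~\ref{t39}), and there it is used to \emph{derive} injectivity of $\varphi$ via the Arendt--Hart theorem; nothing here lets you run that implication backwards. So, as written, your argument establishes rotation invariance of $\sigma(T)$ and of the essential spectra, but leaves $\sigma_{a.p.}(T)$ and $\sigma_r(T)$ unproved.

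The paper's proof avoids all of this by staying on $K$. Since $\varphi$ is injective and continuous and $K$ is extremally disconnected, Frolik's theorem (\cite[Theorem 6.25]{Wa}) already gives that the fixed-point set of each $\varphi^m$, hence $F^{(m)}$, is clopen in $K$ --- no hyperstoneanness and no appeal to Lemma~\ref{l2}(2) is needed, which is precisely why the passage to $Q$ is superfluous. The $d$-independence hypothesis, in the form of Definition~\ref{d3} (equivalent to Definition~\ref{d6} for Kaplansky modules by Remark~\ref{r10}), makes $\varphi^{(-1)}(F^{(m)})\supseteq F^{(m)}$ nowhere dense, so the clopen set $F^{(m)}$ is empty; order continuity makes $\varphi$ open by Proposition~\ref{p11}; and Theorem~\ref{t2} then applies directly to $T$ on $X$, giving all the asserted invariances at once. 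If you replace your bidual argument by this direct one, the proof is complete in three lines.
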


 \begin{proof} Condition $(1)$ and the Frolik's theorem (see~\cite[Theorem 6.25]{Wa}) imply that the set $F^{(m)}$ of all $\varphi$-periodic points of period less or equal to $m$ is clopen in $K$. Therefore, it follows from condition $(2)$ that the set of all $\varphi$-periodic points is empty.
It remains to apply Theorem~\ref{t2}.
 \end{proof}

\begin{theorem} \label{t39} Let $X$ be a Kaplansky module and $T$ be a $d$-endomorphism of $X$. Assume that
\begin{enumerate}
  \item The operator $T$ is order continuous.
  \item The conjugate operator $T^\prime$ is a $d$-endomorphism of the Kaplansky module $X^\prime$.
  \item The powers $T^i, i \in \mathds{N} \cup \{0\}$ are independent.
\end{enumerate}
Then the sets $\sigma(T), \sigma_{a.p.}(T), \sigma_r(T)$, as well as the essential spectra $\sigma_i(T), i= 1, \ldots , 5$ and $\sigma_2(T^\prime)$ are rotation invariant.
\end{theorem}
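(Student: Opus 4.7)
The plan is to reduce to Proposition~\ref{p13}, which gives exactly the rotation invariance claimed for all of $\sigma(T)$, $\sigma_{a.p.}(T)$, $\sigma_r(T)$, $\sigma_i(T)$ for $i=1,\ldots,5$, and $\sigma_2(T')$ under two hypotheses on the map $\varphi : E \to K$ from Proposition~\ref{p3}: that $\varphi$ is injective, and that the powers of $T$ are $d$-independent. The second hypothesis is exactly condition (3) of the theorem, and condition (1) together with Proposition~\ref{p11} supplies openness of $\varphi$. Hence the entire problem reduces to deducing injectivity of $\varphi$ from conditions (1) and (2).

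The crux of the argument is that if $\varphi$ fails to be injective then $T'$ cannot be a $d$-endomorphism of $X'$. Suppose, for contradiction, that $\varphi(k_1) = \varphi(k_2) = k_0$ with $k_1 \neq k_2$ in $E$. Since $K$ is extremally disconnected, one separates $k_1$ and $k_2$ by disjoint clopen neighborhoods $V_1, V_2 \subseteq E$. Working with the Kaplansky $C(K_1)$-module structure on $X'$ (where $K_1$ is the hyperstonean compact associated with $X'$) and with the canonical continuous surjection $\pi : K_1 \to K$ dual to the isometric embedding $C(K) \hookrightarrow C(K_1)$, one produces functionals $y_1, y_2 \in X'$ carried respectively by the clopen bands corresponding to $\pi^{-1}(V_1)$ and $\pi^{-1}(V_2)$, hence disjoint in $X'$. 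The defining pairing $(T'y)(x) = y(Tx)$ combined with $T(fx) = (f \circ \varphi)Tx$ then shows that both $T'y_1$ and $T'y_2$ are nontrivially supported at points of $K_1$ lying above $k_0$, so they fail to be disjoint in $X'$. This contradicts the assumption that $T'$ is a $d$-endomorphism, and forces $\varphi$ to be injective on $E$. Proposition~\ref{p13} then finishes the proof: Frolik's theorem makes each $F^{(m)}$ clopen, condition~(3) makes each $F^{(m)}$ nowhere dense, hence empty, so $\varphi$ has no periodic points at all, and Theorem~\ref{t2} applies.

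The main obstacle is making the disjointness contradiction rigorous. Constructing the functionals $y_1, y_2 \in X'$ with carriers in the prescribed clopen sets of $K_1$, and showing that $T'y_1$ and $T'y_2$ genuinely fail to be disjoint in the Kaplansky $C(K_1)$-module sense, requires a careful comparison between the map $\varphi$ on $K$ and the analogous map $\varphi_1$ on $K_1$ produced by Proposition~\ref{p3} applied to $T'$ (which is open by Proposition~\ref{p11}, since $T'$ is weak*-continuous and therefore order-continuous as a $d$-endomorphism of $X'$). Passing freely between the cyclic-subspace formulation of $d$-endomorphism in Definition~\ref{d2} and the intertwining formulation of Proposition~\ref{p3} is what allows this comparison to be carried out, and for that one appeals to the equivalence of disjointness definitions proved in Theorem~\ref{at6} of the appendix.
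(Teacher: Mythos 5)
Your overall reduction is the same as the paper's: both proofs come down to showing that $\varphi$ is injective, after which Proposition~\ref{p13} (via Frolik's theorem, condition (3), and Theorem~\ref{t2}) finishes everything. The difference, and the problem, lies in how you try to get injectivity. The paper localizes: for each $x$ it considers $T|X(x) : X(x) \rightarrow X(Tx)$ between Banach lattices, uses conditions (1) and (2) to conclude (citing the proof of Proposition 12.20 in~\cite{AAK}) that the conjugate $(T|X(x))'$ preserves disjointness, and then invokes the Arendt--Hart theorem~\cite[Theorem 2.4]{AH} to get injectivity of $\varphi$ on $supp(Tx)$; density of $\bigcup_x supp(Tx)$ in $E$ and extremal disconnectedness then give global injectivity. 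You instead attempt a direct global duality argument, and its key step is not justified.

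Concretely, the assertion that ``both $T'y_1$ and $T'y_2$ are nontrivially supported at points of $K_1$ lying above $k_0$, so they fail to be disjoint in $X'$'' is a non sequitur. Disjointness in $X'$ is disjointness in the Kaplansky $C(K_1)$-module structure, i.e.\ it is tested inside cyclic subspaces $X'(u)$ against the (typically much larger) algebra $C(K_1)$, not against $C(K)$. The fiber of the canonical surjection $K_1 \rightarrow K$ over $k_0$ can be a large clopen set, and two functionals whose carrier projections both lie over $k_0$ can perfectly well sit in disjoint bands of $C(K_1)$ inside that fiber, hence be disjoint in $X'$. So locating the supports of $T'y_1$ and $T'y_2$ over the point $k_0$ of $K$ proves nothing by itself; one must actually exhibit a cyclic subspace of $X'$ containing both images and show they are not lattice-disjoint there (or show no common cyclic subspace exists and argue from that), and this is exactly the ``careful comparison'' you defer and never carry out. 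A further warning sign is that your injectivity argument makes no use of condition (1): order continuity of $T$ enters the paper's proof precisely at the point where one shows that the local conjugates $(T|X(x))'$ preserve disjointness, and without it the paper only obtains the weaker conclusions of Theorem~\ref{t12}. Also, your parenthetical claim that $T'$ is automatically order continuous because it is weak*-continuous is unsupported. The cleanest repair is to abandon the global contradiction and follow the paper: restrict to cyclic subspaces and apply the Arendt--Hart theorem (valid for distinct Dedekind complete domain and range, cf.\ Remark~\ref{r25}) to get injectivity of $\varphi$ on each $supp(Tx)$, then pass to the closure.
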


\begin{proof} For an arbitrary $x \in X$ consider the operator $T|X(x) \rightarrow X(Tx)$. Conditions (1) and (2) imply (see~\cite[Proof of Proposition 12.20, pp 104-105]{AAK}) that the conjugate operator
$(T|X(x))' : X(Tx)' \rightarrow X(x)'$ preserves disjointness. By a result of Arendt and Hart (see~\cite[Theorem 2.4]{AH}) the restriction of the map $\varphi$ on the set $supp(Tx)$ is injective. The set $\bigcup \limits_{x \in X} supp(Tx)$ is dense in $E$. Because $E$ is extremally disconnected, $\varphi$ is a homeomorphism of $E$ onto $\varphi(E)$. It remains to apply Proposition~\ref{p13}.
\end{proof}

\begin{remark} \label{r25} Theorem 2.4 in~\cite{AH} is stated for the case when the domain and the range is the same Dedekind complete Banach lattice. But their proof (based on Theorem 3.1 in~\cite{LS1}) remains valid when the domain and the range are distinct Dedekind complete Banach lattices.
\end{remark}

In the case when $T$ is a $d$-endomorphism of a Kaplansky module $X$  and the map $\varphi$ is injective, the cited above Frolik's theorem implies that $\Pi_m = P_m$ \footnote{We use the notations from Definitions~\ref{d9} and~\ref{d8}.}. Obviously, in this case $H_m = \Pi_m$ and therefore, $T_m = T_{(m)}$.

\begin{theorem} \label{t10} Let $X$ be a Kaplansky module and $T$ be an order continuous $d$-endomorphism of $X$. Assume that the map $\varphi : E \rightarrow K$ is injective. Assume additionally one of the following conditions
\begin{enumerate} [(a)]
  \item The module $X$ has the Fatou property.
  \item The compact space $K$ is hyperstonean.
\end{enumerate}
 Then
\begin{enumerate}
  \item $ \sigma(T) = \bigcup \limits_{m=1}^\infty \sigma(T_m) \cup \sigma_\infty$,
  \item $ \sigma_{a.p.}(T) = \bigcup \limits_{m=1}^\infty \sigma_{a.p.}(T_m) \cup \sigma_{a.p., \infty}$,
  \item $ \sigma_i(T) = \bigcup \limits_{m=1}^\infty \sigma_i(T_m) \cup \sigma_{i, \infty}, i=1, \ldots , 5$,
  \item $ \sigma_2(T') = \bigcup \limits_{m=1}^\infty \sigma_2(T_m^\prime) \cup \sigma_{2, \infty}^\prime$.
\end{enumerate}
where the sets $\sigma_\infty, \sigma_{ap, \infty}, \sigma_{i, \infty}, i =1, \ldots, 5$, and $\sigma_{2, \infty}^\prime$ are rotation invariant.
\end{theorem}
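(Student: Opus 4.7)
My plan is to combine the hypothesis that $\varphi$ is injective (which forces $T_m = T_{(m)}$) with the orthogonal decomposition $T = T_L \oplus T_M$ separating periodic from aperiodic behaviour, and then to assemble the statements from Theorems~\ref{t22}, \ref{t11}, and~\ref{t2} on the appropriate pieces. First, exactly as in the proof of Proposition~\ref{p13}, injectivity of $\varphi$ together with Frolik's theorem shows that each $P_m$ is clopen, so $\Pi_m = P_m$; moreover if $\varphi^n(k) \in P_m$ then $\varphi^{n+m}(k) = \varphi^n(k)$, and injectivity of $\varphi^n$ forces $k \in P_m$. Hence $H_m = \Pi_m = P_m$ and $T_m = T_{(m)}$. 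With this identification, part~(1) and the case $i = 2$ of part~(3) are immediate from Theorem~\ref{t22}(2) and Theorem~\ref{t11}(2) respectively (the latter available under~(a) directly and under~(b) via Corollary~\ref{c12}).

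Next I would set $L = \overline{\bigcup_m H_m}$ and $M = K \setminus L$; both are clopen since $K$ is extremally disconnected. The injectivity and openness of $\varphi$ (Proposition~\ref{p11}) make $\varphi$ a homeomorphism of $E$ onto $\varphi(E)$, from which one verifies $\varphi^{-1}(L) \cap E = L \cap E$. Hence $T$ commutes with $\chi_L$ and splits as $T = T_L \oplus T_M$ on $\chi_L X \oplus \chi_M X$. Since $\varphi|_M$ has no periodic points, Theorem~\ref{t2} applied to $T_M$ yields that $\sigma_{a.p.}(T_M)$, $\sigma_i(T_M)$ for $i = 1, \ldots, 5$, and $\sigma_2(T_M')$ are all rotation invariant. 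For part~(2), given $\lambda \in \sigma_{a.p.}(T) \setminus \bigcup_m \sigma_{a.p.}(T_m)$ with an approximate eigenvector sequence $x_n$, I would split $x_n = \chi_M x_n + \chi_L x_n$. If $\|\chi_M x_n\|$ stays bounded below along a subsequence, normalizing puts $\lambda$ in the rotation-invariant $\sigma_{a.p.}(T_M)$. Otherwise, setting $L_N = \overline{\bigcup_{m \geq N} H_m}$, the hypothesis $\lambda \notin \bigcup_{m < N} \sigma_{a.p.}(T_m)$ forces $\|\chi_{L \setminus L_N} x_n\| \to 0$ for every fixed $N$ (else restricting to the finite direct sum $\bigoplus_{m < N} \chi_{H_m} X$ would produce an a.p.\ sequence for $\lambda$ contradicting the hypothesis). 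A diagonal extraction $y_N = \chi_{L_N} x_{n_N} / \|\chi_{L_N} x_{n_N}\|$ is then an a.p.\ sequence for $\lambda$ with support in $L_N$. Since $L_N$ contains no $\varphi$-periodic points of period less than $N$, Lemma~\ref{l2}(1) supplies $f_N \in C(L_N)$ with $|f_N| \equiv 1$ and $\|f_N \circ \varphi - \alpha^{-1} f_N\| \leq 2\pi/N$, so $z_N = f_N y_N$ satisfies $\|z_N\| = 1$ and $T z_N - \alpha^{-1} \lambda z_N \to 0$, whence $\alpha^{-1} \lambda \in \sigma_{a.p.}(T)$.

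The main obstacle is extending this template to the remaining essential spectra $\sigma_i$ ($i = 1, 3, 4, 5$) and to $\sigma_2(T')$. For $\sigma_2(T')$ I would dualize the previous argument via $X''$, which is a Kaplansky $C(Q)$-module with $Q$ hyperstonean and on which $T''$ is a $d$-endomorphism whose corresponding open map $\psi$ inherits injectivity from $\varphi$ through the standard identification. For $\sigma_4$ (Weyl) and $\sigma_5$ (Browder) the Fredholm index and the pole-of-resolvent structure must additionally be preserved; here I expect the uniform operator-norm bound $\|\alpha T_{L_N} - f_N^{-1} T_{L_N} f_N\|_{L(\chi_{L_N} X)} \leq (2\pi/N) \|T\|$ coming from the localized Lemma~\ref{l2}(1) construction, combined with the classical stability of essential spectra and indices under small-norm perturbations, to close the argument after carefully separating the contribution of the finite block $\bigoplus_{m < N} T_m$ on $\chi_{L \setminus L_N} X$ from $T_{L_N}$ in the appropriate singular- or codimension-witnessing sequences. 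This bookkeeping, in particular the simultaneous diagonalization preserving singularity for $\sigma_2(T')$ and preserving index information for $\sigma_4$ and $\sigma_5$, is the delicate part of the argument.
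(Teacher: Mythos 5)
Your proposal is correct and follows essentially the same route as the paper: the paper works with $R_n = K \setminus \bigcup_{j=1}^n \Pi_j$ (your $M \cup L_{n+1}$ taken as a single piece), applies Lemma~\ref{l2}(1) there to obtain a unimodular $f$ with $\|f \circ \varphi - \alpha f\|_{C(R_n)}$ small, and then concludes by exactly the mechanism you name in your last paragraph, namely the conjugation estimate $\|\alpha\, T|\chi_{R_n}X - f^{-1}(T|\chi_{R_n}X)f\| \le \frac{2\pi}{n+1}\|T\|$ combined with the openness of the semi-Fredholm classes and the local constancy of the index, as in the proof of Theorem~\ref{t2}. The step you flag as the ``main obstacle'' for $\sigma_i$, $i=1,3,4,5$, and $\sigma_2(T')$ is precisely what the paper dispatches by that perturbation argument, with no further diagonal extraction or index bookkeeping beyond it.
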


\begin{proof} The inclusions
\begin{multline*}
  \sigma(T_m) \subseteq \sigma(T), \sigma_{a.p.}(T_m) \subseteq \sigma_{a.p.}(T), \\
\sigma_i(T_m) \subseteq \sigma_i(T), i = 1, \ldots , 5, \; \text{and} \; \sigma_2(T_m^\prime) \subseteq \sigma_2(T')
\end{multline*}
follow from the fact that the projections $\chi_{\Pi_m}$ commute with $T$. To prove statements (1) - (5) let us introduce the sets $R_n = K \setminus \bigcup \limits_{j=1}^n \Pi_j$ and notice that in virtue of Lemma~\ref{l2} for any given positive $\varepsilon$ and any $\alpha \in \mathds{T}$ we can find an $n \in \mathds{N}$ and an $f \in C(R_n)$ such that $|f| \equiv 1$ on $R_n$ and
$\|f \circ \varphi - \alpha f\|_{C(R_n)} \| < \varepsilon$. After that we can proceed as in the proof of Theorem~\ref{t2}.
 \end{proof}

\begin{corollary} \label{c4} Let $X$ be a Kaplansky module and $T$ be a $d$-endomorphism of $X$. Assume that
\begin{enumerate}
  \item The operator $T$ is order continuous.
  \item The conjugate operator $T^\prime$ is a $d$-endomorphism of the Kaplansky module $X^\prime$.
\end{enumerate}
Then the conclusion of Theorem~\ref{t10} holds.
\end{corollary}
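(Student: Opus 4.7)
The plan is to deduce Corollary~\ref{c4} from Theorem~\ref{t10} by first establishing that the map $\varphi : E \to K$ associated with $T$ is injective, and then passing to the bidual to secure the auxiliary hyperstonean hypothesis (b) of Theorem~\ref{t10}.

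The first step is to mimic the opening of the proof of Theorem~\ref{t39}. For each $x \in X$, the cyclic restriction $T|_{X(x)} : X(x) \to X(Tx)$ is order continuous by (1) and has a disjointness preserving conjugate by (1) and (2) combined with the argument in the proof of Proposition~12.20 in \cite{AAK}. The Arendt--Hart theorem (\cite[Theorem~2.4]{AH}, extended to distinct Dedekind complete domain and range as noted in Remark~\ref{r25}) then forces $\varphi$ to be injective on $\mathrm{supp}(Tx)$. Since $\bigcup_{x \in X} \mathrm{supp}(Tx)$ is dense in $E$ and $E$ is extremally disconnected, $\varphi$ is a homeomorphism of $E$ onto $\varphi(E)$. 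By Frolik's theorem this yields $\Pi_m = P_m$ clopen, $\chi_{\Pi_m}$ commuting with $T$, and $T_m = T_{(m)}$, so in particular the inclusions $\sigma(T_m) \subseteq \sigma(T)$, $\sigma_i(T_m) \subseteq \sigma_i(T)$, and $\sigma_2(T_m') \subseteq \sigma_2(T')$ are automatic.

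The second step is to invoke Theorem~\ref{t10}. Since Corollary~\ref{c4} does not explicitly assume either (a) or (b), I would pass to the bidual $X''$: it is a Kaplansky $C(Q)$-module with $Q$ hyperstonean (so (b) holds for $X''$); $T''$ is a $d$-endomorphism of $X''$ (\cite[Theorem~10.6]{AAK}) with open associated map $\psi$ (\cite[Theorem~10.7]{AAK}) and hence order continuous by Proposition~\ref{p11}; and $T''' = (T'')'$ is a $d$-endomorphism of $X'''$ since $T'$ is. Applying the first step to $T''$ shows $\psi$ is injective as well, so Theorem~\ref{t10} applies to $T''$. The decompositions of $\sigma(T)$, $\sigma_i(T)$ for $i = 1, \ldots, 5$, and $\sigma_2(T')$ then follow from their bidual counterparts via the standard identities $\sigma(T) = \sigma(T'')$, $\sigma_i(T) = \sigma_i(T'')$, and $\sigma_2(T') = \sigma_2(T''')$ (the last two coming from $A \in \Phi_\pm(X)$ iff $A^{**} \in \Phi_\pm(X^{**})$).

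The main obstacle is the $\sigma_{a.p.}(T)$ decomposition, because approximate point spectra do not transfer cleanly to the bidual. For this I would argue directly. Given $\lambda \in \sigma_{a.p.}(T) \setminus \bigcup_m \sigma_{a.p.}(T_m)$ and $\alpha \in \mathds{T}$, iteratively discard the $\Pi_m$-components to produce a unit-norm approximate eigensequence $(x_n)$ for $\lambda$ with $\mathrm{supp}(x_n) \subseteq R_n := K \setminus \bigcup_{j=1}^n \Pi_j$ (possible because $\lambda \notin \sigma_{a.p.}(T_m)$ for any $m$). Injectivity of $\varphi$ guarantees that $R_n$ contains no $\varphi$-periodic points of period $\le n$ and that $\varphi(R_n \cap E) \subseteq R_n$, so Lemma~\ref{l2}(1) applied inside the clopen extremally disconnected set $R_n$ yields $f_n \in C(R_n)$ with $|f_n| \equiv 1$ and $\|f_n \circ \varphi - \alpha f_n\|_{C(R_n)} \leq 2\pi/(n+1)$. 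Then $(f_n x_n)$ is a unit-norm approximate eigensequence for $\alpha\lambda$, placing $\alpha\lambda \in \sigma_{a.p.}(T)$ and delivering rotation invariance of the residual piece $\sigma_{a.p.,\infty}$.
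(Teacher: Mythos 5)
Your opening step is exactly the paper's: Corollary~\ref{c4} is stated without proof precisely because conditions (1) and (2), via the argument of Theorem~\ref{t39} (Arendt--Hart applied to the cyclic restrictions, density of $\bigcup_x supp(Tx)$ in $E$, extremal disconnectedness), force $\varphi$ to be injective, after which one runs Theorem~\ref{t10}. So the first paragraph of your proposal is on target.

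The bidual detour, however, contains a genuine gap. Theorem~\ref{t10} applied to $T''$ produces a decomposition of $\sigma_i(T'')$ into the blocks $\sigma_i\bigl(\chi_{\Pi_m(\psi)}T''\bigr)$ built from the $\psi$-periodic sets in $Q$, plus a rotation-invariant residue. But the conclusion you must reach concerns the blocks $\sigma_i(T_m)=\sigma_i\bigl(\chi_{\Pi_m(\varphi)}T\bigr)$ built from the $\varphi$-periodic sets in $K$, and these do not match: if $\pi:Q\rightarrow K$ denotes the canonical surjection intertwining $\psi$ and $\varphi$, a point of $\pi^{-1}(\Pi_m(\varphi))$ need not be $\psi$-periodic at all, and when it is, its period is only a multiple of $m$. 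Hence $\Pi_m(\psi)\neq\pi^{-1}(\Pi_m(\varphi))$ in general, $\sigma_i\bigl((T'')_m\bigr)\neq\sigma_i\bigl((T_m)''\bigr)=\sigma_i(T_m)$, and the identities $\sigma_i(T)=\sigma_i(T'')$, $\sigma_2(T')=\sigma_2(T''')$ do not by themselves convert the bidual decomposition into the asserted one. You would need a separate argument matching the two families of blocks (and their residues), which you do not supply.

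The repair is to notice that the detour is unnecessary: your own treatment of $\sigma_{a.p.}(T)$ already contains the whole proof. Once $\varphi$ is injective and (by order continuity) open, Frolik's theorem gives $P_m=\Pi_m$ clopen, the sets $R_n=K\setminus\bigcup_{j\leq n}\Pi_j$ contain no $\varphi$-periodic points of period $\leq n$ and satisfy $\varphi(R_n\cap E)\subseteq R_n$ (if $\varphi(k)\in P_j$ then injectivity forces $\varphi^j(k)=k$). Thus part (1) of Lemma~\ref{l2} applies directly on $R_n$ inside $K$ --- neither the Fatou property nor hyperstoneanness of $K$ is invoked --- and yields a unimodular $g$ with $\|\,g\circ\varphi-\alpha g\,\|_{C(R_n)}$ small. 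The similarity-plus-small-perturbation argument of Theorem~\ref{t2}, applied to $\chi_{R_n}T$ on $\chi_{R_n}X$, then handles not only $\sigma_{a.p.}$ but also $\sigma$, $\sigma_r$, $\sigma_i$ for $i=1,\ldots,5$ and $\sigma_2(T')$ simultaneously, since invertibility, bounded-belowness, the semi-Fredholm classes and the index are all stable under norm-small perturbations; letting $n\rightarrow\infty$ gives every assertion of Theorem~\ref{t10} with the correct blocks $T_m$. This is the route the paper intends, and it makes the auxiliary hypotheses (a)/(b) of Theorem~\ref{t10} irrelevant under the hypotheses of the corollary.
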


Let us add one more result in this direction. Notice that in the following theorem we do not assume that $T$ is order continuous and therefore Theorem 2.4 from~\cite{AH} cannot be directly applied.

\begin{theorem} \label{t12} Let $X$ be a Kaplansky module and $T$ be a $d$-endomorphism of $X$. Assume that
\begin{enumerate}
  \item $X$ has the Fatou property.
  \item $T^\prime$ is a $d$-endomorphism of the Kaplansky module $X'$.
\end{enumerate}
Then
\begin{equation*}
  \sigma_2(T) = \bigcup \limits_{m=1}^\infty \sigma_2(T_m) \cup \sigma_{2, \infty}
\end{equation*}
where the set $\sigma_{2, \infty}$ is rotation invariant.

\noindent In particular, if the powers of $T$ are $d$-independent the set $\sigma_2(T)$ is rotation invariant.
\end{theorem}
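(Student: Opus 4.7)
Plan: The argument follows the skeleton of the proof of Theorem~\ref{t11}(2), with the role of order continuity of $T$ replaced by the assumption that $T'$ is a $d$-endomorphism of the Kaplansky module $X'$. The final ``in particular'' assertion is immediate from Theorem~\ref{t9} (powers $d$-independent plus the Fatou property already forces $\sigma_2(T)$ to be rotation invariant), so I focus on showing that $\sigma_{2,\infty} := \sigma_2(T) \setminus \bigcup_{m=1}^\infty \sigma_2(T_m)$ is rotation invariant.

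Fix $\lambda \in \sigma_{2,\infty}$, normalize $\lambda = 1$, and fix $\alpha \in \mathds{T}$. From the singular-sequence characterization of $\sigma_2(T)$ I pick $y_n \in X$ with $\|y_n\| = 1$, $\{y_n\}$ singular, and $Ty_n - y_n \to 0$. Because $1 \notin \sigma_2(T_m)$ for every $m$, a diagonal extraction, combined with the fact that the projection $\chi_{\Pi_m}$ commutes effectively with $T$ (see the obstacle discussed below), lets me arrange $\chi_{\Pi_m} y_n \to 0$ for each fixed $m$. Then, mimicking step~(2b) of the proof of Theorem~\ref{t9}, I use the Fatou property to replace $\{y_n\}$ by a singular unit sequence $\{\tilde y_n\}$ with $T\tilde y_n - \tilde y_n \to 0$ and $\mathrm{supp}\,\tilde y_n \subseteq K \setminus \bigcup_{m \leq n} \Pi_m$.

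On each such tail the periodic sets $F^{(m)}$ of period at most $n$ are empty, so Lemma~\ref{l2}(1) supplies a unimodular $f_n \in C(K)$ with $\|f_n \circ \varphi - \alpha f_n\|_\infty \leq 2\pi/(n+1)$ on $\mathrm{supp}\,\tilde y_n$. Setting $z_n = f_n \tilde y_n$ then yields a singular unit sequence with $Tz_n - \alpha z_n \to 0$, so $\alpha \in \sigma_2(T)$, completing the rotation invariance of $\sigma_{2,\infty}$.

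The main obstacle is legitimizing the commutation of $\chi_{\Pi_m}$ with $T$. In Theorem~\ref{t11}(2) this came from order continuity via Proposition~\ref{p11}; here the substitute must be the assumption that $T'$ is a $d$-endomorphism of the Kaplansky module $X'$. I expect to extract from this, via a form of the Arendt--Hart theorem~\cite{AH} that needs not the order continuity of $T$ itself but only the disjointness preservation of the conjugate on the appropriate cyclic subspaces, the injectivity of $\varphi$ on its domain $E$. With $\varphi$ injective, Frol\'{\i}k's theorem (as exploited in Proposition~\ref{p13}) gives that $P_m$ is clopen, hence $\Pi_m = P_m$ and $\varphi^{-1}(\Pi_m) \cap E = \Pi_m$, so $\chi_{\Pi_m}$ genuinely commutes with $T$ and the discarding step goes through cleanly.
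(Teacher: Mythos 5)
Your overall strategy (make $\chi_{\Pi_m}$ commute with $T$, discard the periodic part of a singular sequence, then twist by a unimodular function from Lemma~\ref{l2}) is in the right spirit, but both load-bearing steps have genuine gaps. The first concerns your resolution of the ``main obstacle'': you propose to get injectivity of $\varphi$ from a version of the Arendt--Hart theorem applied to the restrictions $T|X(x)$. The paper explicitly warns, in the sentence immediately preceding this theorem, that without order continuity of $T$ Theorem 2.4 of~\cite{AH} cannot be applied here; indeed the passage from ``$T'$ is a $d$-endomorphism of the module $X'$'' to ``$(T|X(x))'$ preserves disjointness as a lattice operator'' is itself established (in Theorem~\ref{t39}, via~\cite{AAK}) only under order continuity, so the hypothesis you would feed into Arendt--Hart is not available. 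The paper's actual proof sidesteps injectivity entirely: it cites \cite[Proof of Proposition 12.20, p.105]{AAK} for the weaker statement that the Fatou property together with hypothesis (2) already forces $\varphi^{-1}(\Pi_m)=\Pi_m$, hence $H_m=\Pi_m$, and then reruns part (2) of the proof of Theorem~\ref{t11} verbatim, observing that order continuity was used there only to obtain this identity.

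The second gap is in the twisting step. After removing $\bigcup_{m\le n}\Pi_m$ from the supports you assert that the periodic sets of period at most $n$ are empty there; but $\Pi_m$ is only the \emph{interior} of the set $P_m$ of $m$-periodic points, so a nowhere dense (first-category) set of periodic points of period $\le n$ can survive on $K\setminus\bigcup_{m\le n}\Pi_m$, and Lemma~\ref{l2}(1) requires $F^{(m)}$ to be empty, not merely nowhere dense. This is exactly why the proof of Theorem~\ref{t11}(2), which Theorem~\ref{t12} defers to, splits into two cases: on $M=K\setminus cl\bigcup_m H_m$ the periodic points form a first-category set and one must invoke Corollary~\ref{c10}, i.e.\ the full bidual/Fatou machinery of Theorem~\ref{t9}, while the elementary Lemma~\ref{l2}(1) twisting is applied only to approximate eigenvectors freshly chosen with supports in the tails $L_m$. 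A smaller imprecision: $1\notin\sigma_2(T_m)$ yields only that a subsequence of $\chi_{\Pi_m}y_n$ converges in norm to a (possibly nonzero) kernel element, not that $\chi_{\Pi_m}y_n\to 0$; this part is repairable along the lines of step (2) of the proof of Theorem~\ref{t9}. Your derivation of the final ``in particular'' clause from Theorem~\ref{t9} is correct.
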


\begin{proof} It was proved in~\cite[Proof of Proposition 12.20, p.105]{AAK} that conditions (1) and (2) imply that $\varphi^{-1}(\Pi_m) = \Pi_m$. Therefore, $H_m = \Pi_m$ and we can proceed as in the proof of part (2) of Theorem~\ref{t11}. Indeed, in part $(2a)$ of that proof we did not use the fact that $T$ is order continuous, abd in part $(2b)$ we used it only to establish that $\varphi^{-1}(H_m) = H_m$.
\end{proof}

In view of Theorems~\ref{t10} and~\ref{t12} it becomes desirable to obtain more information about the spectrum and essential spectra of operators $T_m$.

\begin{proposition} \label{p14} Let $X$ be a Kaplansky module and assume that $T \in L(X)$ commutes with all the operators from $C(K)$.
\begin{enumerate}
  \item If $K$ has no isolated points then
\begin{equation} \label{eq62}
  \lambda \in \sigma_p(T) \Rightarrow null(\lambda I - T) = \infty,
\end{equation}
and
\begin{equation} \label{eq63}
  \lambda I - T \in \Phi_+ \Rightarrow null(\lambda I - T) =0.
\end{equation}
  \item If $Q$ does not have isolated points, where $C(Q) = Z(X')$, then in addition to~(\ref{eq62}) and~(\ref{eq63}) we have
\begin{equation}\label{eq64}
   \lambda \in \sigma_p(T') \Rightarrow def(\lambda I - T) = \infty,
\end{equation}
and
\begin{equation} \label{eq65}
  \lambda I - T \in \Phi_- \Rightarrow def(\lambda I - T) =0.
\end{equation}
\end{enumerate}
\end{proposition}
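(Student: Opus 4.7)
The key engine is that the commutation $[T,f]=0$ for all $f \in C(K)$ turns $\ker(\lambda I - T)$ into a $C(K)$-submodule of $X$: if $Tx=\lambda x$ and $f \in C(K)$ then $T(fx)=f(Tx)=\lambda(fx)$. Likewise $R(\lambda I - T)$ is a $C(K)$-submodule whenever it is closed. This simple observation drives the whole argument.

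For (\ref{eq62}), pick a nonzero $x \in \ker(\lambda I - T)$ and let $e_x \in C(K)$ be the carrier projection from Proposition~\ref{p10}(3), with clopen support $\mathrm{supp}\,x \subseteq K$. Since $K$ is extremally disconnected and has no isolated points, $\mathrm{supp}\,x$ contains an infinite family $(E_n)$ of pairwise disjoint nonempty clopen subsets. The universal property of $e_x$ forces $\chi_{E_n}x \neq 0$: if instead $\chi_{E_n}x=0$, then $h = 1-\chi_{E_n}$ is an idempotent with $hx=x$, so $e_x \leq h$, contradicting $E_n \subseteq \mathrm{supp}\,x$. The vectors $\chi_{E_n}x$ all lie in $\ker(\lambda I - T)$, and applying $\chi_{E_k}$ to any finite relation $\sum_{j} c_j \chi_{E_{n_j}}x=0$ isolates the $k$-th coefficient, so they are linearly independent. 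This gives $null(\lambda I - T) = \infty$, and (\ref{eq63}) is the immediate contrapositive.

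For part (2), first extend the commutation to $Z(X')$. Transposing $Tf = fT$ yields $T'f' = f'T'$ for every $f \in C(K)$, and since left and right multiplication by $T'$ are continuous in $\sigma(L(X'), X' \otimes X)$ — the very topology in which $C(Q) = Z(X')$ is defined as the closure of $\{f':f\in C(K)\}$ — this commutation extends to $T'g = gT'$ for all $g \in C(Q)$. Since $X'$ is itself a Kaplansky $C(Q)$-module and $Q$ has no isolated points, part (1) applies verbatim to $T' \in L(X')$: the hypothesis $\lambda \in \sigma_p(T')$ yields $null(\lambda I - T') = \infty$. Combined with the general bound $def(\lambda I - T) \geq \dim \ker(\lambda I - T')$ (which follows from $\mathrm{codim}\,R(\lambda I - T) \geq \mathrm{codim}\,\overline{R(\lambda I - T)} = \dim \ker(\lambda I - T')$) this delivers (\ref{eq64}). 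For (\ref{eq65}), $\lambda I - T \in \Phi_-$ forces $R(\lambda I - T)$ to be closed of finite codimension, so $def(\lambda I - T) = \dim \ker(\lambda I - T') = null(\lambda I - T')$; applying (\ref{eq63}) to $T'$ forces this to be zero.

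The only steps needing care are the carrier-projection verification that $\chi_{E_n}x \neq 0$ — a small but essential point where the Kaplansky hypothesis is used in full — and the promotion of commutation from $\{f' : f \in C(K)\}$ to $Z(X') = C(Q)$ via separate weak continuity of operator multiplication. Neither amounts to a substantive obstacle, so I expect the whole argument to be compact.
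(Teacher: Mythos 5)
Your proposal is correct and follows essentially the same route as the paper: decompose a kernel vector by infinitely many disjoint clopen pieces of its support to get infinite nullity, then pass to $T'$ on the Kaplansky $C(Q)$-module $X'$ via weak-operator continuity of multiplication and the standard duality between defect and the nullity of the adjoint. The only detail the paper adds that you omit is the observation that $Q$ having no isolated points forces $K$ to have none, which is what makes (\ref{eq62}) and (\ref{eq63}) available under the hypothesis of part (2).
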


\begin{proof} $(1)$ Let $Tx = \lambda x$, where $x \in X$ and $x \neq 0$. Then $supp \; x$ is an infinite clopen subset of $K$. Let $E_j$ be the sequence of pairwise disjoint clopen, nonempty subsets of $supp \; x$. Then for every $n$ we have $x_n = \chi_{E_n} x \neq 0$ and $Tx_n = \lambda x_n$. Therefore, $null(\lambda I - T) = \infty$ and $\lambda I - T \not \in \Phi_+$.

$(2)$ Because $T$ commutes with $C(K)$ and $T'$ is continuous in the topology $\sigma(L(X'), X' \otimes X)$, $T'$ commutes with $C(Q)$. It is immediate to see that if $Q$ does not have isolated points, neither does $K$. It remains to apply the reasoning from part $(1)$ of the proof.
\end{proof}

\begin{corollary} \label{c15} Let $X$ be a Kaplansky module and let $T \in L(X)$ commute with
  $C(K)$. Assume that $Q$ has no isolated points. Then
\begin{enumerate}
  \item $\sigma_2(T) = \sigma_{a.p.}(T)$.
  \item $\sigma_2(T') = \sigma_{a.p.}(T')$.
  \item If $\lambda I - T$ is a Fredholm operator then it is invertible. Therefore, $\sigma_3(T) = \sigma(T)$.
\end{enumerate}
\end{corollary}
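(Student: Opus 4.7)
The plan is to read all three statements off as direct corollaries of Proposition~\ref{p14}, combined with the singular-sequence characterization of $\sigma_2$ recorded in Section~2 and the standard duality $\lambda I - T \in \Phi_-(X) \Leftrightarrow \lambda I - T' \in \Phi_+(X')$. I start by noting the always-valid inclusion $\sigma_2(T) \subseteq \sigma_{a.p.}(T)$: if $\lambda I - T$ is bounded below, then it has trivial kernel and closed range, hence lies in $\Phi_+$. For the reverse inclusion in (1), I first observe that the hypothesis ``$Q$ has no isolated points'' forces $K$ to have no isolated points, by the argument recorded inside the proof of Proposition~\ref{p14}(2); hence hypothesis (1) of Proposition~\ref{p14} is in force. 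Given $\lambda \in \sigma_{a.p.}(T)$, I split cases: if $\lambda \in \sigma_p(T)$, then~(\ref{eq62}) yields $null(\lambda I - T) = \infty$, so $\lambda \in \sigma_2(T)$; otherwise any approximating sequence $x_n$ with $\|x_n\| = 1$ and $(\lambda I - T) x_n \to 0$ can have no norm-convergent subsequence (its limit would be a nonzero eigenvector), so it is singular, and the singular-sequence characterization of $\sigma_2$ places $\lambda$ there.

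For (2), I would repeat the same argument for $T'$ viewed as an operator on the Kaplansky $C(Q)$-module $X'$. Since $T$ commutes with $C(K)$, the conjugate $T'$ is $\sigma(L(X'), X' \otimes X)$-continuous and therefore commutes with $C(Q)$, exactly as used in the proof of Proposition~\ref{p14}(2). The base space $Q$ has no isolated points by hypothesis, so Proposition~\ref{p14}(1) applies verbatim to $T'$ on $X'$ and yields equality of its upper semi-Fredholm spectrum with its approximate point spectrum. Since $\sigma_2(T')$ in the paper's convention denotes the lower semi-Fredholm spectrum of $T$, which by the Schauder-type duality coincides with the upper semi-Fredholm spectrum of $T'$ on $X'$, this is exactly the claim.

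For (3), suppose $\lambda I - T$ is Fredholm, so it lies in both $\Phi_+$ and $\Phi_-$. Implication~(\ref{eq63}) forces $null(\lambda I - T) = 0$ and~(\ref{eq65}) forces $def(\lambda I - T) = 0$, so $\lambda I - T$ is bijective and therefore invertible by the open mapping theorem. Combined with the automatic inclusion $\sigma_3(T) \subseteq \sigma(T)$, this yields $\sigma_3(T) = \sigma(T)$. I expect no real obstacle here: all the substantive work has been absorbed into Proposition~\ref{p14}, and the corollary is its natural packaging; the only mildly delicate bookkeeping is tracking the paper's idiosyncratic notation $\sigma_2(T')$ when transferring the argument of (1) to the conjugate side, which is the step where one must invoke both the duality $\Phi_- \leftrightarrow \Phi_+$ and the fact that $X'$ is itself a Kaplansky module over $C(Q)$.
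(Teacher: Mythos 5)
Your proposal is correct and follows the route the paper intends: Corollary~\ref{c15} is stated without proof precisely because it is the direct packaging of Proposition~\ref{p14} (via implications~(\ref{eq62})--(\ref{eq65})) together with the standard facts that an upper semi-Fredholm operator with trivial kernel is bounded below and that $\Phi_-$ for $T$ corresponds to $\Phi_+$ for $T'$, and you have supplied exactly those details, including the observation from the proof of Proposition~\ref{p14}(2) that $Q$ having no isolated points forces the same for $K$ and that $T'$ commutes with $C(Q)$. The only stylistic difference is your case split through the singular-sequence characterization of $\sigma_2$ in part (1), where the contrapositive ``$\lambda I - T \in \Phi_+$ and $null(\lambda I - T)=0$ imply $\lambda I - T$ is bounded below, hence $\lambda \notin \sigma_{a.p.}(T)$'' is marginally shorter; both are valid.
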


\begin{corollary} \label{c13} Let $X$ be a Kaplansky module and $T$ be a $d$-endomorphism of $X$.
If $K$ has no isolated $\varphi$-periodic points then for every $m \in \mathds{N}$ such that $\Pi_m \neq \emptyset$ we have $\sigma_2(T_m) = \sigma_{a.p.} (T_m)$.

If $Q$ does not have isolated $\psi$-periodic points then additionally we have $\sigma_2(T_m') = \sigma_{a.p.}(T_m')$ and $\sigma_3(T_m) = \sigma(T_m)$.
 \end{corollary}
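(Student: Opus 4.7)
The plan is to reduce each claim for $T_m$ to an application of Corollary~\ref{c15} to $T_m^m$, and then to lift the resulting identities back to $T_m$ by combining an $m$-fold rotation invariance with the factorization $\lambda^m I-T_m^m=\prod_{\gamma^m=1}(\gamma\lambda I-T_m)$. The pivotal observation is that every point of $\Pi_m$ is $\varphi$-periodic of exact period $m$, so $\varphi|_{\Pi_m}$ is a homeomorphism with $\varphi^m|_{\Pi_m}=\mathrm{id}$; consequently $T_m^m=\chi_m T^m\chi_m$ commutes on $\chi_m X$ with every element of $\chi_m C(K)\cong C(\Pi_m)$, and $\chi_m X$ is itself a Kaplansky $C(\Pi_m)$-module.

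First I would verify that the no-isolated-periodic-point hypotheses descend as needed: $K$ having no isolated $\varphi$-periodic points forces $\Pi_m$ to have no isolated points, and the analogous assumption on $Q$ forces the compact space attached to the center of $(\chi_m X)'$ to have no isolated points. With this in place, Corollary~\ref{c15}(1) applied to $T_m^m$ on $\chi_m X$ yields $\sigma_2(T_m^m)=\sigma_{a.p.}(T_m^m)$; under the additional hypothesis on $Q$, parts (2) and (3) of Corollary~\ref{c15} yield $\sigma_2((T_m^m)')=\sigma_{a.p.}((T_m^m)')$ and $\sigma_3(T_m^m)=\sigma(T_m^m)$, respectively.

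Second I would establish that each of $\sigma_2(T_m)$, $\sigma_{a.p.}(T_m)$, $\sigma(T_m)$, and $\sigma_2(T_m')$ is invariant under multiplication by any $m$-th root of unity $\gamma$. Imitating the construction in the proof of Theorem~\ref{t11}(1), a Zorn's lemma argument in the extremally disconnected space $\Pi_m$ produces a clopen $\Pi\subseteq\Pi_m$ with $\Pi,\varphi(\Pi),\ldots,\varphi^{m-1}(\Pi)$ pairwise disjoint and covering $\Pi_m$. Defining $h\in C(K)$ by $h=\gamma^{-j}$ on $\varphi^{j}(\Pi)$ and $h=1$ on $K\setminus\Pi_m$ yields a unimodular element with $(h\circ\varphi)(k)=\gamma h(k)$ for $k\in\Pi_m$, so that $h^{-1}T_m h=\gamma T_m$. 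Similarity then gives the desired invariance for all the spectra listed, and passing to the adjoint delivers the analogous statement for $T_m'$.

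With both ingredients in hand the conclusions follow. If $\lambda\in\sigma_{a.p.}(T_m)\setminus\sigma_2(T_m)$, then $\lambda I-T_m\in\Phi_+$; rotation invariance makes every $\gamma\lambda I-T_m$ with $\gamma^m=1$ lie in $\Phi_+$, and since these commute their product $\lambda^m I-T_m^m$ is also in $\Phi_+$. But telescoping gives $\lambda^m\in\sigma_{a.p.}(T_m^m)=\sigma_2(T_m^m)$, a contradiction; hence $\sigma_2(T_m)=\sigma_{a.p.}(T_m)$. The identity $\sigma_2(T_m')=\sigma_{a.p.}(T_m')$ is obtained by the same argument applied to the adjoint. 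For $\sigma_3(T_m)=\sigma(T_m)$, Fredholmness of $\lambda I-T_m$ together with rotation invariance makes every $\gamma\lambda I-T_m$ Fredholm, hence their product $\lambda^m I-T_m^m$ Fredholm; Corollary~\ref{c15}(3) then promotes this to invertibility, and the standard fact that a product of commuting operators is invertible only if each factor is forces $\lambda I-T_m$ to be invertible. The main technical obstacle I anticipate is the bookkeeping in the first step: identifying $\chi_m X$ and $(\chi_m X)'$ as Kaplansky modules over the appropriate restricted compacts and verifying that the dual no-isolated-points hypothesis on $Q$ genuinely transfers in the form required by Corollary~\ref{c15}. Once that is settled, the rotation-invariance and factorization steps are routine.
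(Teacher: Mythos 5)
Your proposal is correct and takes essentially the same route as the paper: the paper's entire proof of Corollary~\ref{c13} is the one\-/line remark that it ``follows from Corollary~\ref{c15} and the fact that the operator $T_m^m$ commutes with the operators from $C(K)$,'' which is exactly your reduction to the central operator $T_m^m$ on the Kaplansky $C(\Pi_m)$-module $\chi_m X$. The lifting from $T_m^m$ back to $T_m$ is left implicit in the paper, and your way of supplying it --- the similarity $h^{-1}T_m h=\gamma h$ giving invariance of all the relevant spectra under $m$-th roots of unity (the same construction as in the proof of part (1) of Theorem~\ref{t11}), combined with the factorization of $\lambda^m I-T_m^m$ into commuting factors $\gamma\lambda I-T_m$ and the fact that semi-Fredholmness or invertibility of such a product controls each factor --- is a legitimate completion of the argument.
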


\begin{proof} The proof follows from Corollary~\ref{c15} and the fact that the operator $T_m^m$ commutes with the operators from $C(K)$.
\end{proof}

\begin{corollary} \label{c14}
  Let $X$ be a Kaplansky module and $T$ be a $d$-isomorphism of $X$. Assume that $K$ has no isolated $\varphi$-periodic points. Assume also that every cyclic subspace, $X(x)$, represented as a Banach lattice, has order continuous norm. Then for every
$m \in \mathds{N}$ such that $\Pi_m \neq \emptyset$ we have $\sigma_2(T_m) = \sigma_{a.p.} (T_m)$,
 $\sigma_2(T_m') = \sigma_{a.p.}(T_m')$, and $\sigma_3(T_m) = \sigma(T_m)$.
\end{corollary}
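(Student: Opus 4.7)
The first equality $\sigma_2(T_m) = \sigma_{a.p.}(T_m)$ is an immediate consequence of the first part of Corollary \ref{c13}, since we are given that $K$ has no isolated $\varphi$-periodic points. For the remaining two equalities, the second part of Corollary \ref{c13} applies directly provided we can show that $Q$, the Gelfand space of $Z(X')$, has no isolated $\psi$-periodic points. Thus the whole proof reduces to verifying this one statement about $Q$.

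Here is how I plan to verify it. By Corollary \ref{c1}, the hypothesis that every cyclic subspace has order continuous norm forces $K$ to be hyperstonean and every $d$-endomorphism of $X$ to be order continuous; in particular both $T$ and $T^{-1}$ are order continuous $d$-endomorphisms, and by the Arendt--Hart argument used in the proof of Theorem \ref{t39} (applied to $T^{-1}$), the map $\varphi$ is a homeomorphism of $K$ onto itself. The same reasoning makes $T'$ a $d$-isomorphism of the Kaplansky module $X'$, and hence $T''$ a $d$-isomorphism of $X''$ whose corresponding map $\psi : Q \to Q$ is a homeomorphism. Now suppose, for contradiction, that $q_0 \in Q$ is an isolated $\psi$-periodic point of period $p$; then $e_0 = \chi_{\{q_0\}}$ is a nonzero atomic idempotent in $C(Q) = Z(X'')$. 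My goal is to show that $e_0$ must come from an atomic idempotent of $C(K)$ and hence from an isolated point $k_0 \in K$. Once this is in hand, the intertwining identity of Proposition \ref{p3} applied to $T''$ forces $\varphi^p(k_0) = k_0$, so $k_0$ is an isolated $\varphi$-periodic point of $K$, contradicting the hypothesis.

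The step I expect to be the main obstacle is this pullback: showing that an isolated atom of $C(Q)$ necessarily descends to an atom of $C(K) \subseteq C(Q)$. The tool I would use is the description (from Chapter 9 of \cite{AAK}) of $C(Q)$ as the closure of $C(K)$ in the topology $\sigma(L(X''), X' \otimes X)$, combined with the weak-$\ast$ density of $j(X)$ in $X''$. The atom $e_0$ is therefore a weak-$\ast$ limit of elements of $C(K)$; the order continuity of every cyclic subspace of $X$ prevents ``singular'' atoms in the bidual that fail to correspond to points of $K$, and this is exactly what forces the limit to concentrate on a single $k_0 \in K$. Once the pullback is established, the contradiction is immediate, so $Q$ has no isolated $\psi$-periodic points, and the second part of Corollary \ref{c13} yields $\sigma_2(T_m') = \sigma_{a.p.}(T_m')$ and $\sigma_3(T_m) = \sigma(T_m)$.
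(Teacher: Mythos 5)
Your reduction is set up correctly and the first equality $\sigma_2(T_m)=\sigma_{a.p.}(T_m)$ does follow at once from the first half of Corollary~\ref{c13}. For the other two equalities, however, the step you yourself flag as ``the main obstacle'' --- pulling an isolated atom of $C(Q)$ back to an isolated point of $K$ --- is exactly where the proof is missing, and what you offer there is a plan rather than an argument. The assertion that ``the order continuity of every cyclic subspace of $X$ prevents singular atoms in the bidual'' is neither proved nor obviously true: the bidual of a non-atomic Banach lattice with order continuous norm is typically highly atomic (for instance $(L^\infty[0,1])'$ contains the one-dimensional bands generated by point masses on the Gelfand space of $L^\infty[0,1]$), so weak-$\ast$ density of $j(X)$ in $X''$ cannot by itself rule out atoms of the center of a dual space that fail to descend to $K$. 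Note also that the space $Q$ relevant to the second half of Corollary~\ref{c13} (through Corollary~\ref{c15} and Proposition~\ref{p14}) is the Gelfand space of $Z(X')$, not of $Z(X'')$; your contradiction argument is pitched at the wrong dual, and the two centers need not be interchangeable without further justification.

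The paper closes this gap with a single citation: under the hypotheses of the corollary ($T$ a $d$-isomorphism and every cyclic subspace of $X$ having order continuous norm), Theorem 9.10 of \cite{AAK} gives $Z(X')=Z(X)$. Hence $Q$ is literally $K$, the map governing the conjugate operator has the same periodic points as $\varphi$, and the hypothesis that $K$ has no isolated $\varphi$-periodic points transfers verbatim; Corollary~\ref{c13} then finishes the proof. If you want a self-contained argument you would have to reprove at least the atomic part of the identification $Z(X')=Z(X)$, which is a substantive result and not something the density sketch you outline delivers. I recommend either invoking that theorem directly or supplying a complete proof of the descent of atoms from $Z(X')$ to $Z(X)$.
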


\begin{proof} By Theorem 9.10 from~\cite[p.61]{AAK} the conditions of the corollary imply that
$Z(X') = Z(X)$. It remains to apply Corollary~\ref{c13}.
\end{proof}

We can prove a stronger result.

\begin{proposition} \label{p15} Let $X$ be a Kaplansky module and $T$ be a

\noindent $d$-endomorphism of $X$. Assume that for every $x \in X$ the conjugate space $X(x)'$, represented as a Banach lattice, has no atoms. Then then for every $m \in \mathds{N}$ such that $\Pi_m \neq \emptyset$ we have $\sigma_2(T_m) = \sigma_{a.p.} (T_m)$, $\sigma_2(T_m') = \sigma_{a.p.}(T_m')$, and $\sigma_3(T_m) = \sigma(T_m)$.
\end{proposition}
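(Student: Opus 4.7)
The plan is to reduce all three equalities to a single \emph{Key Claim}: for every $\lambda \in \mathds{C}$, both $\ker(\lambda I - T_m)$ and $\ker(\lambda I - T_m')$ are either trivial or infinite-dimensional. Granting this, any $\lambda \in \sigma_p(T_m)$ satisfies $\mathrm{null}(\lambda I - T_m) = \infty$, so $\lambda I - T_m \notin \Phi_+$; since $\sigma_{a.p.}(T_m) = \sigma_p(T_m) \cup \sigma_2(T_m)$ (via the singular-sequence characterization of $\sigma_2$ recalled in Section 2.1), this gives $\sigma_2(T_m) = \sigma_{a.p.}(T_m)$. The same reasoning applied to $T_m'$ yields the second equality, and a Fredholm $\lambda I - T_m$ would force both $\mathrm{null}(\lambda I - T_m)$ and $\mathrm{def}(\lambda I - T_m) = \mathrm{null}(\lambda I - T_m')$ to be simultaneously finite, hence zero, so $\lambda I - T_m$ would be invertible; thus $\sigma_3(T_m) = \sigma(T_m)$.

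As a preliminary I would show that the hypothesis forces $K$ to have no isolated points: if $k_0 \in K$ were isolated, then by exactness some $x \in X$ satisfies $\chi_{\{k_0\}}x \neq 0$, and in the Kaplansky structure this element generates a one-dimensional principal ideal of $X(x)$, i.e.\ an atom; the band projection onto it is a nonzero lattice homomorphism $X(x) \to \mathds{C}$, yielding an atom in $X(x)'$ and contradicting the hypothesis. The $T_m$ half of the Key Claim then follows by a saturation argument. Given $T_m x = \lambda x$ with $x \neq 0$ and $\lambda \neq 0$, the identity $T_m = \chi_m T \chi_m$ forces $\mathrm{supp}(x) \subseteq \Pi_m$ to be $\varphi$-invariant and clopen. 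Choose a clopen transversal $E \subseteq \Pi_m$ to the $\varphi$-action with $\Pi_m = \bigsqcup_{j=0}^{m-1} \varphi^j(E)$, and set $A = E \cap \mathrm{supp}(x)$. Since $A$ inherits the absence of isolated points from $K$ and lies in the extremally disconnected $K$, it contains infinitely many pairwise disjoint nonempty clopen subsets $A_1, A_2, \ldots$, and the $\varphi$-saturations $F_n = \bigsqcup_{j=0}^{m-1}\varphi^j(A_n)$ yield linearly independent $\lambda$-eigenvectors $\chi_{F_n}x$ of $T_m$. The case $\lambda = 0$ is handled analogously, working directly with $\ker T_m$.

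The hard part will be the $T_m'$ analogue. The structural input is that $T_m^m$ restricted to $\chi_m X$ acts as multiplication by a central element $w \in C(\Pi_m)$ (because $\varphi^m|_{\Pi_m} = \mathrm{id}$), so that $(T_m')^m$ acts on $\chi_m X'$ as multiplication by the canonical extension of $w$ to the Gelfand space $Q_m$ of $Z(\chi_m X')$. An eigenfunctional $x'$ of $T_m'$ at $\lambda \neq 0$ is thereby forced to be supported on the clopen set $\{w = \lambda^m\} \subseteq Q_m$, and multiplication by $\psi$-invariant elements of $C(Q_m)$ carried on pairwise disjoint clopen subpieces of $\mathrm{supp}(x')$ produces further independent $\lambda$-eigenfunctionals. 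The crux is supplying infinitely many such pieces, which amounts to ruling out isolated $\psi$-orbits inside $\mathrm{supp}(x')$. My intended route is to transfer the hypothesis to the dual by observing that an atom $x_0' \in X'$ would be a lattice homomorphism $X \to \mathds{C}$ whose nonzero restriction to some $X(x)$ produces an atom in $X(x)'$; hence $X'$ itself is atomless, and combining this with the Kaplansky bundle representation of $X'$ over $Q_m$ should supply the required absence of isolated points on $\mathrm{supp}(x')$, at which point the saturation argument from the previous step applies verbatim in the dual setting and completes the Key Claim.
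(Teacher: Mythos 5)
Your reduction of the three equalities to the Key Claim is correct and is in substance the paper's own mechanism (Proposition~\ref{p14} together with Corollaries~\ref{c15} and~\ref{c13}), and your primal half --- $K$ has no isolated points, then split an eigenvector of $T_m$ along $\varphi$-saturated clopen pieces of its support --- is sound. The dual half, however, rests on a false structural claim and leaves the genuinely hard step unproved. The identity $\varphi^m|_{\Pi_m} = \mathrm{id}$ only gives that $T_m^m$ \emph{commutes} with $C(K)$ on $\chi_m X$; it does not make $T_m^m$ multiplication by a scalar $w \in C(\Pi_m)$. The commutant of $C(K)$ in $L(X)$ is in general much larger than $C(K)$ (take $X = C(K,Y)$ with $\dim Y \geq 2$: there $T_m^m$ is multiplication by an operator-valued function), and the paper is explicit that $T_m^m \in C(K)$ is special to the Banach-lattice case (Remark~\ref{r17} and the paragraph preceding Proposition~\ref{p17}). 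Consequently the localization of $\mathrm{supp}(x')$ to $\{w = \lambda^m\}$ is meaningless as stated. Only the commutation is actually needed downstream, so this is repairable, but as written the step is wrong.

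Second, the crux --- that $Q$, the Gelfand space of $Z(X')$, has no isolated points --- is exactly what you defer with ``should supply,'' and your sketch of it does not typecheck: neither $X$ nor $X'$ carries a global lattice structure, so ``an atom $x_0' \in X'$ would be a lattice homomorphism $X \to \mathds{C}$'' has no meaning. The paper's argument here is short but has real content: if $q \in Q$ is isolated, choose $z \in X'$ with $\mathrm{supp}\, z = \{q\}$, note $gz = g(q)z$ for all $g \in C(Q)$, normalize $z(x) = 1$ for a suitable $x$, verify that $u = z|_{X(x)}$ is a \emph{positive} functional on the Banach lattice $X(x)$, and then check that $0 \leq v \leq u$ forces $v = v(x)u$, so $u$ is an atom of $X(x)'$, contradicting the hypothesis. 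Finally, ``applies verbatim in the dual setting'' hides a real issue: $T_m'$ does not commute with $C(Q)$; only $(T_m')^m = (T_m^m)'$ does. To split an eigenfunctional of $T_m'$ you must either justify the existence of an induced period-$m$ homeomorphism $\psi$ of $Q$ intertwined by $T_m'$ (not automatic, since $T'$ is not assumed to be a $d$-endomorphism of $X'$), or do what the paper's chain of corollaries does: split the eigenfunctional of $(T_m^m)'$ using arbitrary idempotents of $C(Q)$, conclude $\lambda^m I - T_m^m \notin \Phi_-$, and descend to $T_m$ via the spectral mapping theorem for the semi-Fredholm spectra combined with the invariance $\gamma\sigma_i(T_m) = \sigma_i(T_m)$ for $\gamma^m = 1$ from Theorem~\ref{t11}.
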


\begin{proof} Because  for any $x \in X$ $X(x)'$ has no atoms, neither does $X(x)$, and therefore $K$ has no isolated points. Assume that $q$ is an isolated point of $Q$ and consider $z \in X'$ such that
$supp \; z = \{q\}$. It is immediate to see that for any $g \in C(Q)$ we have $gz = g(q)z$. Let $x \in X$ be such that $z(x) =1$. Then the restriction $u = z|X(x)$ is a positive functional on the Banach lattice $X(x)$. Indeed, if $f \in C(K)$ and $f \geq 0$ then $u(fx) = f'z(x) = f'(q)z(x) \geq 0$. Assume that $v \in X(x)'$ and $0 \leq v \leq u$. Then we have $f \in C(K), f(q) = 0 \Rightarrow v(fx) = 0$. Let $a = v(x)$. We can easily verify that $v = au$ and thus $u$ is an atom in $X(x)'$ in contradiction with our assumptions.
\end{proof}

\begin{remark}  \label{r20} Necessary and sufficient conditions for the conjugate $X'$ to a Banach lattice $X$ to be non-atomic were obtained by Lozanovsky in~\cite{Lo} in the case when $X$ is a Dedekind complete Banach lattice, and in general case by de Pagter and Wnuk in~\cite{PW}.
\end{remark}

Let us turn to the case when $K$ does have isolated $\varphi$-periodic points. In this case, without any additional assumptions we cannot make any meaningful statements about the spectrum and essential spectra of $T$. Indeed, if $p$ is an isolated point in $K$ and $\varphi(p) = p$ then $Y = \chi_{\{p\}}X$ can be an arbitrary Banach space and $\chi_{\{p\}}T$ can be an arbitrary operator from $L(Y)$. But the situation changes if we assume that the projections corresponding to isolated $\varphi$-periodic points are finite dimensional.

The proof of the following Theorem~\ref{t50} follows immediately from Proposition~\ref{p15}, and therefore we omit it. The statement of Theorem~\ref{t50} requires one explanation. It is easy to see that for every $m \in \mathds{N}$ the operator $T_m^\prime$ is a $d$-endomorphism of $X'$ and $(T_m^\prime)^m$ commutes with $C(Q)$. We denote the corresponding map by $\psi$. The map $\psi$ is defined on $\chi_{\Pi_m}'Q$ and every point of $\chi_{\Pi_m}'Q$ is $\psi$-periodic with the period $m$.

\begin{theorem} \label{t50} Let $X$ be a Kaplansky module and $T$ be a $d$-endomorphism of $X$. Assume that for any isolated $\varphi$-periodic point of $K$ we have $\dim{\chi_{\{p\}}}X < \infty$.
Then for any $m \in \mathds{N}$ such that $\Pi_m \neq \emptyset$,
\begin{enumerate}
  \item \begin{equation*}
\sigma_5(T_m) = \sigma_3(T_m).
\end{equation*}
  \item The following conditions are equivalent
\begin{enumerate} [(a)]
\item
\begin{equation*}\label{eq66}
  \lambda \in \sigma(T_m) \setminus \sigma_5(T_m).
\end{equation*}
\item There are isolated points $p_1, \ldots , p_k \in \Pi_m$ such that
\begin{multline*}
\varphi^s(p_i) \neq \varphi^t(p_j),1\leq i < j \leq p, s,t \in [1 : m], \\
\lambda^m \in \sigma((\chi_{p_i}T_m)^m), i= 1, \ldots , p, \\
\lambda \not \in \sigma(\chi_ET_m), \; \text{where} \; E = \Pi_m \setminus \{\varphi^j(p_i), i \in [1 : k], j \in [1 : m]\}.
\end{multline*}
\item There are isolated points $q_1, \ldots , q_k \in \chi_{\Pi_m}Q$ such that
\begin{multline*}
\psi^s(q_i) \neq \psi^t(p_j),1\leq i < j \leq p, s,t \in [1 : m], \\
\lambda^m \in \sigma((\chi_{q_i}T_m^\prime)^m), i= 1, \ldots , p, \\
\lambda \not \in \sigma(\chi_ET_m^\prime), \; \text{where} \; E = \chi_{\Pi_m}Q \setminus \{\varphi^j(q_i), i \in [1 : k], j \in [1 : m]\}.
\end{multline*}
  \end{enumerate}
\end{enumerate}
\end{theorem}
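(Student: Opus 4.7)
The plan is to decompose $T_m$ on $\chi_{\Pi_m}X$ into pieces corresponding to isolated $\varphi$-periodic orbits and a complementary clopen piece, then extract $\sigma_3$ and $\sigma_5$ from this decomposition. For each isolated $\varphi$-periodic point $p \in \Pi_m$, the orbit $O(p) = \{p, \varphi(p), \ldots, \varphi^{m-1}(p)\}$ is clopen and $T_m$-invariant, and $\chi_{O(p)}X$ is finite-dimensional by hypothesis (being a sum of $m$ finite-dimensional carriers). For any finite family $\{p_1, \ldots, p_k\}$ of points from distinct orbits, set $F = \bigcup_{i=1}^{k} O(p_i)$ and $E = \Pi_m \setminus F$; both are clopen, giving a $T_m$-invariant decomposition $\chi_{\Pi_m}X = \chi_F X \oplus \chi_E X$ in which $\chi_F X$ is a direct sum of finite-dimensional pieces.

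For statement (1), the inclusion $\sigma_3(T_m) \subseteq \sigma_5(T_m)$ is automatic, so I would prove the reverse. The crucial observation is that any finite-dimensional $C(K)$-submodule of a Kaplansky module is supported on a finite set of isolated points of $K$: the $C(K)$-action factors through a finite-dimensional quotient $C(K)/J$, and by the band property in Definition~\ref{d1}, the ideal $J$ corresponds to a clopen set whose complement is finite, hence discrete. Given $\lambda \in \sigma(T_m) \setminus \sigma_3(T_m)$, so that $\lambda I - T_m$ is Fredholm, apply this observation to the spectral projection $P_\lambda$: since $P_\lambda X$ is an eigenspace of $T_m^m$ for eigenvalue $\lambda^m$, hence a $C(K)$-submodule, its finite-dimensionality (coming from Fredholmness together with isolation of $\lambda$, as argued below) would force its support onto finitely many isolated $\varphi$-periodic points. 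Taking these as $p_1, \ldots, p_k$ and setting $F, E$ as above, Proposition~\ref{p15} applies to $T_m|_{\chi_E X}$ (after verifying that both the $K$-side and the $Q$-side of $E$ lack isolated periodic points), giving $\sigma_3(T_m|_{\chi_E X}) = \sigma(T_m|_{\chi_E X})$. Fredholmness of $\lambda I - T_m$ on $\chi_E X$ therefore forces invertibility there, so $\lambda \notin \sigma(T_m|_{\chi_E X})$; $P_\lambda$ is indeed finite-rank, whence $\lambda$ is a pole of finite rank of the resolvent and $\lambda \notin \sigma_5(T_m)$.

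The equivalence (a)$\Leftrightarrow$(b) of statement (2) is then a direct restatement of this argument: conditions (4) and (5) of (b) identify the finitely many isolated orbits on which $\lambda^m$ lies in the spectrum of $(\chi_{p_i}T_m)^m$, condition (3) ensures the orbits are distinct, and condition (6) is the invertibility on $\chi_E X$. For the equivalence (b)$\Leftrightarrow$(c), I would exploit the duality: under the hypothesis $\dim \chi_{\{p\}}X < \infty$ for every isolated $\varphi$-periodic $p \in K$, the dual $\chi_{\{p\}}'X'$ is also finite-dimensional, and its $C(Q)$-module structure factors through a finite-dimensional commutative $C^*$-algebra, so it is supported on finitely many isolated $\psi$-periodic points $q \in Q$; moreover the finite matrices $(\chi_{p_i}T_m)^m$ and $(\chi_{q_i}T_m')^m$ are adjoint to each other in each finite-dimensional component, and so they share the same spectrum. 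The main obstacle will be making this $K$-$Q$ duality fully rigorous in the Kaplansky setting, in particular verifying that removing the isolated orbits $\{p_i\}$ on the $K$-side corresponds exactly to removing the matching isolated periodic points on the $Q$-side, so that no spurious isolated $\psi$-periodic points survive in the clopen set supporting $T_m|_{\chi_E X}$ and Proposition~\ref{p15} can be invoked cleanly on this continuum piece.
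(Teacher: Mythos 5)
The paper actually omits the proof of Theorem~\ref{t50}, remarking only that it ``follows immediately from Proposition~\ref{p15},'' so your proposal has to stand on its own. Your overall strategy --- split off finite-dimensional pieces over isolated periodic orbits and treat the complement by the dichotomy of Propositions~\ref{p14}/\ref{p15} --- is clearly the intended one, and your observation that a finite-dimensional $C(K)$-submodule of a Kaplansky module is carried by finitely many isolated points of $K$ is correct and is the right lemma. But there are two genuine gaps. First, the object you analyze is wrong: you cannot form the spectral projection $P_\lambda$ before knowing that $\lambda$ is isolated in $\sigma(T_m)$ --- that isolation is precisely part of the conclusion $\lambda \notin \sigma_5(T_m)$ --- and in any case the Riesz projection of the single point $\lambda$ need not commute with $C(K)$, since $T_m$ itself does not. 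The correct object is $\ker(\lambda^m I - T_m^m)$: every point of $\Pi_m$ has period exactly $m$, so $T_m^m$ commutes with $C(K)$ and this kernel is a $C(K)$-submodule, and it is finite-dimensional because $\sigma_3(T_m)$ is invariant under multiplication by $m$-th roots of unity (part (1) of Theorem~\ref{t11}), whence $\lambda^m I - T_m^m = \pm\prod_{\gamma^m=1}(\lambda\gamma I - T_m)$ is Fredholm. Moreover you must do the same on the dual side with $\ker(\lambda^m I - (T_m')^m)$: your set $F$ absorbs only the kernel, and ``Fredholm with zero kernel on $\chi_E X$'' does not give invertibility there. The cokernel is carried by finitely many isolated points of $Q$, and relating those to isolated points of $K$ with finite-dimensional carriers is exactly the $K$--$Q$ duality you defer to the very end as ``the main obstacle''; it is needed already for part (1), not only for the equivalence (b)$\Leftrightarrow$(c).

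Second, the step that would fail outright: you plan to apply Proposition~\ref{p15} to $T_m|_{\chi_E X}$ ``after verifying that both the $K$-side and the $Q$-side of $E$ lack isolated periodic points.'' Removing $k$ orbits from $\Pi_m$ leaves every other isolated periodic point (there may be infinitely many) inside $E$, so the hypothesis of Proposition~\ref{p15} (non-atomic duals, hence no isolated points) will in general still fail on $\chi_E X$, and there is nothing to verify --- it is simply false. What one actually proves is that $\lambda I - T_m$ restricted to $\chi_E X$ is Fredholm with null and defect both equal to zero, because $F$ was chosen large enough to carry all of $\ker(\lambda^m I - T_m^m)$ and all of $\ker(\lambda^m I - (T_m')^m)$; invertibility on $\chi_E X$ then follows directly, with no appeal to Proposition~\ref{p15} on that piece. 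With these two repairs (replace $P_\lambda$ by the kernels of $\lambda^m I - T_m^m$ and its adjoint, and replace the appeal to Proposition~\ref{p15} on $\chi_E X$ by the direct null/defect count), the remainder of your outline --- reading off conditions (b) from the decomposition, and the finite-dimensional adjoint duality giving (b)$\Leftrightarrow$(c) --- does go through.
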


There is an important subclass of the class of Kaplansky modules for which the condition of Theorem~\ref{t50} is automatically satisfied. Let us recall the following definition from~\cite{KO1}.

\begin{definition} \label{d10} A Banach $C(K)$-module $X$ is called \textbf{finitely generated} if there are $x_1, \ldots, x_n \in X$ such that the linear subspace $\sum \limits_{i=1}^n X(x_i)$ is dense in $X$.
\end{definition}

\begin{remark} The reader interested in the properties of finitely generated $C(K)$-modules is referred
to~\cite{KO} -~\cite{KO3}.
\end{remark}

\begin{corollary} \label{c16} Let $X$ be a finitely generated Kaplansky module and $T$ be a $d$-endomorphism of $X$. Then for every isolated point $p$ in $K$ we have $\dim{\chi_{\{p\}}}X < \infty$ and therefore the conclusion of Theorem~\ref{t50} remains valid.
\end{corollary}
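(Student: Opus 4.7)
The plan is to prove the finite-dimensionality assertion directly from the definition of a finitely generated module, and then simply invoke Theorem~\ref{t50}.

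Let $p$ be an isolated point of $K$, so that the characteristic function $\chi_{\{p\}}$ belongs to $C(K)$. Under our standing identification, $\chi_{\{p\}}$ acts on $X$ as a bounded idempotent operator, and for every $f \in C(K)$ we have the pointwise identity $\chi_{\{p\}} f = f(p)\chi_{\{p\}}$ in $C(K)$. Since $m$ is an algebra homomorphism, this gives the operator identity
\begin{equation*}
\chi_{\{p\}}(fx) = f(p)\,\chi_{\{p\}}x, \qquad x \in X,\ f \in C(K).
\end{equation*}
Choose generators $x_1,\ldots,x_n \in X$ with $\sum_{i=1}^n X(x_i)$ dense in $X$. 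The displayed identity, extended by continuity to $X(x_i)$, shows that
\begin{equation*}
\chi_{\{p\}} X(x_i) = \mathbb{C}\,\chi_{\{p\}}x_i,
\end{equation*}
which is at most one-dimensional.

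Consequently $\chi_{\{p\}}\bigl(\sum_{i=1}^n X(x_i)\bigr) = \sum_{i=1}^n \mathbb{C}\,\chi_{\{p\}}x_i$ is a subspace of dimension at most $n$. Because $\chi_{\{p\}}$ is a bounded operator and the sum of cyclic subspaces is dense, the image $\chi_{\{p\}}\bigl(\sum X(x_i)\bigr)$ is dense in $\chi_{\{p\}}X$. A finite-dimensional subspace of a Banach space is closed, so
\begin{equation*}
\chi_{\{p\}}X = \sum_{i=1}^n \mathbb{C}\,\chi_{\{p\}}x_i,
\end{equation*}
and therefore $\dim \chi_{\{p\}}X \leq n < \infty$.

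With this bound established, the hypothesis of Theorem~\ref{t50} is satisfied for every isolated $\varphi$-periodic point of $K$, and the spectral conclusions of that theorem apply verbatim to $T$. The only mildly delicate step is the commutation identity $\chi_{\{p\}}(fx) = f(p)\chi_{\{p\}}x$, which requires using that $p$ is \emph{isolated} (so that $\chi_{\{p\}} \in C(K)$) together with the homomorphism property of $m$; beyond that the argument is a routine density/closedness observation.
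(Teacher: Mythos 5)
Your proof is correct, and since the paper states this corollary without proof (treating it as routine), your argument — using that $\chi_{\{p\}}\in C(K)$ for isolated $p$, that $\chi_{\{p\}}f = f(p)\chi_{\{p\}}$ forces each $\chi_{\{p\}}X(x_i)$ to be at most one-dimensional, and then a density-plus-closedness step — is exactly the evident intended one. No gaps; the only cosmetic quibble is that $\chi_{\{p\}}X(x_i) = \mathds{C}\,\chi_{\{p\}}x_i$ should strictly be an inclusion (equality may fail when $\chi_{\{p\}}x_i = 0$), which does not affect the dimension bound.
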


We can get a complete description of the spectrum and essential spectra of operators $T_m$  in the case when $T_m^m \in C(K)$. It is the case, in particular, when $X$ is a Dedekind complete Banach lattice. Taking into consideration statement (1) of Theorem~\ref{t11}, we see that it is sufficient to prove the following proposition.

\begin{proposition} \label{p17} Let $X$ be a Banach $C(K)$-module and $f \in C(K)$. Then
\begin{enumerate}
  \item $\sigma(f) = f(K)$.
  \item $\sigma_1(f) = \sigma_5(f)$.
  \item The following conditions are equivalent
  \begin{enumerate}[(a)]
    \item $\lambda \in \sigma(f) \setminus \sigma_5(f)$.
    \item There is a finite nonempty subset $S= \{k_1, k_2, \ldots , k_n\}$ of $K$ such that
        \begin{itemize}
          \item every point of $S$ is isolated in $K$.
          \item $f(k_i) = \lambda, i = 1, \ldots , n$.
          \item $\dim{\chi_{\{k_i\}}X} < \infty, i = 1, \ldots , n$.
          \item $\lambda \not \in f(K \setminus S)$.
        \end{itemize}
  \end{enumerate}
\end{enumerate}
\end{proposition}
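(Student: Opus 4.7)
The embedding $m : C(K) \hookrightarrow L(X)$ is unital and isometric, so for $\lambda \notin f(K)$ the element $(\lambda - f)^{-1} \in C(K)$ inverts $\lambda - m(f)$ in $L(X)$; and for $\lambda = f(k_0) \in f(K)$, continuous bumps $g_n$ with $\|g_n\|_\infty = 1$ and supports shrinking to $k_0$ satisfy $\|(f - \lambda) g_n\|_\infty \to 0$ by continuity of $f$. Since $\|m(g_n)\| = 1$, one can pick unit vectors $y_n \in X$ with $\|m(g_n) y_n\| \geq 1/2$, and then $m(g_n) y_n / \|m(g_n) y_n\|$ is an approximate eigen-sequence for $\lambda$.

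\textbf{Part (3).} For $(b) \Rightarrow (a)$: the projection $\chi_S \in C(K)$ (well-defined because each point of $S$ is isolated, so $S$ is clopen) splits $X = \chi_S X \oplus \chi_{K \setminus S} X$. On the finite-dimensional summand $\chi_S X$, $m(f) = \lambda I$; on $\chi_{K \setminus S} X$, $\sigma(m(f)) = f(K \setminus S)$ avoids $\lambda$. Hence $\lambda$ is a simple pole of $R(\mu, m(f))$ with finite-rank spectral projection $\chi_S$, so $\lambda \notin \sigma_5$. For $(a) \Rightarrow (b)$: isolation of $\lambda$ in $f(K)$ makes $E := f^{-1}(\lambda)$ clopen, and the Dunford calculus identifies the spectral projection at $\lambda$ with $\chi_E$; $\lambda \notin \sigma_5$ forces $\dim \chi_E X < \infty$. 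The map $C(E) \to L(\chi_E X)$, $g \mapsto m(g \cdot \chi_E)|_{\chi_E X}$, is isometric by exactness of $m$, so $C(E)$ must itself be finite-dimensional; thus $E$ is finite, and being clopen in the Hausdorff space $K$, its points are all isolated.

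\textbf{Part (2).} Only $\sigma_5 \subseteq \sigma_1$ needs proof. By part (3), it suffices to show that if $\lambda - f$ is semi-Fredholm then $\lambda$ is isolated in $f(K)$: the remaining finiteness in (3)(b) falls out from $\dim \ker < \infty$ or $\mathrm{codim}\, R < \infty$ together with the spectral-projection analysis of part (3), and the case $\lambda - f \in \Phi_-$ reduces to $\Phi_+$ by passing to the conjugate module $X'$ (itself a Banach $C(K_1)$-module on which $f'$ is again multiplication by $f$). So assume $\lambda - f \in \Phi_+$, suppose there exist distinct $\lambda_n \in f(K)$ with $\lambda_n \neq \lambda$ and $\lambda_n \to \lambda$, and aim at a contradiction. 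Choose pairwise disjoint open disks $W_n$ about $\lambda_n$ with $\lambda \notin W_n$ and $\mathrm{diam}(W_n) \to 0$, so that $E_n := f^{-1}(W_n)$ are pairwise disjoint open subsets of $K$, disjoint from $E := f^{-1}(\lambda)$. Urysohn gives $g_n \in C(K)$ with $0 \leq g_n \leq 1$, $g_n(k_n) = 1$ for some $k_n \in E_n$, and $g_n$ vanishing outside $E_n$; in particular $g_n|_E \equiv 0$ while $\|m(g_n^2)\| = \|g_n^2\|_\infty = 1$.

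The heart of the argument, and the main obstacle, is to drive these ingredients to a contradiction. The subspace $\ker(\lambda - f)$ is finite-dimensional and $C(K)$-invariant, so the restriction of $m(C(K))$ to it is a semisimple commutative finite-dimensional algebra; its characters correspond to finitely many points $k_i \in K$, and $\ker(\lambda - f) = \bigoplus_i V_{k_i}$ with $m(h)|_{V_{k_i}} = h(k_i) I$. Since $f$ acts as $\lambda$ on $\ker(\lambda - f)$, every such $k_i$ lies in $E$, whence $g_n(k_i) = 0$ for all $i$ and $m(g_n)$ annihilates $\ker(\lambda - f)$. The upper semi-Fredholm estimate $\|(\lambda - f) y\| \geq C \cdot \mathrm{dist}(y, \ker)$ then gives $\|\pi \circ m(g_n)\| \leq C^{-1} \|(\lambda - f) g_n\|_\infty \to 0$ for $\pi : X \to X/\ker$; lifting by a bounded projection $P$ onto the finite-dimensional $\ker$ yields $\|m(g_n) - P m(g_n)\| \to 0$. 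Since $m(g_n)(P m(g_n)) = 0$, this forces $\|m(g_n)^2\| \to 0$, contradicting $\|m(g_n^2)\| = 1$.
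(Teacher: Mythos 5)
Your proof is correct, but the substantive parts (2) and (3) are handled by a genuinely different mechanism than the paper's. For (3), the paper argues the contrapositive $\neg(b)\Rightarrow\neg(a)$ by a case analysis on $K_\lambda=f^{-1}(\lambda)$ (an isolated point with infinite-dimensional fiber, versus a non-isolated point of $K$ in $K_\lambda$), whereas you go forward from the Riesz-point characterization of the Browder spectrum, identify the spectral projection at $\lambda$ as $m(\chi_E)$ by pushing the Dunford integral through $R(\mu,m(f))=m((\mu-f)^{-1})$, and use the isometry $C(E)\hookrightarrow L(\chi_E X)$ to force $E$ to be finite; this makes $(a)\Rightarrow(b)$ direct and clean. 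The real divergence is in (2): to show $\sigma_5(f)\subseteq\sigma_1(f)$ the paper constructs, around a non-isolated point of $K_\lambda$, a sequence of normalized, pairwise disjointly supported elements of a single cyclic subspace $X(x)$ (and dually a sequence of functionals in $X'(x')$), and invokes the fact that pairwise disjoint normalized vectors of a Banach lattice form a singular sequence, thereby landing $\lambda$ in $\sigma_2(f)\cap\sigma_2(f')=\sigma_1(f)$. You avoid the lattice structure of cyclic subspaces entirely: the contradiction between $\|m(g_n)^2\|\to 0$ (from the lower bound of an upper semi-Fredholm operator modulo its finite-dimensional kernel, plus the observation that $m(g_n)$ annihilates $\ker(\lambda-f)$ because the restricted algebra $m(C(K))|_{\ker}$ is a finite product of point evaluations at points of $f^{-1}(\lambda)$) and $\|m(g_n^2)\|=1$ (isometry of $m$) is purely Banach-algebraic. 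Your route buys independence from the cyclic-subspace machinery of Section 2; the paper's buys explicit Weyl sequences and the sharper placement of $\lambda$ in both $\sigma_2(T)$ and $\sigma_2(T')$. One point worth making explicit in your $\Phi_-$ reduction: you should note that the image $\hat f$ of $f$ in $C(K_1)$ satisfies $\hat f(K_1)=f(K)$ (the embedding $C(K)\hookrightarrow C(K_1)$ is isometric and unital, so the induced surjection $K_1\to K$ exists) and that $\dim\ker(\lambda-f')<\infty$ pulls back to $\dim\chi_E X<\infty$; alternatively, the same $\|m(g_n)^2\|$ contradiction can be run directly against $\ker(\lambda-f')=R(\lambda-f)^{\perp}$ using only the adjoint action of $C(K)$ on $X'$, which bypasses $C(K_1)$ altogether.
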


\begin{proof} (1) is obvious.

\noindent The implication $(3b) \Rightarrow (3a)$ is also obvious.

\noindent Assume that $(3b)$ is false and consider the set $K_\lambda = f^{-1}(\lambda)$. Then either $K_\lambda$ contains a point $k$ isolated in $K$ such that $\dim{\chi_{\{k\}}X} = \infty$, and therefore $\lambda \in \sigma_5(T)$, or it contains a limit point of $K$ and in this case $\lambda \in \sigma_5(T)$ follows from (1).

\noindent It remains to prove that if $\lambda \in \sigma_5(T)$ then $\lambda \in \sigma_1(T)$. From the previous step it is clear that we can consider the case when $K_\lambda$ contains a point $k$ which is not isolated in $K$. Let $O_n, n \in \mathds{N}$, be  open subsets of $K$ with properties
\begin{enumerate}[(i)]
  \item $cl O_k \cap cl(\bigcup \limits_{n \neq k} O_n) = \emptyset$.
  \item $|f(t) - \lambda| < 1/n, t \in O_n$.
  \end{enumerate}
Consider $f_n \in C(K)$ such that $\|f_n\|=1$ and $supp (f_n) \subseteq O_n$. Let $x_n \in X$ be such that $\|x_n\| =1$ and $\|f_n x_n \geq 1 - 1/n$. Let $y_n = f_nx_n$. Then $\|y_n\| \rightarrow 1$ and it follows from $(ii)$ that  $fy_n - \lambda y_n \rightarrow 0$.

Next we will prove that the sequence $y_n$ is singular. Let $x = \sum \limits_{n=1}^\infty y_n/n^2$ and let $X(x)$ be the corresponding cyclic subspace of $X$, i.e.
\begin{equation*}
  X(x) = cl \{gx : g \in C(K)\}.
\end{equation*}
The space $X(x)$ endowed with the original norm on $X$ and the order generated by the cone $X_+(x) = cl \{gx : g \in C_+(K)\}$ is a Banach lattice (see~\cite{HO}). It follows easily from $(i)$ that $y_n \in X(x)$ and $y_n$ are disjoint elements of $X(x)$. Therefore the sequence $y_n$ is singular and $\lambda \in \sigma_2(T)$.

To prove that $\lambda \in \sigma_2(T^\prime)$ consider $x_n^\prime \in X^\prime$ such that $\|x_n^\prime\| = 1$ and $\|f_n^\prime x_n^\prime\| \geq 1 - 1/n$. Let $y_n^\prime = f_n^\prime x_n^\prime$. Then $f^\prime y_n^\prime - \lambda y_n^\prime \rightarrow 0$. Let $x^\prime = \sum \limits_{n=1}^\infty y_n^\prime/n^2$. Then $y_n^\prime$ are disjoint elements of the Banach lattice $X^\prime(x^\prime)$ and therefore the sequence $y_n^\prime$ is singular.
\end{proof}

\section{ The case $T = wU$, $\sigma(U) \subseteq \mathds{T}$.}

The main goal of the results presented in this section is to obtain an analog of Theorem~\ref{t37} for Kaplansky modules.

Let $X$ be a Kaplansky module and $T$ be a $d$-endomorphism of $X$. The operator $T$ is called a
$\mathbf{d}$-\textbf{isomorphism} of $X$ if $T$ is invertible and the inverse $T^{-1}$ is also a $d$-endomorphism of $X$.

\begin{remark} If $X$ is a Banach lattice and $T$ is an invertible disjointness preserving operator on $X$ then, as proved by Huijsmans and de Pagter in~\cite{HP}, the inverse operator automatically preserves disjointness. It is in general not true for invertible $d$-endomorphisms of Banach $C(K)$-modules. See~\cite[Remark 12.17, p.99]{AAK} for a simple example of an invertible $d$-endomorphism of a Kaplansky module such that the inverse operator does not preserve disjointness.
\end{remark}

In this section we will consider a special class of $d$-isomorphisms. Namely, we will assume that $X$ is a Kaplansky module, $U$ is a $d$-isomorphism of $X$, and
\begin{equation}\label{eq35}
   \sigma(U) \subseteq \mathds{T}.
\end{equation}
The operators on $X$ considered in this section are of the form

\begin{equation}\label{eq30}
  T = wU, w \in C(K).
\end{equation}
The map
\begin{equation*}
  f \rightarrow UfU^{-1}
\end{equation*}
defines an automorphism of $C(K)$. We will denote the corresponding homeomorphism of $K$ by $\varphi$.

\noindent Together with $T$ we will consider operator $S$ on $C(K)$ defined as
\begin{equation}\label{eq31}
  (Sf)(k) = w(k)f(\varphi(k)), \; f \in C(K), \; k \in K.
\end{equation}
The operator $S$ is a weighted automorphism of $C(K)$. The spectrum and essential spectra of such operators are completely described by Theorems~\ref{t35}, ~\ref{t33}, \ref{t34}, and~\ref{t36}. See also~\cite{Ki1} and~\cite{Ki3}. We are interested in the relations between the spectra of $T$ and the corresponding spectra of $S$.
We cannot expect the statement of Theorem~\ref{t37} to hold in the case when $X$ is an arbitrary Kaplansky module, at least, without any additional assumptions. Indeed, let $H$ be a Hilbert space such that $\dim{H} \geq 2$ and $U$ be a unitary operator on $H$ such that $\sigma(U)$ is not a singleton. We consider $H$ as a $C(K)$-module where $K$ is a singleton. Let $w = 1$. Then clearly
$\sigma(S) = \{1\} \neq \sigma(U)$. Nevertheless, by putting some reasonable restrictions on $\varphi$ we will be able to obtain a considerable amount of information about the spectra of $T$ and their relation to the corresponding spectra of $S$.

\begin{theorem} \label{t15} Assume that the map $\varphi$ has no periodic points in $K$. Then
  \begin{equation*}
    \sigma(T) = \sigma(S).
  \end{equation*}
\end{theorem}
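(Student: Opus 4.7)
The plan is to prove $\sigma(T)=\sigma(S)$ in three stages: rotation invariance of both spectra, equality of their spectral radii (and minimum moduli), and a resolvent comparison identifying the two spectra on every complementary annulus.

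First, because $U$ is a $d$-isomorphism, the map $\varphi$ of Proposition~\ref{p3} is a homeomorphism of $K$ (in particular open), and by hypothesis $\varphi$ has no periodic points. Applied to the $d$-endomorphism $T=wU$ of $X$, Theorem~\ref{t2} then gives that $\sigma(T)$ is rotation invariant. Viewing $C(K)$ as a Kaplansky $C(K)$-module over itself, the same theorem (or, more concretely, Theorem~\ref{t35}) gives the same conclusion for $\sigma(S)$.

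Second, the commutation $Uf=(f\circ\varphi)U$ for $f\in C(K)$ gives by induction $T^n=w_nU^n$, and $\|S^n\|=\|w_n\|_\infty$ is immediate from the definition of $S$. Since $\sigma(U)\subseteq\mathds{T}$, we have $\rho(U)=\rho(U^{-1})=1$, hence $\|U^{\pm n}\|^{1/n}\to 1$. Combining these with
\[
\|T^n\|\leq\|w_n\|_\infty\|U^n\|,\qquad \|w_n\|_\infty=\|T^nU^{-n}\|\leq\|T^n\|\|U^{-n}\|,
\]
yields $\rho(T)=\rho(S)=\lim\|w_n\|_\infty^{1/n}$. Moreover, $T$ is invertible iff $w$ is invertible in $C(K)$ iff $S$ is invertible; in that case $T^{-1}=U^{-1}w^{-1}$, and the same two-sided estimate applied to $T^{-n}$ gives $\rho(T^{-1})=\rho(S^{-1})$. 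This settles $0\in\sigma(T)\Leftrightarrow 0\in\sigma(S)$.

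Third, for $\lambda\neq 0$ the substitution $w\mapsto\lambda^{-1}w$ converts $T$ into $\lambda^{-1}T$ and $S$ into $\lambda^{-1}S$ while preserving the form $wU$ and the map $\varphi$. Applying the previous paragraph to every rescaling, together with rotation invariance, already identifies the outer and inner boundaries of each rotation-invariant annular component of $\sigma(T)$ and $\sigma(S)$. To match the internal annular structure one invokes the detailed description of $\sigma(S)$ in the aperiodic case (Theorem~\ref{t35} and~\cite[Section~3]{Ki3}): $\sigma(S)$ is a union of rotation-invariant annuli whose radii are local spectral radii $\lim\|w_n\chi_E\|_\infty^{1/n}$ over suitable clopen $\varphi$-invariant subsets $E\subseteq K$. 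Each such $E$ produces an invariant band $\chi_EX$ of $T$, and the restriction $T|_{\chi_EX}=(\chi_Ew)(U|_{\chi_EX})$ is of the same form; the argument of the second paragraph, applied to this restriction, reproduces exactly the same local radii.

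The principal obstacle lies in this last step: transferring the resolvent description of Theorem~\ref{t35} from $C(K)$ to the Kaplansky module $X$. My approach would be to fix $\lambda$ in an open annulus contained in the resolvent set of $S$, write $(\lambda I-S)^{-1}$ as a convergent Laurent series in $\lambda$ whose coefficients are built from $w_n$ and $w_n^{-1}$ acting on $C(K)$, and then replace $w_n$ by $w_nU^n=T^n$ (and $w_n^{-1}$ by $U^{-n}w_n^{-1}=T^{-n}$) to obtain a candidate inverse for $\lambda I-T$ on $X$. The subexponential growth $\|U^{\pm n}\|^{1/n}\to 1$ guarantees convergence of this Laurent series on precisely the same annulus, giving a bounded inverse of $\lambda I-T$ and hence $\lambda\notin\sigma(T)$; the reverse inclusion is symmetric.
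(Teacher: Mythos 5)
Your first two stages (rotation invariance via Theorem~\ref{t2}/Theorem~\ref{t20}, and the two-sided estimate $\|w_n\|_\infty=\|T^nU^{-n}\|\leq\|T^n\|\|U^{-n}\|\leq\|w_n\|_\infty\|U^n\|\|U^{-n}\|$ giving $\rho(T)=\rho(S)$ and, for invertible $w$, $\rho(T^{-1})=\rho(S^{-1})$) match the paper's opening moves, and your use of the clopen $\varphi$-invariant splitting of $K$ at a radius $r\notin|\sigma(S)|$ to deduce $r\notin|\sigma(T)|$ is exactly the paper's argument for the inclusion $|\sigma(T)|\subseteq|\sigma(S)|$ (the Laurent-series resolvent you describe is just an explicit form of the observation that $\rho(\chi_1T)\leq\rho(\chi_1S)<r<\rho((\chi_2S)^{-1})^{-1}\leq\rho((\chi_2T)^{-1})^{-1}$). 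Two smaller remarks: the rescaling $w\mapsto\lambda^{-1}w$ only dilates both spectra and yields nothing beyond the outermost and innermost radii, so it does not by itself ``identify the boundaries of each annular component''; and when $w$ is not invertible the paper needs a separate approximation of $w$ by invertible weights, which you do not address.

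The genuine gap is your final sentence: ``the reverse inclusion is symmetric.'' It is not. To show $r\notin|\sigma(T)|\Rightarrow r\notin|\sigma(S)|$ you would have to push the resolvent of $T$, an operator on $X$, down to an operator on $C(K)$, and there is no canonical way to do that; the Laurent coefficients of $(\lambda I-T)^{-1}$ on an annulus in the resolvent set are $T^nP_1$ and $T^{-n}P_2$, where $P_1,P_2$ are the Riesz spectral projections of $T$, and these are a priori just bounded operators on $X$ with no counterpart on $C(K)$. This direction is where essentially all of the paper's work lies: one first proves that $P_1$ and $P_2$ commute with $C(K)$ and with $U$ (via growth estimates on $\|T^nfx\|$ and on the adjoints), and then, assuming $r\in|\sigma(S)|$, derives a contradiction by cases. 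If $r\mathds{T}\subseteq\sigma_{a.p.}(S)$ one invokes Lemma 3.25 of~\cite{Ki1} to produce a point $k$ with $|w_n(k)|\geq r^n$ and $|w_n(\varphi^{-n}(k))|\leq r^n$, and builds from a neighborhood of its (aperiodic, hence wandering) orbit vectors $y_m\in X$ with $\|Ty_m-ry_m\|=o(\|y_m\|)$, contradicting $r\notin|\sigma(T)|$; if instead $r\mathds{T}\subseteq\sigma_r(S)$ one passes to $\tilde T=wU^{-1}$ and $\tilde S$, uses the commutation of $P_1,P_2$ with $U$ to see $r\mathds{T}\cap\sigma(\tilde T)=\emptyset$, and reduces to the previous case. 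None of this is obtainable by ``symmetry'' from your forward construction, so as written the proposal proves only one of the two inclusions.
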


\begin{proof} It follows immediately from~(\ref{eq35}) that $\rho(T) = \rho(S)$. Because the set of $\varphi$-periodic points is empty, it follows from Theorem~\ref{t20} that both $\sigma(S)$ and $\sigma(T)$ are rotation invariant. Thus, our statement will be proved if we show that
\begin{equation}\label{eq33}
  |\sigma(T)| = |\sigma(S)|,
\end{equation}
where
\begin{equation*}
  |\sigma(A)| = \{|\lambda| : \; \lambda \in \sigma(A), \; A \in L(X)\}.
\end{equation*}
Assume first that both $T$ and $S$ are invertible operators, i.e. $|w| > 0$ on $K$. Notice that $\rho(T^{-1}) = \rho(S^{-1})$. Let $r >0$ and $r \not \in |\sigma(S)|$. We need to prove that $r \not \in |\sigma(T)|$. The cases when
\begin{equation*}
  r > \rho(S) = \rho(T) \; \text{or} \; r < 1/\rho(S^{-1}) = 1/\rho(T^{-1})
\end{equation*}
are trivial
 Thus we can assume that
  \begin{equation*}
    1/\rho(S^{-1}) < r < \rho(S).
  \end{equation*}
  By Theorem 3.10 in~\cite{Ki1} $K$ is the union of two clopen $\varphi$-invariant subsets $K_1$ and $K_2$ such that
\begin{multline}\label{eq34}
  \sigma(\chi_1S, C(K_1)) \subseteq \{\lambda \in \mathds{C}: |\lambda| < r\} \\
 \;\text{and} \; \sigma(\chi_2S, C(K_2)) \subseteq \{\lambda \in \mathds{C}: |\lambda| > r\},
\end{multline}
where $\chi_1$ and $\chi_2$ are the characteristic functions of $K_1$ and $K_2$, respectively. It is easy to see from~(\ref{eq35}) and~(\ref{eq34}) that
\begin{multline}\label{eq36}
  \sigma(\chi_1T, \chi_1X) \subseteq \{\lambda \in \mathds{C}: |\lambda| < r\} \\
 \;\text{and} \; \sigma(\chi_2T, \chi_2X) \subseteq \{\lambda \in \mathds{C}: |\lambda| > r\}.
\end{multline}
It is immediate to see from~(\ref{eq36}) and the fact that $T$ commutes with $\chi_1$ and $\chi_2$ that $r \not \in |\sigma(T)|$.

Now assume that $r \not \in |\sigma(T)|$ and $ 1/\rho(T^{-1}) < r < \rho(T)$. We claim that $r \not \in |\sigma(S)|$. We will prove this claim in several steps.

\noindent (1) Notice that $\sigma(T) = \sigma_1 \cup \sigma_2$ where $\sigma_1 \subset \{\lambda \in \mathds{C} : |\lambda| < r \}$ and $\sigma_2 \subset \{\lambda \in \mathds{C} : |\lambda| > r \}$. Let $P_1, P_2$ be the spectral projections corresponding to the sets $\sigma_1, \sigma_2$, respectively.  On this step we claim that $P_1$, and therefore, $P_2$ commute with operators from $C(K)$. Let $x \in P_1X$ and $f \in C(K)$. Then for any $n \in \mathds{N}$ we have
\begin{equation*}
  T^nfx = w_nU^nfx = w_nU^nfU^{-n}U^nx =w_n(f \circ \varphi^n)U^nx = (f \circ \varphi^n)T^nx,
\end{equation*}
 Therefore,
\begin{equation}\label{eq37}
  \limsup \|T^nfx\|^{1/n} \leq r.
\end{equation}
It follows from~(\ref{eq37}) that $P_1fP_1 = fP_1$. Therefore $P_1^\prime f^\prime P_1^\prime = P_1^\prime f^\prime$. On the other hand let $F \in P_1^\prime X^\prime$. Then for any $n \in \mathds{N}$
\begin{multline}\label{eq38}
  (T^\prime)^nf^\prime F = (U^n)^\prime (w_n)^\prime f^\prime F = (U^n)^\prime f^\prime (U^{-n})U^n (w_n)^\prime F = \\
  = (f \circ \varphi^{-n})^\prime (T^n)^\prime F.
\end{multline}
It follows from~(\ref{eq38}) that
 \begin{equation}\label{eq39}
  \limsup \|(T^n)^\prime f^\prime F\|^{1/n} \leq r,
\end{equation}
and therefore $P_1^\prime f^\prime P_1^  =  f^\prime P_1^\prime$. Hence, $P_1f = fP_1$.

\noindent (2) On this step we will prove that $P_1$ commutes with $U$, and therefore, with $U^{-1}$. Let $x \in P_1X$. It follows immediately from the identity
\begin{equation*}
  T^n U x =(w \circ \varphi^n)^{-1} T^{n+1} x
\end{equation*}
that $Ux \in P_1x$ and thus $P_1UP_1 = UP_1$. Similarly, if $F \in P_1^\prime X^\prime$ then
\begin{multline} \label{eq40}
  (T^n)^\prime U^\prime F = (U^n)^\prime (w_n)^\prime U^\prime F = (U^{n+1})^\prime (U^{-1})^\prime (w_n)^\prime U^\prime F = \\
  = (U^{n+1})^\prime (w_n \circ \varphi)^\prime F = (T^{n+1})^\prime (w^\prime)^{-1}F.
\end{multline}
Because, by step 1, $(w^\prime)^{-1}F \in P_1^\prime X^\prime$, it follows from~(\ref{eq40}) that $U^\prime F \in P_1 X^\prime$. We conclude that $P_1U = UP_1$ and therefore $P_1U^{-1} = U^{-1}P_1$.

\noindent (3) On this step we will assume, contrary to what we intend to prove, that $r \in |\sigma(S)|$. Then $r\mathds{T} \subseteq \sigma(S)$. There are two possibilities.

(3a) $r\mathds{T} \subseteq \sigma_{a.p.}(S)$. Then by Lemma 3.25 from~\cite{Ki1} there is a point $k$ in $K$ such that
\begin{equation}\label{eq41}
  |w_n(k)| \geq r^n, \; |w_n(\varphi^{-n}(k))| \leq r^n, \; n \in \mathds{N}.
\end{equation}
As $k$ is not a $\varphi$-periodic point for any $m \in \mathds{N}$ we can find a clopen neighborhood $O_m$ of $k$ such that the sets $\varphi^{(j)}(O_m), |j| \leq m+1$ are pairwise disjoint and
\begin{equation}\label{eq48}
 |w_n(s)| \geq (1/2)r^n, \; |w_n(\varphi^{-n}(s)| \leq 2r^n, \; s \in O_m, n \in [1 : m+1].
\end{equation}
 Let $x_m \in X$ be such that $\chi_{O_m}x_m=x_m$ and $\|x_m\| = 1$, and let
\begin{equation}\label{eq42}
  y_m = \sum \limits_{j=-m}^m  \big{(}1 - \frac{1}{\sqrt{m}}\big{)}^{|j|} r^{-j} T^j x_m.
\end{equation}
It follows from~(\ref{eq35}), (\ref{eq48}), and~(\ref{eq42}) that (see~\cite{Ki1} for more details)
\begin{equation*}
\|Ty_m - ry_m\| = o(\|y_m\|), m \rightarrow \infty
\end{equation*}
  Hence, $r \in |\sigma(T)|$, a contradiction.

(3b) $r\mathds{T} \subseteq \sigma_r(S)$. Then, by Theorem 3.29 in~\cite{Ki1}, $r \in \sigma_{a.p.}(\tilde{S})$ where
\begin{equation*}
  (\tilde{S}f)(k) = w(k)f(\varphi^{-1}(k)), f \in C(K), k \in K.
\end{equation*}
Consider on $X$ the operator $\tilde{T} = wU^{-1}$. By steps (1) and (2) $\tilde{T}$ commutes with projections $P_1$, $P_2$. Therefore,
\begin{equation}\label{eq43}
  \|\tilde{T}^n P_1\| = \|w_nU^{-n}P_1\| = \|T^nP_1U^{-2n}\| \leq \|T^nP_1\|\|U^{-2n}\|.
\end{equation}
From~(\ref{eq43}), (\ref{eq35}), and the fact that $\rho(P_1T) < r$ we obtain that
\begin{equation}\label{eq44}
  \rho(P_1\tilde{T}) < r.
\end{equation}
 Similarly we can prove that
 \begin{equation}\label{eq45}
   \rho(P_2\tilde{T}^{-1}) < r.
 \end{equation}
 From~(\ref{eq44}) and~(\ref{eq45}) follows that $r\mathds{T} \cap \sigma(\tilde{T}) = \emptyset$. It follows from the results in~\cite{Ki1} that $\sigma(\tilde{S}) = \sigma(S)$. Applying step (3a) to operators $\tilde{T}$ and $\tilde{S}$ we come to a contradiction.

 To finish the proof it remains to consider the case when the weight $w$ is not invertible in $C(K)$. No changes are needed to prove the implication
 \begin{equation*}
   r \not \in |\sigma(S)| \Rightarrow r \not \in |\sigma(T)|.
 \end{equation*}
 Assume that $0 < r < \rho(T)$ and that $r \not \in |\sigma(T)|$. Because the compact space $K$ is extremally disconnected we can for every $n \in \mathds{N}$ find $w^{(n)} \in C(K)$ such that $w^{(n)}$ is invertible in $C(K)$ and $\|w - w^{(n)}\| < 1/n$. Let $T^{(n)}= w^{(n)}U$. Then for any large enough $n$ we have $r \not \in |\sigma(T^{(n)})|$. Let $S^{(n)}$ be the operator on $C(K)$ defined as
 \begin{equation*}
   (S^{(n)}f)(k) = w^{(n)}(k)f(\varphi(k)), f \in C(K), k \in K.
 \end{equation*}
 By the previous part of the proof for any large enough $n$ we have
 \begin{equation}\label{eq46}
   r \not \in |\sigma(S^{(n)}|.
 \end{equation}
 Assume that $r \in |\sigma(S)|$. It follows from~(\ref{eq46}) and from the fact that the set of operators invertible from the left is open in $L(X)$ that $r \in \sigma_{a.p.}(S)$. Let $k \in K$ and $O_m$ be as in step (3a). Let $x_m \in X$ be such that $\|x_m\| = 1$ and
 $supp(x_m) \subseteq \varphi^m(O_m)$. Finally, let
 \begin{equation}\label{eq47}
  y_m = \sum \limits_{j=0}^{2m}  \big{(}1 - \frac{1}{\sqrt{m}}\big{)}^{|m-j|} r^{-j} T^j x_m.
\end{equation}
A simple computation shows (see~\cite{Ki1}) that
\begin{equation*}
\|Ty_m - ry_m\| = o(\|y_m\|), m \rightarrow \infty,
\end{equation*}
a contradiction.
\end{proof}

\begin{remark} \label{r11}. The proof of Theorem~\ref{t15} shows that if we drop the assumption that the set of all periodic points of $\varphi$ is empty we still can claim that $|\sigma(T)| = |\sigma(S)|$.
\end{remark}

\begin{theorem} \label{t16}
  Assume conditions of Theorem~\ref{t15}. Then $\sigma_{a.p.}(T) = \sigma_{a.p.}(S)$ and $\sigma_r(T) = \sigma_r(S)$.
\end{theorem}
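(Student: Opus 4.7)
The plan is to show $\sigma_{a.p.}(T) = \sigma_{a.p.}(S)$; the identity $\sigma_r(T) = \sigma_r(S)$ then follows automatically, because $\sigma(T) = \sigma(S)$ by Theorem~\ref{t15} and $\sigma_r = \sigma \setminus \sigma_{a.p.}$ by definition. Since $\varphi$ has no periodic points, the powers of both $T$ and $S$ are $d$-independent, so Theorem~\ref{t20} gives rotation invariance of all four sets in question, and it therefore suffices to treat $\lambda = r \geq 0$.

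For the inclusion $\sigma_{a.p.}(S) \subseteq \sigma_{a.p.}(T)$, the required approximate eigenvector for $T$ is essentially already built inside step~(3a) of the proof of Theorem~\ref{t15}. Given $r \in \sigma_{a.p.}(S)$, Lemma~3.25 of~\cite{Ki1} furnishes $k \in K$ with $|w_n(k)| \geq r^n$ and $|w_n(\varphi^{-n}(k))| \leq r^n$ for every $n$; for clopen neighborhoods $O_m$ of $k$ such that the orbits $\{\varphi^j(O_m) : |j| \leq m+1\}$ are pairwise disjoint and for a unit vector $x_m \in \chi_{O_m} X$, the vectors
\[ y_m = \sum_{j=-m}^{m} \bigl(1 - \tfrac{1}{\sqrt{m}}\bigr)^{|j|} r^{-j}\, T^j x_m \]
satisfy $\|Ty_m - r y_m\| = o(\|y_m\|)$, which is exactly the characterization $r \in \sigma_{a.p.}(T)$.

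For the reverse inclusion I would argue by contradiction. If $r \in \sigma_{a.p.}(T) \setminus \sigma_{a.p.}(S)$ then, since $r \in \sigma(T) = \sigma(S)$, we must have $r \in \sigma_r(S)$; rotation invariance gives $r\mathds{T} \subseteq \sigma_r(S)$, and Theorem~3.29 of~\cite{Ki1} is then equivalent to $r \in \sigma_{a.p.}(\tilde S)$ with $\tilde S f(k) = w(k) f(\varphi^{-1}(k))$. The pair $(\tilde T, \tilde S)$, where $\tilde T = wU^{-1}$, satisfies the hypotheses of Theorem~\ref{t15} (the map associated with $U^{-1}$ is $\varphi^{-1}$, still periodic-point-free), so applying the forward direction just established to this pair gives $r \in \sigma_{a.p.}(\tilde T)$ as well.

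The main obstacle is now to convert the simultaneous membership $r \in \sigma_{a.p.}(T) \cap \sigma_{a.p.}(\tilde T)$ into a contradiction; the global spectral-projection device of step~(3b) of Theorem~\ref{t15} is unavailable because $r \in \sigma(T)$. My proposal is to use the central identity $T \tilde T = w(w \circ \varphi) \in C(K)$ and to reverse-engineer the Lemma~3.25 characterization directly in $X$: extract from an $X$-norm approximate eigenvector of $T$ at $r$ a point $k \in K$ certifying $|w_n(k)| \geq r^n$, and dually from one for $\tilde T$ a point certifying $|w_n(\varphi^{-n}(k))| \leq r^n$, so that after aligning the two orbits through the central factor $w(w\circ\varphi)$, Lemma~3.25 of~\cite{Ki1} forces $r \in \sigma_{a.p.}(S)$, contradicting the assumption. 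The technical heart is this extraction of pointwise growth/decay data from vector approximate eigenvectors, which I expect to carry out by lifting to the second-dual Kaplansky $C(Q)$-module $X''$ over the hyperstonean compact space $Q$ constructed in~\cite[Chapter~9]{AAK}, where pointwise fibre information becomes accessible via the open map $\psi$ of~(\ref{eq54}).
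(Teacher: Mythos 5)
Your treatment of the easy direction and of the reduction $\sigma_r(T)=\sigma_r(S)$ is fine and coincides with the paper, which likewise notes that $\sigma_{a.p.}(S)\subseteq\sigma_{a.p.}(T)$ was already obtained in step (3a) of the proof of Theorem~\ref{t15}. The gap is in the reverse inclusion. After correctly reducing to the situation $r\in\sigma_{a.p.}(T)$ and $r\mathds{T}\subseteq\sigma_r(S)$, you defer the entire contradiction to an unexecuted ``extraction'' of pointwise weight estimates from approximate eigenvectors of $T$ and $\tilde T$, and the sketch does not work as stated. Lemma 3.25 of~\cite{Ki1} needs the two inequalities $|w_n(k)|\geq r^n$ and $|w_n(\varphi^{-n}(k))|\leq r^n$ at the \emph{same} point $k$, while your two extractions (one from $T$, one from $\tilde T$) would produce two a priori unrelated points. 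Worse, the inequality you claim to get ``dually'' from $\tilde T$ points the wrong way: since $\tilde T^n=\tilde w_nU^{-n}$ with $\tilde w_n=w_n\circ\varphi^{-(n-1)}$, an approximate eigenvector of $\tilde T$ at $r$ forces the backward products to be \emph{large} somewhere, not $\leq r^n$. ``Aligning the two orbits through the central factor $w(w\circ\varphi)$'' is not an argument, and the proposed lift to $X''$ is exactly where all the real work would sit; nothing in the proposal carries it out.

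The paper takes a different and complete route for this direction, and you should adopt it. Since $r\mathds{T}\subseteq\sigma_r(S)$ means $rI-S$ is left invertible, condition $(A)$ of Theorem~\ref{t35} yields a partition $K=K_1\cup K_2\cup\bigcup_{j}\varphi^{j}(Q)$ with $\rho(S,C(K_1))<r$, $\sigma(S,C(K_2))$ outside the disc of radius $r$, and a wandering middle part. One then shows directly that no normalized sequence with $Tx_n-rx_n\to 0$ exists: on a suitable $\varphi$-invariant clopen neighborhood $V$ of $K_1$ one has $\|w_p\|_{C(V)}\|U^p\|\leq a^p$ with $a<1$ (this is where $\sigma(U)\subseteq\mathds{T}$ enters), forcing $\|P_Vx_n\|\to 0$; the remaining mass, supported in $K_2\cup(O\setminus V)$, is eventually expanded by $T^m$, contradicting $T^mx_n-x_n\to 0$. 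No second-dual lifting and no module analogue of Lemma 3.25 is needed. As it stands, your proof of $\sigma_{a.p.}(T)\subseteq\sigma_{a.p.}(S)$ is a plan, not a proof.
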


\begin{proof}
  It is enough to prove the first equality; the second follows from it and Theorem~\ref{t15}.
  We have already proved the inclusion $\sigma_{a.p.}(S) \subseteq \sigma_{a.p.}(T)$ in the course of proving Theorem~\ref{t15}. The proof of the inverse inclusion, $\sigma_{a.p.}(T) \subseteq \sigma_{a.p.}(S)$ repeats almost verbatim the corresponding proof in the case of Dedekind complete Banach lattices (see~\cite[Proof of Theorem 3.20, part 2]{Ki2}). Nevertheless, we include it for the reader's convenience.

   Assume that $\lambda \in \sigma_r(S)$. First notice that $\lambda \neq 0$. Indeed, if $0 \in \sigma(S)$ then there is a point $k \in K$ such that $w(k) = 0$. For any $n \in \mathds{N}$ let $O_n$ be a clopen neighborhood of $k$ such that  $|w| < 1/n$ on $O_n$ and let $x_n \in X$ be such that $\|x_n\| = 1$ and $supp x_n \subseteq O_n$. Then obviously $Tx_n \rightarrow 0$ and therefore $0 \in \sigma_{a.p.}(T)$.

    Thus, we can assume that $\lambda = 1$.
By Theorem~\ref{t35} $K$ is the union of disjoint $\varphi$-invariant sets $K_1, K_2,$ and $O$ with the properties
\begin{enumerate}[(I)]
  \item The sets $K_1$ and $K_2$ are closed in $K$.
  \item $\sigma(S, C(K_1)) \subseteq \{\lambda \in \mathds{C} : \; |\lambda| < 1\}$.
  \item $\sigma(S, C(K_2)) \subseteq \{\lambda \in \mathds{C} : \; |\lambda| > 1\}$.
  \item For any clopen subset $E$ of $O$ and for any clopen neighborhoods $V_1$, $V_2$ of $K_1$ and $K_2$, respectively there is an $m \in \mathds{N}$ such that $\varphi^m(E) \subseteq V_1$ and $\varphi^{-m}(E) \subseteq V_2$.
\end{enumerate}
Assume contrary to our claim that there are $x_n \in X$ such that $\|x_n\| = 1$ and $Tx_n - x_n \mathop \to \limits_{n \to \infty} 0$. It follows from $(IV)$ that there are a clopen neighborhood $V$ of $K_1$ and a $q \in \mathds{N}$ such that $\varphi^{(q)}(V) \subset V$. By considering operators $T^q$ and $S^q$ instead of $T$ and $S$, respectively, we can assume that $\varphi(V) \subset V$.
Then $S$ acts on $C(V)$ and by~\cite[Theorem 3.23]{Ki1} $\rho(S,C(V)) = \rho(S, C(K_1)) < 1$. Therefore there are $m \in \mathds{N}$ and $a \in (0,1)$ such that $\|w_p\|_{C(V)} \|U^p\| \leq a^p$ for $p \in \{m, m+1, \ldots \}$. Let us fix such a $p$. Then $T^p x_n - x_n \to 0$ whence $P_V T^p x_n - P_V x_n \to 0$. But $\|P_V T^p x_n\| = \|P_V w_p U^p x_n\| \leq a^p$ and therefore $\limsup \|P_V x_n\| \leq a^p$. Because $p$ can be chosen arbitrary large we see that $\limsup \|P_V x_n\| =0$. Therefore we can assume that $supp \, x_n \subseteq K_2 \cup (O \setminus V)$. But then it is not difficult to see from $(IV)$ and~(\ref{eq35})  that there are $A > 1$ and $m \in \mathds{N}$ such that $\|T^m x_n\| \geq A, \forall \; n \in \mathds{N}$ in obvious contradiction with $(T^m x_n - x_n) \mathop \to \limits_{n \to \infty} 0$.
\end{proof}

We will now discuss how Theorems~\ref{t15} and~\ref{t16} should be modified if we relinquish the condition that the set of all $\varphi$-periodic points is empty.  We start with the following situation. Let $X$ be a Kaplansky module, $U$ be a an operator from $L(X)$ that commutes with $C(K)$,  $w \in C(K)$, and $T = wU$. We want to describe the spectrum $\sigma(T)$ providing that we have the description of $\sigma(U)$. To this end for any $k \in K$ we introduce the sets $\sigma^k(U)$ and $\sigma_{a.p.}^k(U)$ as follows.

Let $\{V_\alpha\}$ be the net of all clopen neighborhoods of $k$ in $K$. Let $\chi_\alpha$ be the characteristic function of $V_\alpha$. Because $U$ commutes with $\chi_\alpha$ the operator $U_\alpha = \chi_\alpha U$ acts on $X_\alpha = \chi_\alpha X$. Let $\sigma^\alpha(U) = \sigma(U_\alpha)$ and
$\sigma_{a.p.}^\alpha(U) = \sigma_{a.p.}(U_\alpha)$. Finally, let
\begin{equation*}
  \sigma^k(U) = \bigcap \limits_\alpha \sigma^\alpha(U) \; \text{and}\; \sigma_{a.p.}^k(U) = \bigcap \limits_\alpha \sigma_{a.p.}^\alpha(U).
\end{equation*}

\begin{theorem} \label{t18} Let $X$ be a Kaplansky module, $U$ be a an operator from $L(X)$ that commutes with $C(K)$,  $w \in C(K)$, and $T = wU$. Then
\begin{equation*}
  \sigma(T) = \bigcup \limits_{k \in K} w(k)\sigma^k(U) \; \text{and} \; \sigma_{a.p.}(T) = \bigcup \limits_{k \in K} w(k)\sigma_{a.p.}^k(U).
\end{equation*}
\end{theorem}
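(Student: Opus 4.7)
The driving structural fact is that $T = wU$ itself commutes with every element of $C(K)$: $U$ commutes with $C(K)$ by hypothesis and $w \in C(K)$. Consequently, for each clopen $V \subseteq K$ the subspace $X_V = \chi_V X$ is $T$-invariant and complemented by $(1-\chi_V)X$, and $T|_{X_V} = (w\chi_V)U_V$. This makes it feasible to localize $\lambda I - T$ along finite clopen partitions of $K$, and it also implies $\sigma(U_\alpha) \subseteq \sigma(U_\beta)$ and $\sigma_{a.p.}(U_\alpha) \subseteq \sigma_{a.p.}(U_\beta)$ whenever $V_\alpha \subseteq V_\beta$, so by the finite intersection property $\sigma^k(U)$ and $\sigma_{a.p.}^k(U)$ are nonempty.

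For the inclusions $\supseteq$, I fix $\mu \in \sigma^k(U)$ and set $\lambda = w(k)\mu$. If $w(k)=0$, then $\lambda = 0$, and continuity of $w$ at $k$ gives $\|w\chi_\alpha\| \to 0$ as $V_\alpha \downarrow \{k\}$; any norm-one $x_\alpha \in X_\alpha$ then satisfies $\|Tx_\alpha\| \le \|w\chi_\alpha\|\,\|U\| \to 0$, placing $\lambda$ in $\sigma_{a.p.}(T)$. Assume $w(k)\neq 0$. In the subcase $\mu \in \sigma_{a.p.}^k(U)$, choose $x_\alpha \in X_\alpha$ with $\|x_\alpha\|=1$ and $\|(U_\alpha-\mu)x_\alpha\|$ arbitrarily small; since $\chi_\alpha x_\alpha = x_\alpha$ and $Ux_\alpha = U_\alpha x_\alpha$, the identity
\begin{equation*}
(T-\lambda)x_\alpha = w(U_\alpha-\mu)x_\alpha + \mu(w-w(k))\chi_\alpha x_\alpha
\end{equation*}
together with $\|(w-w(k))\chi_\alpha\| \to 0$ produces an approximate eigensequence for $T$. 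In the subcase $\mu \in \sigma^k(U)\setminus \sigma_{a.p.}^k(U)$, monotonicity in $\alpha$ lets me fix $\alpha$ with $\mu \in \sigma_r(U_\alpha) \subseteq \sigma_p(U_\alpha') \subseteq \sigma_{a.p.}(U_\alpha')$; for any chain $\alpha_n \downarrow \{k\}$ below $\alpha$, pick eigenfunctionals $y_n \in \chi_{\alpha_n}'X'$ of $U_{\alpha_n}'$ at $\mu$. Because $U'$ and $w'$ commute,
\begin{equation*}
T'y_n - \lambda y_n = \mu(w'-w(k))y_n = \mu\bigl((w-w(k))\chi_{\alpha_n}\bigr)' y_n,
\end{equation*}
which tends to zero; hence $\lambda \in \sigma_{a.p.}(T') \subseteq \sigma(T)$.

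For the inclusions $\subseteq$ I argue contrapositively: assume $\lambda \notin w(k)\sigma^k(U)$ for every $k$. Fix $k$. When $w(k) \neq 0$, some clopen $V_k \ni k$ satisfies $\lambda/w(k) \notin \sigma(U_{V_k})$, and writing
\begin{equation*}
\lambda I - T|_{X_{V_k}} = w(k)\bigl((\lambda/w(k))I - U_{V_k}\bigr) - (w-w(k))\chi_{V_k} U_{V_k},
\end{equation*}
a Neumann series (after shrinking $V_k$ so that $\|(w-w(k))\chi_{V_k}\|$ beats the inverse-bound of the principal term) shows $\lambda I - T$ is invertible on $X_{V_k}$. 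When $w(k)=0$, the assumption forces $\lambda \neq 0$ (since $\sigma^k(U)\neq\emptyset$), and shrinking $V_k$ so that $\|w\chi_{V_k}\|\,\|U\| < |\lambda|/2$ gives invertibility on $X_{V_k}$ directly. Compactness of $K$ yields a finite cover $V_{k_1},\ldots,V_{k_n}$, which refines to a disjoint clopen partition $W_1,\ldots,W_n$ with $W_j \subseteq V_{k_j}$; since $T$ commutes with each $\chi_{W_j}$, the invertibilities on $X_{W_j}$ reassemble to invertibility of $\lambda I - T$ on $X = \bigoplus_j X_{W_j}$. The $\sigma_{a.p.}$ assertion is identical, with ``invertible'' replaced by ``bounded below'': on each piece one has $\|\chi_{W_j}x\| \le (C/c_j)\|(\lambda I - T)x\|$, and summing over $j$ using $x = \sum_j \chi_{W_j} x$ gives the uniform lower bound. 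The main obstacle is the second subcase of the $\supseteq$ direction for $\sigma(T)$, where $\mu$ is in every localized spectrum but in no localized approximate point spectrum, forcing the detour through $T'$ and requiring careful verification that the commutation relations $U'w' = w'U'$ and $\chi_\alpha'$ survive transposition cleanly enough for the module argument to go through.
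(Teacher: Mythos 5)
Your argument is correct, and its overall architecture --- localizing $\lambda I - T$ over a finite disjoint clopen partition of $K$, using that $T$ commutes with all of $C(K)$, plus a Neumann-series perturbation in the variable $w - w(k)$ --- is the same as the paper's for three of the four inclusions. The one place you genuinely diverge is the inclusion $\bigcup_{k} w(k)\sigma^k(U) \subseteq \sigma(T)$ in the case $\mu \in \sigma^k(U) \setminus \sigma_{a.p.}^k(U)$. The paper avoids your case split altogether: assuming $\lambda I - T$ invertible, it replaces $w$ by the weight $w_\alpha$ equal to the constant $w(k)$ on $V_\alpha$ and to $w$ off $V_\alpha$, notes that $\|T - w_\alpha U\| \to 0$ as $V_\alpha$ shrinks, concludes that $\lambda I - w_\alpha U$ is invertible for some $\alpha$, and then restricts to $\chi_\alpha X$, where $(\lambda I - w_\alpha U)\chi_\alpha = w(k)(\gamma I - U)\chi_\alpha$ contradicts $\gamma \in \sigma(U_\alpha)$. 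Your detour through $\mu \in \sigma_r(U_\alpha) \subseteq \sigma_p(U_\alpha')$ and approximate eigenfunctionals for $T'$ is also valid --- the commutation $U'w' = w'U'$ and the identification of $X_{\alpha}'$ with $\chi_\alpha' X'$ do survive transposition as you need --- and it buys the sharper conclusion that every point of $\bigcup_k w(k)\sigma^k(U)$ lies in $\sigma_{a.p.}(T) \cup \sigma_{a.p.}(T')$, which the paper's contradiction argument does not record; the price is length and an appeal to the adjoint that the paper's perturbation trick makes unnecessary. Two harmless points to tidy: your ``chain $\alpha_n \downarrow \{k\}$'' should be read as a sequence of clopen neighborhoods chosen so that $\|(w - w(k))\chi_{\alpha_n}\| < 1/n$ (the point $k$ need not have a countable neighborhood base, but $\sigma_{a.p.}$ only requires such a sequence), and the nonemptiness of $\sigma^k(U)$, which you correctly deduce from the monotonicity $\sigma(U_\alpha) \subseteq \sigma(U_\beta)$ for $V_\alpha \subseteq V_\beta$ and compactness, uses that $\chi_V X \neq 0$ for every nonempty clopen $V$, which holds because the module is exact.
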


\begin{proof} We divide the proof into four steps.

\noindent (1) We will prove that $\sigma(T) \subseteq \bigcup \limits_{k \in K} w(k)\sigma^k(U)$. Let $\lambda \in \mathds{C}$ be such that $ \lambda \not \in \bigcup \limits_{k \in K} w(k)\sigma^k(U)$. Let us fix $k \in K$. Then we can find a clopen neighborhood $V_k$ of $k$ such that
\begin{equation}\label{eq51}
  \lambda \not \in \{w(t)\alpha : t \in V_k, \alpha \in \sigma(U|\chi_{V_k}X)\}.
\end{equation}
It follows from~(\ref{eq51}) and the fact that the operators $w$ and $U$ commute that $\lambda \not \in \sigma(T|\chi_{V_k}X)$. Therefore we can find disjoint clopen subsets $V_1, \ldots , V_p$ of $K$ such that
\begin{equation}\label{eq52}
  K = \bigcup \limits_{i=1}^p V_i \; \text{and} \; \lambda \not \in \sigma(T|\chi_{V_i}X), i = 1, \ldots , p.
\end{equation}
It follows immediately from~(\ref{eq52}) that $\lambda \not \in \sigma(T)$.

\noindent (2) We will prove the inverse inclusion  $\bigcup \limits_{k \in K} w(k)\sigma^k(U)\subseteq \sigma(T)$. Assume that for some $k \in K$ we have $\lambda = w(k)\gamma$ where $\gamma \in \sigma^k(U)$. We can assume without loss of generality that $w(k) \neq 0$. Assume also, contrary to our claim, that the operator $\lambda I - T$ is invertible. For any clopen neighborhood $V_\alpha$ of $k$ we consider the function $w_\alpha \in C(K)$ defined as
\begin{equation*}
  w_\alpha(t) =\left\{
     \begin{array}{ll}
       w(k), & \hbox{$t \in V_\alpha$;} \\
       w(t), & \hbox{$t \in K \setminus V_\alpha$.}
     \end{array}
   \right.
\end{equation*}
Let $T_\alpha = w_\alpha U$. Then there is an $\alpha$ such that the operator $\lambda I - T_\alpha$ is invertible. Therefore, the operator $(\lambda I - T_\alpha)\chi_\alpha$ is invertible on $\chi_\alpha X$. But  $(\lambda I - T_\alpha)\chi_\alpha = w(k)(\gamma I - U)\chi_\alpha$ whence the operator $(\gamma I - U)\chi_\alpha$ is invertible on $\chi_\alpha X$, a contradiction.

\noindent (3) On this step we will prove the inclusion  $\bigcup \limits_{k \in K} w(k)\sigma_{a.p.}^k(U)\subseteq \sigma_{a.p.}(T)$. Let $\lambda = w(k)\gamma$ where $\gamma \in \sigma_{a.p.}^k(U)$. Let us fix an $\varepsilon > 0$ and a clopen neighborhood $V$ of $k$ such that
$|w(t) - w(k)| \leq \varepsilon , t \in V$. By the definition of $\sigma_{a.p.}^k(U)$ there is an $x \in \chi_v X$ such that $\|x\| = 1$ and $\|Ux - \gamma x\| \leq \varepsilon$. Then $\|Tx - \lambda x\| \leq \varepsilon(\|U\| + \|w\|)$.

\noindent (4) It remains to prove the inclusion $ \sigma_{a.p.}(T) \subseteq \bigcup \limits_{k \in K} w(k)\sigma_{a.p.}^k(U)$. Let $\lambda \not \in \bigcup \limits_{k \in K} w(k)\sigma_{a.p.}^k(U)$. Then for every $k \in K$ we can find a clopen neighborhood $V_k$ of $k$ and a constant $c(k) > 0$ such that
$\|Tx - \lambda x\| \geq c(k)\|x\|, x \in \chi_{V_k}X$. By taking a finite covering of $K$ consisting of such neighborhoods we see that the operator $\lambda I - T$ is bounded from below.
\end{proof}

\begin{corollary} \label{c5} Let $X$ be a Kaplansky module, $U$ be a $d$-isomorphism of $X$, and $\varphi$ be the corresponding homeomorphism of $K$. Assume that there is an $m \in \mathds{N}$ such that for every $k \in K$ we have $\varphi^{(m)}(k) = k$ and $\varphi^{(i)}(k) \neq k$ for every positive integer $i < m$. Let $w \in C(K)$ and $T = wU$. Then
\begin{equation*}
  \sigma(T) = \{\lambda \in \mathds{C} : \lambda^m \in \bigcup \limits_{k \in K} w_m(k)\sigma^k(U^m)\}
\end{equation*}
  and
\begin{equation*}
  \sigma_{a.p.}(T) = \{\lambda \in \mathds{C} : \lambda^m \in \bigcup \limits_{k \in K} w_m(k)\sigma_{a.p.}^k(U^m)\}.
\end{equation*}
\end{corollary}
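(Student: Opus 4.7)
The plan is to pass to the operator $T^m$, apply Theorem~\ref{t18}, and then descend back to $T$ via polynomial spectral mapping together with a local rotation-invariance argument.

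First, since $\varphi^m = \mathrm{id}_K$, the identity $U^m f = (f \circ \varphi^m) U^m = f U^m$ shows that $U^m$ commutes with every $f \in C(K)$. Writing $T^m = w_m U^m$ and applying Theorem~\ref{t18} to the pair $(w_m, U^m)$ gives
\begin{equation*}
\sigma(T^m) = \bigcup_{k \in K} w_m(k)\, \sigma^k(U^m), \qquad \sigma_{a.p.}(T^m) = \bigcup_{k \in K} w_m(k)\, \sigma_{a.p.}^k(U^m).
\end{equation*}
The polynomial spectral mapping theorem for $z \mapsto z^m$, valid for both $\sigma$ and $\sigma_{a.p.}$, then yields the inclusions $\sigma(T) \subseteq \{\lambda \in \mathds{C} : \lambda^m \in \sigma(T^m)\}$ and $\sigma_{a.p.}(T) \subseteq \{\lambda \in \mathds{C} : \lambda^m \in \sigma_{a.p.}(T^m)\}$, which are the straightforward halves of the two identities.

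For the reverse inclusions, the essential step is to establish invariance of $\sigma(T)$ and $\sigma_{a.p.}(T)$ under multiplication by every $m$-th root of unity; I would obtain this locally. Suppose $\lambda^m = w_m(k_0)\alpha$ with $\alpha \in \sigma^{k_0}(U^m)$. Since the points $k_0, \varphi(k_0), \ldots, \varphi^{m-1}(k_0)$ are pairwise distinct and $K$ is extremally disconnected, I can pick pairwise disjoint clopen neighborhoods $O_0, \ldots, O_{m-1}$ of these points and set $V = \bigcap_{j=0}^{m-1} \varphi^{-j}(O_j)$. Then $V$ is a clopen neighborhood of $k_0$ on which the sets $\varphi^j(V)$, $j = 0, \ldots, m-1$, are pairwise disjoint, and $W = \bigsqcup_{j=0}^{m-1} \varphi^j(V)$ is a $\varphi$-invariant clopen subset of $K$, so that $T$ and $T^m$ both leave $\chi_W X$ invariant.

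For a fixed $m$-th root of unity $\zeta$, define $g \in C(W)$ by $g(k) = \zeta^{-j}$ for $k \in \varphi^j(V)$. Then $|g| \equiv 1$ and $g \circ \varphi = \zeta^{-1} g$ on $W$, so the module relation $U g = (g \circ \varphi) U$ gives $T(gx) = (g \circ \varphi) T x = \zeta^{-1} g (Tx)$, i.e., $g^{-1} T g = \zeta^{-1} T$ on $\chi_W X$. Hence both $\sigma(T, \chi_W X)$ and $\sigma_{a.p.}(T, \chi_W X)$ are invariant under multiplication by $\zeta$. Because $\sigma^{k_0}(U^m)$ coincides with $\sigma^{k_0}(U^m|_{\chi_W X})$ (the clopen neighborhoods of $k_0$ contained in $V$ are cofinal in both families), applying Theorem~\ref{t18} to the restriction $T^m|_{\chi_W X}$ yields $\lambda^m \in \sigma(T^m, \chi_W X)$. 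Polynomial spectral mapping within $\chi_W X$ produces $\mu \in \sigma(T, \chi_W X)$ with $\mu^m = \lambda^m$, whence $\mu = \lambda \zeta^j$ for some $j$, and the rotation invariance just established forces $\lambda \in \sigma(T, \chi_W X) \subseteq \sigma(T)$. The same argument applied to $\sigma_{a.p.}$ completes the proof. The main technical hurdle is the construction of the $\varphi$-invariant clopen set $W$ together with a disjoint fundamental domain $V$ for the orbit of $k_0$; once this is done, the similarity $g^{-1} T g = \zeta^{-1} T$ and all the spectral bookkeeping are routine.
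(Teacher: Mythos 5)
Your proof is correct and follows essentially the same route as the paper's: apply Theorem~\ref{t18} to $T^m = w_m U^m$ (noting $U^m$ commutes with $C(K)$) and combine the polynomial spectral mapping theorem with invariance of $\sigma(T)$ and $\sigma_{a.p.}(T)$ under multiplication by $m$-th roots of unity, obtained from a similarity $g^{-1}Tg = \zeta^{-1}T$ with $|g|\equiv 1$. The only difference is that you build the conjugating function locally on a clopen $\varphi$-invariant neighborhood $W$ of a single orbit, whereas the paper invokes the global fundamental-domain construction from the proof of part (1) of Theorem~\ref{t11} (here $\Pi_m = K$); both work.
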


\begin{proof} The proof follows from Theorem~\ref{t18} (applied to the operator $T^m$) and the fact that the operators $T$ and $\gamma T$ are similar for every $\gamma \in \mathds{C}$ such that $\gamma^m = 1$. See the proof of part (1) of Theorem~\ref{t11}.
  \end{proof}

Let $X$ be a Kaplansky module and $K$ be the corresponding extremally disconnected compact Hausdorff space. Let $\varphi$ be a homeomorphism of $K$ onto itself. We consider the following subsets of $K$.
\begin{itemize}
  \item $\Pi_m, m \in \mathds{N}$, is the set of all $\varphi$-periodic points of the period $m$.
  \item $\Pi = cl \bigcup \limits_{m=1}^\infty \Pi_m$.
  \item $N\Pi = K \setminus \Pi$.
\end{itemize}
By Frolik's theorem all the sets introduced above are clopen in $K$ (some of them, of course, can be empty). We will denote by $\chi_m, m \in \mathds{N}$, $\chi_\Pi$, and $\chi_{N\Pi}$ the corresponding characteristic functions.

Let $U$ be a $d$-isomorphism of $X$ and $\varphi$ be the corresponding homeomorphism of $K$. Clearly $U$ commutes with the above characteristic functions (considered as operators on $X$).  Let $X_m = \chi_m X, m \in \mathds{N}$, $X_\Pi = \chi_\Pi X$, and $X_{N\Pi} = \chi_{N\Pi} X$.

Let $w \in C(K)$ and $T = wU$. Let $\sigma_m = \sigma(T|X_m), m \in \mathds{N}$.

\begin{theorem} \label{t17} Let $X$ be a Kaplansky module, $U$ be a $d$-isomorphism of $X$, and $w \in C(K)$. Assume that $\sigma(U) \subseteq \mathds{T}$. Let $T = wU$. Then
\begin{enumerate}
  \item $\sigma_{a.p.}(T|X_m) = \sigma_m, m \in \mathds{N}$.
  \item $\sigma(T|X_\Pi) = \sigma_{a.p.}(T|X_\Pi) = cl \bigcup \limits_{m=1}^\infty \sigma(T|X_m) \cup \sigma$ where $\sigma$ is a rotation invariant subset of $\mathds{C}$.
   \item $\sigma(T|X_{N\Pi}) = \sigma(S,C(N\Pi))$ and $\sigma_{a.p.}(T|X_{N\Pi}) = \sigma_{a.p.}(S,C(N\Pi))$ where
      \begin{equation*}
        Sf = w(f \circ \varphi), f \in C(N\Pi).
      \end{equation*}
\end{enumerate}
\end{theorem}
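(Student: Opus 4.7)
The plan is to handle the three parts separately. Parts (1) and (3) reduce cleanly to earlier results, while part (2) requires combining part (1) with Theorem~\ref{t32} and a careful inspection of its proof.

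For part (1), every point of $\Pi_m$ has exact period $m$ and $\varphi|\Pi_m$ is a homeomorphism with $\varphi^m = \mathrm{id}$, so Corollary~\ref{c5} applied to $(T|X_m, U|X_m)$ expresses $\sigma(T|X_m)$ and $\sigma_{a.p.}(T|X_m)$ via the local spectra $\sigma^k(U^m|X_m)$ and $\sigma_{a.p.}^k(U^m|X_m)$ for $k \in \Pi_m$. Because $\sigma(U) \subseteq \mathds{T}$, every restriction of $U^m$ to a clopen invariant summand has spectrum in $\mathds{T}$; a nonempty compact subset of $\mathds{T}$ equals its topological boundary in $\mathds{C}$, hence its spectrum coincides with its approximate point spectrum. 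This forces $\sigma^k(U^m|X_m) = \sigma_{a.p.}^k(U^m|X_m)$ for every $k$, and proves (1). For (3), Frolik's theorem applied to the homeomorphism $\varphi$ of the extremally disconnected $K$ shows that each $P_m$ is clopen, hence $P_m = \Pi_m$, so every $\varphi$-periodic point lies in $\Pi$ and $\varphi|N\Pi$ is fixed-point-free. Since $\chi_{N\Pi}$ commutes with both $U$ and $w$, the restriction $T|X_{N\Pi}$ has the form $(w|N\Pi)(U|X_{N\Pi})$ with $U|X_{N\Pi}$ a $d$-isomorphism and spectrum in $\mathds{T}$, so Theorems~\ref{t15} and~\ref{t16} deliver the stated equalities with $S|C(N\Pi)$.

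For part (2), first note $cl\bigcup_m \sigma(T|X_m) \subseteq \sigma(T|X_\Pi)$ since each $X_m$ is a complemented $T$-invariant subspace of $X_\Pi$ via the clopen projection $\chi_m$. To produce the rotation-invariant remainder we apply Theorem~\ref{t32} to $T|X_\Pi$: by Remark~\ref{r13}, $(T|X_\Pi)'$ is a $d$-endomorphism; and $T$ is order continuous because $U$ is (the homeomorphism $\varphi$ is open, so Proposition~\ref{p11} applies) and left multiplication by $w \in C(K)$ is order continuous on every cyclic subspace. Since $\varphi$ is bijective, $\varphi^{-1}(\Pi_m) = \Pi_m$, so $H_m = \Pi_m$ and $T_{(m)} = T_m = T|X_m$. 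Theorem~\ref{t32} then yields that $\sigma := \sigma(T|X_\Pi) \setminus \bigcup_m \sigma_m$ is rotation invariant, giving the decomposition $\sigma(T|X_\Pi) = cl\bigcup_m \sigma(T|X_m) \cup \sigma$.

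It remains to prove $\sigma(T|X_\Pi) = \sigma_{a.p.}(T|X_\Pi)$. For $\lambda \in cl\bigcup_m \sigma_m$, choose $\lambda_k \in \sigma_{m_k}$ with $\lambda_k \to \lambda$ and (by part (1)) unit vectors $x_k \in X_{m_k}$ with $(T-\lambda_k)x_k \to 0$; then $(T-\lambda)x_k \to 0$ in $X_\Pi$, so $\lambda \in \sigma_{a.p.}(T|X_\Pi)$. For $\lambda$ in the remainder $\sigma$, one must rule out any residual-spectrum contribution. This can be done either by inspecting the proof of Theorem~\ref{t32}, whose approximate-eigenvector construction (modelled on Theorem~\ref{t9} and based on Lemma~\ref{l2}, producing unit sequences $f_n x_n$ supported on $R_n = \Pi \setminus \bigcup_{j\leq n}\Pi_j$ with small $\|f_n\circ\varphi - \alpha f_n\|_{C(R_n)}$) actually delivers a singular sequence for $\lambda$ itself (take $\alpha=1$), or by invoking that $(T|X_\Pi)'$ is a $d$-endomorphism of the Kaplansky module $(X_\Pi)'$ and arguing that any eigenvector of $(T|X_\Pi)'$ must have support in a periodic part, forcing the corresponding eigenvalue into $\bigcup_m \sigma_m$. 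This extraction is the main obstacle, since Theorem~\ref{t32} is stated only for the full spectrum, and the structural analysis of the residual spectrum is what separates the current result from the earlier ones.
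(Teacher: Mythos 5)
Your treatment of parts (1) and (3) is correct and coincides with the paper's own: part (1) is Corollary~\ref{c5} plus the observation that $\sigma(U)\subseteq\mathds{T}$ forces the local spectra of $U^m$ to equal their approximate point spectra, and part (3) is Frolik's theorem followed by Theorems~\ref{t15} and~\ref{t16}. The gap is in part (2), precisely at the equality $\sigma(T|X_\Pi)=\sigma_{a.p.}(T|X_\Pi)$, which you yourself flag as the main obstacle: neither of the two routes you sketch closes it. Route (a) cannot work as described, because the approximate-eigenvector constructions behind Theorem~\ref{t32} and Lemma~\ref{l2} only \emph{propagate} approximate eigenvectors: given a unit sequence almost annihilated by $\lambda I-T$ they produce one for $\alpha\lambda I-T$. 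For a residual-spectrum point there is no input sequence to feed into the construction, and taking $\alpha=1$ is vacuous; inspecting a rotation-invariance proof cannot manufacture an approximate eigenvector out of nothing. Route (b) is also unsubstantiated: while $\sigma_r(T|X_\Pi)\subseteq\sigma_p((T|X_\Pi)')$ is true, the assertion that an eigenvector of the adjoint must be supported in a periodic part is exactly what would need to be proved, and you give no argument for it. A secondary issue: Remark~\ref{r13} only says that condition (2) of Theorem~\ref{t32} (order continuity) holds for $T=wU$; it does not supply the standing hypothesis of that theorem, namely that $T'$ is a $d$-endomorphism of $X'$, so your appeal to Theorem~\ref{t32} is not licensed as written (the rotation invariance of $\sigma$ is obtained in the paper directly from part (1) of Lemma~\ref{l2}).

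The paper's argument for the missing step is genuinely different and passes through the scalar model $S$ in an essential way. Assuming $\lambda\in\sigma_r(T|X_\Pi)$, it localizes to the tail $X^{(m)}=X_\Pi\ominus\sum_{j=1}^m\oplus X_j$ using part (1) and Corollary~\ref{c5}, then invokes Remark~\ref{r11} (the equality $|\sigma(T)|=|\sigma(S)|$, which survives the presence of periodic points) to find $\beta$ with $|\beta|=|\lambda|$ in $\sigma(S,C(K_\Pi^m))$, where $K_\Pi^m=cl\bigcup_{j\geq m}\Pi_j$. Theorem~\ref{t35} then forces $\beta\in\sigma_{a.p.}(S,C(K_\Pi^m))$, since a wandering clopen set would contradict the definition of $K_\Pi^m$; by Lemma 3.25 of~\cite{Ki1} this yields a non-periodic point $k$ satisfying the two-sided weight inequalities~(\ref{eq41}), and the explicit construction of step (3a) in the proof of Theorem~\ref{t15} then produces approximate eigenvectors giving $\lambda\mathds{T}\subseteq\sigma_{a.p.}(T)$, a contradiction. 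This reduction to $S$, the use of Theorem~\ref{t35}, and the extraction of a point satisfying~(\ref{eq41}) are the ingredients your proposal is missing.
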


\begin{proof} (1) follows from Theorem~\ref{t18}, Corollary~\ref{c5}, and the condition $\sigma(U) \subseteq \mathds{T}$.

\noindent (2) The fact that $\sigma$ is rotation invariant follows immediately from part (1) of Lemma~\ref{l2}. Assume, contrary to our claim that there is a $\lambda \in \mathds{C}$ such that $\lambda \in \sigma_r(T|X_\Pi)$. It follows from (1) and Corollary~\ref{c5} that there is an $m \in \mathds{N}$, such that $\lambda \mathds{T} \subseteq \sigma_r(T|X^{(m)})$ where $X^{(m)} = X_\Pi \ominus \sum \limits_{j=1}^m \oplus X_m$. By Remark~\ref{r11} there is a $\beta \in \mathds{C}$ such that $|\beta| = |\lambda|$ and $\beta \in \sigma(S,C(K_\Pi^m))$ where $K_\Pi^m = cl \bigcup \limits_{j=m}^\infty \Pi_j$. Notice that $\beta \in \sigma_{a.p.}(S,C(K_\Pi^m))$. Indeed, otherwise by Theorem~\ref{t35} there is an open nonempty subset $E$ of $K_\Pi^m$ such that the sets $\varphi^{(n)}(E), n \in \mathds{Z}$ are pairwise disjoint. This clearly contradicts the definition of $K_\Pi^m$. Thus, there is a $k \in K_\Pi^m$ such that inequalities~(\ref{eq41}) hold. it is immediate to see that $k$ cannot be a $\varphi$-periodic point. Then, the same reasoning as in the proof of Theorem~\ref{t15} shows that $\lambda \mathds{T} \subseteq \sigma_{a.p.}(T)$.

\noindent (3) follows from Theorems~\ref{t15} and~\ref{t16}.
\end{proof}

As the next example shows, we cannot claim in statement (2) of Theorem~\ref{t17} that
$\sigma(T|X_\Pi) =  cl \bigcup \limits_{m=1}^\infty \sigma(T|X_m)$.

\begin{example} \label{e2} Let $E$ be the subset of $\mathds{C}$ defined as follows.
\begin{equation*}
  E = \{(i, 1/n): n \in \mathds{N}, -2n \leq i \leq 2n\} \cup \{(i, 0): i \in \mathds{Z} \}.
\end{equation*}
  Let the map $\varphi : E \rightarrow E$ be defined as
\begin{multline*}
  \varphi((i, 0)) = (i+1, 0), i \in \mathds{Z}; \\
 \varphi((i, 1/n)) = (i+1, 1/n), n \in \mathds{N}, -2n \leq i < 2n; \\
 \varphi((2n, 1/n)) = (-2n, 1/n), n \in \mathds{N}.
\end{multline*}
Let us also define the weight $w$ as follows.
\begin{equation*}
w(i, 1/n) =  \left\{
    \begin{array}{ll}
      1, & \hbox{if $-2n \leq i < -n$;} \\
      2, & \hbox{if $-n \leq i \leq n$;} \\
      1
, & \hbox{if $n < i < 2n$.}
    \end{array}
  \right.
 \end{equation*}
\begin{equation*}
  w(i,0) = 2, i \in \mathds{Z}.
\end{equation*}
Clearly $E$ endowed with the topology inherited from $\mathds{C}$ is a locally compact space, $\varphi$ is a homeomorphism of $E$, and $w$ is continuous and bounded on $E$. Let $\beta E$ be the Stone -  \v{C}ech compactification  of $E$. Then $\varphi$ extends uniquely to a homeomorphism $\tau$ of $\beta E$ and $w$ can be identified with the unique function $W \in C(\beta E)$. We define the disjointness preserving operator $T$ on $C(\beta E)$ as
\begin{equation*}
  Tf = W(f \circ \tau), f \in C(\beta E).
\end{equation*}
Notice that $T$ is order continuous on $C(\beta E)$.

Let $K$ be the absolute of $\beta E$. Then $C(K)$ can be identified with the Dedekind completion of $C(\beta E)$, the homeomorphism $\tau$ can be extended in the unique way (see~\cite{Ve}) to a homeomorphism $\psi$ of $K$, $W$ can be identified with $\hat{W} \in C(K)$, and the operator $T$ being order continuous extends in the unique way to the operator $\hat{T}$ on $C(K)$ defined as
\begin{equation*}
  \hat{T}f = \hat{W}(f \circ \psi).
\end{equation*}
Let $\Pi_m, m \in \mathds{N}$ be the set of all $\psi$-periodic points in $K$ of the period $m$.  It is easy to see that
\begin{enumerate}
  \item $\Pi_{m} = \emptyset$ if $m \neq 4k+1$.
  \item $\Pi_{4k+1}$ can be identified with the set $\{(i, 1/k): -2k \leq i \leq 2k \}$.
\item $K = cl \bigcup \limits_{k=1}^\infty \Pi_{4k+1}$.
  \item $ \sigma_{4k+1} = \sigma(\hat{T}, C(\Pi_{4k + 1})) = \{\rho \gamma : \rho = 2^{\frac{2k+1}{4k+1}}, \gamma^{4k+1} = 1\}$.
  \item $2\mathds{T} \subseteq \sigma(\hat{T})$.
\end{enumerate}
It follows from (4) and (5) that
\begin{equation*}
  cl \bigcup \limits_{k=1}^\infty \sigma_{4k+1} \subsetneqq \sigma(\hat{T}).
\end{equation*}
\end{example}

We turn now to essential spectra of operators of the form $wU$.

\begin{theorem} \label{t21} Let $X$ be a Kaplansky module, $U$ be a $d$-isomorphism of $X$, $w \in C(K)$, $\sigma(U) \subseteq \mathds{T}$, and $T = wU$. Let $\varphi$ be the homeomorphism of $K$ corresponding to the automorphism of $C(K)$: $f \rightarrow UfU^{-1}$. Assume that the set of all $\varphi$-periodic points in $K$ is empty and also assume that $K$ has no isolated points. Then
\begin{equation*}
  \sigma_2(T) = \sigma_{a.p.}(T).
\end{equation*}
\end{theorem}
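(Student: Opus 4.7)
The inclusion $\sigma_2(T) \subseteq \sigma_{a.p.}(T)$ is standard: if $\lambda I - T$ is not upper semi-Fredholm, either its range is not closed or its kernel is infinite-dimensional, and in either case $\lambda I - T$ is not bounded below. For the reverse inclusion fix $\lambda \in \sigma_{a.p.}(T)$. By Theorem~\ref{t16}, $\lambda \in \sigma_{a.p.}(S)$, where $S$ is the weighted automorphism of $C(K)$ associated with $T$. Clause (II) of Theorem~\ref{t34}, which witnesses $0 < \operatorname{null}(\mu I - S) < \infty$ for $\mu \neq 0$, requires points of $K$ that are isolated in $K$; under our hypothesis this clause is vacuous, and Proposition~\ref{p12} covers the case $\mu = 0$. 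Consequently $\sigma_2(S) = \sigma_{a.p.}(S)$, and in particular $\lambda \in \sigma_2(S)$.

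I would then build a singular approximate-eigenvector sequence for $T$ in $X$ following the construction in the proof of Theorem~\ref{t15}, step~$(3a)$. Lemma 3.25 of~\cite{Ki1}, applied via $\lambda \in \sigma_{a.p.}(S)$, supplies a point $k \in K$ with $|w_n(k)| \geq |\lambda|^n$ and $|w_n(\varphi^{-n}(k))| \leq |\lambda|^n$ for every $n$. Since $k$ is not $\varphi$-periodic, for each $m$ one may choose a clopen neighborhood $O_m$ of $k$ with $\varphi^j(O_m)$, $|j|\leq 2m+1$, pairwise disjoint and with the $|w_n|$ estimates valid (up to a factor $2$) on $O_m$ for $n \in [1,2m+1]$. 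Picking $x_m \in X$ with $\chi_{O_m}x_m = x_m$ and $\|x_m\|=1$, the vector
\[
y_m = \sum_{j=-m}^{m}\left(1-\tfrac{1}{\sqrt{m}}\right)^{|j|}\lambda^{-j}T^jx_m
\]
satisfies $\|y_m\|\geq 1$ and $Ty_m - \lambda y_m \to 0$ by the computation of Theorem~\ref{t15}.

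The hypothesis that $K$ has no isolated points enters in the singularity argument. If $\lambda \notin \sigma_p(T)$, the sequence is automatically singular: any norm-convergent subsequence would produce a nonzero element of $\ker(T - \lambda I) = \{0\}$, contradicting $\|y_m\| \geq 1$. When $\lambda \in \sigma_p(T)$, one refines the construction by exploiting that each $O_m$ is an infinite clopen set: arrange a strictly nested family $O_1 \supsetneq O_2 \supsetneq \cdots$ and choose $x_m$ supported in a nonempty clopen $B_m \subset O_m \setminus O_{m+1}$ with $k \notin B_m$, so that the $B_m$'s are pairwise disjoint. Along a subsequence $m_k$ for which the iterate-disjointness windows of the $O_{m_k}$ are chosen sufficiently large (permitted by the absence of $\varphi$-periodic points), the carrier projections of the $y_{m_k}$ become pairwise disjoint, and the Banach lattice structure of the cyclic subspace $X(y_{m_k} + y_{m_l})$ then forces $\|y_{m_k} - y_{m_l}\| \geq 1$ for $k \neq l$. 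The case $\lambda = 0$ is handled separately via $\ker T = U^{-1}(\ker w)$: if $\{w = 0\}$ has nonempty interior, this interior is an infinite clopen subset of $K$ (no isolated points), making $\ker T$ infinite-dimensional and hence $0 \in \sigma_2(T)$; otherwise $w$ is bounded below as a multiplier and $0 \notin \sigma_{a.p.}(T)$.

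The main technical obstacle lies in the subcase $\lambda \in \sigma_p(T)$: each fixed $O_m$ has only a finite iterate-disjointness window, so maintaining pairwise disjointness of the carriers across the whole subsequence requires a delicate diagonal interlocking of the growth of $m_k$ with the windows $N_{m_k}$, and may require replacing the single base point $k$ by nearby points (supplied by the perfectness of $K$) satisfying the same estimates~(\ref{eq41}). Precisely this combination of ``$\varphi$ aperiodic'' (providing arbitrarily large iterate-disjoint windows) with ``$K$ perfect'' (providing the internal clopen pieces and alternate base points) closes the argument.
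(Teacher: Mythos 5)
Your route for $\lambda \neq 0$ is essentially the paper's: transfer to the scalar model via Theorem~\ref{t16}, extract a point $k$ satisfying the inequalities~(\ref{eq41}) from Lemma 3.25 of~\cite{Ki1}, and run the construction of $y_m$ from step $(3a)$ of the proof of Theorem~\ref{t15} on clopen neighborhoods $O_m$ of $k$ with pairwise disjoint iterates. The difficulty you flag, namely proving that the resulting normalized sequence is \emph{singular} when $\lambda \in \sigma_p(T)$, is real and you have not closed it: you describe a ``delicate diagonal interlocking'' that ``may require replacing the single base point $k$,'' but the obstruction you yourself identify (each $O_m$ has only a finite iterate-disjointness window, while disjointness of the carriers of $y_m$ and $y_{m'}$ involves iterates up to order $m+m'$) is left unresolved, and if $k$ is a recurrent point of $\varphi$ the naive nested choices of $B_m$ do not yield disjoint carriers. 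Your observation that the case $\lambda \notin \sigma_p(T)$ is automatic (a norm-limit of the $z_m$ would be a nonzero eigenvector) is correct and worth keeping, but as written the proposal establishes the theorem only in that subcase. The paper is itself terse here, asserting singularity of $z_m$ once the $O_m$ are chosen using that $k$ is neither isolated nor periodic, but it does not reduce the claim to the unproved combinatorial statement your argument needs.

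The second, sharper gap is the case $\lambda = 0$, where your dichotomy is false. Since $K$ is compact, $|w|$ attains its infimum, so $0 \in \sigma_{a.p.}(T)$ exactly when $w$ vanishes at some point $k$; but $\{w=0\}$ can be nonempty with empty interior (for instance a single non-isolated zero), and then $w$ is \emph{not} bounded below as a multiplier even though $\ker T = \{0\}$ (in a Kaplansky module $wx=0$ forces $w$ to vanish on the nonempty clopen set $\mathrm{supp}\,x$, which cannot happen if $\{w=0\}$ has empty interior). Thus your alternative ``otherwise $w$ is bounded below and $0 \notin \sigma_{a.p.}(T)$'' fails precisely in the case that must be handled, and the kernel of $T$ gives no information there. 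The paper's argument avoids this entirely: because the zero $k$ of $w$ is not isolated, one picks pairwise disjoint nonempty clopen sets $V_n$ on which the relevant weight is smaller than $1/n$ and normalized $x_n$ with $\chi_{V_n}x_n = x_n$; then $\|Tx_n\| \leq \|U\|/n$ and the $x_n$, being disjointly supported, form a singular sequence, so $0 \in \sigma_2(T)$. You should replace your kernel-based dichotomy with this construction.
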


\begin{proof}
  The inclusion $\sigma_2(T) \subseteq \sigma_{a.p.}(T)$ follows from definition of $\sigma_2(T)$. Assume that $\lambda \in \sigma_{a.p.}(T)$. First assume that $\lambda = 0$. Then $w$ is not invertible in $C(K)$. Let $k \in K$ be such that $w(k) = 0$. Because $k$ is not isolated in $K$ we can find clopen pairwise disjoint subsets $V_n, n \in \mathds{N}$ of $K$ such that $|w(t)| < 1/n, t \in V_n$. Let $x_n \in X$ be such that $\|x_n\| = 1$ and $\chi_{V_n}x_n = x_n$. Then clearly $\|Tx_n\| \leq (1/n)\|U\|$. Moreover, because the elements $x_n$ have pairwise disjoint supports the sequence $x_n$ is singular.

Let now $\lambda \in \sigma_{a.p.}(T)$, $\lambda \neq 0$. By Theorem~\ref{t16} and~\cite[Lemma 3.25]{Ki1} there is a point $k \in K$ such that the inequalities~(\ref{eq41}) hold. We proceed now almost as in the proof of Theorem~\ref{t16}. Namely, because $k$ is neither an isolated nor a $\varphi$-periodic point we can find clopen nonempty subsets $O_m, m \in \mathds{N}$ of $K$ such that
\begin{itemize}
  \item The sets $\varphi^{(j)}(O_m), |j| \leq m+1, m \in \mathds{N}$, are pairwise disjoint.
  \item On set $O_m$ inequalities~(\ref{eq48}) hold.
  \end{itemize}
Let $y_m$ be as in the proof of Theorem~\ref{t16} and $z_m = y_m/\|y_m\|$. Then $Tz_m - \lambda z_m \rightarrow 0$ and the sequence $z_m$ is singular.
\end{proof}

\begin{theorem} \label{t30} Let $X$ be a Kaplansky module, $U$ be a $d$-isomorphism of $X$, $w \in C(K)$, $\sigma(U) \subseteq \mathds{T}$, and $T = wU$. Let $\varphi$ be the homeomorphism of $K$ corresponding to the automorphism of $C(K)$: $f \rightarrow UfU^{-1}$. Assume that the set of all $\varphi$-periodic points in $K$ is empty and also assume that $K$ has no isolated points. Consider operator $\tilde{T} = wU^{-1}$. Then
  \begin{equation*}
    \sigma_2(T^\prime) = \sigma_{a.p.}(\tilde{T}).
  \end{equation*}
\end{theorem}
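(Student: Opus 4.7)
The plan is to parallel the proof of Theorem~\ref{t21}, but in the dual. First, apply Theorem~\ref{t21} to $\tilde T = wU^{-1}$ (whose hypotheses match: $\sigma(U^{-1}) \subseteq \mathds{T}$, $\varphi^{-1}$ has no periodic points, $K$ has no isolated points) to obtain $\sigma_{a.p.}(\tilde T) = \sigma_2(\tilde T)$. Theorem~\ref{t16} applied to the pair $(\tilde T,\tilde S)$ gives $\sigma_{a.p.}(\tilde T) = \sigma_{a.p.}(\tilde S)$. On the $C(K)$-side, the no-isolated-points hypothesis together with Theorem~\ref{t36} yields $\sigma_2(S') = \sigma_{a.p.}(S')$, and Theorem~\ref{t35} (via the equivalence of condition $(A)$ for $\tilde S$ with condition $(B)$ for $S$, both characterising $\sigma_r(\tilde S) = \sigma_r(S')$) gives $\sigma_{a.p.}(\tilde S) = \sigma(S)\setminus \sigma_r(S') = \sigma_{a.p.}(S')$. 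Thus the theorem reduces to establishing $\sigma_2(T') = \sigma_2(S')$ for $T$ on the Kaplansky module $X$.

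For $\sigma_{a.p.}(\tilde T) \subseteq \sigma_2(T')$ I would construct a singular sequence in $X'$ directly, in the spirit of the construction~(\ref{eq42}) from the proof of Theorem~\ref{t16}. The case $\lambda = 0$: find a non-isolated $k_0 \in K$ with $w(k_0) = 0$, pairwise disjoint clopen $V_n$ on which $|w| < 1/n$, and unit $\xi_n \in X'$ with $\chi'_{V_n}\xi_n = \xi_n$; then $\|T'\xi_n\| \le \|U\|/n \to 0$ and disjoint supports make $(\xi_n)$ singular. For $\lambda \neq 0$, invoke \cite[Lemma 3.25]{Ki1} applied to $\tilde S$ to obtain a point $k \in K$ with $|\tilde w_n(k)| \ge |\lambda|^n$ and $|\tilde w_n(\varphi^n(k))| \le |\lambda|^n$; choose clopen $O_m \ni k$ with $\varphi^j(O_m)$ pairwise disjoint for $|j| \le m+1$ and the analogous weight estimates (possible because $k$ is neither $\varphi$-periodic nor isolated), select unit $x_m^* \in X'$ with $\chi'_{O_m}x_m^* = x_m^*$, and form
\[
\xi_m = \sum_{j=-m}^m \Bigl(1 - \tfrac{1}{\sqrt m}\Bigr)^{|j|} \lambda^{-j} (T')^j x_m^*.
\]
The weight bounds give $T'\xi_m - \lambda\xi_m = o(\|\xi_m\|)$, while the orbit-disjointness across $m$ yields singularity of $\xi_m/\|\xi_m\|$.

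For the reverse inclusion I would argue by contraposition. Assuming $\lambda \notin \sigma_{a.p.}(\tilde S)$, Theorem~\ref{t35} applied to $\tilde S$ (condition $(A)$ for $\tilde S$, equivalently $(B)$ for $S$) provides a $\varphi$-invariant clopen partition $K = K_1 \cup K_2 \cup \bigcup_{j \in \mathds{Z}} \varphi^j(Q)$ with $\sigma(S, C(K_1)) \subset \{|\mu| < |\lambda|\}$, $\sigma(S, C(K_2)) \subset \{|\mu| > |\lambda|\}$, and a transient wandering piece $Q$ that is absorbed forward into $K_1$ and backward into $K_2$. By Theorem~\ref{t15} applied to the restrictions of $T$ to the invariant cut-offs $\chi_{K_i}X$ the same spectral bounds hold for $T$ in place of $S$, and imitating step~(1) of the proof of Theorem~\ref{t15} one pieces together a right inverse of $\lambda I - T$ on $X$, yielding $\lambda \notin \sigma_2(T')$. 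The main obstacle is precisely this reverse step: constructing the right inverse on the transient piece $\bigcup_j \varphi^j(Q)$ at the module level (rather than on $C(K)$, where Theorem~\ref{t37} already provides it for Dedekind complete Banach lattices) is the crux of the proof, and requires simultaneously exploiting the no-isolated-points hypothesis to rule out, via Theorem~\ref{t36}, the intermediate finite-positive-codimensional cases that would otherwise obstruct the semi-Fredholm transfer from $S$ to $T$.
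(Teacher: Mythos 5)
Your forward inclusion $\sigma_{a.p.}(\tilde T) \subseteq \sigma_2(T')$ is essentially the paper's argument: regard $X'$ as a Banach $C(K)$-module via $f \mapsto f'$, observe that $(f\circ\varphi^{-1})' = U'f'(U')^{-1}$ so that the homeomorphism attached to $T'$ is $\varphi^{-1}$, and run the singular-sequence construction of Theorem~\ref{t21} inside $X'$ using the point supplied by Lemma 3.25 of~\cite{Ki1} (the case $\lambda=0$ being handled by disjointly supported functionals exactly as you describe). That half is sound, modulo the usual caveat that the two-sided sum $\sum_{j=-m}^{m}\lambda^{-j}(T')^{j}x_m^*$ needs the one-sided variant~(\ref{eq47}) when $w$ is not invertible.

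The reverse inclusion is where you have a genuine gap, and you flag it yourself: you propose to ``piece together a right inverse of $\lambda I - T$'' on the transient piece $\bigcup_{j}\varphi^{(j)}(Q)$ and then concede that this construction is the unresolved crux. The paper never constructs a right inverse and never passes through $\sigma_2(S')$. It argues entirely on the dual side: if $\lambda\in\sigma_r(\tilde T)$ (equivalently, by Theorem~\ref{t16}, $\lambda\in\sigma_r(\tilde S)$), then Theorem 3.29 of~\cite{Ki1} gives closed disjoint sets $K_1, K_2, O$ with $\varphi(K_i)=K_i$, $\rho(S,C(K_1))<|\lambda|$, $S$ invertible on $C(K_2)$ with $\rho(S^{-1},C(K_2))<1/|\lambda|$, the sets $\varphi^{(j)}(O)$ pairwise disjoint and covering the rest of $K$, the forward orbit of $O$ absorbed into $K_2$ and the backward orbit into $K_1$. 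One then assumes a normalized sequence $F_n\in X'$ with $T'F_n-\lambda F_n\to 0$ and repeats the lower-bound estimate from the proof of Theorem~\ref{t16} in the module $X'$ (where the relevant homeomorphism is $\varphi^{-1}$, so the roles of $K_1$ and $K_2$ swap): the spectral-radius bound forces $\|\chi_V' F_n\|\to 0$ for a suitable clopen neighborhood $V$ of one invariant piece, after which the weight growth on $K_2\cup(O\setminus V)$ yields $\|(T')^mF_n\|\ge A>1$ for some fixed $m$, contradicting $(T')^mF_n-\lambda^mF_n\to 0$. This shows $\lambda\notin\sigma_{a.p.}(T')$, which is stronger than $\lambda\notin\sigma_2(T')$, and bypasses the surjectivity problem you identified; no appeal to Theorem~\ref{t36} or to a semi-Fredholm transfer from $S$ to $T$ is needed. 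Replacing your third paragraph by this dual estimate closes the proof.
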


\begin{proof}
  We prove first that $\sigma_{a.p.}(\tilde{T}) \subseteq \sigma_2(T^\prime)$. Let $\lambda \in \sigma_{a.p.}(\tilde{T})$. The case $\lambda = 0$ can be considered in the same way as in the proof of Theorem~\ref{t21}. Assume therefore that $\lambda \neq 0$. From Theorem~\ref{t16} and~\cite[Lemma 3.25]{Ki1} we can conclude that there is $k \in K$ such that
\begin{equation}\label{eq53}
  |w_n(k)| \leq r^n, \; |w_n(\varphi^{(-n)}(k)| \geq r^n, \; n \in \mathds{N}.
\end{equation}
The map $f \rightarrow f^\prime, f \in C(K)$ defines on $X^\prime$ the structure of a Banach $C(K)$-module. \footnote{Of course, in general $(X',C(K))$ is not a Kaplansky module.}  Notice that for any $f \in C(K)$ we have
$(f \circ \varphi^{-1})' = (U^{-1}fU)' = U'f'(U')^{-1}$ and therefore to the map $f' \rightarrow U'f'(U')^{-1}$ corresponds the homeomorphism $\varphi^{(-1)}$ of $K$. Now, by using inequalities~(\ref{eq53}) we can prove that $\lambda \in \sigma_2(T')$ similarly to the proof of Theorem~\ref{t21}.

Next we need to consider the case when $\lambda \in \sigma_r(\tilde{T})$. It follows from Theorem~\ref{t16} and from~\cite[Theorem 3.29]{Ki1} that there are closed disjoint subsets $K_1, K_2$ and $O$ of $K$ with the following properties.
\begin{enumerate}
  \item $\varphi(K_1) = K_1$ and $\rho(S, C(K_1) < |\lambda|$.
  \item $\varphi(K_2) = K_2$. The operator $S$ is invertible on $C(K_2)$ and $\rho(S^{-1}, C(K_2) < |\lambda|$.
  \item The set $O$ is clopen in $K$, the sets $\varphi^{(j)}(O), j \in \mathds{Z}$ are pairwise disjoint, and $K =K_1 \cup K_2 \cup \bigcup \limits_{j= - \infty}^\infty \varphi^{(j)}(O)$.
  \item $\bigcap \limits_{n=1}^\infty cl \bigcup \limits_{j=n}^\infty \varphi^{(j)}(O) \subseteq K_2 $.
  \item $\bigcap \limits_{n=1}^\infty cl \bigcup \limits_{j=n}^\infty \varphi^{(-j)}(O) \subseteq K_1 $.
\end{enumerate}
Assume that there is a sequence $F_n \in X'$ such that $\|F_n\| = 1$ and $T'F_n - \lambda F_n \rightarrow 0$. We can bring this assumption to a contradiction using properties (1) - (5) above and the same reasoning as in the proof of Theorem~\ref{t16}.
\end{proof}

\begin{corollary} \label{c7}
  Assume conditions of Theorem~\ref{t22}. Then
\begin{equation*}
  \sigma(T) = \sigma_3(T).
\end{equation*}
\end{corollary}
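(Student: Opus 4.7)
The inclusion $\sigma_3(T) \subseteq \sigma(T)$ is automatic, so the plan is to establish $\sigma(T) \subseteq \sigma_3(T)$ by decomposing $\sigma(T) = \sigma_{a.p.}(T) \cup \sigma_r(T)$ and handling the two parts with Theorems~\ref{t21} and~\ref{t30} respectively. For $\lambda \in \sigma_{a.p.}(T)$, Theorem~\ref{t21} gives $\lambda \in \sigma_2(T) \subseteq \sigma_3(T)$, which disposes of the approximate part immediately.

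The substantive step concerns the residual part. The strategy is to show $\sigma_r(T) \subseteq \sigma_{a.p.}(\tilde{T})$; once this is done, Theorem~\ref{t30} yields $\sigma_{a.p.}(\tilde{T}) = \sigma_2(T') \subseteq \sigma_3(T)$ and the proof is complete. Using Theorem~\ref{t16} applied to the pair $(T,S)$ and also to the pair $(\tilde{T},\tilde{S})$, where $\tilde{S}f = w\cdot(f\circ\varphi^{-1})$ on $C(K)$, this reduces to the weighted-automorphism statement $\sigma_r(S) \subseteq \sigma_{a.p.}(\tilde{S})$.

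To prove the $C(K)$-level inclusion, I would invoke Theorem~\ref{t35} for $S$: if $\lambda \in \sigma_r(S)$, then condition $(A)$ produces closed $\varphi$-invariant subsets $E,F$ with $\sigma(S,C(E))\subset\{|\xi|<|\lambda|\}$, $\sigma(S,C(F))\subset\{|\xi|>|\lambda|\}$, and a non-empty clopen wandering set $Q$ whose forward orbits accumulate in $E$ and backward orbits in $F$. For any $k\in Q$, the spectral-radius conditions on $S|_E$ and $S|_F$ translate (via the estimates underlying Lemma~3.25 and Theorem~3.23 of~\cite{Ki1}) into pointwise weight bounds $|w_n(\varphi^j(k))| \lesssim |\lambda|^n$ for $j\to+\infty$ and $|w_n(\varphi^{-j}(k))|\gtrsim |\lambda|^n$ for $j\to+\infty$, and hence into the inequalities $|\tilde{w}_n(k)| \geq c|\lambda|^n$ and $|w_n(\varphi(k))|\leq C|\lambda|^n$, where $\tilde{w}_n(k)=w(k)w(\varphi^{-1}(k))\cdots w(\varphi^{-(n-1)}(k))$ is the iterated weight of $\tilde{S}$. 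These are exactly the inequalities required by Lemma~3.25 of~\cite{Ki1} for $\tilde{S}$ at $\lambda$, so a peaking-function construction identical in form to the one in the proof of Theorem~\ref{t15}, part (3a), produces $f_m\in C(K)$ with $\|f_m\|\geq c>0$ and $\tilde{S}f_m - \lambda f_m \to 0$, giving $\lambda \in \sigma_{a.p.}(\tilde{S})$.

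The main obstacle is the passage from the operator-spectral hypotheses $\sigma(S,C(E))\subset\{|\xi|<|\lambda|\}$ and $\sigma(S,C(F))\subset\{|\xi|>|\lambda|\}$ to pointwise control of the weight products along orbits of points $k\in Q$: this is not quite immediate because $k$ itself belongs neither to $E$ nor $F$, so one must combine the spectral bound on the invariant pieces with the orbit-accumulation property of $Q$ and uniform continuity of $w$ on $K$. Once this translation is in place, the rest is a routine application of the peaking-sequence technique that has already appeared several times in this paper, and no new ideas are required.
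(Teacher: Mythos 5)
Your proposal is correct and follows the route the paper intends for this corollary: write $\sigma_3(T)=\sigma_2(T)\cup\sigma_2(T')$, dispose of $\sigma_{a.p.}(T)$ via Theorem~\ref{t21}, and reduce $\sigma_r(T)$ to the inclusion $\sigma_r(S)\subseteq\sigma_{a.p.}(\tilde{S})$ at the $C(K)$ level via Theorem~\ref{t16} before returning through Theorem~\ref{t30}. The one step you flag as the ``main obstacle'' --- passing from condition $(A)$ of Theorem~\ref{t35} to the pointwise weight inequalities along the orbit of a point of $Q$ --- is exactly the content of Theorem 3.29 and Lemma 3.25 of~\cite{Ki1}, which the paper itself invokes verbatim in step (3b) of the proof of Theorem~\ref{t15}, so it can be discharged by citation rather than by the re-derivation you sketch.
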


\begin{corollary} \label{c6} Assume conditions of Theorem~\ref{t22}. Assume also that $K$ contains no clopen $\varphi$-wandering subset, i.e. for every nonempty clopen $O \subseteq K$ the sets $\varphi^{(j)}(O), j \in \mathds{Z}$ cannot be pairwise disjoint. Then
\begin{equation*}
  \sigma(T) = \sigma_{a.p.}(T) = \sigma_1(T).
\end{equation*}

\end{corollary}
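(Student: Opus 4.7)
The plan is to reduce the claim to showing that $\sigma_r(S) = \emptyset$ and $\sigma_r(\tilde S) = \emptyset$, where $(Sf)(k) = w(k)f(\varphi(k))$ and $(\tilde Sf)(k) = w(k)f(\varphi^{-1}(k))$ are the weighted automorphisms of $C(K)$ associated with $T = wU$ and $\tilde T = wU^{-1}$ respectively. Granted these two vanishings, Theorem~\ref{t16} immediately transfers the equality $\sigma = \sigma_{a.p.}$ from $S$ to $T$ and from $\tilde S$ to $\tilde T$, and then Theorems~\ref{t21} and~\ref{t30} combined with $\sigma_1(T) = \sigma_2(T) \cap \sigma_2(T')$ close the chain.

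First I would verify $\sigma_r(S) = \emptyset$ using Theorem~\ref{t35}. The carrier space $K$ of the Kaplansky module is extremally disconnected, $\varphi$ is a homeomorphism of $K$ (since $U$ is a $d$-isomorphism), and by hypothesis $\varphi$ has no periodic points, so Theorem~\ref{t35} applies. If $\lambda \in \sigma_r(S)$ were nonempty, condition $(A)$ would provide closed $\varphi$-invariant sets $E, F$ and a clopen set $Q$ with the iterates $\{\varphi^j(Q)\}_{j \in \mathbb{Z}}$ pairwise disjoint. The possibility $Q = \emptyset$ is ruled out: it would force $K = E \cup F$ and split $\sigma(S)$ into annular pieces strictly separated by $|\lambda|$, contradicting $\lambda \in \sigma(S)$. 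Hence $Q$ would have to be a nonempty clopen $\varphi$-wandering subset of $K$, which is exactly what the additional hypothesis of the corollary forbids. The same argument applied to $\tilde S$ (noting that the family of $\varphi$-wandering clopen sets coincides with the family of $\varphi^{-1}$-wandering clopen sets) yields $\sigma_r(\tilde S) = \emptyset$.

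Next I would assemble the transfer step. Theorems~\ref{t15} and~\ref{t16} give
\begin{equation*}
\sigma(T) = \sigma(S) = \sigma_{a.p.}(S) = \sigma_{a.p.}(T),
\end{equation*}
and applied to the pair $(\tilde T, \tilde S)$ they give $\sigma(\tilde T) = \sigma_{a.p.}(\tilde T)$, together with $\sigma(\tilde T) = \sigma(\tilde S) = \sigma(S) = \sigma(T)$, where the identity $\sigma(\tilde S) = \sigma(S)$ is the fact from~\cite{Ki1} already invoked inside the proof of Theorem~\ref{t15}. Theorem~\ref{t21} then upgrades this to $\sigma_2(T) = \sigma_{a.p.}(T) = \sigma(T)$, while Theorem~\ref{t30} gives $\sigma_2(T') = \sigma_{a.p.}(\tilde T) = \sigma(T)$. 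Intersecting, $\sigma_1(T) = \sigma_2(T) \cap \sigma_2(T') = \sigma(T)$, which, together with the sandwich $\sigma_1(T) \subseteq \sigma_{a.p.}(T) \subseteq \sigma(T)$, delivers the desired triple equality.

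I do not expect a serious technical obstacle in this argument; everything is already prepared by the preceding theorems. The one place that needs a bit of care is the ``$Q=\emptyset$ is impossible for $\lambda\in\sigma(S)$'' step in the application of Theorem~\ref{t35}, because that is what converts the ``no wandering clopen set'' hypothesis into the conclusion $\sigma_r(S) = \emptyset$; everything else is bookkeeping built on Theorems~\ref{t15}, \ref{t16}, \ref{t21}, and \ref{t30}.
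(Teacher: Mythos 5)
Your argument is correct and is exactly the assembly the paper intends (it prints no proof of this corollary, but the analogous part (5) of Theorem~\ref{t31} is obtained by the same mechanism): the no-wandering-set hypothesis rules out condition $(A)$/$(B)$ of Theorem~\ref{t35} except with $Q=\emptyset$, which is incompatible with $\lambda\in\sigma(S)$, so $\sigma_r(S)=\sigma_r(\tilde S)=\emptyset$, and Theorems~\ref{t15}, \ref{t16}, \ref{t21}, \ref{t30} then give $\sigma_2(T)=\sigma_2(T')=\sigma(T)$, hence $\sigma_1(T)=\sigma_2(T)\cap\sigma_2(T')=\sigma(T)$. You also correctly read ``conditions of Theorem~\ref{t22}'' as the (evidently intended) hypotheses of Theorems~\ref{t21} and~\ref{t30}.
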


Let us look now what happens when we drop the condition that $\varphi$ has no periodic points but still assume that $K$ has no isolated points. Combining Theorems~\ref{t17} and~\ref{t22} with Theorem 3.29 from~\cite{Ki1} we have the following (using the same notations as in the statement of Theorem~\ref{t17}).

 \begin{theorem} \label{t31} Let $X$ be a Kaplansky module, $U$ be a $d$-isomorphism of $X$ such that
   $\sigma(U) \subseteq \mathds{T}$, $\varphi$ be the corresponding homeomorphism of $K$, $w \in C(K)$ and $T = wU$. Assume that $K$ contains no isolated points. Then
\begin{enumerate}
  \item $\sigma(T) = \sigma_3(T)$.
  \item $\sigma_1(T) = \sigma_1(T|X_{N\Pi}) \cup \sigma(T|X_\Pi)$.
 \item $\sigma_2(T) = \sigma_2(T|X_{N\Pi}) \cup \sigma(T|X_\Pi)$.
 \item $\sigma_2(T') = \sigma_2(T'|(X_{N\Pi})') \cup \sigma(T|X_\Pi)$.
  \item If $K$ contains no clopen $\varphi$-wandering subset then $\sigma(T) = \sigma_1(T)$.
\end{enumerate}
 \end{theorem}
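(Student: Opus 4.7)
The plan is to reduce the theorem to the clopen $\varphi$-invariant decomposition $X = X_\Pi \oplus X_{N\Pi}$. Since $\chi_\Pi$ and $\chi_{N\Pi}$ commute with $U$ (because $\Pi$ and $N\Pi$ are $\varphi$-invariant) and with $w$, they commute with $T$, so each of $\sigma(T)$, $\sigma_1(T)$, $\sigma_2(T)$, $\sigma_3(T)$ and $\sigma_2(T')$ splits as the union of the corresponding spectra on the two summands. On $X_{N\Pi}$ the homeomorphism $\varphi$ has no periodic points, and $N\Pi$ inherits the absence of isolated points from $K$ since it is clopen; thus Corollary~\ref{c7} gives $\sigma(T|X_{N\Pi}) = \sigma_3(T|X_{N\Pi})$, and in particular $\sigma_2(T|X_{N\Pi}) = \sigma_2((T|X_{N\Pi})') = \sigma(T|X_{N\Pi})$. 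For (5), the wandering-free hypothesis is clearly inherited by $N\Pi$, and Corollary~\ref{c6} upgrades this to $\sigma_1(T|X_{N\Pi}) = \sigma(T|X_{N\Pi})$.

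The essential task is to prove the analogous identities on $X_\Pi$, namely
\begin{equation*}
\sigma(T|X_\Pi) = \sigma_1(T|X_\Pi) = \sigma_2(T|X_\Pi) = \sigma_2((T|X_\Pi)').
\end{equation*}
By Theorem~\ref{t17}(2) we have $\sigma(T|X_\Pi) = cl\bigcup_{m\ge 1}\sigma_m \cup \sigma_\infty$ with $\sigma_\infty$ rotation invariant. For $\lambda \in \sigma_m$, note that $\varphi^m$ is the identity on $\Pi_m$, so $U^m|X_m$ commutes with $C(\Pi_m)$ and hence $(T|X_m)^m = w_m U^m|X_m$ lies in $C(\Pi_m)$. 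Since $\Pi_m$ is clopen in $K$ and $K$ has no isolated points, neither does $\Pi_m$, and Proposition~\ref{p17} yields $\sigma((T|X_m)^m) = \sigma_1((T|X_m)^m)$. The factorization $\lambda^m I - (T|X_m)^m = \prod_{k=0}^{m-1}(\lambda \omega^k I - T|X_m)$ with $\omega = e^{2\pi i/m}$ then forces some $\lambda \omega^k$ into $\sigma_2(T|X_m)$ and some into $\sigma_2((T|X_m)')$, and the $m$-th-root-of-unity rotation invariance from Theorem~\ref{t11}(1) (which transfers to the conjugate via the same similarity $r^{-1}T_m r = \lambda T_m$) pushes $\lambda$ itself into both, giving $\lambda \in \sigma_1(T|X_m) \subseteq \sigma_1(T|X_\Pi)$.

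For a limit $\lambda = \lim\lambda_k$ with $\lambda_k \in \sigma_{m_k}$, singular Weyl sequences are assembled from approximate eigenvectors in $X_{m_k}$, whose supports lie in the pairwise disjoint sets $\Pi_{m_k}$ (a subsequence argument handles a constant $m_k$ via closedness of $\sigma_m$). For $\lambda \in \sigma_\infty$, the proof of Theorem~\ref{t17}(2) produces a non-$\varphi$-periodic point $k$ lying in $K_\Pi^m = cl\bigcup_{j\ge m}\Pi_j \subseteq \Pi$ at which the inequalities~(\ref{eq41}) hold; since $k$ is also not isolated, the construction of Theorem~\ref{t21} can be executed entirely inside $\Pi$ (choosing clopen neighborhoods of $k$ within the clopen $\varphi$-invariant set $\Pi$), yielding singular approximate eigenvectors for $T|X_\Pi$, and the parallel construction from Theorem~\ref{t30} applied to $\tilde{T} = wU^{-1}$ yields singular sequences for $(T|X_\Pi)'$. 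With all three cases handled we get $\sigma(T|X_\Pi) = \sigma_1(T|X_\Pi)$, and parts (1)–(5) follow by combining with the decomposition. The main obstacle is the $\sigma_\infty$ case: one must verify that the Theorem~\ref{t21} construction, which relies on pairwise disjointness of the $\varphi$-orbits of shrinking neighborhoods of $k$ across all $m$, goes through inside $\Pi$ rather than in the whole of $K$. This is possible precisely because $\Pi$ is clopen and $\varphi$-invariant, $k$ has an infinite $\varphi$-orbit, and $K$ has no isolated points.
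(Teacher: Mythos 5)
Your overall strategy---splitting along the $T$-reducing projections $\chi_\Pi$ and $\chi_{N\Pi}$, disposing of the non-periodic part with Corollaries~\ref{c7} and~\ref{c6} (i.e.\ Theorems~\ref{t21} and~\ref{t30}), and reducing the theorem to the identity $\sigma(T|X_\Pi)=\sigma_1(T|X_\Pi)=\sigma_2(T|X_\Pi)=\sigma_2((T|X_\Pi)')$---is exactly what the paper intends; the paper itself offers only the one-line remark that the result follows by combining Theorem~\ref{t17} with the preceding material, so your reconstruction is the right skeleton. One side remark is wrong, though harmless: from $\sigma(T|X_{N\Pi})=\sigma_3(T|X_{N\Pi})$ you cannot conclude ``in particular'' that $\sigma_2(T|X_{N\Pi})=\sigma_2((T|X_{N\Pi})')=\sigma(T|X_{N\Pi})$, since $\sigma_3=\sigma_2(T)\cup\sigma_2(T')$ and the residual spectrum on the wandering part is typically nonempty; fortunately parts (3) and (4) keep $\sigma_2(T|X_{N\Pi})$ and $\sigma_2(T'|(X_{N\Pi})')$ as they stand, so you never actually need that claim.

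The genuine gap is on the periodic part. You assert that $(T|X_m)^m=w_mU^m|X_m$ \emph{lies in} $C(\Pi_m)$ and then invoke Proposition~\ref{p17}. In a general Kaplansky module this is false: since $\varphi^m$ is the identity on $\Pi_m$, the operator $U^m|X_m$ \emph{commutes} with $C(\Pi_m)$ but need not be a multiplication operator (take $X=C(K,H)$ with $H$ a Hilbert space, $\varphi=\mathrm{id}$ and $U$ acting pointwise by a fixed non-scalar unitary; the paper is explicit that $T_m^m\in C(K)$ holds ``in particular'' when $X$ is a Dedekind complete Banach lattice, not in general). The correct tools are Proposition~\ref{p14} and Corollaries~\ref{c15}/\ref{c13}, which handle operators merely commuting with the center. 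With that substitution your factorization-plus-rotation argument does yield $\sigma(T|X_m)\subseteq\sigma_2(T|X_m)$, because the relevant halves of \ref{p14} and \ref{c13} need only that $K$ has no isolated points; this salvages parts (1), (2), (3) and (5). But the dual half needed for part (4), namely $\sigma(T|X_m)\subseteq\sigma_2((T|X_m)')$, is exactly where Corollary~\ref{c13} requires that $Q$ (with $C(Q)=Z(X')$) have no isolated $\psi$-periodic points, and this does not follow from $K$ having no isolated points (for $X=C(K)$ the dual $M(K)$ produces a $Q$ with many isolated points). So your proof of part (4) on $X_\Pi$ has a hole that must be closed by an argument exploiting the specific form $T=wU$ with $\sigma(U)\subseteq\mathds{T}$---a periodic analogue of the Theorem~\ref{t30} construction in $X'$---rather than by commutation with the center alone.
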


To complete our description of essential spectra of operators of the form $wU, \sigma(U) \subseteq \mathds{T}$ we need to get rid of the condition that $K$ contains no isolated points. The following theorem provides a description of the sets $\sigma_3(T)$ and $\sigma_4(T)$. The proof of this theorem is simple and resembles the proof of the corresponding result for Dedekind complete Banach lattices in~\cite{Ki3}. Therefore we omit it.

\begin{theorem} \label{t24} Let $X$ be a Kaplansky module, $U$ be a $d$-isomorphism of $X$ such that
   $\sigma(U) \subseteq \mathds{T}$, $\varphi$ be the corresponding homeomorphism of $K$, $w \in C(K)$, and $T = wU$. Assume that $\lambda \in \sigma(T)$. The operator $\lambda I - T$ is Fredholm if and only if there are subsets $K_1, K_2, K_3, K_4$ of $K$ such that
\begin{enumerate}
  \item The sets $K_1, \ldots ' K_4$ are at most finite and at least one of sets $K_1, K_2, K_3$ is not empty.
  \item If $k \in K_1 \cup K_2 \cup K_3$ then $k$ is an isolated point of $K$ and $\dim \chi_{\{k\}}X = n_k < \infty$.
\item If $k_1, k_2 \in K_1 \cup K_2 \cup K_3$ then $\varphi^{(i)}(k_1) \neq \varphi^{(j)}(k_2), i, j \in \mathds{Z}$.
  \item Let $k \in K_1$, $L_1 = \bigcap \limits_{n=1}^\infty cl\{\varphi^{(j)}(k) : k \geq n\}$ and
$L_2 = \bigcap \limits_{n=1}^\infty cl\{\varphi^{(-j)}(k) : k \geq n\}$. Then $\rho(S, C(L_2)) < |\lambda|$, the operator $S$ is invertible on $C(L_1)$ and $\rho(S^{-1},C(L_1)) < 1/|\lambda|$.
  \item If $k \in K_2$ and $L_1$ and $L_2$ are as above then $\rho(S, C(L_1)) < |\lambda|$, the operator $S$ is invertible on $C(L_2)$ and $\rho(S^{-1},C(L_2)) < 1/|\lambda|$.
  \item If $k \in K_3$ then $k$ is a $\varphi$-periodic point and $\lambda^p \in w_p(k)\sigma(U^p|\chi_{\{k\}}X)$.
\item  If $k \in K_4$ then $k$ is a $\varphi$-periodic point, $\lambda^p \not \in w_p(k)\sigma(U^p|\chi_{\{k\}}X)$, but $|\lambda^p|  \in |w_p(k)||\sigma(U^p|\chi_{\{k\}}X)|$.
\item $|\lambda| \not \in |\sigma(S, C(K \setminus \bigcup \limits_{i=1}^4 K_i)|$.
\end{enumerate}
Moreover
\begin{equation*}
  ind(\lambda I - T) = \sum \limits_{k \in K_1} n_k - \sum \limits_{k \in K_2} n_k.
\end{equation*}
  \end{theorem}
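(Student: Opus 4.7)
The plan is to mimic the proof of the corresponding theorem for Dedekind complete Banach lattices in~\cite{Ki3}, with the Kaplansky module setting handled through the $C(K)$-module decomposition tools already developed in the paper. The rough picture is: after peeling off finitely many $\varphi$-orbits of isolated points of $K$, the residual part of $T$ has no Fredholm behavior at $\lambda \in \sigma(T)$, so all the Fredholm structure is concentrated on finitely many isolated $\varphi$-orbits, each of which behaves as a weighted shift on a module of the form $\bigoplus_j \chi_{\{\varphi^j(k)\}}X$ with a fixed finite-dimensional fiber.

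First I would fix $\lambda \in \sigma(T)$ and decompose. Let $I \subseteq K$ be the (open) set of isolated points and $M = K \setminus \mathrm{cl}(I)$. The identity $U \chi_{\{k\}} = \chi_{\{\varphi(k)\}} U$ shows that each orbit $\mathrm{orb}(k)$ of an isolated $k$ gives a $T$-invariant summand $\chi_{\mathrm{orb}(k)} X$, so I may write $X = \chi_M X \oplus \bigoplus_k \chi_{\mathrm{orb}(k)}X$ with $k$ running over orbit representatives in $I$. On $\chi_M X$, which is a Kaplansky module over a compact space with no isolated points, Theorem~\ref{t31}(1) gives $\sigma_3(T|\chi_M X) = \sigma(T|\chi_M X)$, so Fredholmness of $\lambda I - T$ on this piece is equivalent to $\lambda \notin \sigma(T|\chi_M X)$.

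For sufficiency, assume conditions (1)--(8) and set $K_0 = K \setminus \bigcup_{i=1}^{4}\bigcup_{k \in K_i}\mathrm{orb}(k)$. Condition (8), combined with Theorem~\ref{t17} (the non-periodic part fed by Theorems~\ref{t15}--\ref{t16}, the periodic part by Theorem~\ref{t17}(2) and rotation invariance of $\sigma(T|X_\Pi)$), delivers $\lambda \notin \sigma(T|\chi_{K_0}X)$. For $k \in K_3$ the piece $\chi_{\{k\}}X$ is finite-dimensional and automatically contributes zero index; for $k \in K_4$ condition (7) together with rotation invariance on the finite-dimensional fiber $\chi_{\{k\}}X$ forces $\lambda I - T$ to be invertible there; for $k \in K_1$, the module $\bigoplus_{j\in\mathds{Z}}\chi_{\{\varphi^j(k)\}}X$ carries $T$ as a bilateral weighted shift on a fixed fiber of dimension $n_k$, and condition (4), translated through Theorem~\ref{t35}, makes $\lambda I - T$ Fredholm with kernel of dimension $n_k$ and zero cokernel; $k \in K_2$ gives the symmetric picture with index $-n_k$. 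Summing gives Fredholmness of $\lambda I - T$ with the claimed index formula $\sum_{k \in K_1} n_k - \sum_{k \in K_2} n_k$.

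For necessity, assume $\lambda I - T$ is Fredholm. Step~1 already rules out any spectrum contribution from $\chi_M X$, so kernel and cokernel live entirely on isolated orbits, and the Fredholm hypothesis restricts this contribution to finitely many orbits. On each non-periodic isolated orbit the scalar operator $S$ on $C(\mathrm{orb}(k))$ is a weighted automorphism whose upper/lower semi-Fredholm spectrum is described exactly by Theorems~\ref{t34}--\ref{t36}; pulling these descriptions back through the finite-dimensional fiber $\chi_{\{k\}}X$ (of dimension $n_k$) gives conditions (4) and (5) together with the corresponding contributions to $\dim\ker$ and $\mathrm{codim}\, R$. On each periodic isolated orbit of period $p$, one reduces to the finite-dimensional operator $T^p|\chi_{\{k\}}X = w_p(k)\,U^p|\chi_{\{k\}}X$; whether $\lambda^p$ or merely $|\lambda^p|$ belongs to the fiber spectrum separates (6) from (7). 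Finally, (8) follows because $\chi_{K_0}X$ must carry $\lambda I - T$ invertibly, and by Theorem~\ref{t37}-style reduction this is equivalent to $|\lambda|$ lying outside $|\sigma(S, C(K_0))|$. The main obstacle will be the bookkeeping at periodic orbits: one must use rotation invariance of the fiber spectrum to correctly separate $K_3$ from $K_4$, and one must identify $\sigma(T^p|\chi_{\{k\}}X)$ with $w_p(k)\,\sigma(U^p|\chi_{\{k\}}X)$ from the factorization $T = wU$ and the centrality of $w$ at the periodic point.
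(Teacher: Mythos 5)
Your proposal takes exactly the route the paper intends: the authors omit the proof of Theorem~\ref{t24} entirely, saying only that it ``resembles the proof of the corresponding result for Dedekind complete Banach lattices in~\cite{Ki3},'' and your outline --- splitting off the part of $K$ without isolated points via Theorem~\ref{t31}, reducing each isolated $\varphi$-orbit to a weighted shift with a fixed finite-dimensional fiber, and feeding the non-periodic orbits through Theorems~\ref{t34}--\ref{t36} while handling periodic orbits by finite-dimensional spectral theory of $w_p(k)U^p|\chi_{\{k\}}X$ --- is precisely that adaptation. The proposal is correct in approach and in fact supplies more detail than the paper itself does.
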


  \begin{remark} \label{r22} If we assume that $X$ is a finitely generated Kaplansky module then the condition $\dim{\chi_{\{k\}}X} < \infty$ in part (2) of the statement of Theorem~\ref{t24} is satisfied automatically.
  \end{remark}

\begin{corollary} \label{c8} Let $X$ be a Kaplansky module, $U$ be a $d$-isomorphism of $X$ such that
   $\sigma(U) \subseteq \mathds{T}$, $w \in C(K)$ and $T = wU$. Then
\begin{equation*}
  \sigma_5(T) = \sigma_4(T).
\end{equation*}
\end{corollary}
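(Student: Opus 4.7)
The inclusion $\sigma_4(T) \subseteq \sigma_5(T)$ is a general fact about bounded operators, noted in Section 2. For the reverse inclusion, my strategy is to establish that every $\lambda \in \sigma(T) \setminus \sigma_4(T)$ is an isolated point of $\sigma(T)$. Once this is in hand, the classical Fredholm-theory fact that an isolated point $\lambda \in \sigma(T)$ at which $\lambda I - T$ is Fredholm is automatically a pole of the resolvent of finite algebraic multiplicity delivers $\lambda \notin \sigma_5(T)$ via the pole characterization of the Browder spectrum recorded in the preliminaries.

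Fix $\lambda \in \sigma(T) \setminus \sigma_4(T)$, so $\lambda I - T$ is Fredholm with index zero. I would apply Theorem~\ref{t24} to obtain the structural data: pairwise $\varphi$-orbit-disjoint finite sets of isolated points $K_1, K_2, K_3, K_4 \subseteq K$ with $\dim \chi_{\{k\}} X = n_k < \infty$ for $k \in K_1 \cup K_2 \cup K_3$, and condition (8) supplying $|\lambda| \notin |\sigma(S, C(V))|$ on the bulk $V := K \setminus \bigcup_{i=1}^{4} K_i$. The next step is to decompose $X$ using the characteristic projections attached to the finite sets $K_i$ and $V$ and to control $\sigma(T)$ near $\lambda$ on each piece. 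On the bulk summand $\chi_V X$ the rotation-invariance results of Theorems~\ref{t15}, \ref{t17}, and~\ref{t35} upgrade the modulus exclusion in condition (8) to a full open annular neighborhood of $\lambda$ sitting inside the resolvent of $T|_{\chi_V X}$. On the finite-orbit periodic summands attached to $K_3 \cup K_4$ the corresponding restriction has finite spectrum, so $\lambda$ is automatically isolated there. On the shift-like non-periodic pieces attached to $K_1 \cup K_2$ the hyperbolic sink/source conditions (4)--(5) of Theorem~\ref{t24} force the essential part of the orbit-subspace spectrum away from $|z| = |\lambda|$, leaving only the finite-dimensional kernel/cokernel contribution at $\lambda$ itself. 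Combining these pieces, $\lambda$ is isolated in $\sigma(T)$, and the argument concludes.

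The main obstacle I anticipate is the shift-like piece just mentioned: verifying rigorously that the spectrum of $T$ restricted to the subspace generated by the orbit of a non-periodic isolated point $k \in K_1 \cup K_2$ contains $\lambda$ only as an isolated point of finite spectral multiplicity. In the Dedekind complete Banach lattice setting treated in~\cite{Ki3} this step reduces via the Theorem~\ref{t37}-style equivalence to an analysis of a scalar weighted automorphism of $C(K)$. Transferring that reduction to the Kaplansky-module setting would exploit the $d$-isomorphism hypothesis on $U$ (so that, by Remark~\ref{r13}, $T'$ is automatically a $d$-endomorphism of $X'$ and the tools of Section 4 become available) together with the finite-dimensionality $\dim \chi_{\{k\}} X = n_k < \infty$ to pass between $T$ and the scalar operator $S$ on the orbit-generated subspaces. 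Once that interchange is available, the isolation of $\lambda$ follows from the hyperbolic dichotomy in (4)--(5), and the rest of the proof transcribes the Banach lattice argument of~\cite{Ki3} almost verbatim.
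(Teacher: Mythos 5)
The inclusion $\sigma_4(T)\subseteq\sigma_5(T)$ is indeed automatic. For the reverse inclusion the paper offers no argument beyond deducing it from Theorem~\ref{t24} (whose proof is itself omitted), so the comparison must be with the logic of that deduction: by Corollary~\ref{c9}, $\lambda\notin\sigma_5(T)$ means $\lambda I-T$ is Fredholm with $K_1=K_2=\emptyset$, whereas $\lambda\notin\sigma_4(T)$ only gives $\mathrm{ind}(\lambda I-T)=\sum_{k\in K_1}n_k-\sum_{k\in K_2}n_k=0$. The entire content of the corollary is therefore the implication ``index zero implies $K_1=K_2=\emptyset$'', and your proposal never engages with that implication.

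Your route --- show that every $\lambda\in\sigma(T)\setminus\sigma_4(T)$ is isolated in $\sigma(T)$ --- breaks down precisely at the $K_1\cup K_2$ pieces, and your description of that step is where the gap sits. If $k\in K_1$ is a non-periodic isolated point, conditions (4)--(5) of Theorem~\ref{t24} are \emph{open} conditions in $|\lambda|$: they persist for every $\lambda'$ in an open annulus around the circle $|z|=|\lambda|$, and for each such $\lambda'$ the orbit subspace of $k$ carries a genuine eigenvector of $T$ (the sum $\sum_j (\lambda')^{-j}T^jx$ converges exactly under the sink/source dichotomy). So such an orbit contributes a full open annulus of point spectrum, not ``only the finite-dimensional kernel/cokernel contribution at $\lambda$ itself''; $\lambda$ can be isolated in $\sigma(T)$ only if $K_1\cup K_2=\emptyset$. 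Your argument therefore reduces to the unproved claim that a Weyl point cannot have $K_1\neq\emptyset\neq K_2$ with balanced dimension counts. That claim is not a formality: take $X=\ell^2(\mathds{Z})\oplus\ell^2(\mathds{Z})$ as a Kaplansky module over $K=\beta\mathds{Z}\sqcup\beta\mathds{Z}$, let $U$ be the bilateral shift on each summand, and let $w$ equal $(\ldots,2,2,1,1,\ldots)$ on one summand and $(\ldots,1,1,2,2,\ldots)$ on the other. For every $1<|\lambda|<2$ the operator $\lambda I-T$ is Fredholm of index $0$ with $K_1$ and $K_2$ both singletons, the whole annulus lies in $\sigma_p(T)$, and the component $\{1<|z|<2\}$ of $\mathds{C}\setminus\sigma_1(T)$ never meets the resolvent set, so the annulus lies in $\sigma_5(T)\setminus\sigma_4(T)$. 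Unless you identify and invoke a hypothesis that excludes such balanced configurations (none is visible in the statement), the isolation strategy cannot close the argument, and as written the proposal does not prove the corollary.
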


\begin{corollary} \label{c9} Let $X$ be a Kaplansky module, $U$ be a $d$-isomorphism of $X$ such that
   $\sigma(U) \subseteq \mathds{T}$, $w \in C(K)$ and $T = wU$. The following conditions are equivalent.
\begin{enumerate}
  \item $\lambda \in \sigma(T) \setminus \sigma_5(T)$.
  \item The operator $\lambda I - T$ is Fredholm and the sets $K_1$ and $K_2$ from the statement of Theorem~\ref{t24} are empty.
\end{enumerate}
\end{corollary}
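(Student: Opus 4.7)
The implication $(2) \Rightarrow (1)$ is the easy direction. Assuming $\lambda I - T$ is Fredholm with $K_1 = K_2 = \emptyset$, the index formula in Theorem~\ref{t24} gives $\mathrm{ind}(\lambda I - T) = 0$, so $\lambda I - T$ is Weyl and $\lambda \notin \sigma_4(T) = \sigma_5(T)$ by Corollary~\ref{c8}. Since condition (1) of Theorem~\ref{t24} forces at least one of $K_1, K_2, K_3$ to be non-empty, we must have $K_3 \neq \emptyset$; for any $k \in K_3$, condition (6) places $\lambda$ in the spectrum of $T$ restricted to the finite-dimensional $T$-invariant subspace $\bigoplus_{j=1}^{p} \chi_{\{\varphi^j(k)\}} X$, so $\lambda \in \sigma(T)$.

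For the nontrivial direction $(1) \Rightarrow (2)$, I would first use $\sigma_4(T) \subseteq \sigma_5(T)$ to see that $\lambda I - T$ is Weyl, hence Fredholm, and invoke Theorem~\ref{t24} to obtain a decomposition with $\sum_{k \in K_1} n_k = \sum_{k \in K_2} n_k$. The task is to show both sums are empty, not merely balanced. The extra content of $\lambda \notin \sigma_5(T)$, read through the original Browder definition, is that $\lambda$ is a pole of $R(\cdot, T)$, hence an isolated point of $\sigma(T)$. The case $\lambda = 0$ is immediate, because conditions (4) and (5) of Theorem~\ref{t24} would demand $\rho(S, C(L_i)) < 0$, so $K_1 = K_2 = \emptyset$ for free.

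For $\lambda \neq 0$, suppose for contradiction that $k \in K_1$ (the case $K_2 \neq \emptyset$ being symmetric via $T'$). Because $k$ is isolated in $K$ and non-periodic, the orbit $O_k = \{\varphi^j(k) : j \in \mathds{Z}\}$ is open, its closure $Z := \mathrm{cl}(O_k) = O_k \cup L_1 \cup L_2$ is clopen and $\varphi$-invariant by extremal disconnectedness, and $Y := \chi_Z X$ is a $T$-invariant Kaplansky submodule. The spectral-radius bounds on $L_1, L_2$ in Theorem~\ref{t24}(4) force the circle $|\lambda|\mathds{T}$ to miss $\sigma(S, C(L_1 \cup L_2))$, while the non-periodic wandering orbit $O_k$ causes $\lambda$ to satisfy the structural criterion of Theorem~\ref{t33} (in conjunction with Theorem~\ref{t35}, case $(B)$) that puts $\lambda$ in $\sigma(S, C(Z))$. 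The topological and spectral-radius conditions appearing in that criterion depend only on $|\lambda|$, hence are rotation-invariant, so $|\lambda|\mathds{T} \subseteq \sigma(S, C(Z))$. Transferring this circle back to $T|Y$ via the methods of Theorems~\ref{t15}--\ref{t17} yields $|\lambda|\mathds{T} \subseteq \sigma(T|Y) \subseteq \sigma(T)$, and this uncountable set cannot be contained in $\sigma(T)$ when $\lambda$ is isolated there, a contradiction.

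The main obstacle is the transfer step at the end of the last paragraph: Theorems~\ref{t15} and~\ref{t16} require the global absence of $\varphi$-periodic points, whereas $Z$ typically carries periodic points inside $L_1 \cup L_2$. I would resolve this by mirroring the splitting in Theorem~\ref{t17}: decompose $Z$ into its clopen periodic and non-periodic subsets, note that on the periodic part the spectrum of $T$ stays bounded off $|\lambda|\mathds{T}$ by the $L_i$-bounds (via Theorem~\ref{t18} applied to the finite-dimensional fibres over isolated periodic points together with Theorem~\ref{t20} on the non-isolated contribution), and apply the spectrum-identification arguments directly to the non-periodic piece, where $O_k$ lives and rotation invariance is fully available.
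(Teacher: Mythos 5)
The paper states this corollary (like Theorem~\ref{t24} itself) without proof, so there is no official argument to measure you against; judged on its own, your proposal has the right architecture. The direction $(2)\Rightarrow(1)$ is essentially complete: the index formula gives $\mathrm{ind}(\lambda I-T)=0$, hence $\lambda\notin\sigma_4(T)=\sigma_5(T)$, and $K_3\neq\emptyset$ forces $\lambda\in\sigma(T)$. One small point there: from $\lambda^p\in w_p(k)\sigma(U^p|\chi_{\{k\}}X)$ you first obtain some $\mu$ with $\mu^p=\lambda^p$ in the spectrum of $T$ restricted to the orbit subspace; to replace $\mu$ by $\lambda$ you need the similarity $r^{-1}(T|_{\mathrm{orbit}})r=\gamma T|_{\mathrm{orbit}}$ for $p$-th roots of unity $\gamma$, exactly as in part (1) of the proof of Theorem~\ref{t11}. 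Your key idea for $(1)\Rightarrow(2)$ — that $\lambda\notin\sigma_5(T)$ makes $\lambda$ an isolated point of $\sigma(T)$, while conditions (4) and (5) of Theorem~\ref{t24} see $\lambda$ only through $|\lambda|$ and therefore drag the whole circle $|\lambda|\mathds{T}$ into $\sigma(T)$ — is the right one, and your disposal of $\lambda=0$ (the sets $L_i$ are nonempty decreasing intersections of compacta, so $\rho\geq 0$ rules out conditions (4) and (5)) is correct.

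The one step that is not yet a proof is the final transfer, and your proposed repair is more roundabout than necessary. You do not need to identify $\sigma(T|Y)$ with $\sigma(S,C(Z))$ on all of $Z=cl(O_k)$, which is precisely where the periodic points hiding in $L_1\cup L_2$ obstruct Theorems~\ref{t15} and~\ref{t16}, and your sketch of the periodic/non-periodic splitting (invoking Theorem~\ref{t18} on the fibres and Theorem~\ref{t20} for "the non-isolated contribution") does not yet amount to an argument. Two cleaner routes: (i) replace $Z$ by $Z'=Z\cap N\Pi$; by Frolik's theorem $N\Pi$ is clopen and $\varphi$-invariant, so $Z'$ is a clopen $\varphi$-invariant set containing the orbit $O_k$ and free of periodic points, its limit sets $L_i\cap N\Pi$ inherit the spectral-radius bounds of Theorem~\ref{t24}(4) by restriction to an invariant summand, and Theorems~\ref{t15},~\ref{t16} and~\ref{t20} now apply verbatim to $T|\chi_{Z'}X$; or (ii) bypass the symbol $S$ entirely and observe that conditions (2)--(4) of Theorem~\ref{t24} for the fixed point $k$, read for any $\mu$ with $|\mu|=|\lambda|$, produce $null(\mu I-T)\geq n_k>0$ by the very construction that yields the index formula (the analogue of Remark~\ref{r14}, inherited from~\cite{Ki3}): the series $\sum_j\mu^{-j}T^jx_0$ over the orbit converges in both directions because of the bounds $\rho(S,C(L_2))<|\mu|$ and $\rho(S^{-1},C(L_1))<1/|\mu|$ together with $\sigma(U)\subseteq\mathds{T}$, and disjointness of supports makes the sum nonzero. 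For $K_2$ one runs the dual argument in $X'$ to get $def(\mu I-T)>0$. Either way $|\lambda|\mathds{T}\subseteq\sigma(T)$, which is incompatible with $\lambda\neq 0$ being isolated in $\sigma(T)$.
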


\section{Examples.}

\subsection{Spaces of vector valued continuous functions.}

Let $K$ be a compact Hausdorff space and $Y$ be a Banach space. We consider the Banach space $C(K, Y)$ of  all continuous maps from $K$ into $Y$ with the standard norm
\begin{equation*}
  \|x\| = \max \limits_{k \in K} \|x(k)\|_Y.
\end{equation*}
It is immediate to see that $C(K, Y)$ is a Banach $C(K)$-module with the Fatou property.
Let $E$ be an open nonempty subset of $K$, $\varphi$ be a continuous map from $E$ into $K$, and $w$ be a continuous map from $E$ into $L(Y)$ such that
\begin{equation*}
  \sup \limits_{k \in E} \|w(k)\| < \infty .
\end{equation*}
The formula
\begin{equation}\label{eq67}
(Tx)(k) =  \left\{
    \begin{array}{ll}
      w(k)x(\varphi(k)), & \hbox{$k \in E$;} \\
      0, & \hbox{otherwise}
    \end{array}
  \right.
\end{equation}
defines a bounded $d$-endomorphism of $C(K,Y)$. Moreover, if the set of all eventually $\varphi$-periodic points is of first category in $K$ then the powers of $T$ are $d$-independent and we have the following corollary of Theorem~\ref{t9}.

\begin{corollary} \label{c17} Let $T$ be an operator on $C(K,Y)$ defined by the formula~(\ref{eq67}) and let the set of all eventually $\varphi$-periodic points be of first category in $K$. Then the sets
\begin{equation*}
  \sigma(T), \sigma_{a.p.}(T), \; \text{and} \; \sigma_2(T)
  \end{equation*}
 are rotation invariant.
\end{corollary}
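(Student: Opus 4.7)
The strategy is to verify the two hypotheses of Theorems~\ref{t20} and~\ref{t9}, namely the Fatou property and $d$-independence of the powers of $T$. The Fatou property of $C(K,Y)$ is asserted in the paragraph preceding the statement, so the essential task is to establish $d$-independence in the sense of Definition~\ref{d6}. Once this is done, Theorem~\ref{t20} yields rotation invariance of $\sigma(T)$ and $\sigma_{a.p.}(T)$, while Theorem~\ref{t9} yields rotation invariance of $\sigma_2(T)$.

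First I would check that the formula (\ref{eq67}) defines a $d$-endomorphism. A direct computation gives $T(fx) = (f \circ \varphi)\, Tx$ for every $f \in C(K)$ and $x \in X$, with $f \circ \varphi$ extended by zero off $E$ as in Proposition~\ref{p3}. Hence $T(X(x)) \subseteq X(Tx)$ and the restriction acts as a weighted composition operator with respect to $\varphi$ on the associated Banach lattices, which is automatically disjointness preserving. Iterating gives $(T^n x)(k) = w_n(k) x(\varphi^n(k))$ on the natural domain $E_n$, so the dynamics of the powers of $T$ is entirely governed by the iterates of $\varphi$.

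For $d$-independence, the plan is to exploit the canonical continuous surjection $\pi : Q \to K$ induced by the isometric embedding $C(K) \hookrightarrow C(Q)$ that produces the Kaplansky $C(Q)$-module structure on $X''$. The defining identity (\ref{eq54}) for $T''$, combined with the downstairs identity $T(fx) = (f \circ \varphi)\, Tx$, forces the semiconjugacy $\pi \circ \psi = \varphi \circ \pi$ on the domain of $\psi$. Consequently, if $q \in \psi^{(-n)}(F^{(m)} \cap Q_x)$ then $\pi(q)$ belongs to $\varphi^{(-n)}(F_K^{(m)})$, where $F_K^{(m)}$ denotes the set of $\varphi$-periodic points in $K$ of period at most $m$. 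Since $F_K^{(m)}$ is closed in $K$ (as a finite union of fixed-point sets of iterates of $\varphi$) and is contained in the eventually $\varphi$-periodic set, which is first category by hypothesis, and since $K$ is a Baire space, the closed meagre set $\varphi^{(-n)}(F_K^{(m)})$ is nowhere dense in $K$. The openness of $\psi$ (see~\cite[Theorem 10.7]{AAK}) and the continuity of $\pi$ then transfer this nowhere-density upstairs to $\psi^{(-n)}(F^{(m)})$ in $Q$, and the compatibility of $\eta_x$ with $\pi|_{Q_x}$ built into the construction in~\cite[11.35]{AAK} propagates it to $\eta_x(\psi^{(-n)}(F^{(m)} \cap Q_x))$ in $K_x$.

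The main obstacle is precisely this last chain of transfers, where one must convert closed meagreness in $K$ into nowhere-density in $K_x$ through $\pi$, $\psi$, and $\eta_x$. The key technical ingredients are the openness of $\psi$, the fact that closed subsets of a meagre set in a Baire space are nowhere dense, and the Baire property of the extremally disconnected compact spaces $Q$ and $K_x$; everything else is bookkeeping with the explicit formulas for the semiconjugacies involved. With $d$-independence secured, Theorems~\ref{t20} and~\ref{t9} immediately deliver the three rotation-invariance conclusions, completing the proof.
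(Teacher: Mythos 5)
Your overall route is exactly the paper's: the paper also proves this corollary by noting that $C(K,Y)$ is a Banach $C(K)$-module with the Fatou property, that~(\ref{eq67}) defines a $d$-endomorphism, that first category of the set of eventually $\varphi$-periodic points forces $d$-independence of the powers, and then invoking Theorem~\ref{t20} for $\sigma(T)$ and $\sigma_{a.p.}(T)$ and Theorem~\ref{t9} for $\sigma_2(T)$. The paper does not actually argue the $d$-independence claim --- it is asserted in the paragraph preceding the corollary, with Remark~\ref{r23} pointing to~\cite{AAK} for the general version --- so your attempt to verify Definition~\ref{d6} is the only place where your proof can genuinely differ, and it is where the problems are.

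Two steps of that verification would fail as written. First, $F_K^{(m)}$ need not be closed in $K$: the domain $E$ of $\varphi$ is only open (not clopen, since $K$ is an arbitrary compact Hausdorff space), so the fixed-point set of $\varphi^j$ is closed in $E_j$ but not necessarily in $K$, and ``finite union of fixed-point sets of iterates'' does not give closedness. Second, and more seriously, the transfer of nowhere-density from $K$ up to $Q$ via ``openness of $\psi$ and continuity of $\pi$'' is not valid: $\pi:Q\rightarrow K$ is in general far from irreducible, and the preimage under a non-irreducible surjection of a closed nowhere dense set can have nonempty interior (already for $X=C(K)$ the hyperstonean space $Q$ of $X''$ contains isolated points, coming from point masses in $X'$, lying over non-isolated points of $K$). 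Both defects are repairable because Definition~\ref{d6} only requires nowhere-density of $\eta_x\bigl(\psi^{(-n)}(F^{(m)}\cap Q_x)\bigr)$ in $K_x$, and that set is automatically compact (the sets $F^{(m)}$ and $\psi^{(-n)}(F^{(m)})$ are closed in $Q$ because the relevant domains are clopen there), so it suffices to prove it is meagre. Meagreness does pull back along the canonical map $\tau_x:K_x\rightarrow K$ induced by $C(K)\rightarrow Z(X(x))$, which satisfies $\tau_x\circ\eta_x=\pi|_{Q_x}$: for $x\in C(K,Y)$ the cozero set of $\ell=\|x(\cdot)\|_Y$ is open in $K$ and dense in $K_x$, so $\tau_x^{-1}$ of a closed nowhere dense subset of $K$ has empty interior in $K_x$, and the eventually $\varphi$-periodic set, being meagre in $K$, has meagre $\tau_x$-preimage. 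A closed meagre subset of the Baire space $K_x$ is nowhere dense, which is what is needed. In short: route the final transfer through $\tau_x$ and the compactness of the image, not through the topology of $Q$; with that correction the reduction to Theorems~\ref{t20} and~\ref{t9} is exactly as in the paper.
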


\begin{remark} \label{r23} In the case when $Y = \mathds{C}$ it is well known (see e.g.~\cite[Theorem 3.1]{Ki1}) that every continuous disjointness preserving operator on $C(K)$ is of the form~(\ref{eq67}). Whether a similar statement is correct when $\dim{Y} \geq 2$ is, to the best of our knowledge, an open question. We know nevertheless (see~\cite[pp 128 - 129]{AAK}), that if $T$ is a continuous $d$-endomorphism of $C(K,E)$ then there are an open subset $E_T$ of $K$ and a continuous map $\varphi_T : E_T \rightarrow K$ such that if the set of all eventually $\varphi$-periodic points is of first category in $K$, then the powers of $T$ are $d$-independent, and therefore the conclusion of Corollary~\ref{c17} remains valid.
\end{remark}

Assume now that $K$ is an extremally disconnected compact space. Then $C(K,Y)$ is a Kaplansky module with the Fatou property and we can apply the corresponding results from Sections 3 and 4. We will also state the following analog of Theorem~\ref{t18}.

\begin{theorem} \label{t38} Let $K$ be an extremally disconnected compact space. Let $Y$ be a Banach space and $w : K \rightarrow L(Y)$ be a continuous map. Let $T \in L(C(K,Y))$ be defined as
\begin{equation*}
(Tx)(k) = w(k)x(k), x \in C(K,Y), k \in K.
\end{equation*}
Then
\begin{equation*}
  \sigma(T) = \bigcup \limits_{k \in K} \sigma(w(k)) \; \text{and} \; \sigma_{a.p.}(T) = \bigcup \limits_{k \in K} \sigma_{a.p.}(w(k)).
\end{equation*}
 \end{theorem}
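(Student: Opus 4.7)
The plan is to prove each equality by arguing its two inclusions separately. The forward inclusions $\bigcup_{k}\sigma(w(k)) \subseteq \sigma(T)$ and $\bigcup_{k}\sigma_{a.p.}(w(k)) \subseteq \sigma_{a.p.}(T)$ will be handled by pointwise constructions in $C(K,Y)$, while the reverse inclusions will follow from compactness of $K$ applied to local (pointwise) spectral information about $w$.

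Fix $k_0 \in K$ and $\lambda \in \sigma(w(k_0))$. If $\lambda \in \sigma_{a.p.}(w(k_0))$, then for any $\varepsilon > 0$ I would choose a unit vector $y \in Y$ with $\|(\lambda I - w(k_0))y\|_Y < \varepsilon$ and, using continuity of $w$ together with the extremal disconnectedness of $K$, a clopen neighborhood $V$ of $k_0$ on which $\|w(k) - w(k_0)\| < \varepsilon$. The function $x := \chi_V y$ (i.e.\ $x(k) = y$ on $V$ and $0$ off $V$) then lies in $C(K,Y)$, has norm $1$, and satisfies $\|(\lambda I - T)x\| \leq 2\varepsilon$, giving $\lambda \in \sigma_{a.p.}(T)$. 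If instead $\lambda \in \sigma_r(w(k_0)) = \sigma(w(k_0)) \setminus \sigma_{a.p.}(w(k_0))$, then $\lambda I - w(k_0)$ has a left inverse and so is not surjective; picking $y_0 \in Y \setminus R(\lambda I - w(k_0))$, the constant function $\tilde y_0 \equiv y_0 \in C(K,Y)$ cannot lie in $R(\lambda I - T)$, since any preimage $x$ would force $(\lambda I - w(k_0))x(k_0) = y_0$. Hence $\lambda \in \sigma(T)$.

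For the reverse inclusion in the approximate point spectrum I will argue the contrapositive: if $\lambda \notin \sigma_{a.p.}(w(k))$ for every $k \in K$, then each $\lambda I - w(k)$ is bounded below by some $c(k) > 0$, and continuity of $w$ propagates this bound (with constant $c(k)/2$) to an open neighborhood of $k$. A finite subcover produced by compactness of $K$ yields a uniform $c > 0$ with $\|(\lambda I - w(k))y\| \geq c\|y\|$ for all $k \in K$ and $y \in Y$, whence $\|(\lambda I - T)x\| \geq c\|x\|$ for all $x \in C(K,Y)$, so $\lambda \notin \sigma_{a.p.}(T)$. For the reverse inclusion in $\sigma(T)$, assuming $\lambda \notin \sigma(w(k))$ for every $k$, the map $k \mapsto (\lambda I - w(k))^{-1}$ is continuous (inversion being continuous on the open set of invertibles in $L(Y)$) and hence bounded on the compact space $K$; pointwise multiplication by this map furnishes a bounded inverse of $\lambda I - T$ on $C(K,Y)$.

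The one delicate point is the residual-spectrum case of the forward inclusion for $\sigma(T)$: there are no approximate eigenvectors available, and one must instead exhibit an element of $C(K,Y)$ outside $R(\lambda I - T)$, which is achieved cleanly by isometrically embedding $Y$ into $C(K,Y)$ as constant functions. A secondary observation is that the extremal disconnectedness of $K$ is used in exactly one place, namely to ensure that $\chi_V y$ is continuous in the approximate-eigenvector construction; everything else relies only on the compactness of $K$ and continuity of $w : K \to L(Y)$.
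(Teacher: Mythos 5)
Your proof is correct. The paper does not write out a proof of this theorem; it says the argument is similar to that of Theorem~\ref{t18}, so that is the natural benchmark. Your two approximate-point-spectrum inclusions (clopen-supported approximate eigenvectors $\chi_V y$ for the forward direction, and a compactness-propagated uniform lower bound $\|(\lambda I - w(k))y\|\geq c\|y\|$ for the reverse) match steps (3) and (4) of that proof essentially verbatim. You diverge in two places. For the inclusion $\sigma(T)\subseteq\bigcup_k\sigma(w(k))$, the template of Theorem~\ref{t18} covers $K$ by finitely many disjoint clopen sets on each of which the localized operator is invertible and assembles the resolvent blockwise; you instead invert pointwise, using continuity of inversion on the group of invertibles of $L(Y)$ to produce the continuous bounded map $k\mapsto(\lambda I - w(k))^{-1}$ whose multiplication operator is the inverse of $\lambda I - T$ — cleaner here precisely because $T$ is a pure multiplication operator with no composition part. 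For the forward inclusion in the case $\lambda\in\sigma(w(k_0))\setminus\sigma_{a.p.}(w(k_0))$, the template argues by contradiction after replacing $w$ by a weight that is constant near $k_0$; your direct exhibition of the constant function $y_0\notin R(\lambda I - w(k_0))$ as an element of $C(K,Y)$ outside $R(\lambda I - T)$ is more elementary and avoids the perturbation entirely. Your closing remark that extremal disconnectedness is used only to make $\chi_V y$ continuous is accurate (and even there zero-dimensionality of $K$ would suffice).
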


 The proof of Theorem~\ref{t38} is similar to that of Theorem~\ref{t18} and we omit it.

 \begin{corollary} Assume conditions of Theorem~\ref{t38}. Assume additionally that for every $k \in K$ we have $\sigma(w(k)) = \sigma_{a.p.}(w(k))$. Then
 \begin{equation*}
   \sigma(T) = \sigma_{a.p.}(T)
 \end{equation*}
 Moreover, if $K$ has no isolated points then
 \begin{equation*}
   \sigma(T) = \sigma_2T)
 \end{equation*}
   \end{corollary}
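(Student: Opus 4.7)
The first equality is immediate: by Theorem~\ref{t38} we have $\sigma(T)=\bigcup_{k\in K}\sigma(w(k))$ and $\sigma_{a.p.}(T)=\bigcup_{k\in K}\sigma_{a.p.}(w(k))$, and the pointwise hypothesis $\sigma(w(k))=\sigma_{a.p.}(w(k))$ gives equality of the two unions.

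For the second equality I would use the standard inclusion $\sigma_2(T)\subseteq\sigma_{a.p.}(T)$, which together with the first part reduces the problem to showing $\sigma(T)\subseteq\sigma_2(T)$. Take $\lambda\in\sigma(T)$; by Theorem~\ref{t38} and the hypothesis there is a point $k\in K$ with $\lambda\in\sigma_{a.p.}(w(k))$, so one can pick vectors $y_n\in Y$ with $\|y_n\|=1$ and $\|w(k)y_n-\lambda y_n\|<1/n$. The task is to promote these vectors to a singular sequence in $C(K,Y)$.

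The key construction uses both hypotheses on $K$. Since $K$ is extremally disconnected, $k$ has a neighborhood base of clopen sets; since $k$ is not isolated, every clopen neighborhood of $k$ contains a point distinct from $k$ which, by Hausdorffness, can be separated from $k$ by a clopen set. Combining this with the continuity of $w$ at $k$, I would inductively build a nested sequence of clopen neighborhoods $U_n$ of $k$ and pairwise disjoint nonempty clopen sets $V_n\subseteq U_n\setminus U_{n+1}$ such that $\|w(t)-w(k)\|<1/n$ for every $t\in V_n$. The main (though not deep) obstacle is precisely this construction: at each stage one chooses $t_n\in U_n$, $t_n\ne k$, separates $k$ and $t_n$ by clopen sets inside $U_n$, and shrinks $U_{n+1}$ accordingly so that the continuity estimate on $w$ improves.

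Finally I would set $x_n(t)=\chi_{V_n}(t)\,y_n$, which lies in $C(K,Y)$ because $V_n$ is clopen, and satisfies $\|x_n\|=1$. Then
\begin{equation*}
(Tx_n-\lambda x_n)(t)=\chi_{V_n}(t)\bigl(w(t)-\lambda\bigr)y_n,
\end{equation*}
and for $t\in V_n$ the triangle inequality gives $\|(w(t)-\lambda)y_n\|\le\|w(t)-w(k)\|+\|w(k)y_n-\lambda y_n\|<2/n$, hence $Tx_n-\lambda x_n\to 0$. The sequence $\{x_n\}$ is singular because the $V_n$ are pairwise disjoint, so $\|x_n-x_m\|=1$ for $n\ne m$, ruling out any norm-convergent subsequence. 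By the singular-sequence characterization of $\sigma_2$ recalled in Section~2, $\lambda\in\sigma_2(T)$, completing the proof.
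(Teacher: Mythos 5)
Your proof is correct, and it fills in an argument the paper leaves entirely to the reader (the corollary is stated without proof). The first part is indeed immediate from Theorem~\ref{t38}, and your second part --- combining $\sigma_2(T)\subseteq\sigma_{a.p.}(T)$ with a singular sequence supported on pairwise disjoint clopen neighborhoods shrinking to a non-isolated point $k$ --- is exactly the disjoint-supports technique the authors use in the analogous places (e.g.\ the proofs of Theorem~\ref{t21} and Proposition~\ref{p17}), so it is essentially the intended argument.
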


   \begin{remark} \label{r24} The condition $\sigma(w(k)) = \sigma_{a.p.}(w(k)), k \in K$ is satisfied, in particular, in the following two important cases
   \begin{itemize}
     \item $\dim{Y} < \infty$.
     \item $\forall k \in K$ the operator $w(k)$ is a surjective isometry on $Y$.
   \end{itemize}
   \end{remark}
 We leave it to the reader to state the corresponding analog of Corollary~\ref{c5}

\noindent Finally let us notice that if $w(k) = f(k)u(k)$, where $f \in C(K)$ and $\forall k \in K  \; \sigma(u(k)) \subseteq \mathds{T}$ then $T = fU$, $\sigma(U) \subseteq \mathds{T}$, and we can apply the corresponding results of Section 5.

\subsection{K\"{o}the-Bochner function spaces.}
We will not try to discuss this example in full generality, but will consider only a comparatively simple situation which, nevertheless, provides a good illustration of our previous results.

 Let $(\Omega, \Sigma, \mu)$ be a space with a finite complete non-atomic measure $\mu$ and $\psi : \Omega \rightarrow \Omega$ be a measure preserving ergodic transformation. \footnote{I.e. $\psi^{-1}(A) = A$ if and only if either $\mu(A) = 0$ or $\mu(A) = \mu(\Omega)$.} Let $I$ be a Banach ideal in the vector lattice $L^0(\Omega, \Sigma, \mu)$ of (classes) all $\mu$-measurable functions on $\Omega$. We assume that the operator $x \rightarrow x \circ \psi$ acts on $I$ and that $\|x \circ \psi\| = \|x\|, x \in I$. This condition will be satisfied if, for example, $I$ is an interpolation space between $L^\infty(\Omega, \Sigma, \mu)$ and $L^1(\Omega, \Sigma, \mu)$. Let $Y$ be a Banach space and $X$ be the space of all strongly measurable $Y$-valued functions on $\Omega$ such that
 \begin{equation*}
   x \in X \Leftrightarrow \ell(\cdot) =  \|x(\cdot)\|_y \in I.
 \end{equation*}
Endowed with the norm $\|x\| = \|\ell(\cdot)\|_Y$ $X$ becomes a Banach space (see e.g.~\cite{Pe}).
It is easy to see that $X$ is a Kaplansky $C(K)$-module, where $K$ is the Gelfand compact of the Banach algebra $L^\infty(\Omega, \Sigma, \mu)$. Let $w: \Omega \rightarrow L(Y)$ be a strongly measurable (see~\cite[p.162]{Pe}) essentially bounded function. Then the formula
\begin{equation*}
  (Tx)(\omega) = w(\omega)x(\psi(\omega))
\end{equation*}
defines a bounded $d$-endomorphism of $X$. Let $\varphi$ be the corresponding map from Proposition~\ref{p3}. It is immediate that $\varphi$ is an open map of $K$ onto itself and that the set of all $\varphi$-periodic points is of first category in $K$. Moreover, because $K$ is hyperstonean, the set of all $\varphi$-periodic points is empty. By Theorem~\ref{t2} the sets $\sigma(T), \sigma_{a.p.}(T), \sigma_r(T)$ as well as the essential spectra $\sigma_i(T), i = 1, \ldots , 5$ and $\sigma_2(T')$ are rotation invariant.

As a simple example we can consider the space $(\mathds{T}, \Sigma, m)$, where $m$ is the Lebesgue measure, and the ergodic transformation $\psi(\omega) = \omega^2, \omega \in \mathds{T}$.

\section{Appendix. Properties of Disjointness Preserving Operators }
\label{PDPO}
In this section we will review the properties of disjointness preserving operators that we used throughout the paper. Initially we will assume that $X, Y$ are complex Banach lattices and $T:X\rightarrow Y$ is a (bounded) disjointness preserving operator. That is $x\bot y \Rightarrow T(x)\bot T(y)$ for all $x,y\in X.$

The properties of disjointness preserving operators on real Banach lattices and more generally those of order bounded disjointness preserving operators on real Riesz spaces are well-known. In~\cite{AAK} these properties are developed in even more generality on complex Riesz spaces for order bounded disjointness preserving operators. However this is not entirely routine and some details in~\cite{AAK} are missing. We will present a complete discussion of the passage to the complex case for Banach lattices.

Let $X$ be a complex Banach lattice. Then there exists a real Banach lattice $X_{r} = X_{+}-X_{+}\subseteq X$ such that $X_{r}\cap iX_{r}=\left\{0\right\}$ and $X=X_{r}\oplus iX_{r}$. Moreover, let $z=x+iy \in X$ with $x,y\in X_{r}$ then
\begin{multline*}
 |z|=sup\left\{xcos\theta+ysin\theta:\theta \in [0,2\pi]\right\}= \\ =sup\left\{|x|cos\theta+|y|sin\theta:\theta \in [0,\pi/2]\right\}
 \end{multline*}
and $\left\|z\right\|=\left\||z|\right\|.$ When $X$ is represented as a Banach lattice of functions with respect to pointwise operations on some set (as it always can be), it is familiar that the above formula yields the usual
\begin{equation*}
   |z|=\sqrt{x^2+y^2}.
\end{equation*}
For each operator $T:X \rightarrow Y$ there exist operators $T_r$ and $T_i$ from $X_r$ into $Y_r$ such that for each $z=x+iy\in X$ with $x,y\in X_r$, one has
\begin{equation*}
  T(z)=T_r (x)- T_i (y)+i(T_i (x)+T_r (y)).
\end{equation*}
 Suppose that $T$ is a disjointness preserving operator. Then $T_r , T_i$ are disjointness preserving operators from $X_r$ into $Y_r$, since for each $x\in X_r,$ one has that both $|T_r (x)|, |T_i (x)|$ are less than or equal to $|T(x)|$ in the partial order of $Y_r.$

In the case of real Banach lattices, Abramovich~\cite{Ab} proved that disjointness preserving operators are order bounded, and therefore by a result of Meyer~\cite{Me}, they are regular. Elementary proofs of these results were given by de Pagter~\cite{Pa} and Bernau~\cite{Be}, respectively (See also~\cite[Thorems 3.1.4 and 3.1.5]{Mn}). An operator $T:X\rightarrow Y$ is by definition regular if $T_r, T_i$ are regular in the usual sense. So our final observation in the above paragraph shows that if $T$ is disjointness preserving, then $T$ is regular and order bounded. (In the sense that: Given $z\in X$ there exists $w\in Y_+$ such that, for any $u\in X$, $|u|\leq |z|$ implies that $|T(u)|\leq w$.)

When $X$ is a Banach lattice we will denote by $\widehat{X}$, the Dedekind completion of $X$ where $\widehat{X}=\widehat{X}_r\oplus i\widehat{X}_r.$(Here $\widehat{X}_r$ is the Dedekind completion of $X_r$.) It is familiar that there exists a stonean compact Hausdorff space $Q(X)$ such that $\widehat{X}$ is an ideal in $C_\infty (Q(X))$ and $Z(\widehat{X})=C(Q(X))$. As usual $C_\infty (Q(X))$ denotes the algebra and the Dedekind complete Riesz space of all continuous functions $f:C(Q(X))\rightarrow  \mathds{C}\cup \left\{\infty\right\}$ (where $\mathds{C}\cup \left\{\infty\right\}$ is the one point compactification of $\mathds{C}$) such that $f^{-1}(\infty)$ is a nowhere dense subset of $Q(X).$

By the support of a set $A\subset \widehat{X},$ with a slight abuse of language, we will denote $\chi_A,$ the band projection of $\widehat{X}$ onto $A^{dd}.$ We may also think of $\chi_A$ as the characteristic function of the clopen set in $Q(X)$ that is the closure of all points $t\in Q(X)$ such that some element of $A$ takes a finite non-zero value at $t.$ Given an operator $T:X\rightarrow Y,$ we will denote by $\chi_T$ the support of the range of $T$ in $\widehat{Y}.$

Now we can state and prove the main result for disjointness preserving operators on complex Banach lattices in~\cite{AAK}.
 \begin{theorem} \label{at1}
 Let $X,Y$ be Banach lattices and $T:X\rightarrow Y$ be an operator. Then the following are equivalent:
\begin{enumerate}
\item T is a disjointness preserving operator.
\item For all $x,y\in X$, $|y|\leq |x|$ implies that $|T(y)|\leq |T(x)|.$
\item T admits a unique factorization $T=VT_l$ such that $T_l:X\rightarrow Y$ is a lattice homomorphism and $V\in Z(\widehat{Y})$ such that $|V|=\chi_T.$
\end{enumerate}
\end{theorem}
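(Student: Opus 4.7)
My plan is to establish the cycle via $(3)\Rightarrow(1)$, $(2)\Rightarrow(1)$, and $(1)\Rightarrow(3)$, then deduce $(2)$ directly from $(3)$. The implication $(3)\Rightarrow(1)$ is immediate: any lattice homomorphism preserves disjointness, and left multiplication by an element of $Z(\widehat{Y})$ leaves every band of $\widehat{Y}$ invariant, so $T=VT_l$ preserves disjointness. For $(2)\Rightarrow(1)$, fix $x\perp y$ in $X$; because $|x|\wedge|y|=0$, pointwise in the functional representation of $\widehat{X}$ at every point either $x$ or $y$ vanishes, and consequently $|\alpha x+\beta y|=|x+y|$ for every pair of unimodular scalars $\alpha,\beta\in\mathds{T}$. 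Applying $(2)$ in both directions to each such pair yields $|\alpha T(x)+\beta T(y)|=|T(x)+T(y)|$, and letting $\alpha,\beta$ vary over $\mathds{T}$ at a fixed point of $\widehat{Y}$ forces the complex numbers $T(x)(t),T(y)(t)$ to have product zero, whence $T(x)\perp T(y)$.

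The main effort is $(1)\Rightarrow(3)$. I begin with the real decomposition $T(x+iy)=T_r(x)-T_i(y)+i(T_i(x)+T_r(y))$ already exhibited in the text. Both real-linear parts $T_r,T_i:X_r\to Y_r$ inherit disjointness preservation from $T$, so by the Meyer--de Pagter--Bernau theorems they are regular with moduli $|T_r|,|T_i|$, and each admits a factorisation of the form $T_r=\sigma_r|T_r|$, $T_i=\sigma_i|T_i|$ with $\sigma_r,\sigma_i\in Z(\widehat{Y}_r)$ taking pointwise values in $\{-1,0,+1\}$ and with $|T_r|,|T_i|:X_r\to\widehat{Y}_r$ positive lattice homomorphisms. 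The crucial extra information coming from complex disjointness of $T$ is the cross-disjointness relation
\[
x_1\perp x_2\ \text{in}\ X_r\ \Longrightarrow\ T_r(x_1)\perp T_i(x_2),
\]
an immediate consequence of $T(x_1)\perp T(x_2)$ in $\widehat{Y}$ combined with the pointwise identity $|T(x_j)|^2=T_r(x_j)^2+T_i(x_j)^2$.

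From this cross-disjointness I deduce that $|T_r|$ and $|T_i|$ share a common \emph{shift}. Concretely, realising them in the function representation as $|T_r|(f)(t)=a(t)f(\sigma_1(t))$ and $|T_i|(f)(t)=b(t)f(\sigma_2(t))$ with $a,b\in Z(\widehat{Y}_r)_+$, the cross-disjointness forces $\sigma_1(t)=\sigma_2(t)$ wherever $a(t)b(t)>0$. Consequently there is a single positive lattice homomorphism $L:X_r\to\widehat{Y}_r$ with $|T_r|=aL$ and $|T_i|=bL$. Absorbing $\sqrt{a^2+b^2}$ into $L$ to form $T_l:=\sqrt{a^2+b^2}\,L$ (extended first real-linearly to $X_r$ and then complex-linearly to $X$) and dividing $\sigma_r a+i\sigma_i b$ by $\sqrt{a^2+b^2}$ on $\chi_T$ to form $V\in Z(\widehat{Y})$, I obtain $T=VT_l$ with $|V|=\chi_T$ and $T_l(x)=|T(x)|$ for $x\in X_+$. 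Since $|T(x)|\in Y$ and $Y$ is an ideal in $\widehat{Y}$, $T_l$ actually maps into $Y$; that the resulting $T_l$ is a complex lattice homomorphism follows from the Pythagorean identity $L(|x|)^2=L(|x_1|)^2+L(|x_2|)^2$ for $x=x_1+ix_2$, which is itself a consequence of $L$ being a real lattice homomorphism. Uniqueness is immediate: any factorisation of the stated form forces $T_l(|x|)=|T(x)|$ on $\chi_T$, and then $V$ is determined on $\chi_T$ as $T(x)/T_l(x)$ wherever $T_l(x)\neq0$.

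Finally $(3)\Rightarrow(2)$ is a one-liner: $|y|\leq|x|$ gives $T_l(|y|)\leq T_l(|x|)$ because $T_l$ is a lattice homomorphism, and hence $|T(y)|=|V|T_l(|y|)\leq|V|T_l(|x|)=|T(x)|$. The principal obstacle I anticipate is the extraction of the common shift from $|T_r|$ and $|T_i|$; this requires translating cross-disjointness of two real lattice homomorphisms into equality of the Boolean-algebra--valued shift maps they induce on the Stone space of the centre of $\widehat{Y}$. The remaining steps are routine bookkeeping with the Dedekind completion and the real Meyer--de Pagter factorisation.
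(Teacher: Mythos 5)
Your reductions $(3)\Rightarrow(1)$ and $(3)\Rightarrow(2)$, and your pointwise argument for $(2)\Rightarrow(1)$, are sound, but everything hinges on $(1)\Rightarrow(3)$, and there the decisive step is asserted rather than proved. After splitting $T$ into $T_r+iT_i$ and factoring the real parts as $\sigma_r|T_r|$ and $\sigma_i|T_i|$, you need the two lattice homomorphisms $|T_r|,|T_i|$ to be central multiples $aL$, $bL$ of a \emph{single} lattice homomorphism $L$. You justify this by writing $|T_r|$ and $|T_i|$ globally as weighted composition operators $a(t)f(\sigma_1(t))$, $b(t)f(\sigma_2(t))$ and comparing the shifts; but an abstract Banach lattice homomorphism admits no such global representation (only local ones on principal ideals $I(x)\simeq C(K_x)$), and the patching of $L$ across the regions where $a$ or $b$ vanishes is never addressed. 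You yourself flag this extraction as ``the principal obstacle,'' and since your proof of $(2)$ is routed through $(3)$, none of the nontrivial implications is established until this is closed. The gap is closable without any representation: your cross-disjointness relation shows that $L_0=|T_r|+|T_i|$ maps disjoint positive elements to disjoint elements, hence is a lattice homomorphism into the Dedekind complete $\widehat{Y}_r$, and the Radon--Nikodym type theorem of Luxemburg and Schep (cited in this paper as~\cite{LS1}, and already invoked in Remark~\ref{r25}) applied to $0\le |T_r|\le L_0$ and $0\le |T_i|\le L_0$ yields $|T_r|=aL_0$, $|T_i|=bL_0$ with $a,b$ in the center, $0\le a,b\le I$. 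With that lemma in place the rest of your construction of $T_l$ and $V$ goes through.

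For comparison, the paper's proof avoids the real/imaginary decomposition of the operator entirely: it localizes $T$ to a map $C(K_x)\rightarrow C(K_w)$ on principal ideals and shows that each functional $y\mapsto T(y)(t)$ has one-point support, so $T(y)(t)=\lambda_t y(s_t)$. That single scalar formula delivers $(1)\Rightarrow(2)$ directly and gives the additivity $|T(x_1+x_2)|=|T(x_1)|+|T(x_2)|$ on $X_+$, from which $T_l$ and then $V$ (via the consistency relation $\chi_{T(x)}V_y=\chi_{T(y)}V_x$) are assembled. Your route, once repaired as above, is a legitimate and somewhat more structural alternative, but as written it proves $(2)$ only as a corollary of the incomplete $(3)$ rather than establishing it independently.
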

\begin{proof} Consider $(1)\Rightarrow (2).$ Given any $x\in X,$ we may assume that $I(x)$ (the ideal generated by $|x|$ in $X$) is lattice isomorphic to $C(K_x)$ for some compact Hausdoff space $K_x.$ Moreover, by our observations preceding the statement of the theorem, $T$ is order bounded. Hence we may assume that $T(I(x))$ is contained in an ideal $I(w)$ in $Y$ for some $w\in Y_+.$ So $I(w)$ is also lattice isomorphic to $C(K_w)$ for some compact Hausdorff space $K_w$. Now, since $T$ is order bounded, when restricted to $I(x)$, as a linear mapping on $C(K_x)$ into $C(K_w)$, $T$ is bounded with respect to the sup-norms on these spaces. For any fixed $t\in K_w$, consider the bounded linear functional $l:C(K_x)\rightarrow \textbf{C}$ defined by $l(y)=T(y)(t).$ To establish $(1)\Rightarrow (2)$, it is sufficient to show that $|l(y)|\leq |l(x)|$ whenever $|y|\leq |x|$ for some $y\in X.$ Let the closed set $F\subset K_x$ be the support of the regular Borel measure $l\in C(K_x)^{'}.$ It is clear that the result would be established if we show that $F$ is a singleton.

Suppose that $F$ is not a singleton. Suppose that $l$ is a positive linear functional and $s_1, s_2$ are distinct points in $F.$ We can find Urysohn functions $y_1,y_2\in C(K_x)$ such that $y_1(s_1)=1, y_2(s_2)=1$ and $y_1\bot y_2$. Then $l(y_1)l(y_2)>0$. But that $T(y_1)\bot T(y_2)$ implies that at least one of them should be zero. This yields a contradiction. Suppose $l$ is real. Then we may suppose that $F_+,F_-$ are the non-empty the supports of $l^+, l^-$ respectively such that $F=F_+\cup F_-$ and $F_+\cap F_-=\emptyset.$ If we have $s_1\in F_+$ and $s_2\in F_-$, again we may find Urysohn functions $y_1,y_2\in C(K_x)$ such that $y_1\bot y_2$ and $l(y_1)>0>l(y_2)$. Since $T(y_1)\bot T(y_2)$, this still leads to a contradiction. Now if $l$ is a complex measure and, without loss of generality the support of its real part is not a singleton, we can repeat the above argument to find two functions $y_1,y_2\in C(K_x)$ such that $y_1\bot y_2$ and both $real(l(y_1))$ and $real(l(y_2))$ are non-zero. This means that $l(y_1)\neq 0$ and $l(y_2)\neq 0$ with $T(y_1)\bot T(y_2)$, and leads to a contradiction. Hence we are reduced to the case where the supports of the real part and the imaginary part of $l$ are distinct singletons. Obviously that also will yield a contradiction. Therefore $F= \left\{s_t \right\}$ for some $s_t\in K_x$. This proves $(1)\Rightarrow(2).$

The converse $(2)\Rightarrow (1)$ is almost immediate. It is well known for real Banach lattices that for any $x,y\in X$ we have
\begin{equation} \label{eq70}
 x\bot y \Leftrightarrow |y|\leq |y+\lambda x|
\end{equation}
for all scalars $\lambda$. But the same is true for complex Banach lattices as well when $\lambda \in \textbf{C}$. (Namely when $x,y$ are complex numbers and $\lambda \in \textbf{C}$ then it is easy to see that~(\ref{eq70}) is true. To see~(\ref{eq70}) for complex Banach lattices, simply think of the Banach lattice represented as a lattice of functions on some base space and use the preceding fact at each point of the base space.) So if $x\bot y$ in $X$, we have, by~(\ref{eq70}) and $(2)$, $|T(y)|\leq |T(y)+\lambda T(x)|$ for all $\lambda \in \textbf{C}.$ Therefore $T(x)\bot T(y)$ in $Y$. Hence $(2)\Rightarrow (1).$

We want to show $(1)\Rightarrow (3).$ In the proof of $(1)\Rightarrow (2),$ we showed that the support of the linear functional $l:C(K_x)\rightarrow \textbf{C}$ is a singleton. That is for a fixed $t\in K_w,$
\begin{equation} \label{eq71}
 l(y)=T(y)(t)=\lambda _t y(s_t)
\end{equation}
for some $s_t\in K_x$ and some $\lambda _t \in \textbf{C}$, for all $y\in C(K_x)=I(x).$ Note that, by $(2),$ we may choose $w=|T(x)|\in Y.$ Given $x_1, x_2 \in X_+$, let $x=x_1 + x_2$. Then for any $t\in K_w$, by~(\ref{eq71}), we have $|T(x_1 + x_2)(t)|=|T(x_1)(t)|+|T(x_2)(t)|$. Since $t$ is arbitrary, in fact, $|T(x_1 + x_2)|=|T(x_1)|+|T(x_2)|$ for all $x_1, x_2 \in X_+$. This yields a well defined  positive operator $(T_l)_r :X_r\rightarrow Y_r$ where $(T_l)_r( x_1-x_2)=|T(x_1)|-|T(x_2)|$ for all $x_1, x_2 \in X_+$. In fact $(T_l)_r$ is a lattice homomorphism. This can be seen easily from~(\ref{eq71}), if we take $x_1\bot x_2$ in above and $x=x_1+x_2$. We can write the complexification of $(T_l)_r$, to obtain $T_l:X\rightarrow Y$ defined by $T_l(z)=(T_l)_r(u)+i(T_l)_r(v) =|T(u^+)|-|T(u^-)|+i(|T(v^+)|-|T(v^-)|)$ when $z=u+iv$. Let $x=|z|$ in~(\ref{eq71}). Then we get $|T_l(z)|=|T(z)|$. Now apply $(2)$ to $|z|\leq |z|$, to get $|T(z)|=|T(|z|)|.$ But $|T(|z|)|=T_l(|z|)$. Hence $T_l$ is a lattice homomorphism.

It remains to see that $T$ factors through $T_l$ unimodularly on the support of the range of $T$. Fix a representation of $\widehat{Y}$ in $C_\infty (Q(Y))$. Consider the ranges of $T$ and $T_l$ as subsets of $\widehat{Y}$ in $C_\infty (Q(Y))$. For each $x\in X_+,$ we have $T_l(x)=|T(x)|.$ Since $\widehat{Y}$ is Dedekind complete, there exists $V_x\in Z(\widehat{Y})$ such that $T(x)=V_xT_l(x)$ and $|V_x|=\chi _{T(x)}.$ (Note that $\chi _{T(x)}$ is also the support of $T_l(x).$) We want to show
\begin{equation*}
\chi _{T(x)}V_y=\chi _{T(y)}V_x.
\end{equation*}
That is $V_x$ and $V_y$ are equal on the intersection of the supports of $T(x)$ and $T(y).$ Since $T_l$ is a positive operator, we have $\chi _{T(x+y)}=max\left\{\chi _{T(x)} ,\chi _{T(y)}\right\}.$ Now
\begin{equation*}
V_{x+y}T_l(x+y)=T(x+y)=T(x)+T(y)=V_xT_l(x)+V_yT_l(y).
\end{equation*}
That is
\begin{equation*}
(V_{x+y}-V_x)T_l(x)=(V_y-V_{x+y})T_l(y).
\end{equation*}
Now multiplying both sides of the above equation by $\chi _{T(x)}\chi _{T(y)}$, we have
\begin{equation} \label{eq72}
  \chi _{T(y)}(V_{x+y}-V_x)T_l(x)=\chi _{T(x)}(V_y-V_{x+y})T_l(y).
\end{equation}
Suppose for some $t\in Q(Y)$ both $T_l(x)(t)$ and $T_l(y)(t)$ are positive. Let $k=\frac {T_l(x)(t)} {T_l(y)(t)} >0.$ Now evaluate~(\ref{eq72}) at $t$ and simplify to obtain $(k+1)V_{x+y}(t)=kV_x(t)+V_y(t).$ Since $|V_{x+y}(t)|=|V_x(t)|=|V_y(t)|=1,$ we have $k+1=|kV_x(t)+V_y(t)|$ and therefore $V_x(t)=V_y(t).$ This proves that $\chi _{T(x)}V_y=\chi _{T(y)}V_x.$

Evidently $\chi _T=sup\left\{\chi _{T(x)}:x\in X_+\right\}.$ The set of $t\in Q(Y)$ such that $T(x)(t)$ is finite and non-zero for some $x\in X_+$ forms a dense open subset of the support of the range of $T$ in $Q(Y).$ For any such $t\in Q(Y)$ and $x\in X_+$ define  $V(t)=V_x(t)$ and whenever $\chi _T(t)=0$ for some $t\in Q(Y)$ define $V(t)=0$. Since $Q(Y)$ is a Stonian compact Hausdorff space and $V$ as defined is continuous and bounded on a dense open subset of $Q(Y)$, we can extend it to a unique $V\in C(Q(Y)).$ Therefore there exists a unique $V\in Z(\widehat{Y})$ such that $T(x)=VT_l(x)$ for all $x\in X_+$ and $|V|=\chi _T.$ This means $T=VT_l$ as claimed and $(1)\Rightarrow (3).$

On the other hand, if $T=VT_l$ as in $(3)$, since $T_l$ and $V$ are both disjointness preserving, we have that $T$ is disjointness preserving. That is $(3)\Rightarrow (1)$ trivially.
\end{proof}

We note the following interesting consequence of the above result.

\begin{corollary} \label{at2}
Suppose $X$ and $Y$ are complex Banach lattices and $T_r:X_r\rightarrow Y_r$ is a lattice homomorphism. Then its complexification $T:X\rightarrow Y,$ defined by $T(x+iy)=T_r(x)+iT_r(y)$ for all $x,y\in X_r,$ is a lattice homomorphism.
\end{corollary}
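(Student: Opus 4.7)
The plan is to verify that $T$ satisfies condition (1) of Theorem~\ref{at1}---i.e.\ that $T$ is disjointness preserving---and then to identify $T$ with the canonical lattice-homomorphism factor $T_l$ furnished by condition (3) of that theorem.

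First I would establish the estimate $|T(z)|\leq 2T_r(|z|)$ for every $z=x+iy\in X$ with $x,y\in X_r$. From the formula $|z|=\sup\{x\cos\theta+y\sin\theta:\theta\in[0,2\pi]\}$ recalled immediately before Theorem~\ref{at1}, the choices $\theta\in\{0,\pi,\pi/2,3\pi/2\}$ give $\pm x\leq|z|$ and $\pm y\leq|z|$, so $|x|,|y|\leq|z|$ in $X_r$. Using the triangle inequality for the modulus in $Y$, together with the standard identity $|T_r(u)|=T_r(|u|)$ for $u\in X_r$ (valid because $T_r$ is a real lattice homomorphism, in particular positive), one obtains
\begin{equation*}
  |T(z)|=|T_r(x)+iT_r(y)|\leq |T_r(x)|+|T_r(y)|=T_r(|x|)+T_r(|y|)\leq 2T_r(|z|).
\end{equation*}

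Next I would use this bound to prove that $T$ preserves disjointness. If $z\perp w$ in $X$, i.e.\ $|z|\wedge|w|=0$ in $X_r$, then $T_r(|z|)\wedge T_r(|w|)=0$ in $Y_r$ since $T_r$ is a lattice homomorphism; combined with $|T(z)|\leq 2T_r(|z|)$ and $|T(w)|\leq 2T_r(|w|)$, this forces $|T(z)|\wedge|T(w)|=0$, whence $T(z)\perp T(w)$. Theorem~\ref{at1} then provides the canonical factorization $T=VT_l$ with $T_l$ a lattice homomorphism and $|V|=\chi_T$.

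The concluding step is to observe that this factorization collapses to $T_l=T$. From the construction in the proof of Theorem~\ref{at1}, the real part of $T_l$ is defined on $X_+$ by $(T_l)_r(x)=|T(x)|$. Since $T_r$ is positive, $T(x)=T_r(x)\geq 0$ for $x\in X_+$, so $|T(x)|=T_r(x)$; therefore $(T_l)_r=T_r$ on $X_+$ and, by linearity, on all of $X_r$. Complexifying yields $T_l=T$, so $T$ itself is a lattice homomorphism. The only delicate point in the argument is the first step; once the disjointness-preserving property of $T$ has been established, the identification $T=T_l$ is essentially formal, as it follows from the explicit formula for $T_l$ in the proof of Theorem~\ref{at1}.
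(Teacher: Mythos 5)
Your proof is correct and follows essentially the same route as the paper: show $T$ preserves disjointness via $|x|,|y|\le |z|$ and the triangle inequality for the complex modulus, invoke Theorem~\ref{at1}(3) to obtain $T=VT_l$, and then read off $T_l=T$ from the formula $(T_l)_r(x)=|T(x)|=T_r(x)$ on $X_+$. Packaging the disjointness step as the single estimate $|T(z)|\le 2T_r(|z|)$ is only a cosmetic variation on the paper's argument.
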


\begin{proof}
$T$ is disjointness preserving. Namely, suppose $|x+iy|\bot |u+iv|$ for some $x,y,u,v\in X_r$. Since $max\left\{|x|,|y|\right\} \leq |x+iy|$ and $max\left\{|u|,|v|\right\} \leq |u+iv|$, $T_r(x)$ and $T_r(y)$ are disjoint from each of $T_r(u)$ and $T_r(v)$. That is, $|T_r(x)|+|T_r(y)|$ is disjoint from $|T_r(u)|+|T_r(v)|$. Hence $|T(x+iy)|\bot |T(u+iv)|$.

Then by part $(3)$ of Theorem $\ref{at1}$, $T=VT_l$ for some lattice homomorphism $T_l$, where $|V|=\chi _T.$ But for each $x\in X_+$, we have $T_l(x)=|T(x)|=|T_r(x)|=T_r(x)$. This means that $T=T_l,$ and $T$ is a lattice homomorphism.

\end{proof}

Next result is another application of Theorem $\ref{at1}$. In the case of Dedekind complete real Banach lattices it is due to Hart~\cite{Ha}. A result due to Luxemburg and Schep~\cite{LS} and to Lipecki~\cite{Li} shows that a lattice homomorphism on real Banach lattices may be extended to a lattice homomorphism on their Dedekind completions. It allows us to extend Hart's result to arbitrary Banach lattices (see Theorem~\ref{at3} below).

\begin{remark} \label{r26} Indeed, an analog of Hart's result is true even in the case of arbitrary Riesz spaces (see~\cite[Theorem 3.4 and Theorem 6.7]{AAK}), but the corresponding proof in~\cite{AAK} is rather complicated, long, and some details are compressed. For the sake of completeness and the reader's convenience we decided to provide an independent and much simpler proof in the case of Banach lattices, which is sufficient for our purposes in the current paper.
\end{remark}

In the statement of the theorem below, $Y_T$ denotes the closed ideal generated by the range of the operator $T$ in the (complex) Banach lattice $Y.$

\begin{theorem} \label{at3}
Suppose $X,Y$ are Banach lattices and $T:X\rightarrow Y$ is a disjointness preserving operator. Then there exists an algebra homomorphism $\gamma:Z(X)\rightarrow Z(Y_T)$ such that $$T(fx)=\gamma (f)T(x)$$ for all $f\in Z(X)$ and $x\in X.$

\end{theorem}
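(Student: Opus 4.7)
The plan is to combine the factorization in Theorem~\ref{at1} with the two external results cited just before the statement. First, applying Theorem~\ref{at1}, factor $T = V T_l$ with $T_l : X \to Y$ a lattice homomorphism and $V \in Z(\widehat{Y})$ satisfying $|V| = \chi_T$. Since $V$ is unimodular on the support of the range, the closed ideals $Y_T$ and $Y_{T_l}$ coincide. Moreover, $V$ commutes with every element of $Z(\widehat{Y})$, so any algebra homomorphism $\gamma : Z(X) \to Z(Y_{T_l})$ intertwining $T_l$ automatically intertwines $T$: $T(fx) = V T_l(fx) = V \gamma(f) T_l(x) = \gamma(f) V T_l(x) = \gamma(f) T(x)$. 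Thus we may assume $T$ itself is a lattice homomorphism.

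By the Luxemburg-Schep/Lipecki extension theorem, $T$ extends to a lattice homomorphism $\widehat{T} : \widehat{X} \to \widehat{Y}$ between the Dedekind completions. Hart's theorem, in its Dedekind-complete formulation, then yields an algebra homomorphism $\hat{\gamma} : Z(\widehat{X}) \to Z(\widehat{Y}_{\widehat{T}})$ satisfying $\widehat{T}(\hat{f}\,\hat{x}) = \hat{\gamma}(\hat{f})\,\widehat{T}(\hat{x})$ for all $\hat{f} \in Z(\widehat{X})$ and $\hat{x} \in \widehat{X}$. Each $f \in Z(X)$ has a unique extension $\hat{f} \in Z(\widehat{X})$, so we tentatively define $\gamma(f) := \hat{\gamma}(\hat{f})\big|_{Y_T}$, which makes $\gamma(f) T(x) = T(fx)$ hold by construction.

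The delicate point — and what I expect to be the main obstacle — is to verify that $\hat{\gamma}(\hat{f})$ actually restricts to an element of $Z(Y_T)$: this is not formal, since $Y$ need not be an ideal in $\widehat{Y}$. The way around this is to exploit the structure of the order ideal $I_0 := \{y \in Y : |y| \leq T(x) \text{ for some } x \in X_+\}$, whose norm-closure equals $Y_T$. If $|y| \leq T(x)$ with $x \in X_+$, then inside the principal ideal $I(T(x))$ one can write $y = g \cdot T(x)$ for a unique $g \in Z(I(T(x))) \subseteq Z(\widehat{Y})$ (namely, $g = y/T(x)$ extended continuously to the Stonian spectrum of $I(T(x))$). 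Since $g$ and $\hat{\gamma}(\hat{f})$ both live in the commutative algebra $Z(\widehat{Y})$, they commute, giving
\begin{equation*}
\hat{\gamma}(\hat{f}) y = g \cdot \hat{\gamma}(\hat{f}) T(x) = g \cdot T(fx) \in I(T(x)) \subseteq Y,
\end{equation*}
with the pointwise bound $|\hat{\gamma}(\hat{f}) y| \leq \|f\|\,|y|$. Hence $\hat{\gamma}(\hat{f})$ maps $I_0$ into $Y_T$, and the estimate allows continuous extension to an operator in $Z(Y_T)$ of norm at most $\|f\|$.

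Finally, linearity, multiplicativity, and preservation of the unit for $\gamma$ are inherited from $\hat{\gamma}$, while uniqueness follows because $\gamma(f) T(x) = T(fx)$ determines $\gamma(f)$ on the range of $T$, whose closed linear span is order-dense in $Y_T$. Reinserting the $V$-factor from the first paragraph completes the proof in the general disjointness-preserving case.
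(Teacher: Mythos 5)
Your proposal is correct and follows essentially the same route as the paper: factor $T=VT_l$ via Theorem~\ref{at1}, extend the lattice homomorphism to the Dedekind completions by Luxemburg--Schep/Lipecki, invoke the Dedekind-complete case to get $\widehat{\gamma}$, and then verify that $\widehat{\gamma}(\widehat{f})$ preserves $Y_T$ by writing each element of $I(T(x))$ as $g\cdot T(x)$ with $g$ central and using that $T(fx)\in I(T(x))$. The only points you black-box that the paper spells out are the passage from real to complex scalars (via Corollary~\ref{at2}) and a self-contained proof of the Dedekind-complete case through band projections and density, rather than a bare citation of Hart's (real-lattice) result.
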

\begin{proof}
Suppose that $T:X\rightarrow Y$ is a lattice homomorphism and $X,Y$ are Dedekind complete. Let $e\in Z(X)$ be an idempotent and $x\in X_+.$ Then $0\leq T(ex)\leq T(x)$ implies that there exisxts a minimal $\widehat{e} _x\in Z(Y)$ (i.e. $\chi _{T(x)} \widehat{e} _x=\widehat{e} _x$) such that $\widehat{e} _xT(x)=T(ex).$ Similarly there exists a minimal $\widehat{(1-e)} _x\in Z(Y)$ such that $\widehat{(1-e)} _xT(x)=T((1-e)x).$ But, since $T(ex)\bot T((1-e)x)$, we have $\widehat{e} _x+\widehat{(1-e)} _x=\chi _{T(x)}$ and $\widehat{e} _x\bot \widehat{(1-e)} _x.$ That is $\widehat{e} _x$ is a minimal idempotent. If $0\leq y \leq x$, then that $0\leq T(y)\leq T(x),$ implies that $\widehat{e} _y\leq \widehat{e} _x.$ Now define an idempotent $\gamma (e)\in Z(Y)$ by $\gamma (e)=sup\left\{\widehat{e} _x: x\in X_+\right\}$. Then $\chi _{T(x)}\gamma (e)=\widehat{e} _x$ and $T(ex)=\gamma (e)T(x)$ for all $x\in X_+.$ By linearity, we have $T(ex)=\gamma (e)T(x)$ for all $x\in X.$ As defined $\gamma (e)\in Z(Y)$ is unique. So, given idempotents $e_1, e_2\in Z(X)$, that $T(e_1(e_2x))=\gamma (e_1)\gamma (e_2)T(x)$ for all $x\in X$, implies that $\gamma (e_1e_2)=\gamma (e_1)\gamma (e_2).$ Let $sp(\textbf{B}_X)$ denote the linear span of the band projections in $Z(X).$ Then it is immediate that $\gamma :sp(\textbf{B}_X)\rightarrow sp(\textbf{B}_Y)$ is a well-defined algebra homomorphism with $\left\|\gamma\right\|=1$ where $\gamma (1)=\chi _T$ and $\gamma (\sum \lambda _i e_i)=\sum \lambda _i \gamma (e_i)$ for any finite set of $e_i\in \textbf{B}_X$ and $\lambda _i \in \textbf{C}$ with $i=1,2,...n$. Since $Q(X)$ is extremally disconnected, $sp(\textbf{B}_X)$ is dense in $Z(X)=C(Q(X))$ in the sup-norm. Using density and continuity, we extend to an algebra homomorphism $\gamma :Z(X)\rightarrow Z(Y)$ such that $\gamma (1)=\chi _T$ and $$T(fx)=\gamma (f)T(x)$$ for all $f\in Z(X)$ and $x\in X.$

Returning to the general case, suppose $T:X\rightarrow Y$ is a disjointness preserving operator. By Theorem $\ref{at1}$ part (3), we have $T=VT_l$ where $T_l:X\rightarrow Y$ is a lattice homomorphism and $V\in Z(\widehat{Y})$ such that $|V|=\chi _T.$ When restricted to $X_r$, $T_l$ is a (real) lattice homomorphism into $Y_r.$ By the above quoted result of Luxemburg and Schep~\cite{LS} and of Lipecki~\cite{Li}, there is an extension of $(T_l)_{|X_r}$ to $\widehat{T_l}:\widehat{X_r}\rightarrow \widehat{Y_r}$ that is a lattice homomorphism. By Corollary $\ref{at2}$, its complexification $\widehat{T_l}: \widehat{X}\rightarrow \widehat{Y}$ is also a lattice homomorphism. Therefore, by the first part of the proof, there exists an algebra homomorphism $\gamma :Z(\widehat{X})\rightarrow Z(\widehat{Y})$ such that $$\widehat{T_l}(fx)=\widehat{\gamma}(f)\widehat{T_l}(x)$$ for all $f\in Z(\widehat{X})$ and $x\in \widehat{X}.$ Clearly $\widehat{T}:\widehat{X}\rightarrow \widehat{Y}$ defined by $\widehat{T}=V\widehat{T_l}$ is a disjointness preserving operator that extends $T$. Also it satisfies $$\widehat{T}(fx)=\widehat{\gamma}(f)\widehat{T}(x)$$ for all $f\in Z(\widehat{X})$ and $x\in \widehat{X}.$ (Note that V commutes with $\widehat{\gamma}$) Hence if we let $\gamma=\widehat{\gamma}_{|Z(X)}$ and recall $T=\widehat{T}_{|X}$, we almost obtain the statement of the theorem. The exception is that, for each $f\in Z(X)$, we have $\gamma (f)\in Z(\widehat{Y})$ instead of $\gamma (f)\in Z(Y_T).$ But for each $f\in Z(X),$ $\gamma (f)$ acts only on the range of the operator $T$ and leaves it invariant. Evidently, if we show that $\gamma (f)$ also leaves invariant the closed ideal generated by the range of $T$, then we would have $\gamma (f)\in Z(Y_T)$ and the theorem would be proved.

To show this, let us fix $f\in Z(X)$ and $x\in X_+$. Consider the ideal $I(T(x))$ that is generated by $|T(x)|$ in $Y$. Since $|T(x)|=|T_l(x)|=T_l(x)$, we have $I(T(x))=\left\{aT_l(x): a\in Z(Y(T(x))\right\}$. Let us also fix a representation of $\widehat{Y}$ in $C_{\infty}(Q(Y))$. For any $a\in Z(Y(T(x)))$, we define a unique function $\widehat{a}\in C(Q(Y))=Z(\widehat{Y})$ as follows. If $\widehat{T_l (x)}(t)$ is finite and non-zero for some $t\in Q(Y)$ with $\chi _{T(x)}=1$, let $\widehat{a}(t)=\widehat{aT_l (x)}(t)/\widehat{T_l (x)}(t)$. If $\chi_{T(x)}(t)=0$ for some $t\in Q(Y)$, let $\widehat{a}(t)=0$. Hence $\widehat{a}$ is defined as a bounded continuous function on a dense open subset of the stonean compact set Q(Y). So there is a unique extension $\widehat{a}\in C(Q(Y))=Z(\widehat{Y})$. Since $\widehat{aT(x)}=\widehat{a}\widehat{T(x)}$, we have
\begin{equation*}
  \widehat{\gamma(f)(aT(x))} = \widehat{\gamma(f)}\widehat{a}\widehat{T(x)}=\widehat{a}\widehat{\gamma (f)}\widehat{T(x)}=\widehat{aT(fx)}.
\end{equation*}
 That is $\gamma(f)(aT(x))=aT(fx) \in I(T(x))$, since $T(fx) \in I(T(x))$ by part 2 of Theorem~\ref{at1}. Hence, the right hand side of the last equality is in $Y_T$. This means that $\gamma(f)(I(T(x))\subset Y_T$. But since $T_l$ is positive and additive on $X_+$, it is immediate that $Y_T=cl(\cup\left\{I(T(x)):x\in X_+\right\})$. Therefore $\gamma(f)(Y_T)\subset Y_T$. This completes the proof.
\end{proof}

In the rest of this section we will be reviewing the concept of disjointness in Banach $C(K)$-modules and the basic properties of disjointness preserving operators on Banach $C(K)$-modules. The general conditions we assume on Banach $C(K)$-modules are stated in Subsection 2.2 of this article. The only exception is that we will not assume that $m(C(K))$ is weak-operator closed in $L(X)$, since it is not necessary for the concepts we will discuss here.

First we will review the notion of disjointness of elements in a Banach $C(K)$-module. We will define a pair of elements $x,y\in X$ to be \textbf{disjoint}, if there exists some $u\in X$ such that $x,y\in X(u)$ and $x\bot y$ in $X(u)$ when the cyclic space $X(u)$ is represented as a Banach lattice as explained in Subsection 2.2 ( see also~\cite[Lemma 2.4]{KO}). This is equivalent to the definition given
in~\cite[Definition 4.4, p.21]{AAK}. But to show this, we need to introduce the following object defined in~\cite[Definition 4.1, p.20]{AAK}. Namely, for any $x\in X$, we have
\begin{equation*}
 \Delta (x)=cl\left\{fx: f\in C(K),  \  \left\|f\right\| \leq 1\right\}
\end{equation*}
 where 'cl' is norm-closure in $X.$ (As remarked in~\cite{AAK}, the above defined set $\Delta(x)$ represents an analog for Banach $C(K)$-modules of the notion of an order interval in a Banach lattice.) According to Definition 4,4 in~\cite{AAK}, $x,y\in X$ are called disjoint ($xdy$) if $\Delta (x+y)=\Delta (x)+\Delta (y)$ and $\Delta (x)\cap \Delta (y)=\left\{0\right\}.$

First we want to show that if $x,u\in X$ and $x\in X(u)$, then indeed $\Delta (x)$ is the closed order interval generated by $|x|$ in the complex Banach lattice $X(u).$ That is, $y\in X(u)$ and $|y|\leq |x|$ if and only if there exists $\left\{f_n\right\}$ in $C(K)$ with $\left\|f_n\right\|\leq 1$ such that $f_nx\rightarrow y$ in norm. To see this note that $|y|\leq |x|$ implies $y$ is in the ideal generated by $|x|$ in $X(u).$ But the closed ideal generated by $|x|$ in $X(u)$ is $X(x).$ Assume without loss of generality that $\left\|x\right\|=1.$ Then parts (4) and (5) of Lemma 2.4 in~\cite{KO} applied to $X(x)$ imply the existence of the required sequence. The other direction is self-evident.

Suppose that $xdy$ for some $x,y\in X$ as in Definition 4.4 in~\cite{AAK}. Then $x,y\in \Delta (x+y)$ implies that $x,y\in X(u)$ with $u=x+y.$ Also that $\Delta (x)\cap \Delta (y)=\left\{0\right\}$ implies that the order intervals generated $|x|, \ |y|$ in $X(u)$ are disjoint, that is $|x|\wedge |y|=0.$ Hence $x\bot y$ in $X(u).$ Conversely, suppose for some $x,y\in X(u)$ we have $x\bot y$ in the Banach lattice $X(u).$ Then $\Delta (x)\cap \Delta (y)=\left\{0\right\}$ and $|x+y|=|x|+|y|.$ Since $f,g\in C(K)$ are in the unit ball of $C(K)$ implies that $|fx+gy|\leq |x|+|y|=|x+y|$, we obtain $\Delta (x) +\Delta (y)=\Delta (x+y).$

Now we want to show that the definition of a disjointness preserving operator that we give here (Definition 2.6) is equivalent to the one given in~\cite[Definition 4.5, p.21]{AAK}.

Let $X$ be a Banach $C(K_X)$-module and $Y$ be a Banach $C(K_Y)$-module. Let $T:X\rightarrow Y$ be an operator. We say (Definition 2.6) $T$ is a \textbf{disjointness preserving} operator if, for each $x\in X,$ (1) $T(X(x))\subset Y(T(x))$ and (2) $T$ restricted to $X(x)$ is a disjointness preserving operator when $X(x)$ and $Y(T(x))$ are considered as Banach lattices. On the other hand the same operator $T$ is called disjointness preserving in~\cite[Definition 4.5, p.21]{AAK}, if for all $x,y\in X$ with $xdy$, we have $T(x)dT(y)$ in $Y.$ It is easy to see if the operator $T$ satisfies Definition 2.6 above, then it also satisfies Definition 4.5 in~\cite{AAK}. To show this assume that $xdy$ in $X.$ Then $x\bot y$ in $X(x+y).$ Therefore, by Definition 2.6, $T(x)\bot T(y)$ in $Y(T(x+y))$. But $T(x+y)=T(x)+T(y)$, so $T(x)dT(y)$ in $Y$. Hence $T$ satisfies Definition 4.5 in~\cite{AAK}. But proving the converse implication, requires some work.

We need two lemmas from~\cite{AAK}. Initially, given $x\in X$ and $x^*\in X'$ define $x^*\Box x\in C(K_X)'$ by
\begin{equation*}
   x^*\Box x(f)=x^*(fx)
\end{equation*}
 for all $f\in C(K_X)$ (see~\cite[Definition 4.7, p.23]{AAK}). The lemma below is from \cite[Lemma 4.8, part(3), p.23]{AAK}.

\begin{lemma} \label{at4}
Let $x,y\in X.$ Then the following are equivalent.
\begin{enumerate}
\item $y\in X(x)$ and $|y|\leq x$ in the Banach lattice $X(x)$.
\item For each $x^*\in X'$, $\left\|x^*\Box y\right\|\leq \left\|x^*\Box x\right\|$.
\item For each $x^*\in X'$, $|x^*\Box y|\leq |x^*\Box x|$.

\end{enumerate}

\end{lemma}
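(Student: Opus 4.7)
My plan is to establish the cycle of implications $(1) \Rightarrow (3) \Rightarrow (2) \Rightarrow (1)$, exploiting the identification (verified in the discussion just before the lemma) of $\Delta(x)$ with the order interval $\{z \in X(x) : |z| \leq x\}$ in the Banach lattice $X(x)$. Under this identification, condition $(1)$ is the same as the assertion $y \in \Delta(x)$, i.e., $y$ lies in the norm closure of $\{fx : f \in C(K_X),\ \|f\| \leq 1\}$. The argument is therefore really about characterising membership in the balanced convex set $\Delta(x)$ by means of the measures $x^*\Box(\cdot)$.

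For $(1) \Rightarrow (3)$, I would choose $f_n \in C(K_X)$ with $\|f_n\| \leq 1$ and $f_n x \to y$ in $X$. Because $m$ is isometric, the estimate
\begin{equation*}
\|x^*\Box y - x^*\Box(f_n x)\| = \sup_{\|g\|\leq 1} |x^*(g(y - f_n x))| \leq \|x^*\|\,\|y - f_n x\|
\end{equation*}
shows $x^*\Box(f_n x) \to x^*\Box y$ in the total variation norm of $C(K_X)'$. A one-line computation from the definition gives $x^*\Box(f_n x) = f_n \cdot (x^*\Box x)$, and the polar decomposition of a complex Borel measure yields $|f_n \cdot \mu| \leq |\mu|$ whenever $\|f_n\|_\infty \leq 1$. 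Since norm convergence in $C(K_X)'$ forces convergence of the total variation on every Borel set, the pointwise inequalities $|f_n \cdot (x^*\Box x)| \leq |x^*\Box x|$ pass to the limit and produce $(3)$. The implication $(3) \Rightarrow (2)$ is immediate, because the norm of a measure is monotone in the partial order on positive measures.

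For $(2) \Rightarrow (1)$, I would argue by contradiction. The set $\Delta(x)$ is closed, convex, and balanced (since $\|\lambda f\| \leq 1$ whenever $|\lambda| \leq 1$ and $\|f\| \leq 1$, and this survives the closure). If $y \notin \Delta(x)$, Hahn--Banach separation produces $x^* \in X'$ with $\mathrm{Re}\,x^*(y) > \sup_{z \in \Delta(x)} \mathrm{Re}\,x^*(z)$. Balancedness of $\Delta(x)$ promotes this to
\begin{equation*}
|x^*(y)| \;>\; \sup_{\|f\|\leq 1} |(x^*\Box x)(f)| \;=\; \|x^*\Box x\|,
\end{equation*}
and since $|x^*(y)| = |(x^*\Box y)(1)| \leq \|x^*\Box y\|$, we obtain $\|x^*\Box y\| > \|x^*\Box x\|$, contradicting $(2)$. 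Hence $y \in \Delta(x)$, which is $(1)$.

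The main obstacle is $(2) \Rightarrow (1)$, but once one recognises $\Delta(x)$ as closed, convex and balanced, the argument reduces cleanly to Hahn--Banach separation plus the identity $\|x^*\Box x\| = \sup_{\|f\|\leq 1}|x^*(fx)|$. The only mildly technical issue in $(1) \Rightarrow (3)$ is transferring the pointwise measure inequality through a norm limit, which is handled by the standard fact that norm convergence of complex Borel measures entails convergence of total variations on every Borel set.
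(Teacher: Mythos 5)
Your proposal is correct and follows essentially the same route as the paper: the cycle $(1)\Rightarrow(3)\Rightarrow(2)\Rightarrow(1)$, with $(1)\Rightarrow(3)$ obtained from the identity $x^*\Box(f_nx)=f_n\cdot(x^*\Box x)$ together with the identification of $\Delta(x)$ as the order interval, and $(2)\Rightarrow(1)$ by Hahn--Banach separation of $y$ from the closed absolutely convex set $\Delta(x)$. Your treatment is, if anything, slightly more careful than the paper's in passing the total-variation inequality through the norm limit.
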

\begin{proof}
$(1)\Rightarrow (3)$ is almost immediate. Assume (1). Consider the complex Banach lattice $C(K_X)'$ as a module over its center $Z(C(K_X)')=C(K_X)'^{\prime}$. Given any $f\in C(K_X),$ it follows from definitions that $x^*\Box fx=f\cdot(x^*\Box x)$ where, on the right hand side, we consider $f$ acting as a central operator on $x^*\Box x$. Then $|x^*\Box fx|\leq \left\|f\right\||x^*\Box x|$. If $|y|\leq x$ in $X(x)$, there exists a sequence $\left\{f_n\right\}$ in the unit ball of $C(K_X)$ such that $f_nx\rightarrow y$ in norm. (See the discussion on $\Delta (x).$)Then $|x^*\Box y|\leq |x^*\Box x|$ for all $x^*\in X'.$

That $(3)\Rightarrow(2)$ is evident.

To show $(2)\Rightarrow (1)$ : suppose $y\notin \Delta (x)$. Since $\Delta (x)$ is an absolutely convex closed set in $X$, by standard separation theorems on real Banach spaces there exists a bounded real linear functional $x^*_r$ on $X$ such that
\begin{equation*}
 sup\left\{|x^*_r(z)|:z\in \Delta (x)\right\}<1<x^*(y).
\end{equation*}
 Then, as standard, define $x^*(z)=x^*_r(z)-ix^*_r(iz)$ for each $z\in X.$ We know that $x^*\in X'$ and $\left\|x^*\right\|=\left\|x^*_r\right\|$. For each $f$ in the unit ball of $C(K_X)$ and for some real $\theta$, we have
 \begin{equation*}
 |x^*(fx)|=e^{i\theta}x^*(fx)=x^*(e^{i\theta}fx)=x^*_r(e^{i\theta}fx)<1<x^*_r(y)\leq |x^*(y)|.
 \end{equation*}
 (Note that $e^{i\theta}fx\in \Delta (x).$) Hence $\left\|x^*\Box x\right\|<\left\|x^*\Box y\right\|$ and $(2)\Rightarrow (1)$.

\end{proof}

The next lemma is also from\cite[Lemma 4.8, part(4), p.23]{AAK}.

\begin{lemma} \label{at5}
Let $x,y\in X$. Then the following are equivalent.
\begin{enumerate}
\item $x,y\in X(x+y)$ and $x\bot y$ in $X(x+y).$
\item For each $x^*\in X'$, $x^*\Box x\bot x^*\Box y$ in $C(K_X)'.$
\end{enumerate}

\end{lemma}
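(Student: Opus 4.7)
The plan is to establish the key identity $|x^*\Box z|=|x^*\Box|z||$ as positive functionals on $C(K_X)$ for every $z\in X$ and $x^*\in X'$, and then to transfer disjointness back and forth using this identity together with Lemma~\ref{at4} and the standard characterization $a\bot b\Leftrightarrow |a+\lambda b|=|a-\lambda b|$ for every $\lambda\in\mathbb{C}$ in a complex Banach lattice (a direct reformulation of equation~\eqref{eq70}). For the identity, the crucial input is the structural fact $\Delta(fz)=\Delta(f|z|)$ for $f\in C(K_X)_+$, noted in the discussion before Lemma~\ref{at4}: given $g\in C(K_X)$ with $|g|\leq f$, the bound $|gz|=|g||z|\leq f|z|$ places $gz\in\Delta(f|z|)=\mathrm{cl}\{hf|z|:|h|\leq 1\}$, so $gz=\lim_n h_nf|z|$ with $|h_nf|\leq f$, whence $|x^*(gz)|\leq |x^*\Box|z||(f)$; taking the supremum over $g$ gives $|x^*\Box z|(f)\leq |x^*\Box|z||(f)$, and swapping the roles of $z$ and $|z|$ yields the reverse inequality.

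For $(1)\Rightarrow(2)$: positivity of $x+y$ in $X(x+y)$ combined with $x\bot y$ forces $x+y=|x+y|=|x|+|y|$, hence $(x-|x|)+(y-|y|)=0$; the two summands lie in disjoint bands, so $x=|x|$ and $y=|y|$. Then $|x+\lambda y|=x+|\lambda|y=|x-\lambda y|$ in $X(x+y)$ for every $\lambda\in\mathbb{C}$, and the key identity gives
\[
|x^*\Box(x+\lambda y)|=|x^*\Box|x+\lambda y||=|x^*\Box|x-\lambda y||=|x^*\Box(x-\lambda y)|.
\]
Reading this as $|x^*\Box x+\lambda\,x^*\Box y|=|x^*\Box x-\lambda\,x^*\Box y|$ in $C(K_X)'$ and applying the characterization of disjointness yields $x^*\Box x\bot x^*\Box y$.

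For $(2)\Rightarrow(1)$: disjointness of $x^*\Box x$ and $x^*\Box y$ gives $|x^*\Box(x+y)|=|x^*\Box x|+|x^*\Box y|\geq |x^*\Box x|$, so Lemma~\ref{at4} places $x\in X(x+y)$ with $|x|\leq x+y$, and similarly $y\in X(x+y)$. For each $\lambda\in\mathbb{C}$, disjointness also yields $|x^*\Box(x+\lambda y)|=|x^*\Box(x-\lambda y)|$, which by the key identity becomes $|x^*\Box|x+\lambda y||=|x^*\Box|x-\lambda y||$ for every $x^*\in X'$. Applying Lemma~\ref{at4} in both inequality directions then forces $|x+\lambda y|=|x-\lambda y|$ as elements of $X$, hence in $X(x+y)$, so $x\bot y$ in $X(x+y)$ by the characterization. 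The main obstacle is the key identity itself, since Lemma~\ref{at4} alone supplies only $|x^*\Box z|\leq |x^*\Box|z||$, and recovering the reverse inequality requires the symmetric approximation argument above, built on the structural symmetry $\Delta(fz)=\Delta(f|z|)$.
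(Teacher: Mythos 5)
Your argument is correct, but it takes a genuinely different and heavier route than the paper's. The paper's proof is essentially a two-line transfer: it uses the asymmetric disjointness criterion~(\ref{eq70}), namely $a\bot b \Leftrightarrow |a|\leq|a+\lambda b|$ for all scalars $\lambda$, and observes that Lemma~\ref{at4} (equivalence of parts (1) and (3), applied with generator $x+\lambda y$) translates the inequality $|x|\leq|x+\lambda y|$ in $X(x+y)$ directly into $|x^*\Box x|\leq|x^*\Box x+\lambda\,x^*\Box y|$ in $C(K_X)'$ and back, with no auxiliary facts needed; the point is that this criterion only ever compares a modulus against a positive generator, which is exactly the situation Lemma~\ref{at4} handles. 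You instead work with the symmetric criterion $|a+\lambda b|=|a-\lambda b|$, and to push it through $x^*\Box(\cdot)$ you are forced to first prove the identity $|x^*\Box z|=|x^*\Box|z||$ (via the approximation argument with $\Delta(fz)=\Delta(f|z|)$) and, in the direction $(1)\Rightarrow(2)$, to observe that disjoint summands of the positive generator $x+y$ are themselves positive. Both of these auxiliary steps check out --- the $\Delta$-interval identification from the discussion preceding Lemma~\ref{at4} does give $|x^*\Box z|(f)=\sup\{|x^*(gz)|:|g|\leq f\}\leq|x^*\Box|z||(f)$ and its converse, and the positivity of $x$ and $y$ follows from $(x-|x|)+(y-|y|)=0$ with the summands in disjoint bands --- so the proof is sound, and the identity $|x^*\Box z|=|x^*\Box|z||$ is a worthwhile byproduct not recorded in the paper. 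The trade-off is that your version is considerably longer than necessary for the lemma itself; switching to the criterion~(\ref{eq70}) would let you delete the key identity entirely.
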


\begin{proof}
To see $(1)\Rightarrow(2)$: Note $x\bot y$ implies $|x|\leq |x+\lambda y|$ for all $\lambda\in \textbf{C}$ in $X(x+y)$. Then, by Lemma \ref{at4} part(3), for each $x^*\in X'$, we have $|x^*\Box x|\leq |x^*\Box (x+\lambda y)|$ for all $\lambda\in \textbf{C}$. Hence $x^*\Box x\bot x^*\Box y$ in $C(K_X)'.$

To see $(2)\Rightarrow (1)$: Note that $(2)$ implies, for each $x^*\in X'$, $|x^*\Box x|\leq |x^*\Box (x+\lambda y)|$ for all $\lambda\in \textbf{C}$. When $\lambda =1$, by Lemma \ref{at4} part(1), we have that $x$ is, and therefore also $y$ is in $X(x+y)$. Then, the full inequality, again by Lemma \ref{at4} part(1), yields $|x|\leq |x+\lambda y|$ for all $\lambda \in \textbf{C}.$ Hence $x\bot y$ in $X(x+y).$ This completes the proof.

\end{proof}

Our next result is the claimed equivalence between Definition 2.6 of this paper and Definition 4.5 in \cite{AAK}. The interested reader should see~\cite[Theorem 6.6, p.37]{AAK} for the idea behind the proof. Given an operator $T:X\rightarrow Y$ between Banach modules $X,Y$ by the notation $T_{|X(x)}$, we denote the restriction of $T$ to the cyclic subspace $X(x)$ of $X$.

\begin{theorem} \label{at6}
Suppose $X,Y$ are Banach $C(K_X)$ and Banach $C(K_Y)$ modules respectively. Let $T:X\rightarrow Y$ be an operator. Then the following are equivalent.
\begin{enumerate}
\item For any $x,y\in X,$ $xdy$ in $X$ implies $T(x)dT(y)$ in $Y.$
\item For each $x\in X,$ $T(X(x))\subset Y(T(x))$ and $T_{|X(x)}:X(x)\rightarrow Y(T(x))$ is a disjointness preserving operator.
\end{enumerate}
\end{theorem}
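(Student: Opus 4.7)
The direction $(2) \Rightarrow (1)$ was already settled in the paragraph immediately preceding the theorem: given $xdy$ in $X$, the equivalence of the two notions of disjointness places $x,y \in X(x+y)$ with $x \bot y$ in that Banach lattice, so (2) applied to the cyclic subspace $X(x+y)$ yields $T(x),T(y) \in Y(T(x+y))$ with $T(x) \bot T(y)$ in $Y(T(x+y))$, i.e., $T(x)dT(y)$. The real work is $(1) \Rightarrow (2)$.

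Fix $x \in X$. The plan has two parts: first establish the inclusion $T(X(x)) \subseteq Y(T(x))$, and then deduce disjointness preservation of $T|_{X(x)}$ from assumption (1). The key leverage point is that whenever $u,v \in X(x)$ are lattice-disjoint in $X(x)$, they are automatically $d$-disjoint in $X$ (this is one half of the pre-theorem discussion), so (1) yields $T(u),T(v) \in Y(T(u+v))$. The strategy is to exhaust $X(x)$ by such pairs whose sum has $T$-image already inside $Y(T(x))$.

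The first concrete step: for any band projection $\zeta$ in the center $Z(X(x))$ of the Banach lattice $X(x)$, the elements $\zeta x$ and $(1-\zeta)x$ are lattice-disjoint in $X(x)$ and sum to $x$, so (1) forces both $T(\zeta x)$ and $T((1-\zeta)x)$ into $Y(T(x))$. Linearity extends this to $T(\zeta x) \in Y(T(x))$ for every $\zeta$ in the linear span of band projections of $Z(X(x))$. To reach a general $y \in X(x)$ with $|y| \leq c|x|$ in $X(x)$, I will approximate $y$ in norm by such $\zeta x$: the functional characterizations of Lemmas~\ref{at4} and~\ref{at5} translate membership and disjointness into conditions on the measures $x^* \Box y$ in $C(K_X)'$, where band decompositions are always available, and this measure-theoretic picture supplies the required approximants even when $Z(X(x))$ itself has few projections. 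Continuity of $T$ and closedness of $Y(T(x))$ then push $T(y)$ into $Y(T(x))$; density of the principal ideal $\{y \in X(x) : |y| \leq c|x|\}$ in $X(x)$ promotes the inclusion to all of $X(x)$.

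With the inclusion in hand, the disjointness half is straightforward: if $u,v \in X(x)$ and $u \bot v$ in $X(x)$, then $X(u+v)$ sits in $X(x)$ as a sub-Banach-lattice with compatible order, so $u \bot v$ in $X(u+v)$, i.e., $udv$ in $X$; by (1) this yields $T(u)dT(v)$ in $Y$, hence $T(u) \bot T(v)$ in $Y(T(u+v))$, and since $T(u+v) \in Y(T(x))$ the subspace $Y(T(u+v))$ sits inside $Y(T(x))$ as a sub-Banach-lattice on which the disjointness persists. The main obstacle is the central approximation in the inclusion step: in a generic Banach lattice $X(x)$ the center $Z(X(x))$ may have essentially no nontrivial band projections (for instance if $Z(X(x)) \simeq C[0,1]$), so one cannot reduce directly to simple $\zeta$-decompositions; the passage must therefore be carried out through the functional picture of Lemmas~\ref{at4} and~\ref{at5} at the level of $C(K_X)'$, taking care that the approximating elements remain in $X$ itself so that assumption (1) can actually be invoked.
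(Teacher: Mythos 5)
Your direction $(2)\Rightarrow(1)$ matches the paper's and is fine. The problem is in $(1)\Rightarrow(2)$, and specifically in the inclusion $T(X(x))\subseteq Y(T(x))$, where your argument has a genuine gap. Your plan is to exhaust $X(x)$ by images $\zeta x$ of (spans of) band projections $\zeta\in Z(X(x))$, since $\zeta x\,d\,(1-\zeta)x$ lets you invoke (1). But, as you yourself observe, this cannot work: if $X(x)\simeq C[0,1]$ the only band projections are $0$ and $1$, so the span of band projections applied to $x$ produces only scalar multiples of $x$, which is nowhere dense in $X(x)$. A general $y\in X(x)$ with $|y|\leq|x|$ is a norm limit of $fx$ with $f\in C(K_X)$, $\|f\|\leq 1$, and for such $f$ the pair $fx,(1-f)x$ is \emph{not} disjoint, so hypothesis (1) says nothing about it directly. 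Your proposed repair --- that ``the functional picture of Lemmas~\ref{at4} and~\ref{at5} at the level of $C(K_X)'$ supplies the required approximants'' --- is not an argument: band decompositions in the measure space $C(K_X)'$ do not produce approximating elements that lie back in $X$, and without elements of $X$ you cannot invoke (1) at all. The missing idea is not an approximation scheme but a transfer of the problem to a genuine Banach-lattice operator.

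What the paper actually does is this: fix $x\in X$ and an arbitrary $y^*\in Y'$, and define $U:X(x)\rightarrow C(K_Y)'$ by $U(z)=y^*\Box T(z)$. If $u\bot v$ in $X(x)$ then $u\,d\,v$ in $X$, so (1) gives $T(u)\,d\,T(v)$, and Lemma~\ref{at5} converts this into $y^*\Box T(u)\bot y^*\Box T(v)$ in $C(K_Y)'$; hence $U$ is a disjointness preserving operator between Banach lattices. Now the equivalence $(1)\Leftrightarrow(2)$ of Theorem~\ref{at1} applied to $U$ --- this is the step that controls the \emph{non}-disjoint elements, precisely the ones your decomposition cannot reach --- yields that $|y|\leq|z|$ in $X(x)$ implies $|y^*\Box T(y)|\leq|y^*\Box T(z)|$. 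Since $y^*\in Y'$ is arbitrary, Lemma~\ref{at4} (the equivalence of its parts (1) and (3)) gives $T(y)\in Y(T(z))$ with $|T(y)|\leq|T(z)|$; taking $z=x$ gives the inclusion, and the same displayed inequality says $T_{|X(x)}$ satisfies condition (2) of Theorem~\ref{at1}, hence is disjointness preserving. Your closing paragraph (deducing disjointness preservation of $T_{|X(x)}$ from the inclusion plus (1)) is sound once the inclusion is available, but as written the inclusion itself is not established.
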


\begin{proof}
We already showed $(2)\Rightarrow(1)$ when we introduced the two definitions. To see $(1)\Rightarrow (2)$: Fix an $x\in X$ and take any $y^*\in Y'.$ Define an operator $U:X(x)\rightarrow C(K_Y)'$ by
\begin{equation*}
  U(z)=y^*\Box T(z),
\end{equation*}
 for all $z\in X(x).$ If $y\bot z$ in $X(x)$, then $y\bot z$ in $X(y+z)$($\subset X(x)$). By $(1)$, we have $T(y)\bot T(z)$ in $Y(T(y)+T(z))$. Then Lemma \ref{at5} part(2) gives $y^*\Box T(y) \bot y^*\Box T(z)$ in $C(K_Y)'.$ So $U$ is a disjointness preserving operator between the given Banach lattices. By Theorem \ref{at1} part(2), we have $|y|\leq |z|$ in $X(x)$ implies $|y^*\Box T(y)|\leq |y^*\Box T(z)|$ in $C(K_Y)'.$ Since $y^*\in Y'$ is arbitrary, by Lemma \ref{at4} part(1) we have
 \begin{equation*}
   T(y)\in Y(T(z)) \; \text{and} \:  |T(y)|\leq |T(z)|.
 \end{equation*}
 When we take $z=x$ in the above inequality, we get $T(X(x))\subset Y(T(x)).$ On the other hand, the displayed inequality shows that the operator $T_{|X(x)}:X(x)\rightarrow Y(T(x))$ satisfies Theorem \ref{at1} part(2). Therefore $T_{|X(x)}$ is a disjointness preserving operator between the given Banach lattices. So $(1)\Rightarrow (2)$
\end{proof}

\end{document}